\newtheorem{thm}{Theorem}[section]
\newtheorem{lem}[thm]{Lemma}
\newtheorem{prop}[thm]{Proposition}
\newtheorem{cor}[thm]{Corollary}
\theoremstyle{definition}
\newtheorem{defn}[thm]{Definition}
\newtheorem{nota}[thm]{Notation}
\newtheorem{rem}[thm]{Remark}
\newtheorem{exam}[thm]{Example}
\newcommand{\bC}{{\mathbb{C}}}
\newcommand{\bN}{{\mathbb{N}}}
\newcommand{\A}{{\mathcal{A}}}
\newcommand{\D}{{\mathcal{D}}}
\newcommand{\F}{{\mathcal{F}}}
\renewcommand{\H}{{\mathcal{H}}}
\newcommand{\I}{{\mathcal{I}}}
\renewcommand{\L}{{\mathcal{L}}}
\newcommand{\M}{{\mathcal{M}}}
\renewcommand{\P}{{\mathcal{P}}}
\newcommand{\X}{{\mathcal{X}}}
\renewcommand{\phi}{\varphi}
\newcommand{\fB}{{\mathfrak{B}}}
\newcommand{\fM}{{\mathfrak{M}}}
\newcommand{\fN}{{\mathfrak{N}}}
\newcommand{\qand}{\quad\text{and}\quad}
\newcommand{\qqand}{\qquad\text{and}\qquad}
\newcommand{\alg}{\mathrm{alg}}
\renewcommand{\th}{\mathrm{th}}
\newcommand{\diag}{\mathrm{diag}}
\newcommand{\op}{\mathrm{op}}
\newcommand{\bncs}{{BNC_{vs}}}
\newcommand{\inv}{{\langle -1 \rangle}}
\tikzset{Box/.style={very thick, rounded corners}}
\tikzset{marked/.style={star, star point height = .75mm, star points =5, fill=black,minimum size=2mm, inner sep=0mm} }
\tikzset{verythickline/.style = {line width=7pt}}
\tikzset{thickline/.style = {line width=5pt}}
\tikzset{medthick/.style = {line width=3pt}}
\tikzset{med/.style = {line width=2pt}}
\tikzset{count/.style = {fill=white,circle,draw,thin, inner sep=2pt}}
\tikzset{rcount/.style = {fill=white,rectangle,draw,thin,inner sep=2pt, rounded corners}}
\tikzset{cpr/.style = {draw,fill=white,rectangle,thin, rounded corners}}
\definecolor{ggreen}{HTML}{00BB33}
\begin{document}

\nocite{*}

\title[On Operator-Valued Bi-Free Distributions]{On Operator-Valued Bi-Free Distributions}

\author{Paul Skoufranis}
\address{Department of Mathematics, Texas A\&M University, College Station, Texas, USA, 77843}
\email{pskoufra@math.tamu.edu}

\subjclass[2010]{46L54, 46L53}
\date{\today}
\keywords{bi-free probability, operator-valued distributions, amalgamating over a subalgebra, $R$-cyclic matrices, $R$-transform, $S$-transform}

\begin{abstract}
In this paper, operator-valued bi-free distributions are investigated.  Given a subalgebra $D$ of a unital algebra $B$, it is established that a two-faced family $Z$ is bi-free from $(B, B^{\mathrm{op}})$ over $D$ if and only if certain conditions relating the $B$-valued and $D$-valued bi-free cumulants of $Z$ are satisfied.  Using this, we verify that a two-faced family of matrices is $R$-cyclic if and only if they are bi-free from the scalar matrices over the scalar diagonal matrices.  Furthermore, the operator-valued bi-free partial $R$-, $S$-, and $T$-transforms are constructed.  New proofs of results from free probability are developed in order to facilitate many of these bi-free results.
\end{abstract}

\maketitle

\section{Introduction}
\label{sec:intro}

The notion of bi-free pairs of faces was introduced by Voiculescu in \cite{V2014} as a theory to enable the simultaneous study of the left and right actions of algebras on free product of vector spaces.  Initially postulated by Mastnak and Nica in \cite{MN2015} and demonstrated by Charlesworth, Nelson, and the author in \cite{CNS2015-1}, the combinatorial structures for bi-free probability are bi-non-crossing partitions; partitions that are non-crossing once a specific permutation is applied.  Consequently, as the combinatorics of free and bi-free probability are intimately related, many results from free probability have immediate generalizations to the bi-free setting.  Other bi-free results, such as bi-free partial transforms (see \cites{V2015, V2013, S2015-1, S2015-2}) and bi-matrix models (see \cite{S2015-3}), require additional work.

Although briefly examined in \cite{V2014}, operator-valued bi-free probability (a generalization of bi-free probability where the scalars $\bC$ are replaced with an arbitrary unital algebra $B$) has received less attention.  In \cite{CNS2015-2}, Charlesworth, Nelson, and the author demonstrated that the combinatorics of operator-valued bi-free probability is similar to the combinatorics of operator-valued free probability, yet addition care had to be taken.

Operator-valued free probability has been incredible useful as its study enlarges the domain of mathematics where free probability techniques may be applied.  This greater framework yields its own interesting results and allows additional techniques, which simplify arguments in free probability.  Of course, the trade-off of this wider framework is that results are more difficult to obtain.  These same ideas should resonate in operator-valued bi-free probability; the techniques should yield interesting results at the expense of the arguments being greater in difficulty.  Furthermore, as bi-free probability is in its infancy, a further understanding of bi-free probability can be obtain through studying operator-valued bi-freeness as intuition from the operator-valued setting can yield results in the scalar setting (as was the case with the bi-matrix models in \cite{S2015-3}).

Unfortunately, few concrete examples of bi-free pairs of $B$-faces are in existence to derive intuition from.  The most natural example comes from considering a type II$_1$ factor $\fM$, a von Neumann subalgebra $\fN$ of $\fM$, and the conditional expectation $E_\fN : \fM \to \fN$ of $\fM$ onto $\fN$.  To construct pairs of $\fN$-faces, consider the linear maps on $\fM$, denoted $\L(\fM)$, and the expectation $E : \L(\fM) \to \fN$ defined by
\[
E(T) = E_\fN(T(1_\fM))
\]
for all $T \in \L(\fM)$; that is, apply $T$ to $1_\fM$ and take the expectation of $\fM$ onto $\fN$.  Further define $^*$-homomorphisms $L : \fM \to \L(\fM)$ and $R : \fM^\op \to \L(\fM)$ by
\[
L(X)(A) = XA \qqand R(X)(A) = AX
\]
for all $X, A \in \fM$.
For this discussion we will call $L(X)$ a left operator and $R(X)$ a right operator.  If $\fM = \fM_1 \ast_\fN \fM_2$, the amalgamated free product of von Neumann algebras $\fM_1$ and $\fM_2$ over $\fN$, then the pairs of $\fN$-faces $(L(\fM_1), R(\fM_1^\op))$ and $(L(\fM_2), R(\fM_2^\op))$ are bi-free with amalgamation over $\fN$ with respect to $E$.

Like free probability, bi-free probability is a theory which describes the joint moments of operators.  In particular, free probability may be viewed as a subcase of bi-free probability where only left operators are considered thereby providing intuition for the bi-free case.  Recall, to compute the $\fN$-distribution of elements of $\fM$, given $X_1, \ldots, X_n \in \fM$ one takes $T_1, \ldots, T_n, T'_1, \ldots, T'_n \in \fN$ and computes the joint moment of 
\[
L(T_1) L(X_1) L(T'_1), \ldots, L(T_n) L(X_n) L(T'_n),
\]
which is
\[
E(L(T_1) L(X_1) L(T'_1) \cdots L(T_n) L(X_n) L(T'_n)) = T_1 E_\fN(X_1 (T'_1 T_2) X_2 \cdots (T'_{n-1} T_n) X_n) T'_n.
\]
In particular, although the joint moment depends on all of $ X_1, \ldots, X_n, T_1, \ldots, T_n, T'_1, \ldots, T'_n$, the moment really only depends on the $\fM$-operators $X_1, \ldots, X_n$ and the $\fN$-values of $T_1$, $T'_1T_2$, $\ldots$, $T'_{n-1}T_n$, and $T'_n$. This is realized by combining $L(T'_k)$ and $L(T_{k+1})$ into a single element of $L(\fN)$.  Furthermore, one need not include $L(T_1)$ and $L(T'_n)$ in the product as one may always pull $T_1$ out on the left and $T'_n$ out on the right.

Similar arguments to those above apply if one only considers right operators.  However, things become incredible more complex if one simultaneously considers left and right operators.  For example, note that if $X \in \fM$, $Y \in \fM^\op$, and $T \in \fN$, then
\[
E(L(X) R(Y) L(T)) = E_\fN(XTY).
\]
In particular, as $X, Y$ are elements of $\fM$, the $\fN$-operator $T$ cannot be moved outside of the expectation even though $L(T)$ was the right-most operator in the product.  This is startlingly different from the free case where the first left (or right) element of $\fN$ to act (i.e. the right-most in the product in both cases) may be moved outside of the expectation.  Furthermore, notice 
\[
E(L(X) R(Y) R(T)) = E_\fN(XTY) = E(L(X) R(Y) L(T));
\]
that is, the result is the same no matter whether we view the first $\fN$-operator acting as a left operator or as a right operator.  In particular, the first left and right $\fN$-operators to act behave in a special manner.

For further illustration, consider $X \in \fM$, $Y \in \fM^{\op}$, $T, T' \in \fN$, and $S, S'\in \fN^{\op}$.  Then
\[
E(L(T) L(X) L(T') R(S) R(Y) R(S')) = T E_\fN(X (T'S') Y) S.
\]
Like before,  although the joint moment depends on all of $X, Y, T, T', S, S'$, the moment really only depends on the $\fM$-operators $S, X$ and the $\fN$-values of $T, S, T'S'$.  This implies the first left and right $\fN$ operators to act combine into a single $\fN$-operator that is special in the sense that it can be treated as either a left \emph{or} as a right operator.  

The goal of this paper is to better understand operator-valued bi-free distributions over an arbitrary algebra $B$.  In Section \ref{sec:prelims}, preliminaries on operator-valued bi-free pairs of $B$-faces common to the majority of sections of this paper are developed.  After these important definitions, constructions, and results are collected, a description of which joint moments of a pair of $B$-faces $(A_\ell, A_r)$ are required to completely describe all joint moments is provided based on the above.  In particular, if elements of both $A_\ell$ and $A_r$ are considered (which is the interesting case in bi-free probability), then it is always the case that there is precisely one $B$-operator that can be viewed as a left \emph{and} as a right $B$-operator.  It is this special $B$-operator that must be treated with some care and produces very interesting peculiarities in the theory.

In Section \ref{sec:sub}, a powerful result of Nica, Shlyakhtenko, and Speicher from \cite{NSS2002} is generalized to the bi-free setting.  Given a subalgebra $D$ of $B$, \cite{NSS2002} examined of how the $B$-valued distributions of random variables interacts with the $D$-valued distributions.  In particular, \cite{NSS2002}*{Theorem 3.5} provides conditions between the $B$-valued and $D$-valued cumulants of random variables $X_1, \ldots, X_n$ in order to completely classify when $X_1, \ldots, X_n$ were free from $B$ over $D$.  Said result is quite a useful theoretical tool and has the application that if one wants to compute the $D$-valued cumulants of $X_1, \ldots, X_n$ for some $D$, one may extend $D$ to an algebra $B$ where the cumulants may be easier to compute.  This application is prevalent in the study of $R$-cyclic matrices (see \cite{NSS2002-R} for example).

Section \ref{sec:sub} extends \cite{NSS2002}*{Theorem 3.5} to the bi-free setting.  Specifically Theorem \ref{thm:bi-free-over-D} shows $(\{Z_i\}_{i\in I}, \{Z_j\}_{j \in J})$ are bi-free from $(B, B^\op)$ with amalgamation over $D$ if and only nearly identical conditions to those in \cite{NSS2002}*{Theorem 3.5} on the $B$-valued and $D$-valued bi-free cumulants are satisfied.  However, the proof \cite{NSS2002}*{Theorem 3.5} required two techniques in free probability that do not have bi-free analogues.  First \cite{NSS2002}*{Theorem 3.5} makes use of the `alternating centred moments vanish' characterization of free independence, which does not have a bi-free analogue.  Secondly, \cite{NSS2002}*{Theorem 3.5} uses the notion of canonical random variables, which is an abstraction of the commonly known Fock space operator model from \cite{N1996}.  Although there is a bi-free operator model on a Fock space in \cite{CNS2015-1}, a complication arises in its abstraction due to the necessity of left (right) operators to commute with right (left) $B$-operators.  However, as the conditions in Theorem \ref{thm:bi-free-over-D} are related to the operator-valued bi-free cumulants, a combinatorial argument manipulating the cumulants produces the result (and a new proof of \cite{NSS2002}*{Theorem 3.5}).

In Section \ref{sec:R-cyc}, Theorem \ref{thm:bi-free-over-D} is used to produce simple examples of operator-valued bi-free pairs of $B$-faces.  In \cite{NSS2002-R}, Nica, Shlyakhtenko, and Speicher analyzed the notion of $R$-cyclic families of matrices and demonstrated these are precisely the matrices of operators that are free from the scalar matrices over the scalar diagonal matrices.    Theorem \ref{thm:R-cyc} provides the bi-free version of this result by using the same notion required for bi-matrix models in \cite{S2015-3}.  In particular, this provides additional evidence that the constructions of \cite{S2015-3} are the correct ones to consider for matrices of operators in the bi-free setting and gives concrete examples of operators that are bi-free from scalar matrix algebra over scalar diagonal matrices.

In free probability theory, the $R$- and $S$-transforms provide substantial information about the joint moments of freely independent random variables under additive and multiplicative free convolution respectively.  These transforms have analogues in the operator-valued setting: the operator-valued $R$-transform was developed by Voiculescu in \cite{V1995}, and the operator-valued $S$-transform was developed by Dykema in \cite{D2006} (also see \cite{A2004}).   The goal of Sections \ref{sec:R-Trans}, \ref{sec:T}, and \ref{sec:S} are to generalize the bi-free partial $R$-, $T$-, and $S$-transforms from \cites{V2015, V2013, S2015-1, S2015-2} to the operator-valued setting (where the $T$-transform is desired for additive free convolution in one element of the pair and multiplicative free convolution in the other element of the pair).

A peculiarity with the operator-valued bi-free partial $R$-, $T$-, and $S$-transform is the number of variables these transformations are functions of.  One may expect these to be functions of two $B$-variables; one for a left $B$-operator and one for a right $B$-operator.  However, a third $B$-variable is required to handle the special $B$-operator that may be viewed as a left and as a right $B$-operator.  This third variable is required since when one examines reduction of the operator-valued bi-free cumulant corresponding to a bi-non-crossing partition, one of these special $B$-operators is always created from the other left and right operators.  More surprisingly, the operator-valued bi-free partial $R$-, $T$-, and $S$-transforms of their respected convolutions are \emph{compositions} in this third $B$-operator of one transform in the family by the other (i.e. the bottom of a bi-non-crossing diagram behaves differently that the sides).  The definitions and formulae for the partial $R$-transform (Theorem \ref{thm:R-transform}), the partial $T$-transform (Definition \ref{defn:T-Transform} and Theorem \ref{thm:T-property}), and the partial $S$-transform (Definition \ref{defn:S-Transform} and Theorem \ref{thm:S-property}) all reduce to the same formula as in \cites{V2015, V2013, S2015-1, S2015-2} in the scalar-valued setting.  Furthermore, partial $S$-transform of a convolution is particularly nice as the equation looks symmetric, even though the operator-valued free $S$-transform of \cite{D2006} is not symmetric.

The approach take in Sections \ref{sec:R-Trans}, \ref{sec:T}, and \ref{sec:S} is the same combinatorial approach for the scalar-valued bi-free partial transforms taken in \cites{S2015-1, S2015-2}.  However, for the partial $T$- and $S$-transforms, a new proof of the operator-valued $S$-transform formula  from \cite{D2006} is needed to facilitate these arguments.  Consequently, Section \ref{sec:free-S} is devoted to developing another proof of \cite{D2006}*{Theorem 1.1} along the lines of Nica's and Speicher's ``Fourier'' transform approach to multiplicative convolution from \cite{NS1997}.

\section{Common Preliminaries}
\label{sec:prelims}

This section will recall and develop most of the preliminary results required throughout this paper.  More background will be recalled in the other sections as required.

\subsection*{Bi-Non-Crossing Partitions}

Throughout this paper, a map $\chi : \{1,\ldots, n\} \to \{\ell, r\}$ is used to designate whether the $k^{\mathrm{th}}$ operator in a sequence of $n$ operators is a left operator (when $\chi(k) = \ell$) or a right operator (when $\chi(k) = r$).  Furthermore, we will often need to index the set of left and right operators.  Consequently, if $I$ is an indexing set for left operators and $J$ is an indexing set for right operators, a map $\omega : \{1, \ldots, n\} \to I \sqcup J$ is used to designate the index in $I \sqcup J$ for each operator in the sequence.  Given $\omega : \{1, \ldots, n\} \to I \sqcup J$, the corresponding map $\chi_\omega : \{1,\ldots, n\} \to \{\ell, r\}$ is defined by
\[
\chi_\omega(k) = \left\{
\begin{array}{ll}
\ell & \mbox{if } \omega(k) \in I \\
r & \mbox{if } \omega(k) \in J
\end{array} \right. .
\]

Given $\chi: \{1, \ldots, n\} \to \{\ell, r\}$ with
\[
\chi^{-1}(\{\ell\}) = \{k_1<\cdots<k_p\} \qqand \chi^{-1}(\{r\}) = \{k_{p+1} > \cdots > k_n\},
\]
define the permutation $s_\chi$ on $\{1,\ldots, n\}$ by $s_\chi(q) = k_q$.  The only differences between the combinatorial aspects of free and bi-free probability arise from dealing with $s_\chi$.  

Using $s_\chi$, define the total ordering $\prec_\chi$ on $\{1,\ldots, n\}$ by $k_1 \prec_\chi k_2$ if and only if $s_\chi^{-1} (k_1)< s_\chi^{-1}(k_2)$.  Instead of reading $\{1,\ldots, n\}$ in the traditional order, $\prec_\chi$ corresponds to reading $\chi^{-1}(\{\ell\})$ in increasing order followed by reading $\chi^{-1}(\{r\})$ in decreasing order.  

A subset $V \subseteq \{1,\ldots, n\}$ is said to be a \emph{$\chi$-interval} if $V$ is an interval with respect to the ordering $\prec_\chi$.
In addition, $\min_{\prec_\chi}(V)$ and $\max_{\prec_\chi}(V)$ denote the minimal and maximal elements of $V$ with respect to the ordering $\prec_\chi$.

\begin{defn}
A partition $\pi \in \P (n)$ is said to be \emph{bi-non-crossing with respect to $\chi$} if the partition  $s_\chi^{-1}\cdot \pi$ (the partition formed by applying $s_\chi^{-1}$ to each entry of each block of $\pi$) is non-crossing.  Equivalently $\pi$ is bi-non-crossing if whenever there are blocks $U, V \in \pi$ with $u_1, u_2 \in U$ and $v_1, v_2 \in V$ such that 
\[
u_1 \prec_\chi v_1 \prec_\chi u_2 \prec_\chi v_2,
\]
then $U = V$.  The set of bi-non-crossing partitions with respect to $\chi$ is denoted by $BNC(\chi)$.
\end{defn}

\begin{exam}
\label{exam:FLOP}
If $\chi : \{1,\ldots, 6\} \to \{\ell, r\}$ is such that $\chi^{-1}(\{\ell\}) = \{1, 2, 3, 6\}$ and $\chi^{-1}(\{r\}) = \{4, 5\}$, then $(s_\chi(1), \ldots, s_\chi(6)) = (1, 2, 3, 6, 5, 4)$.  If $\pi = \{ \{1,4\}, \{2,5\}, \{3, 6\}\}$, then $\pi$ is crossing on $\{1,\ldots, 6\}$ yet is bi-non-crossing with respect to $\chi$.  This may be seen via the following diagrams.
\begin{align*}
\begin{tikzpicture}[baseline]
	\draw[thick, dashed] (-1,2.75) -- (-1,-.25) -- (1,-.25) -- (1,2.75);
	\draw[thick] (-1, 1.5) -- (-.5,1.5) -- (-0.5,0) -- (-1, 0);
	\draw[thick] (-1, 2.5) -- (0.5,2.5) -- (0.5,1) -- (1, 1);
	\draw[thick] (-1, 2) -- (0,2) -- (0,.5) -- (1, .5);
	\node[right] at (1, 1) {$4$};
	\draw[black, fill=black] (1,1) circle (0.05);
	\node[right] at (1, .5) {$5$};
	\draw[black, fill=black] (1,.5) circle (0.05);
	\node[left] at (-1, 0) {$6$};
	\draw[black, fill=black] (-1,0) circle (0.05);
	\node[left] at (-1, 2.5) {$1$};
	\draw[black, fill=black] (-1,2.5) circle (0.05);	
	\node[left] at (-1, 2) {$2$};
	\draw[black, fill=black] (-1,2) circle (0.05);
	\node[left] at (-1, 1.5) {$3$};
	\draw[black, fill=black] (-1,1.5) circle (0.05);
\end{tikzpicture}
\longrightarrow
\begin{tikzpicture}[baseline]
	\draw[thick, dashed] (-.25,0) -- (5.25, 0);
	\draw[thick] (5, 0) -- (5,1.5) -- (0,1.5) -- (0, 0);
	\draw[thick] (1, 0) -- (1,1) -- (4,1) -- (4, 0);
	\draw[thick] (2, 0) -- (2,.5) -- (3,.5) -- (3, 0);
	\node[below] at (4, 0) {$5$};
	\draw[black, fill=black] (4,0) circle (0.05);	
	\node[below] at (5, 0) {$4$};
	\draw[black, fill=black] (5,0) circle (0.05);	
	\node[below] at (0, 0) {$1$};
	\draw[black, fill=black] (0,0) circle (0.05);	
	\node[below] at (1, 0) {$2$};
	\draw[black, fill=black] (1,0) circle (0.05);	
	\node[below] at (2, 0) {$3$};
	\draw[black, fill=black] (2,0) circle (0.05);	
	\node[below] at (3, 0) {$6$};
	\draw[black, fill=black] (3,0) circle (0.05);
\end{tikzpicture}
\end{align*}
\end{exam}

Note $BNC(\chi)$ inherits a lattice structure from $\P(n)$ via refinement (i.e. $\pi \leq \sigma$ if $\pi$ is a refinement of $\sigma$) and thus has minimal and maximal elements, denoted $0_\chi$ and $1_\chi$ respectively.  Furthermore, given $\pi\in BNC(\chi)$ and $p,q \in \{1, \ldots, n\}$, $p \sim_\pi q$ is used to denote that $p$ and $q$ are in the same block of $\pi$ whereas $p \nsim_\pi q$ is used to denote that $p$ and $q$ are in different blocks of $\pi$.

For $n,m\geq 0$, define $\chi_{n,m} : \{1,\ldots, n+m\} \to \{\ell, r\}$ by $\chi(k) = \ell$ if $k \leq n$ and $\chi(k) = r$ if $k > n$.  For notation purposes in later sections, it will be useful to think of $\chi_{n,m}$ as a map on 
\[
\{1_\ell, 2_\ell, \ldots, n_\ell, 1_r, 2_r, \ldots, m_r\}
\]
under the identification $k \mapsto k_\ell$ if $k \leq n$ and $k \mapsto (k-n)_r$ if $k > n$. Furthermore, denote $BNC(n,m)$ for $BNC(\chi_{n,m})$ and  $1_{n,m}$ for $1_{\chi_{n,m}}$.

\begin{defn}
\label{defn:mobius}
The \emph{bi-non-crossing M\"{o}bius function} is the function
\[
\mu_{BNC} : \bigcup_{n\geq1}\bigcup_{\chi : \{1, \ldots, n\}\to\{\ell, r\}}BNC(\chi)\times BNC(\chi) \to \bC
\]
defined such that $\mu_{BNC}(\pi, \sigma) = 0$ unless $\pi$ is a refinement of $\sigma$, and otherwise defined recursively via the formulae
\[
\sum_{\substack{\tau \in BNC(\chi) \\\pi \leq \tau \leq \sigma}} \mu_{BNC}(\tau, \sigma) = \sum_{\substack{\tau \in BNC(\chi) \\ \pi \leq \tau \leq \sigma}} \mu_{BNC}(\pi, \tau) = \left\{
\begin{array}{ll}
1 & \mbox{if } \pi = \sigma  \\
0 & \mbox{otherwise}
\end{array} \right. .
\]
\end{defn}

Due to the similarity in lattice structures, the bi-non-crossing M\"{o}bius function is related to the non-crossing M\"{o}bius function $\mu_{NC}$ by the formula 
\[
\mu_{BNC}(\pi, \sigma) = \mu_{NC}(s^{-1}_\chi \cdot \pi, s^{-1}_\chi \cdot \sigma).
\]
This implies that $\mu_{BNC}$ inherits many `multiplicative' properties that $\mu_{NC}$ has.  For more details, see \cite{CNS2015-1}*{Section 3}.

\subsection*{$\boldsymbol{B}$-$\boldsymbol{B}$-Non-Commutative Probability Spaces}  There are very specific structures that operator-valued bi-free probability apply to.  We begin by recalling the structures for operator-valued free probability first.

\begin{defn}
A \emph{$B$-non-commutative probability space} is a pair $(\A, E)$ where $\A$ is a unital algebra containing $B$ (with $1_\A= 1_B$) and $E : \A \to B$ is a unital linear map such that
\[
E(b_1 Z b_2) = b_1  E(Z) b_2
\]
for all $b_1, b_2 \in B$ and $Z \in \A$.
\end{defn}

\begin{defn}
\label{defn:BBncps}
A \emph{$B$-$B$-non-commutative probability space} is a triple $(\A, E, \varepsilon)$ where $\A$ is a unital algebra, $\varepsilon : B \otimes B^{\op} \to \A$ is a unital homomorphism such that $\varepsilon|_{B \otimes 1_B}$ and $\varepsilon|_{1_B \otimes B^{\op}}$ are injective, and $E : \A \to B$ is a linear map such that
\[
E(\varepsilon(b_1 \otimes b_2)Z) = b_1 E_{\A}(Z) b_2
\qqand
E(Z\varepsilon(b \otimes 1_B)) = E_{\A}(Z\varepsilon(1_B \otimes b))
\]
for all $b_1, b_2, b \in B$ and $Z \in \A$.  To simplify notation, $L_b$ and $R_b$ may be used in place of $\varepsilon(b \otimes 1_B)$ and $\varepsilon(1_B \otimes b)$ respectively. 

The unital subalgebras of $\A$ defined by
\begin{align*}
\A_\ell &:= \{ Z \in \A  \, \mid \, Z R_b = R_b Z \mbox{ for all }b \in B\} \text{ and}\\
\A_r &:= \{ Z \in \A  \, \mid \, Z L_b = L_b Z \mbox{ for all }b \in B\}
\end{align*}
are called the \emph{left} and \emph{right algebras of} $\A$ respectively.  
\end{defn}

For the purposes of this paper, view $B^\op$ as $B$ with the map $b \mapsto R_b$ being anti-multiplicative on $B$.

Given a $B$-$B$-non-commutative probability space $(\A, E, \varepsilon)$, one can verify that $(\A_\ell, E)$ is a $B$-non-commutative probability space with $\varepsilon(B \otimes 1_B)$ as the copy of $B$ and $(\A_r, E)$ is a $B^{\op}$-non-commutative probability space with $\varepsilon(1_B \otimes B^{\op})$ as the copy of $B^{\op}$.   Thus the bi-free structure of a $B$-$B$-non-commutative probability space is a generalization of the free structure of a $B$-non-commutative probability space. Furthermore $(\A, E, \varepsilon)$ is simply a non-commutative probability space whenever $B = \bC$.

We refer the reader to \cite{CNS2015-2}*{Section 3} for further discussions on why $B$-$B$-non-commutative probability spaces are the correct setting to perform operator-valued bi-free probability.

\subsection*{Operator-Valued Bi-Multiplicative Functions}

In order to discuss the operator-valued bi-free moment and cumulant functions, it is necessary to describe what operations one may use to reduce said functions via blocks of bi-non-crossing partitions and how one many move elements of $\varepsilon(B \otimes B^{\op})$ around.  Said operations are essential to this paper so we recall these operations in the greatest detail possible.

 To begin, given $\chi: \{1,\ldots, n\} \to \{\ell, r\}$ and a subset $V \subseteq \{1,\ldots, n\}$, let $\chi|_V : V \to \{\ell, r\}$ denote the restriction of $\chi$ to $V$.  Similarly, given an $n$-tuple of objects $(Z_1, \ldots, Z_n)$, let $(Z_1, \ldots, Z_n)|_V$ denote the $|V|$-tuple where the elements in positions not indexed by an element of $V$ are removed.  Finally, given $\pi \in BNC(\chi)$ such that $V$ is a union of blocks of $\pi$, let $\pi|_V \in BNC(\chi|_V)$ denote the bi-non-crossing partition formed by taking the blocks of $\pi$ contained in $V$.

\begin{defn}
\label{defn:bi-multiplicative}
Let $(\A, E, \varepsilon)$ be a $B$-$B$-non-commutative probability space and let 
\[
\Phi : \bigcup_{n\geq 1} \bigcup_{\chi : \{1,\ldots, n\} \to \{\ell, r\}} BNC(\chi) \times \mathcal{A}_{\chi(1)} \times \cdots \times \mathcal{A}_{\chi(n)} \to B
\]
be a function that is linear in each $\mathcal{A}_{\chi(k)}$. It is said that $\Phi$ is \emph{bi-multiplicative} if for every $\chi : \{1,\ldots, n\} \to \{\ell, r\}$, $Z_k \in \mathcal{A}_{\chi(k)}$, $b \in B$, and $\pi \in BNC(\chi)$, the following four conditions all hold:
\begin{enumerate}
\item\label{part:bi-multi-1} Let
\[
q = \max\{ k \in \{1,\ldots, n\} \, \mid \, \chi(k) \neq \chi(n)\}.
\]
If $\chi(n) = \ell$ then
\[
\Phi_{1_\chi}(Z_1, \ldots, Z_{n-1}, Z_nL_b) = \left\{
\begin{array}{ll}
\Phi_{1_\chi}(Z_1, \ldots, Z_{q-1}, Z_q R_b, Z_{q+1}, \ldots, Z_n) & \mbox{if } q \neq -\infty  \\
\Phi_{1_\chi}(Z_1, \ldots, Z_{n-1}, Z_n)b & \mbox{if } q = -\infty
\end{array} \right. .
\]
If $\chi(n) = r$ then 
\[
\Phi_{1_\chi}(Z_1, \ldots, Z_{n-1}, Z_nR_b) = \left\{
\begin{array}{ll}
\Phi_{1_\chi}(Z_1, \ldots, Z_{q-1}, Z_q L_b, Z_{q+1}, \ldots, Z_n) & \mbox{if } q \neq -\infty  \\
b\Phi_{1_\chi}(Z_1, \ldots, Z_{n-1}, Z_n) & \mbox{if } q = -\infty
\end{array} \right. .
\]
\item\label{part:bi-multi-2} Let $p \in \{1,\ldots, n\}$ and let
\[
q = \max\{ k \in \{1,\ldots, n\} \, \mid \, \chi(k) = \chi(p), k < p\}.
\]
If $\chi(p) = \ell$ then
\[
\Phi_{1_\chi}(Z_1, \ldots, Z_{p-1}, L_bZ_p, Z_{p+1}, \ldots, Z_n) = \left\{
\begin{array}{ll}
\Phi_{1_\chi}(Z_1, \ldots, Z_{q-1}, Z_qL_b, Z_{q+1}, \ldots, Z_n) & \mbox{if } q \neq -\infty  \\
b \Phi_{1_\chi}(Z_1, Z_2, \ldots, Z_n) & \mbox{if } q = -\infty
\end{array} \right. .
\]
If $\chi(p) = r$ then
\[
\Phi_{1_\chi}(Z_1, \ldots, Z_{p-1}, R_bZ_p, Z_{p+1}, \ldots, Z_n) = \left\{
\begin{array}{ll}
\Phi_{1_\chi}(Z_1, \ldots, Z_{q-1}, Z_qR_b, Z_{q+1}, \ldots, Z_n) & \mbox{if } q \neq -\infty  \\
\Phi_{1_\chi}(Z_1, Z_2, \ldots, Z_n) b & \mbox{if } q = -\infty
\end{array} \right. .
\]
\item\label{part:bi-multi-3} 
Suppose that $V_1, \ldots, V_m$ each are unions of blocks of $\pi$ and are $\chi$-intervals which partition $\{1, \ldots, n\}$. Further, suppose $V_1, \ldots, V_m$ are ordered by $\prec_\chi$. Then
\[
\Phi_\pi(Z_1, \ldots, Z_n) = \Phi_{\pi|_{V_1}}\left((Z_1, \ldots, Z_n)|_{V_1}\right) \cdots \Phi_{\pi|_{V_m}}\left((Z_1, \ldots, Z_n)|_{V_m}\right).
\]
\item\label{part:bi-multi-4} Suppose that $V$ and $W$ are unions of blocks of $\pi$ that partition $\{1,\ldots, n\}$.  Further suppose that $V$ is a $\chi$-interval and
\[
\min_{\prec_\chi}(\{1,\ldots, n\}), \max_{\prec_\chi}(\{1,\ldots, n\}) \in W.
\]
Let
\[
p = \max_{\prec_\chi}\left(\left\{k \in W  \, \mid \, k \prec_\chi \min_{\prec_\chi}(V)\right\}\right) \qquad\mbox{ and } \qquad q = \min_{\prec_\chi}\left(\left\{k \in W  \, \mid \, \max_{\prec_\chi}(V) \prec_\chi k\right\}\right).
\]
Then
\begin{align*}
\Phi_\pi(Z_1, \ldots, Z_n) &= \left\{
\begin{array}{ll}
\Phi_{\pi|_{W}}\left(\left(Z_1, \ldots, Z_{p-1}, Z_p L_{\Phi_{\pi|_{V}}\left((Z_1,\ldots, Z_n)|_{V}\right)}, Z_{p+1}, \ldots, Z_n\right)|_{W}\right)  & \mbox{if } \chi(p) = \ell \\
\Phi_{\pi|_{W}}\left(\left(Z_1, \ldots, Z_{p-1}, R_{\Phi_{\pi|_{V}}\left((Z_1,\ldots, Z_n)|_{V}\right)} Z_p, Z_{p+1}, \ldots, Z_n\right)|_{W}\right)  & \mbox{if } \chi(p) = r 
\end{array} \right. \\
&= \left\{
\begin{array}{ll}
\Phi_{\pi|_{W}}\left(\left(Z_1, \ldots, Z_{q-1},  L_{\Phi_{\pi|_{V}}\left((Z_1,\ldots, Z_n)|_{V}\right)} Z_q, Z_{q+1}, \ldots, Z_n\right)|_{W}\right)  & \mbox{if } \chi(q) = \ell  \\
\Phi_{\pi|_{W}}\left(\left(Z_1, \ldots, Z_{q-1}, Z_q R_{\Phi_{\pi|_{V}}\left((Z_1,\ldots, Z_n)|_{V}\right)}, Z_{q+1}, \ldots, Z_n\right)|_{W}\right) & \mbox{if } \chi(q) = r
\end{array} \right. .
\end{align*}
\end{enumerate}
\end{defn}

\begin{rem}
\label{rem:bi-multi-same-as-free}
Given a bi-multiplicative function $\Phi$, the four conditions in Definition \ref{defn:bi-multiplicative} demonstrate how one may reduce $\Phi_\pi$ into an expression involving only $\Phi_{1_\chi}$'s, and how one may move elements of $\varepsilon(B \otimes B^{\op})$ around, thereby allowing multiple reductions.  

Note one may understand the four conditions of Definition \ref{defn:bi-multiplicative} via the notion of a multiplicative function in free probability as follows.  Given $\pi \in BNC(\chi)$ and a bi-multiplicative map $\Phi$, each reduction property one may apply to $\Phi_\pi(Z_1, \ldots, Z_n)$ follows by
\begin{enumerate}
\item viewing the non-crossing partition $s_\chi^{-1}  \cdot \pi$,
\item rearranging the $n$-tuple $(Z_1, \ldots, Z_n)$ to $(Z_{s_\chi(1)}, \ldots, Z_{s_\chi(n)})$, 
\item replacing any occurrences of $L_bZ_j$, $Z_j L_b$, $R_b Z_j$, and $Z_j R_b$ with $bZ_j$, $Z_j b$, $Z_j b$, and $bZ_j$ respectively,
\item applying one of the properties of a multiplicative map from \cite{NSS2002}*{Section 2.2}, and
\item reversing the above identifications.
\end{enumerate}  
\end{rem}

\begin{exam}
\label{exam:bi-mult-exam-1}
For an example of how properties (\ref{part:bi-multi-1}) and (\ref{part:bi-multi-2}) of Definition \ref{defn:bi-multiplicative} apply, consider $\chi_0 : \{1,2,3,4\} \to \{\ell, r\}$ by $\chi_0^{-1}(\{\ell\}) = \{1,3\}$.  If $\Phi$ is bi-multiplicative, $Z_k \in \mathcal{A}_{\chi(k)}$, and $\{b_k\}^{5}_{k=1} \subseteq B$, then
\[
\Phi_{1_{\chi_0}}(L_{b_1} Z_1, R_{b_2}Z_2, L_{b_3} Z_3, R_{b_4}Z_4R_{b_5}) = b_1 \Phi_{1_\chi}( Z_1 L_{b_3}, Z_2 R_{b_4}, Z_3 L_{b_5}, Z_4) b_2.
\]
Diagrammatically, view the sequence $L_{b_1} Z_1, R_{b_2}Z_2, L_{b_3} Z_3, R_{b_4}Z_4R_{b_5}$ from the top down in the bi-non-crossing diagram of $\pi$.  Each $B$-operator that occurs to the left of a $Z_k$ is viewed as above the corresponding node in the bi-non-crossing diagram and each $B$-operator that occurs to the right of a $Z_k$ is viewed as below the corresponding node.  One then may move each $B$-operator along the dotted lines until one either arrives at another node (changing between $L_b$ and $R_b$ if needed), escapes on the left (as $b_1$ does), or on the right (as $b_2$ does).
\begin{align*}
\begin{tikzpicture}[baseline]
	\draw[thick, dashed] (-1,2) -- (-1,-.25) -- (1,-.25) -- (1,2);
	\draw[thick] (-1, 1.5) -- (0,1.5) -- (-0,0) -- (1, 0);
	\draw[thick] (1, 1) -- (0,1);
	\draw[thick] (-1, .5) -- (0,.5);
	\node[left] at (-1, 1.5) {$1$};
	\draw[black, fill=black] (-1,1.5) circle (0.05);	
	\node[right] at (1, 1) {$2$};
	\draw[black, fill=black] (1,1) circle (0.05);
	\node[left] at (-1, .5) {$3$};
	\draw[black, fill=black] (-1,.5) circle (0.05);
	\node[right] at (1, 0) {$4$};
	\draw[black, fill=black] (1,0) circle (0.05);
\end{tikzpicture}
\end{align*}
\end{exam}

\begin{exam}
For an example of how properties (\ref{part:bi-multi-3}) and (\ref{part:bi-multi-4}) of Definition \ref{defn:bi-multiplicative} apply, consider $\chi : \{1,\ldots, 10\} \to \{\ell, r\}$ by $\chi^{-1}(\{\ell\}) = \{1,2, 3, 6, 7\}$ and $\pi \in BNC(\chi)$ with the following bi-non-crossing diagram:
\begin{align*}
\begin{tikzpicture}[baseline]
	\draw[thick, dashed] (-1,4.75) -- (-1,-.25) -- (1,-.25) -- (1,4.75);
	\draw[thick] (-1, 4) -- (0.25,4) -- (0.25,0.5) -- (1, 0.5);
	\draw[thick] (1, 2.5) -- (.25,2.5);
	\draw[thick] (-1, 2) -- (0.25,2);
	\draw[thick] (-1, 1.5) -- (-0.25,1.5) -- (-0.25,0) -- (1, 0);
	\node[left] at (-1, 4.5) {$1$};
	\draw[black, fill=black] (-1,4.5) circle (0.05);	
	\node[left] at (-1, 4) {$2$};
	\draw[black, fill=black] (-1,4) circle (0.05);
	\node[left] at (-1, 3.5) {$3$};
	\draw[black, fill=black] (-1,3.5) circle (0.05);	
	\node[right] at (1, 3) {$4$};
	\draw[black, fill=black] (1,3) circle (0.05);
	\node[right] at (1, 2.5) {$5$};
	\draw[black, fill=black] (1,2.5) circle (0.05);
	\node[left] at (-1, 2) {$6$};
	\draw[black, fill=black] (-1,2) circle (0.05);
	\node[left] at (-1, 1.5) {$7$};
	\draw[black, fill=black] (-1,1.5) circle (0.05);	
	\node[right] at (1, 1) {$8$};
	\draw[black, fill=black] (1,1) circle (0.05);
	\node[right] at (1, .5) {$9$};
	\draw[black, fill=black] (1,.5) circle (0.05);
	\node[right] at (1, 0) {$10$};
	\draw[black, fill=black] (1,0) circle (0.05);
\end{tikzpicture}
\end{align*}
If $\Phi$ is bi-multiplicative and $Z_k \in \mathcal{A}_{\chi(k)}$, then
\[
 \Phi_\pi(Z_1, \ldots, Z_{10}) = \Phi_{1_{1,0}}(Z_1)  \Phi_{1_{\chi_0}}\left(Z_2 L_{\Phi_{1_{1,0}}(Z_3)}, Z_5 R_{\Phi_{1_{0,1}}(Z_8)}, Z_6, Z_9 R_{\Phi_{1_{1,1}}(Z_7, Z_{10})} \right)          \Phi_{1_{0,1}}(Z_4)
\]
where $\chi_0$ is as in Example \ref{exam:bi-mult-exam-1}. Furthermore, the same holds when any bi-non-crossing partition replaces the block containing $Z_7$ and $Z_{10}$ at the bottom of the bi-non-crossing diagram, when $Z_3$ is replaced with a non-crossing partition on the left-hand-side, $Z_8$ is replaced with a non-crossing partition on the right-hand-side, and $Z_1$ and $Z_4$ are replaced with non-crossing partitions on the left- and right-hand-sides respectively that do not have any common nodes.

Properties (\ref{part:bi-multi-3}) and (\ref{part:bi-multi-4}) of Definition \ref{defn:bi-multiplicative} enable one to apply $\Phi$ to a $\chi$-interval, and then move the resulting $B$-operator along the dotted lines until one either arrives at another node (yielding a $L_b$ and $R_b$ depending on the node), or escapes on the left or on the right.
\end{exam}

With the notion of bi-multiplicative complete, operator-valued bi-free moment and cumulant functions may be discussed.  Given a $B$-$B$-non-commutative probability space $(\A, E, \varepsilon)$, the \emph{operator-valued bi-free moment function}
\[
E^B : \bigcup_{n\geq 1} \bigcup_{\chi : \{1,\ldots, n\} \to \{\ell, r\}} BNC(\chi) \times \mathcal{A}_{\chi(1)} \times \cdots \times \mathcal{A}_{\chi(n)} \to B
\]
is the bi-multiplicative function such that
\[
E^B_{1_\chi}(Z_1, \ldots, Z_n) := E(Z_1 \cdots Z_n)
\]
for each $\chi : \{1, \ldots, n\} \to \{\ell, r\}$ and $Z_k \in \mathcal{A}_{\chi(k)}$.  Great lengths were taken in \cite{CNS2015-2}*{Section 5} in order to show that such a bi-multiplicative function exists.

\begin{defn}
\label{defn:kappa}
Let $(\mathcal{A}, E, \varepsilon)$ be a $B$-$B$-non-commutative probability space.  The \emph{operator-valued bi-free cumulant function} 
\[
\kappa^B : \bigcup_{n\geq 1} \bigcup_{\chi : \{1,\ldots, n\} \to \{\ell, r\}} BNC(\chi) \times \mathcal{A}_{\chi(1)} \times \cdots \times \mathcal{A}_{\chi(n)} \to B
\]
is defined by
\begin{align}
\label{eq:cumulants}
\kappa^B_\pi(Z_1,\ldots, Z_n) := \sum_{\substack{\sigma \in BNC(\chi) \\ \sigma \leq\pi}}  E^B_\sigma(Z_1, \ldots, Z_n) \mu_{BNC}(\sigma, \pi)
\end{align}
for each $\chi : \{1, \ldots, n\} \to \{\ell, r\}$, $\pi \in BNC(\chi)$, and $Z_k \in \mathcal{A}_{\chi(k)}$.  To simplify notation, define
\[
\kappa^B_\chi(Z_1,\ldots, Z_n) := \kappa^B_{1_\chi}(Z_1,\ldots, Z_n)
\]
\end{defn}

Since $E^B$ is bi-multiplicative, one obtains that $\kappa^B$ is bi-multiplicative by properties of the bi-non-crossing M\"{o}bius function (see \cite{CNS2015-2}*{Theorem 6.2.1}).  Furthermore, using M\"{o}bius inversion, obtain obtains that
\begin{align}
\label{eq:mobius}
E^B_\sigma(Z_1, \ldots, Z_n) &= \sum_{\substack{\pi \in BNC(\chi) \\ \pi \leq\sigma}}\kappa^B_\pi(Z_1,\ldots, Z_n).
\end{align}

Finally, the following result shows that, as with operator-valued free cumulants, when one inserts an element of $\varepsilon(B \otimes 1_{B})$ or $\varepsilon(1_B \otimes B^{\op})$ into a non-trivial operator-valued bi-free cumulation, one always obtains zero.
\begin{prop}[\cite{CNS2015-2}*{Proposition 6.4.1}]
\label{prop:vanishing-of-scalar-cumulants}
Let $(\mathcal{A}, E, \varepsilon)$ be a $B$-$B$-non-commutative probability space, let $\chi : \{1,\ldots, n\} \to \{\ell, r\}$ with $n \geq 2$, and let $Z_k \in \mathcal{A}_{\chi(k)}$.  If there exist $q \in \{1,\ldots, n\}$ and $b \in B$ such that $Z_q = L_b$ if $\chi(q) = \ell$ or $Z_q = R_b$ if $\chi(q) = r$, then
\[
\kappa^B_\chi(Z_1, \ldots, Z_n) = 0.
\]
\end{prop}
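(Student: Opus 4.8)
The plan is to prove this by induction on $n$, mimicking the standard free-probability argument that scalar cumulants vanish, but carried out purely combinatorially using the defining Möbius relation \eqref{eq:cumulants} together with the bi-multiplicativity of $E^B$. Fix $\chi$, the elements $Z_k$, and the distinguished index $q$ with (say) $\chi(q) = \ell$ and $Z_q = L_b$; the case $\chi(q) = r$ is symmetric. For the base case $n = 2$, a direct computation gives $\kappa^B_\chi(Z_1, Z_2) = E^B_{1_\chi}(Z_1,Z_2) - E^B_{0_\chi}(Z_1,Z_2)$, and when one of the two entries is an $L_b$ or $R_b$ the two terms agree by bi-multiplicativity (property \eqref{part:bi-multi-3}/\eqref{part:bi-multi-4} of Definition \ref{defn:bi-multiplicative}, which governs how $E^B_{0_\chi}$ splits, together with the moment condition $E(\varepsilon(b_1\otimes b_2)Z) = b_1 E(Z) b_2$ and the compatibility $E(Z\varepsilon(b\otimes 1_B)) = E(Z\varepsilon(1_B\otimes b))$), so the difference is zero.

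For the inductive step, I would isolate the singleton behaviour of the index $q$. The key observation is that bi-multiplicativity of $E^B$ lets one compute, for \emph{every} $\sigma \in BNC(\chi)$, the value $E^B_\sigma(Z_1,\ldots,Z_n)$ in terms of the value obtained after deleting position $q$, according to whether $q$ is a singleton of $\sigma$ or not. If $\{q\}$ is a block of $\sigma$, then applying property \eqref{part:bi-multi-4} (or \eqref{part:bi-multi-3} if $q$ is $\prec_\chi$-extremal) absorbs the factor $E^B_{1_{1,0}}(L_b) = b$ into a neighbouring node as an $L_b$ or $R_b$; if $\{q\}$ is not a block of $\sigma$, then $q$ lies in a larger block and one uses property \eqref{part:bi-multi-2} (the analogue for left operators of moving $L_b$ leftward through the block) to again rewrite $E^B_\sigma$ in terms of the reduced $(n-1)$-tuple. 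In both cases the point is that $E^B_\sigma(Z_1,\ldots,Z_n)$ does not depend on $\sigma$ beyond the data of $\sigma$ restricted to $\{1,\ldots,n\}\setminus\{q\}$ and whether $q$ is a singleton.

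Now substitute this into \eqref{eq:cumulants} and organize the sum over $\sigma \le 1_\chi$ by first summing over all $\sigma$ in which $\{q\}$ is \emph{not} a singleton, and then over all $\sigma$ in which it \emph{is}. For the first family, the inner sum of $\mu_{BNC}(\sigma,1_\chi)$ over exactly those $\sigma$ extending a fixed partition of the remaining indices (with $q$ glued into some block) — this is where I expect the main obstacle — should be shown to produce precisely (the negative of) the contribution of the singleton family, using the complementation/multiplicativity properties of $\mu_{BNC}$ inherited from $\mu_{NC}$ via $\mu_{BNC}(\pi,\sigma) = \mu_{NC}(s_\chi^{-1}\cdot\pi, s_\chi^{-1}\cdot\sigma)$. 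Concretely, one wants the identity that summing $\mu_{BNC}(\sigma,1_\chi)$ over all ways of adjoining the point $q$ to the block containing its $\prec_\chi$-predecessor versus leaving $q$ as a singleton gives cancelling contributions — the same mechanism that forces $\kappa_{n+1}(a_1,\ldots,a_j,1,a_{j+1},\ldots,a_n) = 0$ in the scalar free case. Alternatively, and perhaps more cleanly, one can bypass the Möbius bookkeeping entirely: since $\kappa^B$ is itself bi-multiplicative (stated after Definition \ref{defn:kappa}, from \cite{CNS2015-2}*{Theorem 6.2.1}), apply \eqref{eq:mobius} to express $E^B_{1_\chi}(Z_1,\ldots,Z_n) = \sum_{\pi} \kappa^B_\pi(Z_1,\ldots,Z_n)$, split each $\kappa^B_\pi$ via bi-multiplicativity into a product over the blocks of $\pi$, isolate the block $B_0$ containing $q$, and argue by induction on block size that every term in which $|B_0| \ge 2$ already vanishes — reducing the claim to showing the sum of the remaining terms (those with $\{q\}$ a block of $\pi$) reconstructs $E^B_{1_\chi}$ of the $(n-1)$-tuple, which is exactly \eqref{eq:mobius} for $n-1$ after the bi-multiplicative absorption of $b$. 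The hardest bookkeeping is making the block-deletion map $\pi \mapsto \pi|_{\{1,\ldots,n\}\setminus\{q\}}$ compatible with the bi-non-crossing structure and verifying the Möbius (or, in the second approach, the bi-multiplicative reduction) identities hold uniformly whether or not $q$ is $\prec_\chi$-extremal — i.e. handling the boundary cases $q = -\infty$ in Definition \ref{defn:bi-multiplicative}(\ref{part:bi-multi-1})–(\ref{part:bi-multi-2}) separately.
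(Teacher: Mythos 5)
This proposition is not proved in the paper at all: it is recalled verbatim from \cite{CNS2015-2}*{Proposition 6.4.1}, so there is no internal argument to compare yours against. Judged on its own merits, the second route you sketch is the correct and standard one, and it does go through: expand $E(Z_1\cdots Z_n)$ by (\ref{eq:mobius}), and split the sum according to whether $\{q\}$ is a block of $\pi$. For $\pi \neq 1_\chi$ whose block containing $q$ has size at least $2$, the bi-multiplicative reduction of $\kappa^B_\pi$ produces a cumulant of length between $2$ and $n-1$ whose $q$-entry still lies in $\varepsilon(B\otimes 1_B)$ (left positions only ever acquire $L$-factors, right positions only $R$-factors, in Definition \ref{defn:bi-multiplicative}), so it vanishes by the induction hypothesis; and when this vanishing factor appears nested as a subscript $L_{(\cdot)}$ or $R_{(\cdot)}$ the whole term dies by linearity. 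For the $\pi$ with $\{q\}$ a singleton, the attachment point of $b=E(L_b)$ prescribed by properties (\ref{part:bi-multi-3})/(\ref{part:bi-multi-4}) depends only on $\chi$ and $q$, not on $\pi$, so these terms sum (as $\pi|_{\{1,\ldots,n\}\setminus\{q\}}$ runs over all of $BNC(\chi|_{\{1,\ldots,n\}\setminus\{q\}})$) to $E$ of a fixed reduced product, which equals $E(Z_1\cdots Z_n)$ because $L_b$ commutes with every entry indexed by $\chi^{-1}(\{r\})$ and because of the two defining identities of $E$ in the $\prec_\chi$-extremal cases. Subtracting yields $\kappa^B_\chi(Z_1,\ldots,Z_n)=0$. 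These three verifications (stability of the form $L_{b'}$ at position $q$, uniformity of the absorption over all singleton partitions, and the identification of the reduced moment with the original one) are exactly the ``bookkeeping'' you defer, and they should be written out, but none of them fails.

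By contrast, the first route rests on the unverified blanket claim that $E^B_\sigma(Z_1,\ldots,Z_n)$ depends on $\sigma$ only through $\sigma|_{\{1,\ldots,n\}\setminus\{q\}}$ and the singleton status of $q$, plus a M\"obius cancellation you yourself flag as the obstacle; as written this is not a proof, and I would simply discard it in favour of the second argument, which needs no M\"obius identities beyond (\ref{eq:cumulants})--(\ref{eq:mobius}).
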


\subsection*{Operator-Valued Bi-Freeness}  We are now in a position to discuss the main concept of this paper.

\begin{defn}
\label{defn:pair-of-B-faces}
Let $(\mathcal{A}, E_\mathcal{A}, \varepsilon)$ be a $B$-$B$-non-commutative probability space.  A \emph{pair of $B$-faces of} $\A$ is a pair $(A_\ell, A_r)$ of unital subalgebras of $\A$ such that
\[
\varepsilon(B \otimes 1_B) \subseteq A_\ell \subseteq \A_\ell \qquad \mathrm{and}\qquad \varepsilon(1_B \otimes B^{\mathrm{op}}) \subseteq A_r \subseteq \A_r.
\]
\end{defn}

The following is an equivalent definition of when a family of $B$-faces is bi-free with amalgamation over $B$ from \cite{CNS2015-2}.  Note that the map $\epsilon : \{1,\ldots, n\} \to K$ defines a partition on $\{1,\ldots, n\}$ with blocks $\{ \epsilon^{-1}(\{k\})\}_{k \in K}$, so it makes sense to ask if a partition is a refinement of $\epsilon$.

\begin{thm}[\cite{CNS2015-2}*{Theorem 7.1.4 and Theorem 8.1.1}]
\label{thm:bifree-classifying-theorem}
Let $(\mathcal{A}, E_\mathcal{A}, \varepsilon)$ be a $B$-$B$-non-commutative probability space and let $\{(A_{k,\ell}, A_{k,r})\}_{k \in K}$ be a family of pairs of $B$-faces of $\A$.  Then $\{(A_{k,\ell}, A_{k,r})\}_{k \in K}$ are bi-free with amalgamation over $B$ if and only if for all $\chi : \{1,\ldots, n\} \to \{\ell, r\}$, $\epsilon : \{1,\ldots, n\} \to K$, and $Z_k \in A_{\epsilon(k), \chi(k)}$, the formula
\begin{align*}
E^B_{\A}(Z_1 \cdots Z_n) = \sum_{\pi \in BNC(\chi)} \left[ \sum_{\substack{\sigma\in BNC(\chi)\\\pi\leq\sigma\leq\epsilon}}\mu_{BNC}(\pi, \sigma) \right] E^B_{\pi}(Z_1,\ldots, Z_n)   
\end{align*}
holds.  Equivalently $\{(A_{k,\ell}, A_{k,r})\}_{k \in K}$ are bi-free with amalgamation over $B$ if and only if for all $\chi : \{1,\ldots, n\} \to \{\ell, r\}$, $\epsilon : \{1,\ldots, n\} \to K$, and $Z_k \in A_{\epsilon(k), \chi(k)}$, 
\[
\kappa^B_{\chi}(Z_1, \ldots, Z_n) = 0
\]
provided $\epsilon$ is not constant.
\end{thm}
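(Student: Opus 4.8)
\emph{Proof proposal.}
The plan is to separate the statement into two layers. The inner layer is that the two displayed characterizations are equivalent to each other by purely M\"obius-theoretic manipulations internal to $(\A, E_\A, \varepsilon)$; the outer layer is that the cumulant characterization is equivalent to bi-freeness with amalgamation over $B$ as defined via the $B$-valued free-product model of \cite{CNS2015-2}. Since the statement is \cite{CNS2015-2}*{Theorems 7.1.4 and 8.1.1}, this amounts to reconstructing those arguments from the machinery recalled above.

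\emph{Inner layer.} Interchanging summations in \eqref{eq:cumulants} gives, for any $\epsilon$,
\[
\sum_{\substack{\sigma \in BNC(\chi)\\ \sigma \leq \epsilon}} \kappa^B_\sigma(Z_1,\ldots,Z_n) \;=\; \sum_{\pi \in BNC(\chi)} \Bigg[ \sum_{\substack{\sigma \in BNC(\chi)\\ \pi \leq \sigma \leq \epsilon}} \mu_{BNC}(\pi,\sigma)\Bigg] E^B_\pi(Z_1,\ldots,Z_n),
\]
so the first displayed identity is equivalent to $E^B(Z_1\cdots Z_n) = \sum_{\sigma \leq \epsilon}\kappa^B_\sigma(Z_1,\ldots,Z_n)$. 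Comparing with \eqref{eq:mobius} at $\sigma = 1_\chi$, this in turn says $\sum_{\pi \not\leq \epsilon}\kappa^B_\pi(Z_1,\ldots,Z_n) = 0$. I would then induct on $n$: for $\pi \neq 1_\chi$ every block has fewer than $n$ legs, so bi-multiplicativity of $\kappa^B$ (reducing innermost $\chi$-intervals first) writes $\kappa^B_\pi(Z_1,\ldots,Z_n)$ as a product of cumulants $\kappa^B_{1_{\chi|_V}}$ over the blocks $V$ of $\pi$, with inserted elements of $\varepsilon(B\otimes B^{\op})$; because $L_b$ and $R_b$ lie in every left, resp. right, face, these insertions never change the $\epsilon$-colour carried by a node, so if $\pi \not\leq \epsilon$ some block $V$ has $\epsilon|_V$ non-constant and the inductive hypothesis (together with $\varepsilon(0)=0$ and Proposition \ref{prop:vanishing-of-scalar-cumulants}) forces that factor, hence $\kappa^B_\pi(Z_1,\ldots,Z_n)$, to vanish. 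When $\epsilon$ is non-constant the surviving exceptional term is exactly $\kappa^B_{1_\chi} = \kappa^B_\chi$, which yields the equivalence of the two characterizations (and conversely feeds the length-$n$ vanishing of $\kappa^B_\chi$ back into the induction).

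\emph{Outer layer.} Recall that bi-freeness over $B$ is defined by representing the pairs $(A_{k,\ell},A_{k,r})$ on $B$-$B$-bimodules with specified vectors, passing to the reduced free product of these pointed bimodules, and declaring the family bi-free when its joint $B$-distribution under $E_\A$ matches that of the induced left and right regular representations in the vacuum expectation on the free product. The task is then to compute that model distribution: tracking, leg by leg, which creation/annihilation operators on which tensor slot survive when one applies an alternating word and pairs against the vacuum, the non-vanishing contributions are indexed precisely by the $\pi \in BNC(\chi)$ refining $\epsilon$, and the $B$-coefficient of each such $\pi$ collapses, via the bi-multiplicative axioms \ref{part:bi-multi-1}--\ref{part:bi-multi-4} of Definition \ref{defn:bi-multiplicative}, to $\kappa^B_\pi(Z_1,\ldots,Z_n)$; this produces the first displayed identity in the model, hence in general. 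For the converse direction, the inner layer shows that vanishing mixed cumulants makes every mixed moment a fixed universal expression in the individual $B$-distributions of the pairs, so a family with vanishing mixed cumulants is $B$-distributionally indistinguishable from the free-product model built on those individual distributions, i.e.\ bi-free over $B$.

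The main obstacle is the explicit moment computation in the $B$-valued free-product model. In the scalar setting one only records which tensor slots are occupied, whereas here one must simultaneously propagate $B$-valued coefficients through both the left and the right actions, and --- as emphasized in the introduction --- left operators coming from one face need not commute with the right $B$-operators $R_b$ produced by another face; this is exactly why one works with $B$-$B$-bimodules, and why the bookkeeping that turns each surviving bi-non-crossing diagram's coefficient into $\kappa^B_\pi$, rather than some reordered variant, is delicate. A secondary but genuine subtlety, already present in the inner layer, is choosing the order of cumulant reductions so that every inserted $L_b$ or $R_b$ lands inside a cumulant of length $\geq 2$, where Proposition \ref{prop:vanishing-of-scalar-cumulants} applies cleanly.
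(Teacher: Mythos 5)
You should first note that this paper does not prove the statement at all: Theorem \ref{thm:bifree-classifying-theorem} is imported verbatim from \cite{CNS2015-2}*{Theorems 7.1.4 and 8.1.1}, so there is no internal argument to compare with; what you are really attempting is a reconstruction of the proof in that reference. Your ``inner layer'' is essentially sound and matches how such equivalences are handled here: interchanging the sums in (\ref{eq:cumulants}) identifies the displayed moment formula with $E^B(Z_1\cdots Z_n)=\sum_{\pi\le\epsilon}\kappa^B_\pi(Z_1,\ldots,Z_n)$, and the induction using bi-multiplicativity of $\kappa^B$ (with the observation that the inserted $L_b$, $R_b$ keep each reduced entry inside its face, so colours are preserved) correctly shows that vanishing of mixed \emph{full} cumulants is equivalent to vanishing of $\kappa^B_\pi$ for all $\pi\not\le\epsilon$, hence to the moment formula.

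The genuine gap is in the ``outer layer.'' The assertion that, in the reduced free product of pointed $B$-$B$-bimodules, the vacuum-state moments of an alternating word are indexed exactly by $\pi\in BNC(\chi)$ with $\pi\le\epsilon$ and that ``the $B$-coefficient of each such $\pi$ collapses, via the bi-multiplicative axioms, to $\kappa^B_\pi$'' is not something one gets by tracking legs and tensor slots; it presupposes that a bi-multiplicative moment function $E^B_\pi$ exists and that joint moments in the model admit the universal expansion with coefficients $\sum_{\pi\le\sigma\le\epsilon}\mu_{BNC}(\pi,\sigma)$. Establishing exactly this (well-definedness and bi-multiplicativity of $E^B_\pi$, the universal moment polynomials, and the matching of the free-product model with them) is the bulk of \cite{CNS2015-2} (the present paper itself remarks that ``great lengths'' were taken there for the moment function alone), and the noncommutation of left operators from one face with right $B$-operators produced by another is precisely where the naive scalar-style bookkeeping breaks down --- you flag this as ``delicate'' but do not resolve it. Your converse also silently uses the forward direction: to conclude that a family with vanishing mixed cumulants is bi-free you need to know that the free-product model built on the given individual $B$-distributions itself has vanishing mixed cumulants and reproduces those distributions, which again rests on the model computation. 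So the proposal correctly organizes the proof and settles the equivalence of the two displayed conditions, but the equivalence of either condition with the definition of bi-freeness over $B$ --- the actual content of the cited theorems --- is asserted rather than proved.
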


\subsection*{Operator-Valued Bi-Free Cumulants of Products}
One of the most important tools on the combinatorial side of bi-free probability is the ability to expand bi-free cumulants of products of left operators and of products of right operators as cumulants in the individual operators.  To do so, we recall a result from \cite{CNS2015-2}.  To begin, given two partitions $\pi, \sigma \in BNC(\chi)$, let $\pi \vee \sigma$ denote the smallest element of $BNC(\chi)$ greater than $\pi$ and $\sigma$.

Let $m,n \in \mathbb{N}$ with $m < n$ and fix a sequence of integers 
\[
k(0) = 0 < k(1) < \cdots < k(m) = n.
\]
For $\chi : \{1,\ldots, m\} \to \{\ell, r\}$, define $\widehat{\chi} : \{1,\ldots, n\} \to \{\ell, r\}$ via
\[
\widehat{\chi}(q) = \chi(p_q)
\]
where $p_q$ is the unique element of $\{1,\ldots, m\}$ such that $k(p_q-1) < q \leq k(p_q)$.  Note there exists an embedding of $BNC(\chi)$ into $BNC(\widehat{\chi})$ via $\pi \mapsto \widehat{\pi}$ where the $p^{\mathrm{th}}$ node of $\pi$ is replaced by the block $(k(p-1)+1, \ldots, k(p))$.  Alternatively, this map can be viewed as an analogue of the map on non-crossing partitions from \cite{NSBook}*{Notation 11.9} after applying $s^{-1}_\chi$.

Using the above notation, the following bi-free analogue of \cite{NSBook}*{Theorem 11.12} was obtained.
\begin{thm}[\cite{CNS2015-2}*{Theorem 9.1.5}]
\label{thm:products}
Let $(\mathcal{A}, E, \varepsilon)$ be a $B$-$B$-non-commutative probability space, let $m,n \in \mathbb{N}$ with $m < n$, let $\chi : \{1,\ldots, m\} \to \{\ell, r\}$, and let
\[
k(0) = 0 < k(1) < \cdots < k(m) = n.
\]
If $Z_k \in \mathcal{A}_{\widehat{\chi}(k)}$ then
\[
\kappa^B_{\chi}\left(Z_1 \cdots Z_{k(1)}, Z_{k(1)+1} \cdots Z_{k(2)}, \ldots, Z_{k(m-1)+1} \cdots Z_{k(m)}\right) = \sum_{\substack{\sigma \in BNC(\widehat{\chi})\\ \sigma \vee \widehat{0_\chi} = 1_{\widehat{\chi}}}} \kappa^B_\sigma(Z_1, \ldots, Z_n).
\]
\end{thm}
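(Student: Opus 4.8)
The plan is to mimic the proof of the free analogue \cite{NSBook}*{Theorem 11.12}, adapting each step to the bi-non-crossing setting by passing through $s_\chi^{-1}$ where the lattice structure is needed, and using bi-multiplicativity of $\kappa^B$ to handle the $B$-valued bookkeeping. First I would rewrite the left-hand side using Möbius inversion \eqref{eq:mobius}: applying $E^B$ to the products and expanding, $E^B_{1_{\widehat\chi}}(Z_1,\ldots,Z_n) = E(Z_1\cdots Z_n)$ equals $\kappa^B_\chi(Z_1\cdots Z_{k(1)}, \ldots, Z_{k(m-1)+1}\cdots Z_{k(m)})$ only after a further summation; more precisely I would instead start from the definition \eqref{eq:cumulants} of the left-hand cumulant and substitute, for each $E^B_\pi$ appearing, its expansion \eqref{eq:mobius} as a sum of $\kappa^B_\sigma$ over $\sigma \in BNC(\widehat\chi)$ with $\sigma \le \widehat\pi$. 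This is where the embedding $\pi \mapsto \widehat\pi$ and the observation that $E^B_\pi\big(Z_1\cdots Z_{k(1)},\ldots\big) = E^B_{\widehat\pi}(Z_1,\ldots,Z_n)$ (a consequence of bi-multiplicativity, property \eqref{part:bi-multi-3}, since the blocks $(k(p-1)+1,\ldots,k(p))$ are $\widehat\chi$-intervals) is used.

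After this substitution, the left-hand side becomes a double sum $\sum_{\sigma\in BNC(\widehat\chi)}\kappa^B_\sigma(Z_1,\ldots,Z_n)\big[\sum_{\pi} \mu_{BNC}(\sigma',\widehat\pi)\big]$, where $\sigma'$ is the appropriate object and the inner sum ranges over $\pi \in BNC(\chi)$ with $\widehat\pi \ge \sigma$. The key combinatorial step is to show that this inner coefficient equals $1$ when $\sigma \vee \widehat{0_\chi} = 1_{\widehat\chi}$ and $0$ otherwise. To do this I would translate everything through $s_{\widehat\chi}^{-1}$ into the non-crossing lattice $NC(n)$: the map $\pi \mapsto \widehat\pi$ corresponds exactly to the analogous map on non-crossing partitions from \cite{NSBook}*{Notation 11.9} after applying $s_\chi^{-1}$ and $s_{\widehat\chi}^{-1}$ (as noted in the excerpt), the condition $\sigma \vee \widehat{0_\chi} = 1_{\widehat\chi}$ becomes the corresponding join condition in $NC(n)$ since $s_{\widehat\chi}^{-1}\cdot\widehat{0_\chi}$ is the non-crossing partition $\{(k(p-1)+1,\ldots,k(p))\}_p$, and $\mu_{BNC} = \mu_{NC}\circ(s^{-1}\times s^{-1})$. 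Then the required identity is precisely the content of the computation in the proof of \cite{NSBook}*{Theorem 11.12}, namely the interval-sum / Möbius identity $\sum_{\pi:\,\widehat\pi\ge\sigma}\mu_{NC}(\cdots) = \delta_{\sigma\vee\,\text{(that partition)},\,1_n}$, which follows from the fact that $\{\widehat\pi : \pi\in BNC(\chi)\}$ is exactly the set of $\tau \in BNC(\widehat\chi)$ with $\tau \ge \widehat{0_\chi}$, together with the defining relation of $\mu_{BNC}$ applied on the interval $[\sigma\vee\widehat{0_\chi},\,1_{\widehat\chi}]$.

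The main obstacle I anticipate is the first reduction: verifying cleanly that $E^B_\pi(Z_1\cdots Z_{k(1)},\ldots,Z_{k(m-1)+1}\cdots Z_{k(m)}) = E^B_{\widehat\pi}(Z_1,\ldots,Z_n)$ for every $\pi\in BNC(\chi)$, not just for $\pi = 1_\chi$. The subtlety is that the blocks of $\pi$ need not be $\chi$-intervals, so one cannot directly invoke property \eqref{part:bi-multi-3} of bi-multiplicativity for the outer partition; instead one must argue by an induction on the nesting structure of $\pi$ (equivalently, on $s_\chi^{-1}\cdot\pi$ in $NC(m)$), peeling off an inner $\chi$-interval block of $\pi$, applying properties \eqref{part:bi-multi-3} and \eqref{part:bi-multi-4} to reduce it to an $L_b$ or $R_b$ insertion, and checking this is compatible with the corresponding reduction of $\widehat\pi$ — here one must track carefully how $L_b$/$R_b$ insertions interact with the merging of consecutive nodes into blocks $(k(p-1)+1,\ldots,k(p))$, using that all these nodes carry the same $\chi$-value. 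This is exactly the place where the bi-free setting departs from the free one (as flagged throughout the introduction by the role of the distinguished $B$-operator at the bottom of a diagram), so I would expect the bulk of the written proof to be spent establishing this reduction identity, after which the Möbius-function manipulation is formal.
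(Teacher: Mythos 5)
Note that the paper does not prove this result---it is quoted from \cite{CNS2015-2}*{Theorem 9.1.5}---and your proposal follows essentially the same route as the proof given there (the operator-valued bi-free adaptation of \cite{NSBook}*{Theorem 11.12}): expand the left-hand cumulant by M\"obius inversion, identify $E^B_\pi$ of the grouped products with $E^B_{\widehat{\pi}}(Z_1,\ldots,Z_n)$ via bi-multiplicativity, and use that $\pi\mapsto\widehat{\pi}$ identifies $BNC(\chi)$ with the interval $[\widehat{0_\chi},1_{\widehat{\chi}}]$ so that the inner M\"obius sum collapses to $1$ precisely when $\sigma\vee\widehat{0_\chi}=1_{\widehat{\chi}}$ and to $0$ otherwise. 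Your closing remark also correctly isolates the only non-formal step, namely establishing $E^B_\pi(Z_1\cdots Z_{k(1)},\ldots,Z_{k(m-1)+1}\cdots Z_{k(m)})=E^B_{\widehat{\pi}}(Z_1,\ldots,Z_n)$ for all $\pi\in BNC(\chi)$ by iterated use of properties (\ref{part:bi-multi-3}) and (\ref{part:bi-multi-4}) of Definition \ref{defn:bi-multiplicative}, which is exactly where the cited proof spends its effort.
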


Theorem \ref{thm:products} has many applications in this paper.  One application is the following result, which may also be deduced from the proof of Theorem \ref{thm:bifree-classifying-theorem}.
\begin{cor}
\label{cor:generators-vanishing-cumulants}
Let $(\A, E, \varepsilon)$ be a $B$-$B$-non-commutative probability space and let $\{(A_{k, \ell}, A_{k, r})\}_{k \in K}$ be pairs of $B$-faces.  For each $k \in K$, suppose there exist elements $\{Z_{k,i}\}_{i \in I_k} \in A_{k,\ell}$ and $\{Z_{k,j}\}_{j \in J_k} \subseteq A_{k,r}$ such that
\[
A_{k,\ell} = \alg(\{Z_{k,i}\}_{i \in I}, \varepsilon(B \otimes 1_B)) \qqand A_{k,r} = \alg(\{Z_{k,j}\}_{j \in J},  \varepsilon(1_B \otimes B^\op)).
\]
Then $\{(A_{k, \ell}, A_{k, r})\}_{k \in K}$ are bi-free with amalgamation over $B$ if and only if for all $\chi : \{1,\ldots, n\} \to \{\ell, r\}$, $\epsilon : \{1,\ldots, n\} \to K$, and 
\[
Z_k \in \left\{
\begin{array}{ll}
\{L_{b_1} Z_{\epsilon(k),i} L_{b_2} \, \mid \, i \in I_{\epsilon(k)}, b_1, b_2 \in B\} & \mbox{if } \chi(k) = \ell \\
\{R_{b_1} Z_{\epsilon(k),j} R_{b_2} \, \mid \, j \in J_{\epsilon(k)}, b_1, b_2 \in B\} & \mbox{if } \chi(k) = r
\end{array} \right.,
\]
we have
\[
\kappa^B_{\chi}(Z_1, \ldots, Z_n) = 0
\]
provided $\epsilon$ is not constant (i.e. checking that mixed bi-free cumulants vanish on the generators is enough).
\end{cor}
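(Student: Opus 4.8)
The plan is to establish both implications, the forward one being immediate from Theorem \ref{thm:bifree-classifying-theorem} and the converse carrying all the weight. For the forward direction, if $\{(A_{k,\ell},A_{k,r})\}_{k\in K}$ are bi-free with amalgamation over $B$, then since $L_{b_1}Z_{\epsilon(k),i}L_{b_2}\in A_{\epsilon(k),\ell}$ and $R_{b_1}Z_{\epsilon(k),j}R_{b_2}\in A_{\epsilon(k),r}$, Theorem \ref{thm:bifree-classifying-theorem} immediately gives $\kappa^B_\chi(Z_1,\ldots,Z_n)=0$ whenever $\epsilon$ is non-constant. For the converse I would assume the displayed vanishing on all building-block tuples and, by Theorem \ref{thm:bifree-classifying-theorem} again, aim to prove $\kappa^B_\chi(Z_1,\ldots,Z_n)=0$ for \emph{arbitrary} $Z_k\in A_{\epsilon(k),\chi(k)}$ with $\epsilon$ non-constant. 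By multilinearity of $\kappa^B$ I may assume each $Z_k$ is a monomial: a product of some of the $Z_{\epsilon(k),i}$ ($i\in I_{\epsilon(k)}$) and elements of $\varepsilon(B\otimes 1_B)$ when $\chi(k)=\ell$, and similarly with $\varepsilon(1_B\otimes B^{\op})$ when $\chi(k)=r$ (collapsing adjacent $B$-operators). If some $Z_k$ lies in $\varepsilon(B\otimes B^{\op})$ then, as $n\geq 2$, Proposition \ref{prop:vanishing-of-scalar-cumulants} already gives the result; and if every $Z_k$ is, up to $1_B$-factors, a single generator, then each $Z_k$ is a building block and the hypothesis applies directly. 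Outside these two cases each $Z_k$ factors as a product of genuine generators and single $L_b$'s (resp. single $R_b$'s), with the total number $N$ of factors strictly larger than $n$.

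Next I would apply Theorem \ref{thm:products}, writing
\[
\kappa^B_\chi(Z_1,\ldots,Z_n)=\sum_{\substack{\sigma\in BNC(\widehat\chi)\\\sigma\vee\widehat{0_\chi}=1_{\widehat\chi}}}\kappa^B_\sigma(W_1,\ldots,W_N),
\]
where $W_1,\ldots,W_N$ lists all factors in order. For a fixed such $\sigma$, bi-multiplicativity of $\kappa^B$ (cf.\ Remark \ref{rem:bi-multi-same-as-free}) lets me peel the blocks of $\sigma$ from the inside out, so that $\kappa^B_\sigma(W_1,\ldots,W_N)$ becomes a product, with some $\varepsilon(B\otimes B^{\op})$-insertions, of the quantities $\kappa^B_{1_{\widehat\chi|_V}}$ over the blocks $V$ of $\sigma$, each evaluated on the $W$'s lying in $V$ with scalars $L_{b'}$, $R_{b'}$ inserted adjacent to those $W$'s that abut a nested block. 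Analysing a single block $V$: if $|V|\geq 2$ and $V$ contains an original $L_b$- or $R_b$-factor, then after the insertions that slot still holds $L_{b''}$ (resp.\ $R_{b''}$), so Proposition \ref{prop:vanishing-of-scalar-cumulants} kills that factor and hence $\kappa^B_\sigma$; if $|V|\geq 2$ and $V$ consists only of genuine generators, then absorbing the inserted scalars via conditions (\ref{part:bi-multi-1}) and (\ref{part:bi-multi-2}) of Definition \ref{defn:bi-multiplicative} rewrites that factor as (a scalar) times $\kappa^B_{1_{\widehat\chi|_V}}(\cdots)$ times (a scalar) with each argument a building-block element, and this vanishes by hypothesis unless all those generators carry a common index in $K$; and if $|V|$ is a singleton, $\kappa^B_{1_{\widehat\chi|_V}}$ of it is merely a scalar.

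It follows that a term $\kappa^B_\sigma$ can be nonzero only if every block of $\sigma$ is either a singleton or a block of genuine generators all bearing a single $K$-index. For such a $\sigma$, two distinct $Z_k$, $Z_{k'}$ can have factors in a common block of $\sigma$ only when $\epsilon(k)=\epsilon(k')$; but $\sigma\vee\widehat{0_\chi}=1_{\widehat\chi}$ says exactly that the relation ``some factor of $Z_k$ and some factor of $Z_{k'}$ lie in a common block of $\sigma$'' has full transitive closure on $\{1,\ldots,n\}$, which then forces $\epsilon$ to be constant — contradicting our hypothesis. Hence no $\sigma$ in the sum contributes, so $\kappa^B_\chi(Z_1,\ldots,Z_n)=0$, and Theorem \ref{thm:bifree-classifying-theorem} finishes the proof.

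The step I expect to be the main obstacle is the inside-out reduction of $\kappa^B_\sigma$ together with the block-by-block case analysis: one must check that the $\varepsilon(B\otimes B^{\op})$-insertions coming from nested blocks never convert a genuine-generator argument into a pure scalar (so the building-block hypothesis really applies to a block of genuine generators) and, conversely, that a block carrying an original $L_b$ or $R_b$ factor remains scalar-valued in that slot even after the insertions (so Proposition \ref{prop:vanishing-of-scalar-cumulants} really applies). Conditions (\ref{part:bi-multi-1})--(\ref{part:bi-multi-4}) of Definition \ref{defn:bi-multiplicative} are precisely the tools needed, but the reduction must be organised carefully — peeling $\chi$-interval blocks from the innermost outward — so that each manipulation is of a permitted form.
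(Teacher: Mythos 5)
Your argument is correct and follows exactly the route the paper intends: the corollary is stated there as an application of Theorem \ref{thm:products}, and your proof simply fills in that application — expand each mixed cumulant of monomials via Theorem \ref{thm:products}, kill blocks containing $\varepsilon(B\otimes B^{\op})$-factors by Proposition \ref{prop:vanishing-of-scalar-cumulants}, kill mixed-index generator blocks by hypothesis, and use $\sigma\vee\widehat{0_\chi}=1_{\widehat{\chi}}$ to force $\epsilon$ constant for any surviving term. No substantive difference from the paper's (sketched) proof.
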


\subsection*{Operations on the Operator-Valued Bi-Free Cumulants}

In \cite{CNS2015-2}*{Theorem 10.2.1} it was demonstrated for families of pairs of $B$-faces with certain conditions that it suffices to show that certain algebras are freely independent with amalgamation over $B$ in order to show the family is bi-freely independent.  The following two results are quantitative realizations of these arguments.  In particular, these arguments can be used to show that, under certain conditions, only certain of the operator-valued bi-free cumulants need be considered.

\begin{lem}
\label{lem:interchange-cumulant}
Let $(\A, E, \varepsilon)$ be a $B$-$B$-non-commutative probability space, let $\chi : \{1,\ldots, n\} \to \{\ell, r\}$ be such that $\chi(k_0) = \ell$ and $\chi(k_0+1) = r$ for some $k_0 \in \{1,\ldots, n-1\}$, and let $X \in \A_\ell$ and $Y \in \A_r$ be such that $E(ZXYZ') = E(ZYXZ')$ for all $Z, Z' \in \A$.  Define $\chi' : \{1,\ldots, n\} \to \{\ell, r\}$ by 
\[
\chi'(k) = \left\{
\begin{array}{ll}
r & \mbox{if } k = k_0 \\
\ell & \mbox{if } k = k_0+1 \\
\chi(k) & \mbox{otherwise}
\end{array} \right. .
\]
Then 
\[
\kappa^B_{\chi}(Z_1, \ldots, Z_{k_0-1}, X, Y, Z_{k_0+2}, \ldots, Z_n) = \kappa^B_{\chi'}(Z_1, \ldots, Z_{k_0-1}, Y, X, Z_{k_0+2}, \ldots, Z_n)
\]
for all $Z_1, \ldots, Z_{k_0 - 1}, Z_{k_0+2}, \ldots, Z_n \in \A$ with $Z_k \in \A_{\chi(k)}$.
\end{lem}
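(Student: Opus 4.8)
The plan is to reduce everything to a single transposition together with the Möbius relation between $E^B$ and $\kappa^B$. Put $\tau := (k_0\ \ k_0{+}1)$, the transposition of $\{1,\dots,n\}$ that interchanges $k_0$ and $k_0+1$. First I would verify the combinatorial fact that $s_{\chi'} = \tau\circ s_\chi$. Indeed, writing $\chi^{-1}(\{\ell\}) = \{k_1<\cdots<k_p\}$ and $\chi^{-1}(\{r\}) = \{k_{p+1}>\cdots>k_n\}$, the only change in passing from $\chi$ to $\chi'$ is that $k_0$ leaves the increasing left-run and enters the decreasing right-run while $k_0+1$ does the reverse; since $k_0$ and $k_0+1$ are consecutive integers, a short inspection shows each lands precisely in the slot vacated by the other, so $(s_{\chi'}(q))_q$ differs from $(s_{\chi}(q))_q$ only by swapping the two entries with values $k_0$ and $k_0+1$. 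Hence $s_{\chi'}^{-1} = s_\chi^{-1}\circ\tau$, so $\sigma\in BNC(\chi')$ if and only if $\tau\cdot\sigma\in BNC(\chi)$; thus $\sigma\mapsto\tau\cdot\sigma$ is a lattice isomorphism of $BNC(\chi)$ onto $BNC(\chi')$ taking $1_\chi$ to $1_{\chi'}$, and by the identity $\mu_{BNC}(\pi,\sigma)=\mu_{NC}(s_\chi^{-1}\cdot\pi,s_\chi^{-1}\cdot\sigma)$ it preserves the bi-non-crossing M\"obius function.

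The heart of the argument is the moment identity
\[
E^B_\sigma(Z_1,\dots,Z_{k_0-1},X,Y,Z_{k_0+2},\dots,Z_n) = E^B_{\tau\cdot\sigma}(Z_1,\dots,Z_{k_0-1},Y,X,Z_{k_0+2},\dots,Z_n)
\]
for every $\sigma\in BNC(\chi)$; the right-hand side makes sense since $\chi'(k)=\chi(\tau(k))$, so the rearranged tuple has entries in $\A_{\chi'(1)},\dots,\A_{\chi'(n)}$. I would prove this by induction on $n$ using bi-multiplicativity of $E^B$. For $\sigma=1_\chi$ both sides equal $E$ applied to the same product except that the consecutive factors $XY$ on the left appear as $YX$ on the right, so equality is exactly the hypothesis $E(ZXYZ')=E(ZYXZ')$ with $Z=Z_1\cdots Z_{k_0-1}$ and $Z'=Z_{k_0+2}\cdots Z_n$. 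If $\sigma\neq 1_\chi$, I would peel off a proper $\chi$-interval $V$ which is a union of blocks of $\sigma$ via (\ref{part:bi-multi-3}) or (\ref{part:bi-multi-4}) of Definition~\ref{defn:bi-multiplicative}; because $\tau$ intertwines $\prec_\chi$ with $\prec_{\chi'}$ and carries unions of blocks that are $\chi$-intervals to unions of blocks that are $\chi'$-intervals, the same reduction applies on the right with $V$ replaced by $\tau(V)$, producing matching factors and matching insertion nodes. A factor whose index set meets $\{k_0,k_0+1\}$ in at most one point yields literally the same $B$-valued expression on both sides (after relabelling the interval by the order-isomorphism induced by $\tau$), while the factor whose index set contains both $k_0$ and $k_0+1$ is handled by the inductive hypothesis — provided one checks that whenever an $L_b$ or $R_b$ gets adjoined to $X$ or to $Y$ (as happens when the $B$-value of a reduced block is reinserted via (\ref{part:bi-multi-4})), the resulting pair $\widetilde X\in\A_\ell$, $\widetilde Y\in\A_r$ still satisfies $E(Z\widetilde X\widetilde Y Z')=E(Z\widetilde Y\widetilde X Z')$ for all $Z,Z'$. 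This last point is a one-line computation using that $X$ commutes with every $R_b$ and $Y$ with every $L_b$, combined with the original hypothesis (treating $L_b$ or $R_b$ as being absorbed into $Z$ or $Z'$).

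Granting the moment identity, the lemma follows by unwinding \eqref{eq:cumulants}: since every $\sigma\in BNC(\chi)$ satisfies $\sigma\le 1_\chi$,
\begin{align*}
\kappa^B_{\chi}(Z_1,\dots,X,Y,\dots,Z_n)
&=\sum_{\sigma\in BNC(\chi)} E^B_\sigma(Z_1,\dots,X,Y,\dots,Z_n)\,\mu_{BNC}(\sigma,1_\chi)\\
&=\sum_{\sigma\in BNC(\chi)} E^B_{\tau\cdot\sigma}(Z_1,\dots,Y,X,\dots,Z_n)\,\mu_{BNC}(\tau\cdot\sigma,1_{\chi'})\\
&=\sum_{\sigma'\in BNC(\chi')} E^B_{\sigma'}(Z_1,\dots,Y,X,\dots,Z_n)\,\mu_{BNC}(\sigma',1_{\chi'})\\
&=\kappa^B_{\chi'}(Z_1,\dots,Y,X,\dots,Z_n),
\end{align*}
where the reindexing $\sigma'=\tau\cdot\sigma$ is a bijection of $BNC(\chi)$ onto $BNC(\chi')$. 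The main obstacle is the moment identity, and within it the bookkeeping of the inductive step: matching the two bi-multiplicative reductions term by term, and confirming that the swap hypothesis on the pair $(X,Y)$ is stable under inserting elements of $\varepsilon(B\otimes B^{\op})$.
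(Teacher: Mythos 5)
Your proposal is correct and follows essentially the same route as the paper: the transposition $\sigma\mapsto\tau\cdot\sigma$ is exactly the paper's bijection $BNC(\chi)\to BNC(\chi')$ with matching M\"obius values, and the block-by-block moment identity $E^B_\sigma = E^B_{\tau\cdot\sigma}$ proved via bi-multiplicativity and the hypothesis $E(ZXYZ')=E(ZYXZ')$ is the paper's key step (there phrased as the two cases $k_0\sim_\sigma k_0+1$ and $k_0\nsim_\sigma k_0+1$). Your inductive interval-peeling and the check that the swap hypothesis survives insertion of $L_b$, $R_b$ are just a more detailed write-up of that same reduction.
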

\begin{proof}
By Definition \ref{defn:kappa} we obtain that
\begin{align*}
\kappa^B_{\chi}(Z_1, \ldots, Z_{k_0-1}, X, Y, Z_{k_0+2}, \ldots, Z_n) &= \sum_{\sigma \in BNC(\chi)}  E^B_\sigma(Z_1, \ldots, Z_{k_0-1}, X, Y, Z_{k_0+2}, \ldots, Z_n) \mu_{BNC}(\sigma, 1_\chi) \\
\kappa^B_{\chi'}(Z_1, \ldots, Z_{k_0-1}, Y, X, Z_{k_0+2}, \ldots, Z_n) &= \sum_{\sigma' \in BNC(\chi')}  E^B_{\sigma'}(Z_1, \ldots, Z_{k_0-1}, Y, X, Z_{k_0+2}, \ldots, Z_n) \mu_{BNC}(\sigma', 1_{\chi'}).
\end{align*}
Note that there is a bijection from $BNC(\chi)$ to $BNC(\chi')$ which sends a partition $\pi$ to the partition $\pi'$ obtained by interchanging $k_0$ and $k_0+1$.  Furthermore $\mu_{BNC}(\sigma, 1_\chi) = \mu_{BNC}(\sigma', 1_{\chi'})$ by the lattice structure on bi-non-crossing partitions.

To complete the proof, it suffices to show that 
\[
E^B_\sigma(Z_1, \ldots, Z_{k_0-1}, X, Y, Z_{k_0+2}, \ldots, Z_n) = E^B_{\sigma'}(Z_1, \ldots, Z_{k_0-1}, Y, X, Z_{k_0+2}, \ldots, Z_n) 
\]
for all $\sigma \in BNC(\chi)$.  If $k_0 \sim_\sigma k_0+1$, then, using bi-multiplicative properties, one may reduce 
\[
E^B_\sigma(Z_1, \ldots, Z_{k_0-1}, X, Y, Z_{k_0+2}, \ldots, Z_n)
\]
to an expression involving $E(ZXYZ')$ for some $Z, Z' \in \A$, commute $X$ and $Y$ to get $E(ZYXZ')$ in this reduction, and undo the bi-multiplicative reductions to obtain 
\[
E^B_{\sigma'}(Z_1, \ldots, Z_{k_0-1}, Y, X, Z_{k_0+2}, \ldots, Z_n).
\]
 If $k_0 \nsim_\sigma k_0+1$, then the bi-multiplicative reductions of 
 \[
 E^B_\sigma(Z_1, \ldots, Z_{k_0-1}, X, Y, Z_{k_0+2}, \ldots, Z_n) \qqand E^B_{\sigma'}(Z_1, \ldots, Z_{k_0-1}, Y, X, Z_{k_0+2}, \ldots, Z_n) 
 \]
 agree via Definition \ref{defn:bi-multiplicative}.  Consequently the proof is complete.
\end{proof}
\begin{lem}
\label{lem:swap-cumulant}
Let $(\A, E, \varepsilon)$ be a $B$-$B$-non-commutative probability space, let $\chi : \{1,\ldots, n\} \to \{\ell, r\}$ be such that $\chi(n) = \ell$, and let $X \in \A_\ell$ and $Y \in \A_r$ be such that $E(ZX) = E(ZY)$ for all $Z \in \A$.  Define $\chi' : \{1,\ldots, n\} \to \{\ell, r\}$ by 
\[
\chi'(k) = \left\{
\begin{array}{ll}
r & \mbox{if } k = n \\
\chi(k) & \mbox{otherwise}
\end{array} \right. .
\]
Then 
\[
\kappa^B_{\chi}(Z_1, \ldots, Z_{n-1}, X) = \kappa^B_{1_{\chi'}}(Z_1, \ldots, Z_{n-1}, Y)
\]
for all $Z_1, \ldots, Z_{n - 1} \in \A$ with $Z_k \in \A_{\chi(k)}$.
\end{lem}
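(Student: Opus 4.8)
The plan is to follow the template of the proof of Lemma \ref{lem:interchange-cumulant}: expand both bi-free cumulants via Definition \ref{defn:kappa} as M\"obius-weighted sums of bi-free moment functions, identify a value-preserving correspondence between the two index lattices, and then match the moment terms one at a time.

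First I would note that $\chi$ and $\chi'$ induce the \emph{same} permutation, $s_{\chi} = s_{\chi'}$, and hence the same total order $\prec_{\chi} = \prec_{\chi'}$ on $\{1, \ldots, n\}$: since $\chi$ and $\chi'$ agree off $n$ and $n$ is the largest index, under $\chi$ it is the last element of $\chi^{-1}(\{\ell\})$ read increasingly while under $\chi'$ it is the first element of $\chi'^{-1}(\{r\})$ read decreasingly, and in either case it lands in the same slot of $s_{\chi}$. Consequently $BNC(\chi) = BNC(\chi')$ as subsets of $\P(n)$, $1_{\chi} = 1_{\chi'}$, and $\mu_{BNC}(\sigma, 1_{\chi}) = \mu_{BNC}(\sigma, 1_{\chi'})$ for every $\sigma$. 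By Definition \ref{defn:kappa} it then suffices to show, for each $\sigma \in BNC(\chi) = BNC(\chi')$, that
\[
E^B_{\sigma}(Z_1, \ldots, Z_{n-1}, X) = E^B_{\sigma}(Z_1, \ldots, Z_{n-1}, Y),
\]
where the left side is formed with respect to $\chi$ and the right side with respect to $\chi'$; both are legitimate since $X \in \A_{\ell} = \A_{\chi(n)}$ and $Y \in \A_{r} = \A_{\chi'(n)}$.

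Next I would reduce both of these moments via Definition \ref{defn:bi-multiplicative}, always using the ``predecessor'' form of property (\ref{part:bi-multi-4}), and run the two reductions in parallel. Since $\chi$ and $\chi'$ differ only at $n$ and $\prec_{\chi} = \prec_{\chi'}$, every nested $B$-valued factor coming from a block of $\sigma$ not containing $n$ is literally the same on the two sides, and the bi-multiplicative recombination outside the block of $n$ is identical. The block $V$ of $\sigma$ containing $n$ is a single block, so it contributes (possibly as a nested factor) the value $E^B_{1_{\chi|_V}}$ of the elements of $V$ taken in their original order, with $B$-operators absorbed from the blocks nested directly inside $V$; as $n$ has the largest index it is the last entry here. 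The blocks nested between $n$ and its $\prec_{\chi}$-successor inside $V$ consist entirely of right indices (everything $\succ_{\chi} n$ is a right index), so by the predecessor form of property (\ref{part:bi-multi-4}) they are absorbed onto position $n$ as a factor $L_b$ to the \emph{right} of $X$ on the $\chi$-side, and as a factor $R_b$ to the \emph{left} of $Y$ on the $\chi'$-side, for the \emph{same} $b \in B$ (these values never involve position $n$, and the multiplicativity of $b \mapsto L_b$ and anti-multiplicativity of $b \mapsto R_b$ make the accumulated subscripts agree). Thus the fully reduced expressions agree except that the factor carrying position $n$ is $E(U_1 \cdots U_t X L_b)$ on the $\chi$-side and $E(U_1 \cdots U_t R_b Y)$ on the $\chi'$-side for the same $U_1, \ldots, U_t \in \A$ and $b \in B$ (in the degenerate case $V = \{n\}$ this is just $E(X) = E(Y)$).

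Finally I would close the gap with
\[
E(U_1 \cdots U_t X L_b) = E(U_1 \cdots U_t X R_b) = E(U_1 \cdots U_t R_b X) = E(U_1 \cdots U_t R_b Y),
\]
where the first equality is the axiom $E(Z \varepsilon(b \otimes 1_B)) = E(Z \varepsilon(1_B \otimes b))$, the second uses $X \in \A_{\ell}$ so that $X$ commutes with $R_b = \varepsilon(1_B \otimes b)$, and the third is the hypothesis $E(ZX) = E(ZY)$ applied with $Z = U_1 \cdots U_t R_b$. I expect the bookkeeping in the preceding paragraph to be the real obstacle: one must verify, via the reduction dictionary of Definition \ref{defn:bi-multiplicative} (equivalently Remark \ref{rem:bi-multi-same-as-free}), that throughout the reduction position $n$ can only accumulate left $B$-operators on its right (on the $\chi$-side), respectively right $B$-operators on its left (on the $\chi'$-side), that the accumulated values coincide, and that position $n$ stays the last entry of the $E$-term in which it sits.
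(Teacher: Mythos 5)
Your argument is correct and is essentially the paper's proof: both expand the two cumulants via the M\"{o}bius formula of Definition \ref{defn:kappa}, identify $BNC(\chi)$ with $BNC(\chi')$ (your observation that $s_\chi = s_{\chi'}$ is just a cleaner statement of the paper's ``change the last node from left to right'' bijection) with equal M\"{o}bius values, and then match $E^B_\sigma(Z_1,\ldots,Z_{n-1},X)$ with $E^B_{\sigma'}(Z_1,\ldots,Z_{n-1},Y)$ term by term. The only difference is that you carry out in detail the bi-multiplicative bookkeeping and the chain $E(U_1\cdots U_t X L_b) = E(U_1\cdots U_t X R_b) = E(U_1\cdots U_t R_b X) = E(U_1\cdots U_t R_b Y)$, which the paper simply asserts follows from Definition \ref{defn:bi-multiplicative} and the assumptions on $X$ and $Y$.
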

\begin{proof}
By Definition \ref{defn:kappa}, we obtain that
\begin{align*}
\kappa^B_{\chi}(Z_1, \ldots, Z_{n-1}, X)&= \sum_{\sigma \in BNC(\chi)}  E^B_\sigma(Z_1, \ldots, Z_{n-1}, X) \mu_{BNC}(\sigma, 1_\chi) \\
\kappa^B_{\chi'}(Z_1, \ldots, Z_{n-1}, Y) &= \sum_{\sigma' \in BNC(\chi')}  E^B_{\sigma'}(Z_1, \ldots, Z_{n-1}, Y) \mu_{BNC}(\sigma', 1_{\chi'}).
\end{align*}
Note that there is a bijection from $BNC(\chi)$ to $BNC(\chi')$ which sends a partition $\pi$ to the partition $\pi'$ obtained by changing the last node from a left node to a right node.  Furthermore $\mu_{BNC}(\sigma, 1_\chi) = \mu_{BNC}(\sigma', 1_{\chi'})$ by the lattice structure on bi-non-crossing partitions.  Since Definition \ref{defn:bi-multiplicative} together with the assumptions on $X$ and $Y$ implies
\[
E^B_\sigma(Z_1, \ldots, Z_{n-1}, X)= E^B_{\sigma'}(Z_1, \ldots, Z_{n-1}, Y)
\]
for all $\sigma \in BNC(\chi)$, the proof is complete.
\end{proof}

\subsection*{Operator-Valued Bi-Free Distributions}

With the above preliminaries, we turn our attention to the main focus of this paper; operator-valued bi-free distributions.  Below it is exhibited which operator-valued bi-free moments/cumulants are required to understand the operator-valued bi-free distributions.  Many of the arguments are similar to those in the introduction via bi-multiplicativity from Definition \ref{defn:bi-multiplicative}.

Let $(\A, E, \varepsilon)$ be a $B$-$B$-non-commutative probability space and let $(A_\ell, A_r)$ be a pair of $B$-faces such that
\[
A_{\ell} = \alg(\{Z_{i}\}_{i \in I}, \varepsilon(B \otimes 1_B)) \qqand A_{r} = \alg(\{Z_{j}\}_{j \in J}, \varepsilon(1_B \otimes B^\op))
\]
for some $\{Z_i\}_{i \in I} \subseteq \A_\ell$ and $\{Z_j\}_{j \in J} \subseteq \A_r$.
Using the facts that $b \mapsto L_b$ and $b \mapsto R_b$ are a homomorphism and anti-homomorphism that commute, one obtains that  every element of $\alg(A_\ell, A_r)$ can be written as a linear combination of elements of the form
\[
C_{b_1}Z_{\omega(1)}C_{b_2} Z_{\omega(2)} \cdots C_{b_n} Z_{\omega(n)} L_{b} R_{b'} \quad \text{ where } \quad C_{b_k} = \left\{
\begin{array}{ll}
L_{b_k} & \mbox{if } \omega(k) \in I \\
R_{b_k} & \mbox{if } \omega(k) \in J
\end{array} \right. 
\]
for some $n \geq 0$ and $\omega : \{1,\ldots, n\} \to I \sqcup J$.  

Note Definition \ref{defn:BBncps} implies that
\[
E(C_{b_1}Z_{\omega(1)} \cdots C_{b_n} Z_{\omega(n)} L_{b} R_{b'})  = E(C_{b_1}Z_{\omega(1)} \cdots C_{b_n} Z_{\omega(n)} L_{b b'}) = E(C_{b_1}Z_{\omega(1)} \cdots C_{b_n} Z_{\omega(n)} R_{b b'})
\]
Moreover if $\omega(k) \in I$ for all $k$ ($\omega(k) \in J$ for all $k$), then the $L_{b} R_{b'}$ term may be removed from the product if one multiplies on the right (respectively left) of the remaining expectation by $bb'$.  However, if $\{\omega(k)\}^n_{k=1}$ intersects both $I$ and $J$, there need not be a way to remove $L_{b} R_{b'}$ from the expectation.

Furthermore, if $k_\ell = \min\{k \, \mid \, \omega(k) \in I\} \neq \infty$, then $C_{b_{k_\ell}}$ can be removed from the product if one multiplies by $b_{k_\ell}$ on the left of the remaining expectation.  Similarly if $k_r = \min\{k \, \mid \, \omega(k) \in  J\} \neq \infty$, then $C_{b_{k_r}}$ can be removed from the product if one multiplies by $b_{k_r}$ on the right of the remaining expectation.

Consequently, the joint distributions elements of $\alg(A_\ell, A_r)$ can be deduced by moments of a very specific form.   Similarly, using bi-multiplicative properties, only certain operator-valued bi-free cumulants are required to study the joint distributions of elements in $\alg(A_\ell, A_r)$.  We will make the following notation to describe the necessary operator-valued bi-free moments and cumulants:
\begin{nota}
For $Z = \{Z_i\}_{i \in I} \sqcup \{Z_j\}_{j \in J}$ with $Z_i$ and $Z_j$ as above, $n \geq 1$, $\omega : \{1,\ldots, n\} \to I \sqcup J$, and $b_1, \ldots, b_{n-1} \in B$, define the following:
\begin{itemize}
\item If $\omega(k)\in I$ for all $k$, define
\begin{align*}
\mu^B_{Z, \omega}(b_1, \ldots, b_{n-1}) &= E(Z_{\omega(1)} L_{b_1} Z_{\omega(2)} L_{b_2} Z_{\omega(3)} \cdots L_{b_{n-1}} Z_{\omega(n)}) \\
\kappa^B_{Z, \omega}(b_1, \ldots, b_{n-1}) &= \kappa^B_{\chi_\omega}(Z_{\omega(1)}, L_{b_1} Z_{\omega(2)}, L_{b_2} Z_{\omega(3)}, \ldots, L_{b_{n-1}} Z_{\omega(n)}).
\end{align*}
\item If $\omega(k)\in J$ for all $k$, define
\begin{align*}
\mu^B_{Z, \omega}(b_1, \ldots, b_{n-1}) &= E(Z_{\omega(1)} R_{b_1} Z_{\omega(2)} R_{b_2} Z_{\omega(3)} \cdots R_{b_{n-1}} Z_{\omega(n)})\\
\kappa^B_{Z, \omega}(b_1, \ldots, b_{n-1}) &= \kappa^B_{\chi_\omega}(Z_{\omega(1)}, R_{b_1} Z_{\omega(2)}, R_{b_2} Z_{\omega(3)}, \ldots, R_{b_{n-1}} Z_{\omega(n)}).
\end{align*}
\item Otherwise let $k_\ell = \min\{k \, \mid \, \omega(k) \in I\}$ and $k_r = \min\{k \, \mid \, \omega(k) \in J\}$.  Then $\{k_\ell, k_r\} = \{1, k_0\}$ for some $k_0$.  Define $\mu^B_{Z, \omega}(b_1, \ldots, b_{n-1})$ to be
\[
E\left(Z_{\omega(1)} C^{\omega(2)}_{b_1} Z_{\omega(2)} \cdots C^{\omega(k_0-1)}_{b_{k_0-2}} Z_{\omega(k_0-1)} Z_{k_0} C^{\omega(k_0+1)}_{b_{k_0-1}} Z_{\omega(k_0+1)} \cdots C^{\omega(n-1)}_{b_{n-3}} Z_{\omega(n-1)} C^{\omega(n)}_{b_{n-2}} Z_{\omega(n)} C^{\omega(n)}_{b_{n-1}}\right)
\]
and define $\kappa^B_{Z, \omega}(b_1, \ldots, b_{n-1})$ to be
\[
\kappa^B_{\chi_\omega}\left(Z_{\omega(1)}, C^{\omega(2)}_{b_1} Z_{\omega(2)}, \ldots, C^{\omega(k_0-1)}_{b_{k_0-2}} Z_{\omega(k_0-1)}, Z_{k_0}, C^{\omega(k_0+1)}_{b_{k_0-1}} Z_{\omega(k_0+1)}, \ldots, C^{\omega(n-1)}_{b_{n-3}} Z_{\omega(n-1)}, C^{\omega(n)}_{b_{n-2}} Z_{\omega(n)} C^{\omega(n)}_{b_{n-1}}\right)
\]
where
\[
C^{\omega(k)}_{b} = \left\{
\begin{array}{ll}
L_{b} & \mbox{if } \omega(k) \in I \\
R_{b} & \mbox{if } \omega(k) \in J
\end{array} \right.  .
\]
\end{itemize}
\end{nota}
Note that one may think of the above as placing exactly one $B$-operator between every pair of $\chi$-adjacent terms.  Furthermore, if $\omega : \{1,\ldots, n\} \to I \sqcup J$ is such that $\{\omega(k)\}^n_{k=1}$ intersects both $I$ and $J$, then the $b_{n-1}$-term behaves differently than the other elements of $B$ in $\kappa^B_{Z, \omega}(b_1, \ldots, b_{n-1})$ (i.e. it is the special $b$-operator that can be viewed as a left and as a right operator).  Moreover, it is clear from the above discussions that 
\[
\{\mu^B_{Z, \omega} \, \mid \, n \geq 1, \omega : \{1,\ldots, n\} \to I \sqcup J\}
\]
completely describe the joint moments of elements in $\alg(A_\ell, A_r)$.  Using equation (\ref{eq:mobius}) together with bi-multiplicative properties, one sees that 
\[
\{\kappa^B_{Z, \omega} \, \mid \, n \geq 1, \omega : \{1,\ldots, n\} \to I \sqcup J\}
\]
also completely describe the joint moments of elements in $\alg(A_\ell, A_r)$.

\section{A Characterization of Bi-Freeness over Subalgebras}
\label{sec:sub}

Let $B$ be a unital algebra and let $(\A, E, \varepsilon)$ be a $B$-$B$-non-commutative probability space.  Suppose $D \subseteq B$ is a unital subalgebra (with $1_B \in D$) such that there exists a conditional expectation $F : B \to D$ of $B$ onto $D$ (that is, $F(d) = d$ for all $d \in D$ and $F(d_1 b d_2) = d_1 F(b) d_2$ for all $d_1, d_2 \in D$ and $b \in B$).  We claim that $(\A, F \circ E, \varepsilon|_{D \otimes D^{\op}})$ is a $D$-$D$-non-commutative probability space.  Indeed $\varepsilon|_{D \otimes D^{\op}}$ is a unital homomorphism that is injective on $D \otimes 1_D$ and $1_D \otimes D^\op$, 
\[
(F \circ E)(\varepsilon(d_1 \otimes d_2)Z) = F(d_1 E(Z) d_2) = d_1 F(E(Z)) d_2
\]
for all $d_1, d_2 \in D$ and $Z \in \A$, and
\[
(F \circ E)(Z\varepsilon(d \otimes 1_D)) = F(E(Z\varepsilon(d \otimes 1_D))) =  F(E(Z\varepsilon(1_D \otimes d))) = (F \circ E)(Z\varepsilon(1_D \otimes d))
\]
for all $d \in D$ and $Z \in \A$.  Hence $(\A, F \circ E, \varepsilon|_{D \otimes D^{\op}})$ is a $D$-$D$-non-commutative probability space by Definition \ref{defn:BBncps}.

The goal of this section is to investigate the relationship between the $B$-valued and $D$-valued distributions of elements of $\A$ in the context of the previous paragraph.  Consequently, the notation in the previous paragraph will be used throughout the section.  Furthermore, $b$ will be used for an element of $B$, $d$ will be used for an element of $D$, and $\kappa^B$ and $\kappa^D$ will be used for the $B$-valued and $D$-valued cumulants respectively.

We begin with the following bi-free analogue of \cite{NSS2002}*{Theorem 3.1}.
\begin{thm}
\label{thm:cumulants-D-valued}
Let $(\A, E, \varepsilon)$ be a $B$-$B$-non-commutative probability space, let $\{Z_i\}_{i \in I} \subseteq \A_\ell$, and let $\{Z_j\}_{j \in J} \subseteq \A_r$.  If the $B$-valued cumulants of $Z = \{Z_i\}_{i\in I} \sqcup \{Z_j\}_{j \in J}$ are such that
\[
\kappa^B_{Z, \omega}(d_1, \ldots, d_{n-1}) \in D
\]
for all $n\geq 1$, $\omega : \{1,\ldots, n\} \to I \sqcup J$, and $d_1, \ldots, d_{n-1} \in D$, then the $D$-valued cumulants of $Z$ are the restriction to $D$ of the $B$-valued cumulants of $Z$; that is
\[
\kappa^D_{Z, \omega}(d_1, \ldots, d_{n-1}) = \kappa^B_{Z, \omega}(d_1, \ldots, d_{n-1})
\]
for all $n\geq 1$, $\omega : \{1,\ldots, n\} \to I \sqcup J$, and $d_1, \ldots, d_{n-1} \in D$.
\end{thm}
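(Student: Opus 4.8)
The plan is to reduce the statement to the corresponding equality of \emph{moment} functions and then invoke M\"{o}bius inversion over $BNC$. Fix $n\geq 1$, $\omega : \{1,\dots,n\}\to I\sqcup J$, and $d_1,\dots,d_{n-1}\in D$, and let $(W_1,\dots,W_n)$ be the tuple of elements of $\A$ that appears inside $\kappa^B_{Z,\omega}(d_1,\dots,d_{n-1})$ in the Notation preceding the theorem (so exactly one factor $L_{d_k}$ or $R_{d_k}$ sits between each pair of $\chi_\omega$-adjacent entries, together with the extra trailing factor $C^{\omega(n)}_{d_{n-1}}$ in the mixed case). Since $\kappa^D_{Z,\omega}(d_1,\dots,d_{n-1}) = \kappa^D_{\chi_\omega}(W_1,\dots,W_n)$ and likewise for $\kappa^B$, and since both cumulant functions are recovered from their moment functions by the \emph{same} formula $\kappa_\chi=\sum_{\sigma\in BNC(\chi)}E_\sigma\,\mu_{BNC}(\sigma,1_\chi)$ (Definition \ref{defn:kappa}), it suffices to prove that $E^D_\pi(W_1,\dots,W_n)=E^B_\pi(W_1,\dots,W_n)$ for every $\pi\in BNC(\chi_\omega)$.

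I would prove this by induction on $n$, proving simultaneously the auxiliary fact that for every such admissible tuple and every $\pi$ one has $\kappa^B_\pi(W_1,\dots,W_n)\in D$ (and hence, by (\ref{eq:mobius}), $E^B_\pi(W_1,\dots,W_n)\in D$). For the auxiliary fact: using bi-multiplicativity of $\kappa^B$ (Definition \ref{defn:bi-multiplicative} and Remark \ref{rem:bi-multi-same-as-free}), $\kappa^B_\pi(W_1,\dots,W_n)$ reduces to a product of terms $\kappa^B_{1_{\chi|_V}}$, one for each block $V$ of $\pi$, each evaluated on a sub-tuple of the $W$'s into which one has inserted factors $L_b$ or $R_b$ where $b$ is a product of some of the original $d_k$'s with $\kappa^B$-values of $\prec_\chi$-nested blocks; by the induction hypothesis the nested values lie in $D$, so each such $b$ lies in $D$, and (after using Definition \ref{defn:bi-multiplicative}(\ref{part:bi-multi-1})--(\ref{part:bi-multi-2}) to push any $L_d/R_d$ sitting at the $\prec_\chi$-boundary of $V$ out to the sides) each $\kappa^B_{1_{\chi|_V}}$-term is exactly of the form $\kappa^B_{Z,\omega'}(\text{arguments in }D)$, hence lies in $D$ by hypothesis; since $D$ is a subalgebra the whole product lies in $D$.

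For the moment-function equality: when $\pi=1_{\chi_\omega}$ we have $E^D_{1_{\chi_\omega}}(W_1,\dots,W_n)=(F\circ E)(W_1\cdots W_n)=F\bigl(E^B_{1_{\chi_\omega}}(W_1,\dots,W_n)\bigr)=E^B_{1_{\chi_\omega}}(W_1,\dots,W_n)$, the last equality because $E^B_{1_{\chi_\omega}}(W_1,\dots,W_n)\in D$ by the auxiliary fact and $F$ restricts to the identity on $D$. When $\pi$ has more than one block, peel off an inner $\chi$-interval $V$ of blocks of $\pi$ via Definition \ref{defn:bi-multiplicative}(\ref{part:bi-multi-3})--(\ref{part:bi-multi-4}): the resulting block-value $E^D_{1_{\chi|_V}}((W_1,\dots,W_n)|_V)$ equals $E^B_{1_{\chi|_V}}((W_1,\dots,W_n)|_V)$ and lies in $D$ by the smaller-$n$ instance of the theorem-to-be and of the auxiliary fact, so the $D$-valued bi-multiplicative reduction of $E^D_\pi$ and the $B$-valued bi-multiplicative reduction of $E^B_\pi$ become literally the same sequence of multiplications and $L_b/R_b$-insertions, and the remaining (fewer-node) factors are handled by the inductive hypothesis. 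This gives $E^D_\pi(W_1,\dots,W_n)=E^B_\pi(W_1,\dots,W_n)$, and then $\kappa^D_{Z,\omega}(d_1,\dots,d_{n-1})=\sum_\sigma E^D_\sigma(W_1,\dots,W_n)\mu_{BNC}(\sigma,1_{\chi_\omega})=\sum_\sigma E^B_\sigma(W_1,\dots,W_n)\mu_{BNC}(\sigma,1_{\chi_\omega})=\kappa^B_{Z,\omega}(d_1,\dots,d_{n-1})$.

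The main obstacle I anticipate is organizational but genuinely bi-free in nature: one must verify that the class of admissible tuples (one interior $B$-operator between each pair of $\chi$-adjacent entries, plus the distinguished trailing operator when $\omega$ meets both $I$ and $J$) is closed under the operations $\pi\mapsto\pi|_V$ and $(W_1,\dots,W_n)\mapsto(W_1,\dots,W_n)|_V$ occurring in bi-multiplicative reductions, and that the distinguished ``bottom'' $B$-operator --- the one that is simultaneously a left and a right operator and that Definition \ref{defn:bi-multiplicative}(\ref{part:bi-multi-4}) allows one to push either left or right --- is carried through the induction consistently on both the $B$- and the $D$-side. This is exactly where the argument departs from the free proof of \cite{NSS2002}*{Theorem 3.1}: there is no ``the last $B$-operator acts freely'' phenomenon, so one cannot simply strip off the final operator. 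Everything else is the bi-free incarnation of the observation that a multiplicative function restricted to a subalgebra on which it is supported remains multiplicative --- here immediate, since the reductions in Definition \ref{defn:bi-multiplicative} only ever multiply elements of $B$ together and never leave $D$ once all block-values lie in $D$.
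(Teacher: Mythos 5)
Your proposal is correct and follows essentially the same route as the paper: show that the $B$-valued cumulants (and hence, by the moment-cumulant formula, the $B$-valued moments) of the relevant tuples lie in $D$, conclude that $F\circ E$ agrees with $E$ on those products so the $D$-valued and $B$-valued moment functions coincide there, and then apply M\"{o}bius inversion to identify the cumulant functions. The only difference is packaging: the paper states the $D$-valuedness for the whole subalgebra generated by $Z$ and $\varepsilon(D\otimes D^{\op})$, invoking Theorem \ref{thm:products} to handle products, whereas you run a direct induction over bi-multiplicative reductions of the admissible tuples, which accomplishes the same thing.
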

\begin{proof}
Let $\A_0$ be the subalgebra of $\A$ generated by $Z$ and $\varepsilon(D \otimes D^{\op})$.  Using bi-multiplicative properties, Theorem \ref{thm:products}, and the assumptions of the theorem, we obtain that the restriction of the $B$-valued cumulant function to $\A_0$ is valued in $D$.  Consequently, the moment-cumulant formula (\ref{eq:mobius}) implies that the $B$-valued moment function is $D$-valued when restricted to $\A_0$.  Thus the $B$-valued and $D$-valued moment functions agree when restricted to $\A_0$.  Hence the moment-cumulant formula (\ref{eq:cumulants}) implies that the $B$-valued and $D$-valued cumulant functions agree on $\A_0$.
\end{proof}
In \cite{NSS2002}, it was demonstrated that the sufficient condition in Theorem \ref{thm:cumulants-D-valued} is almost necessary in the free case.

The main goal of this section is to prove the following bi-free analogue of the beautiful result \cite{NSS2002}*{Theorem 3.5} to completely characterizes when a pair of $D$-faces is bi-free from $(B, B^\op)$ with amalgamation over $D$ via a relationship between the $B$-valued and $D$-valued bi-free cumulants.  The power of the following theorem in obtaining bi-free results and examples is exhibited in Section \ref{sec:R-cyc}.
\begin{thm}
\label{thm:bi-free-over-D}
Let $\{Z_i\}_{i \in I} \subseteq \A_\ell$, $\{Z_j\}_{j \in J} \subseteq \A_r$, and $Z = \{Z_i\}_{i \in I} \sqcup \{Z_j\}_{j \in J}$.   Assume that $F : B \to D$ satisfies the following faithfulness condition:
\begin{itemize}
\item if $b_1 \in B$ and $F(b_2b_1) = 0$ for all $b_2 \in B$, then $b_1 = 0$.
\end{itemize}
Then $(\alg(\varepsilon(D \otimes 1_D), \{Z_i\}_{i \in I}), \alg(\varepsilon(1_D \otimes D^\op), \{Z_j\}_{j \in J}))$ is bi-free from $(\varepsilon(B \otimes 1_B), \varepsilon(1_B \otimes B^\op))$ with amalgamation over $D$ if and only if
\begin{align}
\label{eq:cummulant-condition-1}
\kappa^B_{Z, \omega}(b_1, \ldots, b_{n-1}) = F\left( \kappa^B_{Z, \omega}(F(b_1), \ldots, F(b_{n-1})) \right)
\end{align}
for all $n\geq 1$, $\omega : \{1,\ldots, n\} \to I \sqcup J$, and $b_1, \ldots, b_{n-1} \in B$.  Alternatively, equation (\ref{eq:cummulant-condition-1}) is equivalent to
\begin{align}
\label{eq:cummulant-condition-0}
\kappa^B_{Z, \omega}(b_1, \ldots, b_{n-1}) = \kappa^D_{Z, \omega}(F(b_1), \ldots, F(b_{n-1})).
\end{align}
\end{thm}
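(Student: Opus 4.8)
The plan is to mimic the combinatorial strategy outlined in the introduction: rather than using the ``alternating centred moments'' characterization or canonical operators, I will work directly with the operator-valued bi-free cumulants and the characterization of bi-freeness from Theorem~\ref{thm:bifree-classifying-theorem} (more precisely, its generator version, Corollary~\ref{cor:generators-vanishing-cumulants}). First I note that the equivalence of \eqref{eq:cummulant-condition-1} and \eqref{eq:cummulant-condition-0} should follow from Theorem~\ref{thm:cumulants-D-valued}: once one knows the left-hand side of \eqref{eq:cummulant-condition-1} lies in $D$ whenever the arguments do (which is exactly what \eqref{eq:cummulant-condition-1} forces, since $F$ is $D$-valued), Theorem~\ref{thm:cumulants-D-valued} identifies $\kappa^D_{Z,\omega}$ on $D$-arguments with $\kappa^B_{Z,\omega}$ on the same arguments, and then $F$ is the identity on $D$ so the two right-hand sides coincide. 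So the real content is the equivalence between bi-freeness of the two pairs of $D$-faces from $(\varepsilon(B\otimes 1_B),\varepsilon(1_B\otimes B^{\op}))$ over $D$ and the cumulant identity \eqref{eq:cummulant-condition-1}.

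For the forward direction, I would assume bi-freeness over $D$ and compute a $B$-valued cumulant $\kappa^B_{Z,\omega}(b_1,\ldots,b_{n-1})$ by first writing each $b_k = F(b_k) + (b_k - F(b_k))$, i.e. splitting $B = D \oplus \ker F$. Expanding multilinearly, a typical term has some slots filled with $D$-operators and some with operators $C_{b_k - F(b_k)}$, which are of the form $L_{b'}$ or $R_{b'}$. Inside a nontrivial cumulant these ``scalar'' insertions of $\varepsilon(B\otimes 1_B)$ or $\varepsilon(1_B\otimes B^{\op})$ interact with bi-freeness over $D$: by Proposition~\ref{prop:vanishing-of-scalar-cumulants} a pure $L_{b'}$ in a cumulant of length $\geq 2$ kills it, but here the $L_{b'}$ is glued to a neighbouring $Z$. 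The right move is to use Theorem~\ref{thm:products} to expand $\kappa^B$ of products (writing $C_{b_k}Z_{\omega(k)}$ as a product of the two operators $C_{b_k}$ and $Z_{\omega(k)}$) into a sum of $\kappa^B_\sigma$ over $\sigma$ with $\sigma\vee\widehat{0_\chi}=1_{\widehat\chi}$; because the $C_{b_k}$ come from the $(\varepsilon(B\otimes 1_B),\varepsilon(1_B\otimes B^{\op}))$ pair and the $Z$'s come from the $D$-face pair, bi-freeness over $D$ forces the $\ker F$-part of $b_k$ to vanish in any such $\sigma$ that is not constant on the $\epsilon$-partition — and the connectedness condition $\sigma\vee\widehat{0_\chi}=1_{\widehat\chi}$ leaves exactly the partitions that are forced to mix, hence vanish, unless the $C_{b_k}$-part is replaced by its $D$-component $L_{F(b_k)}$. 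After this reduction, one is left with $\kappa^B_{Z,\omega}$ evaluated at the $F(b_k)$, which lies in $D$ by Theorem~\ref{thm:cumulants-D-valued}, and applying $F$ (the identity on $D$) gives \eqref{eq:cummulant-condition-1}.

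For the converse, I would assume \eqref{eq:cummulant-condition-1} (equivalently \eqref{eq:cummulant-condition-0}) and verify the hypothesis of Corollary~\ref{cor:generators-vanishing-cumulants}: that $\kappa^D_\chi$ vanishes on any alternating sequence of generators drawn from the two $D$-faces and from $\varepsilon(B\otimes 1_B)$, $\varepsilon(1_B\otimes B^{\op})$, provided the index function $\epsilon$ (with two values: ``$Z$-face'' and ``$B$-face'') is nonconstant. A $\kappa^D_\chi$ of a sequence containing a generator $L_b$ with $b\notin D$ can, using bi-multiplicativity and Theorem~\ref{thm:products} in reverse, be rewritten as a sum of $\kappa^B_{Z,\omega}$-type quantities with $B$-insertions; condition \eqref{eq:cummulant-condition-1} says each such quantity only sees $F(b_k)$, so replacing $L_b$ by $L_{F(b)}$ changes nothing, and then $F(b)\in D$ means we have inserted a genuine $D$-operator of $\varepsilon(D\otimes 1_D)$ into the $D$-cumulant, which by Proposition~\ref{prop:vanishing-of-scalar-cumulants} (applied over $D$) is zero — here the faithfulness of $F$ enters, guaranteeing that ``$F(b)$ sees everything'' so that no information is lost and the vanishing is genuine rather than an artifact. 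I expect the main obstacle to be the bookkeeping in the forward direction: carefully tracking, under the product expansion of Theorem~\ref{thm:products}, which partitions $\sigma$ survive, showing that the surviving ones are exactly those in which every $C_{b_k}$-node is in a block containing only other $B$-face nodes (so that bi-freeness over $D$ applies to kill the $\ker F$-component unless that block is a singleton absorbed into a neighbouring $Z$ via bi-multiplicativity as $L_{F(b_k)}$), and handling the special role of the last argument $b_{n-1}$ — the ``$B$-operator that is simultaneously left and right'' — which Definition~\ref{defn:bi-multiplicative}(1) treats asymmetrically and which therefore requires a separate check in the reduction.
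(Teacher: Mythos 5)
Your reduction of the problem to Theorem~\ref{thm:cumulants-D-valued} (for the equivalence of \eqref{eq:cummulant-condition-1} and \eqref{eq:cummulant-condition-0}) and to Corollary~\ref{cor:generators-vanishing-cumulants} together with Lemmata~\ref{lem:interchange-cumulant} and \ref{lem:swap-cumulant} (for the direction ``cumulant condition $\Rightarrow$ bi-freeness'') is the right frame, but both of your substantive arguments have genuine gaps. In the direction you call the converse, the step ``rewrite $\kappa^D_\chi$ of a sequence containing $L_b$ as a sum of $\kappa^B_{Z,\omega}$-type quantities, replace $L_b$ by $L_{F(b)}$ using \eqref{eq:cummulant-condition-1}, and kill the result by Proposition~\ref{prop:vanishing-of-scalar-cumulants}'' conceals the entire difficulty: $\kappa^D$ is built from $F\circ E$ and the $D$-valued moment--cumulant formula, $\kappa^B$ from $E$, and comparing the two expansions is precisely the content of the paper's inductive argument (the analogues of Lemmata~\ref{lem:base-case-1} and \ref{lem:base-case-2}), which requires subtracting lower-order terms, reducing to the case $F(b)=0$, and a careful matching of the partition classes $\Theta$ and $\Omega$ via several reduction operations. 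In particular, when $F(b)=0$ the desired vanishing of $\kappa^D_\chi(X_1,\ldots,X_m,L_b,X_{m+1},\ldots)$ is not at all immediate from \eqref{eq:cummulant-condition-1}; it is exactly what the double induction establishes. Note also that the faithfulness of $F$ plays no role in this direction, so your appeal to it there signals a misplacement.

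The other direction is where your plan actually fails. Bi-freeness of the pair generated by $Z$ from $(\varepsilon(B\otimes 1_B),\varepsilon(1_B\otimes B^{\op}))$ with amalgamation over $D$ is, via Theorem~\ref{thm:bifree-classifying-theorem}, a statement about the vanishing of mixed $D$-valued cumulants with respect to $F\circ E$; it gives no direct control over the $B$-valued cumulants $\kappa^B_{Z,\omega}(b_1,\ldots,b_{n-1})$ appearing on the left of \eqref{eq:cummulant-condition-1}. Splitting $b_k=F(b_k)+(b_k-F(b_k))$ and expanding with Theorem~\ref{thm:products} does not let you argue that ``bi-freeness over $D$ forces the $\ker F$-part to vanish,'' because the cumulants that bi-freeness annihilates are $\kappa^D_\sigma$'s, not the $\kappa^B_\sigma$'s in your expansion. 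The paper circumvents exactly this obstruction by a model argument: Lemma~\ref{lem:get-the-cumulants-you-want} constructs a $B$-$B$-non-commutative probability space and a family $Z'$ whose $B$-valued cumulants are prescribed to be $\kappa^D_{Z,\omega}(F(b_1),\ldots,F(b_{n-1}))$; the already-proved direction shows $Z'$ is bi-free from $(B,B^{\op})$ over $D$; one checks $Z$ and $Z'$ have the same $D$-valued distribution, hence (both being bi-free from copies of $(B,B^{\op})$ over $D$) the algebras they generate with $\varepsilon(B\otimes B^{\op})$ have the same $D$-valued distribution; and only then does the faithfulness hypothesis on $F$ enter, upgrading $F(E(L_bT))=F(E'(L_bT'))$ for all $b$ to $E(T)=E'(T')$, i.e.\ equality of $B$-valued distributions and hence of $B$-valued cumulants. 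Your proposal contains no mechanism playing the role of this construction or of the faithfulness step, so as written this direction cannot be completed along the lines you describe.
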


Note the equivalence of equations (\ref{eq:cummulant-condition-1}) and (\ref{eq:cummulant-condition-0}) follows directly from Theorem \ref{thm:cumulants-D-valued}.

\begin{rem}
\label{rem:ex-D-valued}
If $Z = \{Z_i\}_{i \in I} \sqcup \{Z_j\}_{j \in J}$ satisfy equation (\ref{eq:cummulant-condition-0}), then the proof of Theorem \ref{thm:cumulants-D-valued} implies that $E$ is $D$-valued on $\alg(Z, \varepsilon(D \otimes D^\op ))$.
\end{rem}

\begin{rem}
\label{rem:property-preserved-products-by-D}
Suppose equation (\ref{eq:cummulant-condition-0}) holds for some $\{Z_i\}_{i \in I} \subseteq \A_\ell$, $\{Z_j\}_{j \in J} \subseteq \A_r$, and $Z = \{Z_i\}_{i \in I} \sqcup \{Z_j\}_{j \in J}$.  For each $i \in I$, $j \in J$, and $k \in \{1,2\}$  let $d_{i,k}, d_{j,k} \in D$, let $Z'_i = L_{d_{i,1}} Z_i L_{d_{i,2}}$, and let $Z'_j = R_{d_{j,1}} Z_j R_{d_{j,2}}$.  If $Z' = \{Z'_i\}_{i \in I} \sqcup \{Z'_j\}_{j \in J}$, then 
\[
\kappa^B_{Z', \omega}(b_1, \ldots, b_{n-1}) = \kappa^D_{Z', \omega}(F(b_1), \ldots, F(b_{n-1})).
\]
for all $n\geq 1$, $\omega : \{1,\ldots, n\} \to I \sqcup J$, and $b_1, \ldots, b_{n-1} \in B$; that is, $Z'$ also satisfies equation (\ref{eq:cummulant-condition-0}).  Indeed, this is easily verified using properties (\ref{part:bi-multi-1}) and (\ref{part:bi-multi-2}) of Definition \ref{defn:bi-multiplicative} along with the conditional expectation property of $F$.
\end{rem}

As described in the introduction, the proof of Theorem \ref{thm:bi-free-over-D} will be drastically different from the proof of \cite{NSS2002}*{Theorem 3.5}.  The approach taken here to show bi-freeness of $Z$ from $(B, B^\op)$ over $D$ is to demonstrate that mixed $D$-valued cumulants of $Z$ and $(B, B^\op)$ vanish in order to apply Theorem \ref{thm:bifree-classifying-theorem}.  This will be accomplished via induction arguments using the moment-cumulant formulae.  As the proof of Theorem \ref{thm:bi-free-over-D} ultimately must contain a proof of \cite{NSS2002}*{Theorem 3.5}, we will begin by reproving \cite{NSS2002}*{Theorem 3.5} (i.e. showing all mixed cumulants of only left operators from the pairs vanish).  The full proof of Theorem \ref{thm:bi-free-over-D} can then be seen using near identical combinatorial arguments.

In order to reprove \cite{NSS2002}*{Theorem 3.5} by showing all mixed cumulants of left operators vanish, we begin with the following base case of an inductive argument.  To simplify notation throughout the arguments in this section, define $Z(D, I) := \{ L_{d_1} Z_i L_{d_2} \, \mid \, i \in I, d_1, d_2 \in D\}$.
\begin{lem}
\label{lem:base-case-1}
Let $Z = \{Z_i\}_{i \in I} \sqcup \{Z_j\}_{j \in J}$ satisfy equation (\ref{eq:cummulant-condition-0}).  For all $n \geq 2$, for all $\chi : \{1,\ldots, n\} \to \{\ell, r\}$ such that $\chi(k) = \ell$ for all $k$ (i.e. $\chi = \chi_{n,0}$), and for all $X_1, \ldots, X_n \in \epsilon(B \otimes 1_B) \cup Z(D,I)$, we have
\[
\kappa^D_\chi(X_1, \ldots, X_n) = 0
\]
provided exactly one $X_k$ is an element of $\epsilon(B \otimes 1_B)$.
\end{lem}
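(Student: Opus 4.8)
The plan is to compute the left-hand side directly using the moment-cumulant machinery, reducing it to $B$-valued cumulants and then exploiting the hypothesis (\ref{eq:cummulant-condition-0}). First I would reduce to the case where the single element of $\varepsilon(B\otimes 1_B)$ occurs in some fixed position; by relabeling I may assume $X_{k_0} = L_b$ for one index $k_0$ and $X_k = L_{d_{k,1}} Z_{\omega(k)} L_{d_{k,2}}$ for the remaining indices, where $\omega$ maps into $I$. Since we are working over $D$, Theorem \ref{thm:cumulants-D-valued} (applied to $Z$, which satisfies (\ref{eq:cummulant-condition-0}), hence the hypothesis of that theorem after Remark \ref{rem:ex-D-valued}) tells us the $D$-valued cumulants of $Z$ coincide with the restriction of the $B$-valued cumulants. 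So it suffices to show the corresponding $B$-valued cumulant vanishes: $\kappa^B_\chi(X_1,\ldots,X_n) = 0$.

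Next I would invoke Proposition \ref{prop:vanishing-of-scalar-cumulants}: a non-trivial $B$-valued bi-free cumulant with an inserted element of $\varepsilon(B\otimes 1_B)$ vanishes. The subtlety is that $X_{k_0} = L_b$ is a \emph{bare} scalar operator only if the adjacent $L_{d}$'s have been absorbed; more precisely, using bi-multiplicativity (properties (\ref{part:bi-multi-1}) and (\ref{part:bi-multi-2}) of Definition \ref{defn:bi-multiplicative}) one may move the $L_{d_{k,1}}, L_{d_{k,2}}$ factors onto neighbouring $Z_{\omega(k)}$'s or out the sides, rewriting the cumulant as $\kappa^B_\chi(\ldots, L_{b'}, \ldots)$ with the distinguished entry being a genuine element of $\varepsilon(B\otimes 1_B)$ (after pulling in the neighbouring $d$'s, which merge into $b$). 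Since $n \geq 2$, Proposition \ref{prop:vanishing-of-scalar-cumulants} then forces this to be $0$. Translating back via Theorem \ref{thm:cumulants-D-valued} gives $\kappa^D_\chi(X_1,\ldots,X_n) = 0$.

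The one genuine obstacle is the bookkeeping when $k_0 = 1$ or $k_0 = n$, i.e.\ when $L_b$ is the first or last operator: in those cases the neighbouring $L_d$ factors cannot be absorbed into $L_b$ from both sides, and one has to be slightly careful about whether $b$ escapes out the side. But even then, the inserted operator at position $k_0$ is of the form $L_{b}$ with $b\in B$ after absorbing at most one neighbouring $d$, which is still an element of $\varepsilon(B\otimes 1_B)$, so Proposition \ref{prop:vanishing-of-scalar-cumulants} still applies since $n\geq 2$ guarantees the cumulant is non-trivial. Thus I expect the proof to be short: rewrite via bi-multiplicativity, apply Proposition \ref{prop:vanishing-of-scalar-cumulants}, and transfer between $\kappa^B$ and $\kappa^D$ using Theorem \ref{thm:cumulants-D-valued} together with Remark \ref{rem:property-preserved-products-by-D} (to handle the $L_{d_{k,1}} Z_{\omega(k)} L_{d_{k,2}}$ form). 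The only thing to double-check is that Theorem \ref{thm:cumulants-D-valued} genuinely equates the $D$- and $B$-valued cumulants on \emph{all} of $\A_0 = \alg(Z, \varepsilon(D\otimes D^\op))$, which it does, so the inserted $L_b$ with $b\in B\setminus D$ is handled purely on the $B$-valued side where Proposition \ref{prop:vanishing-of-scalar-cumulants} is available.
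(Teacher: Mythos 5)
Your reduction to the $B$-valued side is where the argument breaks. Theorem \ref{thm:cumulants-D-valued} identifies $\kappa^D$ with the restriction of $\kappa^B$ only on $\A_0 = \alg(Z, \varepsilon(D \otimes D^{\op}))$, i.e.\ only for tuples whose $B$-entries lie in $D$; it says nothing about tuples containing $L_b$ with $b \in B \setminus D$, and such an $L_b$ is precisely the distinguished entry in this lemma. There is no general principle equating $\kappa^D$ and $\kappa^B$ once entries outside $\A_0$ appear: for instance $\kappa^B_{\chi_{2,0}}(L_{b_1}, L_{b_2}) = b_1b_2 - b_1b_2 = 0$ while $\kappa^D_{\chi_{2,0}}(L_{b_1}, L_{b_2}) = F(b_1b_2) - F(b_1)F(b_2)$, which is nonzero in general. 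So the chain ``$\kappa^D_\chi(X_1,\ldots,X_n) = \kappa^B_\chi(X_1,\ldots,X_n) = 0$ by Proposition \ref{prop:vanishing-of-scalar-cumulants}'' is unjustified at its first equality; in fact, asserting that equality for the tuples in question is essentially asserting the conclusion of the lemma, since the $B$-valued vanishing is trivial. A further symptom that something must be wrong: your argument only uses hypothesis (\ref{eq:cummulant-condition-0}) through Theorem \ref{thm:cumulants-D-valued}, i.e.\ only through the weaker fact that the $B$-cumulants with $D$-arguments are $D$-valued; but the conclusion of the lemma is (part of) freeness of $Z$ from $B$ over $D$, which by \cite{NSS2002}*{Theorem 3.5} is strictly stronger than that weaker condition, so no correct proof can avoid using (\ref{eq:cummulant-condition-0}) with genuinely non-$D$ arguments.

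The paper's proof has to work considerably harder for exactly this reason: it proceeds by induction on $n$, splits into cases according to whether $L_b$ sits at the start, the end, or strictly inside the tuple, and in the interior case first reduces to $F(b)=0$ (using Proposition \ref{prop:vanishing-of-scalar-cumulants} applied to $L_{F(b)} \in \varepsilon(D\otimes 1_D)$, where it \emph{is} applicable on the $D$-valued side), then expands $\kappa^D_\chi$ via the moment--cumulant formula (\ref{eq:mobius}) into $F(E(\cdots))$ plus lower-order $D$-cumulants killed by the inductive hypothesis, and finally uses (\ref{eq:cummulant-condition-0}) together with Remark \ref{rem:property-preserved-products-by-D} to see that every surviving $B$-valued cumulant of the form $\kappa^B_{\chi'}(X_1',\ldots, L_b X_{q+1}',\ldots,X_p')$ vanishes because $F(b)=0$, while the remaining moment contribution factors as $E(X_1\cdots X_m)F(b)E(X_{m+1}\cdots X_n)=0$. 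That interplay between $F$, the $D$-cumulants, and the $B$-cumulants with a genuinely $B$-valued insertion is the substance of the lemma and is absent from your proposal.
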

\begin{proof}
The proof proceeds by induction on $n$.  In the case $n = 2$, note, using Remark \ref{rem:ex-D-valued}, that
\begin{align*}
\kappa^D_\chi(L_b, L_{d_1} Z_i L_{d_2}) &= F(E(L_bL_{d_1} Z_i L_{d_2})) - F(E(L_b))F(E(L_{d_1} Z_i L_{d_2})) \\
&= F(bE(L_{d_1} Z_i L_{d_2})) - F(b)E(L_{d_1} Z_i L_{d_2}) = F(b)E(L_{d_1} Z_i L_{d_2}) - F(b)E(L_{d_1} Z_i L_{d_2}) = 0
\end{align*}
and
\begin{align*}
\kappa^D_\chi(L_{d_1} Z_i L_{d_2}, L_b) &=F(E(L_{d_1} Z_i L_{d_2}L_b)) - F(E(L_{d_1} Z_i L_{d_2}))F(E(L_b)) \\
&= F(E(L_{d_1} Z_i L_{d_2})b) - E(L_{d_1} Z_i L_{d_2})F(b) = E(L_{d_1} Z_i L_{d_2})F(b) - E(L_{d_1} Z_i L_{d_2})F(b) = 0.
\end{align*}
Hence the base case is complete.

For the inductive step, suppose the result has been verified for $n \geq 2$.  Let $\{X_k\}^{n}_{k=1} \subseteq Z(D,I)$ and let $b \in B$.  The inductive step will now be broken into three cases depending on the position of $L_b$.  To begin, we will demonstrate that $\kappa^D_\chi(L_b, X_1, \ldots, X_n) = 0$.  Indeed note by the moment-cumulant formula (\ref{eq:mobius}) that
\begin{align*}
\kappa^D_\chi(L_b, X_1, \ldots, X_n) &= F(E(L_b X_1 \cdots X_n)) - \sum_{\substack{ \sigma \in BNC(\chi) \\ \sigma \neq 1_\chi}} \kappa^D_\sigma(L_b, X_1, \ldots, X_n).
\end{align*}
For the first term, note
\[
F(E(L_b X_1 \cdots X_n)) = F(b) E(X_1 \cdots X_n)
\]
by Remark \ref{rem:ex-D-valued}.  Furthermore, note if $\sigma \in BNC(\chi)$ is such that $\sigma \neq 1_\chi$ and $\{1\}$ is not a block of $\sigma$, then, using the bi-multiplicative properties of the bi-free cumulants to reduce to the block of $\sigma$ containing $1$ (note each $X_k$ may become $L_d X_k L_{d'}$ where $d, d' \in D$ when using these reduction properties), $\kappa^D_\sigma(L_b, X_1, \ldots, X_n) = 0$ by the inductive hypothesis.  Contemplating the following diagram may aid in the comprehension of how this argument works.
\begin{align*}
\begin{tikzpicture}[baseline]
	\draw[thick, dashed] (-.25,0) -- (7.25, 0);
	\draw[thick] (5, 0) -- (5,1) -- (0,1) -- (0, 0);
	\draw[thick] (3,0) -- (3,1);
	\draw[thick] (1, 0) -- (1,.5) -- (2,.5) -- (2, 0);
	\draw[thick] (6, 0) -- (6,.75) -- (7,.75) -- (7, 0);
	\node[below] at (0, 0) {$L_b$};
	\draw[black, fill=black] (0,0) circle (0.05);	
	\node[below] at (1, 0) {$X_1$};
	\draw[black, fill=black] (1,0) circle (0.05);	
	\node[below] at (2, 0) {$X_2$};
	\draw[black, fill=black] (2,0) circle (0.05);	
	\node[below] at (3, 0) {$X_3$};
	\draw[black, fill=black] (5,0) circle (0.05);
	\node[below] at (4, 0) {$X_4$};
	\draw[black, fill=black] (4,0) circle (0.05);	
	\node[below] at (5, 0) {$X_5$};
	\draw[black, fill=black] (3,0) circle (0.05);
	\node[below] at (6, 0) {$X_6$};
	\draw[black, fill=black] (6,0) circle (0.05);	
	\node[below] at (7, 0) {$X_7$};
	\draw[black, fill=black] (7,0) circle (0.05);
\end{tikzpicture}
\longrightarrow
\begin{tikzpicture}[baseline]
	\draw[thick, dashed] (-.25,0) -- (5.25, 0);
	\draw[thick] (5, 0) -- (5,1) -- (0,1) -- (0, 0);
	\draw[thick] (3,0) -- (3,1);
	\node[below] at (0, 0) {$L_b$};
	\draw[black, fill=black] (0,0) circle (0.05);	
	\node[below] at (2.5, 0) {$L_{\kappa^D(X_1, X_2)} X_3$};
	\draw[black, fill=black] (5,0) circle (0.05);	
	\node[below] at (5.5, 0) {$L_{\kappa^D(X_4)}X_5L_{\kappa^D(X_6, X_7)}$};
	\draw[black, fill=black] (3,0) circle (0.05);
\end{tikzpicture}
\end{align*}
Hence
\begin{align*}
\sum_{\substack{ \sigma \in BNC(\chi) \\ \sigma \neq 1_\chi}} \kappa^D_\sigma(L_b, X_1, \ldots, X_n) &= F(b) \sum_{\sigma' \in BNC(\chi|_{2, \ldots, n+1})} \kappa^D_{\sigma'} (X_1, \ldots, X_n) \\ &= F(b) F(E(X_1 \cdots X_n)) = F(b) E(X_1 \cdots X_n).
\end{align*}
Hence $\kappa^D_\chi(L_b, X_1, \ldots, X_n) = 0$.  

A similar argument show that $\kappa^D_\chi(X_1, \ldots, X_n, L_b) = 0$.  Finally, suppose $m \in \{1,\ldots, n-1\}$.  We desire to show that $\kappa^D_\chi (X_1, \ldots, X_m, L_b, X_{m+1}, \ldots, X_n)  = 0$.  Note, by Proposition \ref{prop:vanishing-of-scalar-cumulants}, 
\[
\kappa^D_\chi (X_1, \ldots, X_m, L_b, X_{m+1}, \ldots, X_n)  = 0 \Longleftrightarrow \kappa^D_\chi (X_1, \ldots, X_m, L_{b- F(b)}, X_{m+1}, \ldots, X_n)  = 0.
\]
Hence we may assume that $F(b) = 0$.  Furthermore by the moment-cumulant formula (\ref{eq:mobius}) 
\begin{align*}
\kappa^D_\chi & (X_1, \ldots, X_m, L_b, X_{m+1}, \ldots, X_n) \\
&= F(E(X_1 \cdots X_m L_b X_{m+1} \cdots X_n)) - \sum_{\substack{ \sigma \in BNC(\chi) \\ \sigma \neq 1_\chi}} \kappa^D_\sigma(X_1, \ldots, X_m, L_b, X_{m+1}, \ldots, X_n) \\
&= \sum_{\pi \in BNC(\chi)} F\left(\kappa^B_\pi(X_1, \ldots, X_m, L_b, X_{m+1}, \ldots, X_n)\right) - \sum_{\substack{ \sigma \in BNC(\chi) \\ \sigma \neq 1_\chi}} \kappa^D_\sigma(X_1, \ldots, X_m, L_b, X_{m+1}, \ldots, X_n).
\end{align*}
Note if $\pi \in BNC(\chi)$ is such that $\{m+1\}$ is not a block of $\pi$, then $\kappa^B_\pi(X_1, \ldots, X_m, L_b, X_{m+1}, \ldots, X_n) = 0$ by Proposition \ref{prop:vanishing-of-scalar-cumulants} (and bi-multiplicative properties).  Similarly, if $\sigma \in BNC(\chi)$ is such that $\{m+1\}$ is not a block of $\sigma$, then, using the bi-multiplicative properties of the bi-free cumulants, $\kappa^D_\sigma(X_1, \ldots, X_m, L_b, X_{m+1}, \ldots, X_n) = 0$ by the inductive hypothesis.   Hence
\begin{align*}
\kappa^D_\chi & (X_1, \ldots, X_m, L_b, X_{m+1}, \ldots, X_n) \\
&= \sum_{\substack{ \sigma \in BNC(\chi) \\ \{m+1\} \text{ a block of } \sigma }} F\left(\kappa^B_\sigma(X_1, \ldots, X_m, L_b, X_{m+1}, \ldots, X_n)\right) -  \kappa^D_\sigma(X_1, \ldots, X_m, L_b, X_{m+1}, \ldots, X_n).
\end{align*}
Using $0 = F(b) = \kappa^D(b)$, one obtains
\[
\sum_{\substack{ \sigma \in BNC(\chi) \\ \{m+1\} \text{ a block of } \sigma }} \kappa^D_\sigma(X_1, \ldots, X_m, L_b, X_{m+1}, \ldots, X_n) = 0
\]
as bi-multiplicative properties produce a $\kappa^D(b)$ in the reduction thereby yielding zero.  

To see that 
\begin{align}
\label{eq:sum-to-show-0}
\sum_{\substack{ \sigma \in BNC(\chi) \\ \{m+1\} \text{ a block of } \sigma }} F\left(\kappa^B_\sigma(X_1, \ldots, X_m, L_b, X_{m+1}, \ldots, X_n)\right)
\end{align}
is zero, note if $\sigma \in BNC(\chi)$ is such that $\{m+1\}$ is a block of $\sigma$ and there exist $k_1 \in \{1,\ldots, m\}$ and $k_2 \in \{m+2, \ldots, n+1\}$ with $k_1 \sim_\sigma  k_2$, then, after applying bi-multiplicative properties, one obtains $\kappa^B_{\chi'}(X'_1, \ldots, X'_q, L_{b} X'_{q+1}, \ldots, X'_p)$ where $1 \leq q < p$, $\chi' : \{1, \ldots, p\} \to \{\ell, r\}$ is such that $\chi'(k) = \ell$ for all $k$, and $X'_k \in Z(D, I)$. 
Contemplating the following diagrams may aid in the comprehension of this argument.
\begin{align*}
\begin{tikzpicture}[baseline]
	\draw[thick, dashed] (-.25,0) -- (7.25, 0);
	\draw[thick] (5, 0) -- (5,1) -- (0,1) -- (0, 0);
	\draw[thick] (4,0) -- (4,1);
	\draw[thick] (6, 0) -- (6,.75) -- (7,.75) -- (7, 0);
	\node[below] at (0, 0) {$X_1$};
	\draw[black, fill=black] (0,0) circle (0.05);	
	\node[below] at (1, 0) {$X_2$};
	\draw[black, fill=black] (1,0) circle (0.05);	
	\node[below] at (2, 0) {$L_b$};
	\draw[black, fill=black] (2,0) circle (0.05);	
	\node[below] at (3, 0) {$X_3$};
	\draw[black, fill=black] (5,0) circle (0.05);
	\node[below] at (4, 0) {$X_4$};
	\draw[black, fill=black] (4,0) circle (0.05);	
	\node[below] at (5, 0) {$X_5$};
	\draw[black, fill=black] (3,0) circle (0.05);
	\node[below] at (6, 0) {$X_6$};
	\draw[black, fill=black] (6,0) circle (0.05);	
	\node[below] at (7, 0) {$X_7$};
	\draw[black, fill=black] (7,0) circle (0.05);
\end{tikzpicture}
\longrightarrow
\begin{tikzpicture}[baseline]
	\draw[thick, dashed] (-.25,0) -- (5.25, 0);
	\draw[thick] (5, 0) -- (5,1) -- (0,1) -- (0, 0);
	\draw[thick] (4,0) -- (4,1);
	\node[below] at (0.5, 0) {$X_1L_{\kappa^D(X_2)}$};
	\draw[black, fill=black] (0,0) circle (0.05);	
	\node[below] at (3, 0) {$L_b L_{\kappa^D(X_3)} X_4$};
	\draw[black, fill=black] (5,0) circle (0.05);	
	\node[below] at (5.5, 0) {$X_5L_{\kappa^D(X_6, X_7)}$};
	\draw[black, fill=black] (4,0) circle (0.05);
\end{tikzpicture}
\end{align*}
Consequently, $\kappa^B_\sigma(X_1, \ldots, X_m, L_b, X_{m+1}, \ldots, X_n) = 0$ for such $\sigma$ since Remark \ref{rem:property-preserved-products-by-D} implies the $X'_k$ satisfy equation (\ref{eq:cummulant-condition-0}) so $F(b) = 0$ implies $\kappa^B_{\chi'}(X'_1, \ldots, X'_q, L_b X'_{q+1}, \ldots, X'_p) = 0$.  

Therefore, in (\ref{eq:sum-to-show-0}), one need only consider $\sigma \in BNC(\chi)$ with $\{m+1\}$ a block of $\sigma$ and $k_1 \nsim_\sigma k_2$ for all $k_1 \in \{1,\ldots, m\}$ and $k_2 \in \{m+2, \ldots, n+1\}$.  By the moment-cumulant formula (\ref{eq:mobius}), the sum (\ref{eq:sum-to-show-0}) then becomes
\[
F(E(X_1\cdots X_m) b E(X_{m+1} \cdots X_n)) = E(X_1 \cdots X_m)F(b) E(X_{m+1} \cdots X_n) = 0.
\]
Hence $\kappa^D_\chi (X_1, \ldots, X_m, L_b, X_{m+1}, \ldots, X_n) = 0$ thereby completing the inductive step of the proof.
\end{proof}

\begin{lem}
\label{lem:base-case-2}
Let $Z = \{Z_i\}_{i \in I} \sqcup \{Z_j\}_{j \in J}$ satisfy equation (\ref{eq:cummulant-condition-0}).  For all $n \geq 2$, for all $\chi : \{1,\ldots, n\} \to \{\ell, r\}$ such that $\chi(k) = \ell$ for all $k$, and for all $X_1, \ldots, X_n \in \epsilon(B \otimes 1_B) \cup Z(D, I)$, we have
\[
\kappa^D_\chi(X_1, \ldots, X_n) = 0
\]
provided at least one $X_k$ is an element of $\epsilon(B \otimes 1_B)$ and at least one $X_k$ is an element of $Z(D, I)$.
\end{lem}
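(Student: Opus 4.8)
The plan is to imitate the proof of Lemma~\ref{lem:base-case-1}, arguing by induction on $n$ with Lemma~\ref{lem:base-case-1} serving both as the base case and as part of the inductive hypothesis. For $n=2$ the hypothesis forces exactly one of $X_1,X_2$ to lie in $\varepsilon(B\otimes 1_B)$, so the claim is Lemma~\ref{lem:base-case-1}; in the inductive step, if exactly one $X_k$ lies in $\varepsilon(B\otimes 1_B)$ we again invoke Lemma~\ref{lem:base-case-1}, so we may assume $n\geq 3$ and that at least two of the $X_k$ lie in $\varepsilon(B\otimes 1_B)$ (and at least one in $Z(D,I)$). First I would normalise the $\varepsilon(B\otimes 1_B)$-operators: applying Proposition~\ref{prop:vanishing-of-scalar-cumulants} in the $D$-$D$-non-commutative probability space $(\A,F\circ E,\varepsilon|_{D\otimes D^\op})$ to $L_{b}=L_{b-F(b)}+L_{F(b)}$ (with $L_{F(b)}\in\varepsilon(D\otimes 1_D)$ and $n\geq 2$), multilinearity lets us assume $F(b_k)=0$ whenever $X_k=L_{b_k}\in\varepsilon(B\otimes 1_B)$; by the module identity $F(d_1bd_2)=d_1F(b)d_2$ this normalisation survives the absorptions of $\varepsilon(D\otimes 1_D)$-operators produced by the bi-multiplicative reductions of $\kappa^B$ and $\kappa^D$ in Definition~\ref{defn:bi-multiplicative}.

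Next, exactly as in Lemma~\ref{lem:base-case-1}, I would combine the $D$-valued moment--cumulant formula (\ref{eq:mobius}) with $E^D_{1_\chi}(X_1,\ldots,X_n)=F(E(X_1\cdots X_n))=\sum_{\rho\in BNC(\chi)}F\big(\kappa^B_\rho(X_1,\ldots,X_n)\big)$ to obtain
\[
\kappa^D_\chi(X_1,\ldots,X_n)=\sum_{\rho\in BNC(\chi)}F\big(\kappa^B_\rho(X_1,\ldots,X_n)\big)-\sum_{\substack{\pi\in BNC(\chi)\\ \pi\neq 1_\chi}}\kappa^D_\pi(X_1,\ldots,X_n).
\]
In the first sum, Proposition~\ref{prop:vanishing-of-scalar-cumulants} annihilates every $\rho$ in which an $\varepsilon(B\otimes 1_B)$-position lies in a block of size $\geq 2$; for the surviving $\rho$ (those in which each $L_{b_k}$ is a singleton block) bi-multiplicativity absorbs each $L_{b_k}$ into a neighbouring block, and equation (\ref{eq:cummulant-condition-0}) together with $F(b_k)=0$ forces the term to vanish as soon as some $L_{b_k}$ is absorbed, surrounded only by $\varepsilon(D\otimes 1_D)$-data, into a block of $Z(D,I)$-operators — this is precisely the way $F(b)=0$ is exploited in Lemma~\ref{lem:base-case-1}. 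In the second sum, the inductive hypothesis annihilates every $\pi\neq 1_\chi$ possessing a block (necessarily of size $<n$) containing both an $\varepsilon(B\otimes 1_B)$-operator and a $Z(D,I)$-operator — here Remark~\ref{rem:property-preserved-products-by-D} keeps the absorbed $Z(D,I)$-operators inside $Z(D,I)$ — while Proposition~\ref{prop:vanishing-of-scalar-cumulants} with $F(b_k)=0$ annihilates every $\pi$ having a singleton $\varepsilon(B\otimes 1_B)$-block. Thus the second sum collapses to those $\pi$ all of whose blocks are ``pure'' and whose $\varepsilon(B\otimes 1_B)$-positions form pure blocks of size $\geq 2$.

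The crux — and what I expect to be the main obstacle — is to verify that the two surviving sums coincide, giving $\kappa^D_\chi(X_1,\ldots,X_n)=0$. Writing $d_0,d_1,\ldots\in D$ for the $E$-moments of the maximal runs of $Z(D,I)$-operators (these lie in $D$ by Remark~\ref{rem:ex-D-valued}), bi-multiplicativity reduces each surviving $\kappa^B_\rho$ and each surviving $\kappa^D_\pi$ to a sum of products of such $d_i$'s, of factors $F(\cdots b_k\cdots b_{k'}\cdots)$ coming from $\varepsilon(B\otimes 1_B)$-operators that get clustered into a common block, and of $D$-valued cumulants $\kappa^D_{Z,\omega}$ of $Z(D,I)$-operators (via (\ref{eq:cummulant-condition-0}) and Theorem~\ref{thm:cumulants-D-valued}), and one checks that the two resulting expressions are equal. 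This matching is the precise analogue — now with several $\varepsilon(B\otimes 1_B)$-operators in play rather than a single one — of the cancellation $F\big(E(X_1\cdots X_m)\,b\,E(X_{m+1}\cdots X_n)\big)=E(X_1\cdots X_m)F(b)E(X_{m+1}\cdots X_n)=0$ that closes the proof of Lemma~\ref{lem:base-case-1}.

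Finally, I would record a shortcut in the subcase that two of the $\varepsilon(B\otimes 1_B)$-operators, say $X_p=L_b$ and $X_{p+1}=L_{b'}$, are $\chi$-adjacent: combining $L_bL_{b'}=L_{bb'}$ and applying Theorem~\ref{thm:products}, the left-hand side is a $D$-cumulant with one fewer $\varepsilon(B\otimes 1_B)$-operator but still mixed, hence zero by the inductive hypothesis (or zero by Proposition~\ref{prop:vanishing-of-scalar-cumulants} if $bb'\in D$), while each correction term $\kappa^D_\sigma(X_1,\ldots,X_n)$ on the right has $\sigma$ with exactly two blocks separating $p$ and $p+1$, so at least one of those blocks is either a singleton $\varepsilon(B\otimes 1_B)$-block (killed by $F(b)=0$) or a mixed block of size $<n$ (killed by the inductive hypothesis); thus $\kappa^D_\chi(X_1,\ldots,X_n)=0$ in that subcase. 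Only the situation in which no two $\varepsilon(B\otimes 1_B)$-operators are $\chi$-adjacent genuinely requires the full bookkeeping of the previous two paragraphs.
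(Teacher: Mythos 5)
Your setup matches the paper's: expand $\kappa^D_\chi$ via the moment--cumulant formulae, use Proposition~\ref{prop:vanishing-of-scalar-cumulants} to discard all $\kappa^B_\rho$ in which some $\varepsilon(B\otimes 1_B)$-position is not a singleton, and use the inductive hypothesis (together with bi-multiplicativity and Remark~\ref{rem:property-preserved-products-by-D}) to discard all $\kappa^D_\sigma$, $\sigma\neq 1_\chi$, possessing a mixed block. The normalisation $F(b_k)=0$ and the adjacency shortcut are both legitimate. But the proof stops exactly where the lemma's real content lies: you must show that the two surviving sums, $\sum_{\rho\in\Theta}F\bigl(\kappa^B_\rho(X_1,\ldots,X_n)\bigr)$ and $\sum_{\sigma\in\Omega}\kappa^D_\sigma(X_1,\ldots,X_n)$, are equal, and your proposal only asserts that ``one checks'' this. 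The normalisation does not make these terms vanish individually: after the singleton $L_{b_k}$'s are absorbed, the resulting multipliers are products such as $b_{k_1}\,E(X'_{p+1}\cdots X'_{q-1})\,b_{k_2}$, and $F(b_{k_1} d\, b_{k_2})$ need not vanish even though $F(b_{k_1})=F(b_{k_2})=0$; likewise the pure $\varepsilon(B\otimes 1_B)$-blocks of size at least two in the $\Omega$-sum contribute nonzero $\kappa^D$-values. So a genuine term-by-term correspondence between the two families of partitions is unavoidable, and nothing in your outline constructs it.

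That correspondence is precisely what the paper's three ``operations'' carry out: first collapsing every maximal run of $Z(D,I)$-operators not linked outside its interval into a single $D$-valued moment (using Remark~\ref{rem:ex-D-valued}, so the same $L_{E(\cdots)}=L_{F(\cdots)}$ appears on both sides), then reducing to partitions in which every $Z(D,I)$-position is linked outside its interval, and finally peeling off full blocks on $Z(D,I)$-intervals with interspersed singleton $L_b$'s, where equation~(\ref{eq:cummulant-condition-0}) is invoked through the identity $\kappa^B_{\chi_{q,0}}(X'_{k_1},L_{b'_1}X'_{k_2},\ldots)=\kappa^D_{\chi_{q,0}}(X'_{k_1},L_{F(b'_1)}X'_{k_2},\ldots)$ on the $\Theta$-side and the summation $\sum_{\tau\in BNC(k_{p+1}-k_p-1,0)}\kappa^D_\tau(b_{k_p+1},\ldots,b_{k_{p+1}-1})=F(b'_p)$ on the $\Omega$-side, until both sums reduce to the purely scalar identity $F(E(L_{b'_1})\cdots E(L_{b'_{m'}}))=F(E(L_{b'_1}\cdots L_{b'_{m'}}))$. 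Without an argument of this kind (or some replacement for it), your induction on $n$ proves only that the problematic terms have been isolated, not that they cancel; the proposal is therefore incomplete at its crux, as you yourself anticipated.
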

\begin{proof}
Let $m$ be the number of $X_k$ that are elements of $\epsilon(B \otimes 1_B)$ (so $m \geq 1$).  We will proceed by induction on $m$ with the case $m = 1$ complete by Lemma \ref{lem:base-case-1}.

For the inductive step, suppose the result holds for some $m \geq 1$.  The inductive step will proceed by induction on $n-m \geq 1$.  Note we will demonstrate the base case and the inductive step simultaneously as the arguments are similar.

For notational purposes, let
\[
\theta(X_1, \ldots, X_n) = \{k \, \mid \, X_k \in \varepsilon(B \otimes 1_B)\}.
\]
Notice that
\begin{align*}
\kappa^D_\chi(X_1, \ldots, X_n) &= F(E(X_1 \cdots X_n)) - \sum_{\substack{ \sigma \in BNC(\chi) \\ \sigma \neq 1_\chi}} \kappa^D_\sigma(X_1, \ldots, X_n) \\
&= \sum_{\pi \in BNC(\chi)} F\left(\kappa^B_\pi(X_1, \ldots, X_n)\right) - \sum_{\substack{ \sigma \in BNC(\chi) \\ \sigma \neq 1_\chi}} \kappa^D_\sigma(X_1, \ldots, X_n).
\end{align*}

Note if $\pi \in BNC(\chi)$ then $\kappa^B_\pi(X_1, \ldots, X_n) = 0$ by Proposition \ref{prop:vanishing-of-scalar-cumulants} unless $\{k\}$ is a block of $\pi$ for each $k \in \theta(X_1, \ldots, X_n)$.  Furthermore, we claim that the induction hypotheses imply $\kappa^D_\sigma(X_1, \ldots, X_n) = 0$ whenever $\sigma \in BNC(\chi)$ is such that $\sigma \neq 1_\chi$ and there exist $k \in \theta(X_1, \ldots, X_n)$ and $k' \in \{1,\ldots, n\} \setminus\theta(X_1, \ldots, X_n)$ with $k \sim_\pi k'$.  Indeed, in the base case, $k'$ is the only index such that $X_{k'} \in Z(D, I)$, so, as $\sigma \neq 1_\chi$, the inductive hypothesis on $m$ (along with bi-multiplicative properties if necessary) imply $\kappa^D_\sigma(X_1, \ldots, X_n) = 0$ (see Lemma \ref{lem:S-lem-1} for examples of how to reduce using bi-multiplicative properties).  Otherwise, if $V$ is the block containing $k'$ and some element of $\theta(X_1, \ldots, X_n)$, then either the cardinality of $\theta(X_1, \ldots, X_n) \cap V$ is less than $m$ so the inductive hypothesis on $m$ (along with bi-multiplicative properties if necessary) imply $\kappa^D_\sigma(X_1, \ldots, X_n) = 0$, or the cardinality of $\theta(X_1, \ldots, X_n) \cap V$ is $m$ and there are less than $n-m$ elements of $\{1,\ldots, n\} \setminus \theta(X_1, \ldots, X_n)$ in $V$ so the inductive hypothesis on $n-m$ (along with bi-multiplicative properties if necessary) imply $\kappa^D_\sigma(X_1, \ldots, X_n) = 0$. 

 Consequently 
\begin{align*}
\kappa^D_\chi(X_1, \ldots, X_n) 
&= \sum_{\pi \in \Theta(X_1, \ldots, X_n)} F\left(\kappa^B_\pi(X_1, \ldots, X_n)\right) - \sum_{\sigma \in \Omega(X_1, \ldots, X_n)} \kappa^D_\sigma(X_1, \ldots, X_n).
\end{align*}
where
\begin{align*}
\Theta(X_1, \ldots, X_n) &:= \{\pi \in BNC(\chi) \, \mid \, \{k\} \text{ a block of }\chi \text{ for all } k \in \theta(X_1, \ldots, X_n) \} \quad \text{and}\\
\Omega(X_1, \ldots, X_n) &:= \{\sigma \in BNC(\chi) \, \mid \,  k \nsim_\sigma k' \text{ for all } k \in \theta(X_1, \ldots, X_n) \text{ and } k' \notin \theta(X_1, \ldots, X_n)\}.
\end{align*}
For all $\pi \in \Theta(X_1, \ldots, X_n)$, the partitions $\sigma \in \Omega(X_1, \ldots, X_n)$ such that 
\[
\sigma|_{\{1,\ldots, n\} \setminus  \theta(X_1, \ldots, X_n)} = \pi|_{\{1,\ldots, n\} \setminus \theta(X_1, \ldots, X_n)}
\]
will be called the partitions of $\Omega(X_1, \ldots, X_n)$ corresponding to $\pi$.

The proof will proceed to show 
\begin{align}
&\sum_{\pi \in \Theta(X_1, \ldots, X_n)} F\left(\kappa^B_\pi(X_1, \ldots, X_n)\right) \quad \text{ and}\label{sum:1} \\
&\sum_{\sigma \in \Omega(X_1, \ldots, X_n)} \kappa^D_\sigma(X_1, \ldots, X_n)\label{sum:2}
\end{align}
are equal.
This will be accomplished via three operations.  
Each operation will enable us to restrict the partitions one needs to consider in sums (\ref{sum:1}) and (\ref{sum:2}) at the cost of having sums of (\ref{sum:1}) and (\ref{sum:2}) with these restricted partitions and at the cost of having to change the sequence $(X_1, \ldots, X_n)$ to a new sequence $(X'_1, \ldots, X_{n'})$ for each sum.  
The proof will eventually reach the case of sums of identical $B$-values obtained by the third operation with sums of (\ref{sum:1}) and (\ref{sum:2}) where $(X'_1, \ldots, X'_{n'})$ has the property that $X'_k \in \varepsilon(B \otimes 1_B)$ for all $k$ (with the full $\Theta(X'_1, \ldots, X'_{n'})$ and $\Omega(X'_1, \ldots, X'_n)$).  The proof will then be complete since if $(X'_1, \ldots, X'_{n'}) = (L_{b'_1}, \ldots, L_{b'_{m'}})$, then
\begin{align*}
\sum_{\pi \in \Theta\left(L_{b'_1}, \ldots, L_{b'_{m'}}\right)} F\left(\kappa^B_\pi\left(L_{b'_1}, \ldots, L_{b'_{m'}}\right)\right) = F\left(E\left(L_{b'_1}\right) \cdots E\left(L_{b'_{m'}}\right)\right) = F(b'_1 \cdots b'_{m'})
\end{align*}
as $\Theta\left(L_{b'_1}, \ldots, L_{b'_{m'}}\right) = \{0_{m'}\}$ whereas
\begin{align*}
\sum_{\sigma \in \Omega\left(L_{b'_1}, \ldots, L_{b'_{m'}}\right)} \kappa^D_\sigma\left(L_{b'_1}, \ldots, L_{b'_{m'}}\right) = F\left(E\left(L_{b'_1} \cdots L_{b'_{m'}}\right)\right) =  F(b'_1 \cdots b'_{m'})
\end{align*}
as $\Omega\left(L_{b'_1}, \ldots, L_{b'_{m'}}\right) = BNC(m',0)$.

The first operation will enable us to restrict the partitions in (\ref{sum:1}) and (\ref{sum:2}) to partitions with the property that each interval of entries from $Z(D, I)$ that is surrounded by elements from $\epsilon(B \otimes 1_B)$ (or the ends of the sequence) has one element that is equivalent via the partition to an element outside of this interval.  The following diagram illustrates an example of a partition this first operation remove from sums (\ref{sum:1}) and (\ref{sum:2}) where $\tau$ and $\pi'$ are some bi-non-crossing partitions and where the $L_b$ and $L_{b'}$ may be connected in sum (\ref{sum:2}) (the goal is to exterminate $\tau$ leaving only $\pi'$, $L_b$, and $L_{b'}$):
\begin{align*}
\begin{tikzpicture}[baseline]
	\draw[thick, dashed] (-.25,0) -- (8.25, 0);
	\node[below] at (2, 0) {$L_b$};
	\draw[black, fill=black] (2,0) circle (0.05);	
	\node[below] at (3, 0) {$Z_{i_1}$};
	\draw[black, fill=black] (3,0) circle (0.05);
	\node[below] at (4, 0) {$Z_{i_2}$};
	\draw[black, fill=black] (4,0) circle (0.05);	
	\node[below] at (5, 0) {$Z_{i_3}$};
	\draw[black, fill=black] (5,0) circle (0.05);
	\node[below] at (6, 0) {$L_{b'}$};
	\draw[black, fill=black] (6,0) circle (0.05);	
	\draw[thick] (4,.5) ellipse (1cm and .25cm);
	\draw[thick] (3, 0) -- (3, .5);
	\draw[thick] (5, 0) -- (5, .5);
	\draw[thick] (4, 0) -- (4, .25);
	\node[below] at (4, .7) {$\tau$};
	\node[below] at (4, 2) {$\pi'$};
	\draw[thick] (4,1.75) ellipse (1.5cm and .5cm);
	\draw[thick] (1,0) -- (1, 1.5) -- (2.7, 1.5);
	\draw[thick] (7,0) -- (7, 1.5) -- (5.3, 1.5);
	\draw[thick] (0,0) -- (0, 2) -- (2.7, 2);
	\draw[thick] (8,0) -- (8, 2) -- (5.3, 2);
	\draw[black, fill=black] (.25,0) circle (0.05);	
	\draw[black, fill=black] (.5,0) circle (0.05);	
	\draw[black, fill=black] (.75,0) circle (0.05);	
	\draw[black, fill=black] (7.25,0) circle (0.05);	
	\draw[black, fill=black] (7.5,0) circle (0.05);	
	\draw[black, fill=black] (7.75,0) circle (0.05);	
\end{tikzpicture}
\end{align*}

To begin the process of this first operation, choose $p, q \in \theta(X'_1, \ldots, X'_{n'}) \cup \{0, n'+1\}$ such that $p+1 < q$ and $\{p+1, \ldots, q-1\} \cap \theta(X'_1, \ldots, X'_{n'}) = \emptyset$.  For each 
\[
\pi' \in \left\{
\begin{array}{ll}
\Theta(X'_1, \ldots, X'_p, X'_{q},\ldots, X'_{n'}) & \mbox{if } p \neq 0, q \neq n'+1  \\
\Theta(X'_{q},\ldots, X'_{n'}) & \mbox{if } p = 0, q \neq n'+1 \\
\Theta(X'_1, \ldots, X'_p) & \mbox{if } p \neq 0, q = n'+1
\end{array} \right.
\]
(the case $p = 0$ and $q=n'+1$ does not occur), consider $\pi'$ as a bi-non-crossing partition on $\{1, \ldots, p, q+1, \ldots, n'\}$ in the natural way.  We claim we can reduce parts of the sums (\ref{sum:1}) and (\ref{sum:2}) by adding over all $\pi \in \Theta(X'_1, \ldots, X'_{n'})$ and $\sigma \in \Omega(X'_1, \ldots, X'_{n'})$ such that $k \nsim_\pi k'$ and $k \nsim_\sigma k'$ for all $k \in \{p+1, \ldots, q-1\}$ and $k' \notin \{p+1, \ldots, q-1\}$, and such that
\[
\pi|_{\{1, \ldots, p, q+1, \ldots, n'\}} = \sigma|_{\{1, \ldots, p, q+1, \ldots, n'\} \setminus \theta(X'_1, \ldots, X'_{n'})} = \pi'.
\]
Indeed, using the moment-cumulant equation (\ref{eq:mobius}), summing in (\ref{sum:1}) over such $\pi \in \Theta(X'_1, \ldots, X'_{n'})$ produces
\begin{align*}
&F\left(\kappa^B_{\pi'}\left(X'_1, \ldots, X'_p, L_{E(X'_{p+1} \cdots X'_{q-1})} X'_q, X'_{q+1}, \ldots, X_{n'}\right)\right) & & \text{ if } p \neq 0, q \neq n'+1 \\
&F\left(\kappa^B_{\pi'}\left(L_{E(X'_{p+1} \cdots X'_{q-1})} X'_q, X'_{q+1}, \ldots, X'_{n'}\right)\right) & & \text{ if } p =0, q \neq n'+1, \text{ and }\\
&F\left(\kappa^B_{\pi'}\left(X'_1, \ldots, X'_p L_{E(X'_{p+1} \cdots X'_{q-1})}\right)\right) & & \text{ if } p \neq 0, q = n'+1,
\end{align*}
and, if $V = \{1, \ldots, n - (q-p)\} \setminus \theta(X'_1, \ldots, X'_p, X'_{q},\ldots, X'_n)$, summing in (\ref{sum:2}) over such $\sigma \in \Omega(X'_1, \ldots, X'_{n'})$ produces
\begin{align*}
&\sum_{\substack{\sigma' \in \Omega(X'_1, \ldots, X'_p, X_{q},\ldots, X'_n) \\ \sigma'|_{V} = \pi'}} \kappa^D_{\sigma'}\left(X'_1, \ldots, X'_p, L_{F(X'_{p+1} \cdots X'_{q-1})} X'_q, X'_{q+1}, \ldots, X'_{n'}\right) & & \text{ if } p \neq 0, q \neq n'+1 \\
&\sum_{\substack{\sigma' \in \Omega(X'_{q},\ldots, X'_{n'}) \\ \sigma'|_{\{1, \ldots, n' - (q-p)\} \setminus \theta(X'_{q},\ldots, X'_{n'})} = \pi'}} \kappa^D_{\sigma'}\left(L_{F(X'_{p+1} \cdots X'_{q-1})} X'_q, X'_{q+1}, \ldots, X'_{n'}\right) & & \text{ if } p = 0, q \neq n'+1,  \text{ and}\\
&\sum_{\substack{\sigma' \in \Omega(X'_1, \ldots, X'_p) \\ \sigma'|_{\{1, \ldots, n' - (q-p)\} \setminus \theta(X'_1, \ldots, X'_p)} = \pi'}} \kappa^D_{\sigma'}\left(X'_1, \ldots, X'_p L_{F(X'_{p+1} \cdots X'_{q-1})}\right) & &  \text{ if } p \neq 0, q = n'+1.
\end{align*}
Since $X'_k \in Z(D, I)$ for all $k \in \{p+1, \ldots, q-1\}$ as $\{p+1, \ldots, q-1\} \cap \theta(X'_1, \ldots, X'_{n'}) = \emptyset$, 
\[
F(X'_{p+1} \cdots X'_{q-1}) = E(X'_{p+1} \cdots X'_{q-1})
\]
by Remark \ref{rem:ex-D-valued}.  Consequently, since 
\[
L_{E(X'_{p+1} \cdots X'_{q-1})} X'_q, X'_p L_{E(X'_{p+1} \cdots X'_{q-1})} \in \varepsilon(B \otimes 1_B),
\]
since this holds for all $\pi'$, and since distinct $\pi'$ require distinct $\pi$ and $\sigma$, by progressively applying this operation (i.e. applying it to each such interval one at a time) it suffices to show sums (\ref{sum:1}) and (\ref{sum:2}) agree when one restricts to those $\pi \in \Theta(X'_1, \ldots, X'_{n'})$ and $\sigma \in \Omega(X'_1, \ldots, X'_{n'})$ where if $p, q \in \theta(X'_1, \ldots, X'_{n'}) \cup \{0, n'+1\}$ are such that $p+1 < q$ and $\{p+1, \ldots, q-1\} \cap \theta(X'_1, \ldots, X'_{n'}) = \emptyset$, then there exist  $k_1, k_2 \in \{p+1, \ldots, q-1\}$ and $k'_1, k'_2 \notin \{p+1, \ldots, q-1\}$ such that $k_1 \sim_\pi k'_1$ and $k_2 \sim_\sigma k'_2$.  Note this operation will complete the base case of this inductive step (i.e. when there is exactly one element of $Z(D,I)$ in the sequence).

The second operation used to restrict the partitions that need to be considered in sums (\ref{sum:1}) and (\ref{sum:2}) will enable us to reduce to partitions $P$ where if 
\[
\theta(X'_1, \ldots, X'_{n'}) = \{k_1 < k_2 < \cdots < k_{m'}\}
\]
and
\[
V_P = \{k \in \{1,\ldots, n'\} \setminus \theta(X'_1, \ldots, X'_{n'}) \, \mid \, \text{ if }k_q < k < k_{q+1} \text{ then there exists }k' \notin [k_q, k_{q+1}]\text{ with }k' \sim_P k \}
\]
(where $k_0 = 0$ and $k_{m'+1} = n'+1$), then $V_P = \{1,\ldots, n'\} \setminus \theta(X'_1, \ldots, X'_{n'})$.  The following diagram illustrates an example of a partition this second operation removes from sums (\ref{sum:1}) and (\ref{sum:2}) where $\tau_1$, $\tau_2$, and $\pi'$ are some bi-non-crossing partitions (the goal is to exterminate $\tau_1$ and $\tau_2$; this also includes the case that $L_b$ and $\tau_2$ are absent):
\begin{align*}
\begin{tikzpicture}[baseline]
	\draw[thick, dashed] (-.25,0) -- (8.25, 0);
	\node[below] at (2, 0) {$Z_{i_1}$};
	\draw[black, fill=black] (2,0) circle (0.05);	
	\node[below] at (3, 0) {$Z_{i_2}$};
	\draw[black, fill=black] (3,0) circle (0.05);
	\node[below] at (4, 0) {$L_b$};
	\draw[black, fill=black] (4,0) circle (0.05);	
	\node[below] at (5, 0) {$Z_{i_3}$};
	\draw[black, fill=black] (5,0) circle (0.05);
	\node[below] at (6, 0) {$Z_{i_4}$};
	\draw[black, fill=black] (6,0) circle (0.05);	
	\draw[thick] (2.5,.5) ellipse (1cm and .25cm);
	\draw[thick] (5.5,.5) ellipse (1cm and .25cm);
	\draw[thick] (2, 0) -- (2, .29);
	\draw[thick] (3, 0) -- (3, .29);
	\draw[thick] (5, 0) -- (5, .29);
	\draw[thick] (6, 0) -- (6, .29);
	\node[below] at (2.5, .7) {$\tau_1$};
	\node[below] at (5.5, .7) {$\tau_2$};
	\node[below] at (4, 2) {$\pi'$};
	\draw[thick] (4,1.75) ellipse (1.5cm and .5cm);
	\draw[thick] (1,0) -- (1, 1.5) -- (2.7, 1.5);
	\draw[thick] (7,0) -- (7, 1.5) -- (5.3, 1.5);
	\draw[thick] (0,0) -- (0, 2) -- (2.7, 2);
	\draw[thick] (8,0) -- (8, 2) -- (5.3, 2);
	\draw[black, fill=black] (.25,0) circle (0.05);	
	\draw[black, fill=black] (.5,0) circle (0.05);	
	\draw[black, fill=black] (.75,0) circle (0.05);	
	\draw[black, fill=black] (7.25,0) circle (0.05);	
	\draw[black, fill=black] (7.5,0) circle (0.05);	
	\draw[black, fill=black] (7.75,0) circle (0.05);	
\end{tikzpicture}
\end{align*}

To describe this second operation, for a fixed $\pi_0 \in \Theta(X'_1, \ldots, X'_{n'})$ with $V_{\pi_0} \neq \{1,\ldots, n'\} \setminus \theta(X'_1, \ldots, X'_{n'})$, if we sum over all $\pi \in \Theta(X'_1, \ldots, X'_{n'})$ with $V_\pi = V_{\pi_0}$, we obtain $F(\kappa^B_{\pi'}(X''_1, \ldots, X''_{n''}))$ where 
\[
\pi' = \pi_0|_{V_{\pi_0} \cup \theta(X'_1, \ldots, X'_{n'})} \in \Theta(X''_1, \ldots, X''_{n''})
\]
and $(X''_1, \ldots, X''_{n''})$ is obtained from $(X'_1, \ldots, X'_{n'})$ by multiplying certain $X'_k  \in Z(D, I)$ by $L_{E(X'_{p+1} \cdots X'_{q-1})}$ for some $X'_{p+1}, \ldots, X'_{q-1} \in Z(D, I)$ (which are then elements of $Z(D, I)$).  Similarly, if we sum over all $\sigma \in \Omega(X'_1, \ldots, X'_{n'})$ with $V_\sigma = V_{\pi_0}$, we obtain the sum of all $\kappa^D_{\sigma'}(X''_1, \ldots, X''_{n''})$ where 
\[
\sigma' = \sigma|_{V_{\pi_0} \cup \theta(X'_1, \ldots, X'_{n'})} \in \Omega(X''_1, \ldots, X''_{n''})
\]
since $L_{E(X'_{p+1} \cdots X'_{q-1})} = L_{F(X'_{p+1} \cdots X'_{q-1})}$ for all $X'_{p+1}, \ldots, X'_{q-1} \in Z(D, I)$ that one need consider (by applying the same bi-multiplicative properties in the same orders).  One then sees that 
\[
V_{\pi'} = \{1, \ldots, n''\} \setminus \theta(X''_1, \ldots, X''_{n''})
\]
 and all $\sigma'$ corresponding to $\pi'$ occur (precisely once as we vary $\pi'$).   Consequently, the second operation is complete.

Using the above two operations, in sums (\ref{sum:1}) and (\ref{sum:2}) we need only consider $\pi \in \Theta(X'_1, \ldots, X'_{n'})$ and $\sigma \in \Omega(X'_1, \ldots, X'_{n'})$ such that $V_\pi = V_\sigma = \{1,\ldots, n'\} \setminus \theta(X'_1, \ldots, X'_{n'})$. Our third and final operation will be progressively used to complete the proof.  For a fixed $\pi \in \Theta(X'_1, \ldots, X'_{n'})$, there must be a block $W_\pi = \{k_1 < k_2 < \cdots < k_q\}$ of $\pi$ such that $X'_{k_p} \in  Z(D, I)$ for all $p$ and if $k_p < k < k_{p+1}$, then $\{k\}$ is a block of $\pi$ and $X'_k = L_{b_k}$ for some $b_k \in B$ (i.e. a full partition on an interval of elements of $Z(D,I)$ with singleton elements of $\varepsilon(B \otimes 1_B)$ inserted). 
The following diagram illustrates an example of a partition this third operation reduces in sums (\ref{sum:1}) and (\ref{sum:2}) where $\pi'$ is some bi-non-crossing partition.  The goal is remove the full partition on $Z_{i_1}, \ldots, Z_{i_6}$ with any $L_b$ in between. Note the $L_{b_k}$ may be absent and $L_{b_1}$ and $L_{b_2}$ may be connected in the $\Omega(X'_1, \ldots, X'_{n'})$ term.  Furthermore, $\pi'$ may only connect to one side or be absent.  If $\pi'$ is absent, this operation will complete the proof of equality for this portion of the sums.
\begin{align*}
\begin{tikzpicture}[baseline]
	\draw[thick, dashed] (-.25,0) -- (12.25, 0);
	\node[below] at (2, 0) {$Z_{i_1}$};
	\draw[black, fill=black] (2,0) circle (0.05);	
	\node[below] at (3, 0) {$L_{b_1}$};
	\draw[black, fill=black] (3,0) circle (0.05);
	\node[below] at (4, 0) {$L_{b_2}$};
	\draw[black, fill=black] (4,0) circle (0.05);	
	\node[below] at (5, 0) {$Z_{i_2}$};
	\draw[black, fill=black] (5,0) circle (0.05);
	\node[below] at (6, 0) {$Z_{i_3}$};
	\draw[black, fill=black] (6,0) circle (0.05);	
	\node[below] at (7, 0) {$Z_{i_4}$};
	\draw[black, fill=black] (7,0) circle (0.05);	
	\node[below] at (8, 0) {$L_{b_3}$};
	\draw[black, fill=black] (8,0) circle (0.05);	
	\node[below] at (9, 0) {$Z_{i_5}$};
	\draw[black, fill=black] (9,0) circle (0.05);	
	\node[below] at (10, 0) {$Z_{i_6}$};
	\draw[black, fill=black] (10,0) circle (0.05);	
	\node[below] at (6, 2) {$\pi'$};
	\draw[thick] (6,1.75) ellipse (1.5cm and .5cm);
	\draw[thick] (1,0) -- (1, 1.5) -- (4.7, 1.5);
	\draw[thick] (11,0) -- (11, 1.5) -- (7.3, 1.5);
	\draw[thick] (0,0) -- (0, 2) -- (4.7, 2);
	\draw[thick] (12,0) -- (12, 2) -- (7.3, 2);
	\draw[black, fill=black] (.25,0) circle (0.05);	
	\draw[black, fill=black] (.5,0) circle (0.05);	
	\draw[black, fill=black] (.75,0) circle (0.05);	
	\draw[black, fill=black] (11.25,0) circle (0.05);	
	\draw[black, fill=black] (11.5,0) circle (0.05);	
	\draw[black, fill=black] (11.75,0) circle (0.05);	
	\draw[thick] (2,0) -- (2, .75) -- (10, .75) -- (10, 0);
	\draw[thick] (5,0) -- (5, .75);
	\draw[thick] (6,0) -- (6, .75);
	\draw[thick] (7,0) -- (7, .75);
	\draw[thick] (9,0) -- (9, .75);
	\draw[thick] (10,0) -- (10, .75);
\end{tikzpicture}
\end{align*}

Note if $W_1 = \{k_1, k_1 + 1, \ldots, k_q\}$, then $W_1$ must be union of blocks of $\pi$ and
\[
\kappa^B_{\pi|_{W_1}}((X'_1, \ldots, X'_{n'})|_{W_1}) = \kappa^B_{\chi_{q,0}}(X'_{k_1}, L_{b'_1} X'_{k_2}, \ldots, L_{b'_{q-1}} X'_{k_q})
\]
where (if $X_k = L_{b_k}$) $b'_p = b_{k_p + 1} b_{k_p+2} \cdots b_{k_{p+1}-1} \in B$ (or $b'_p = 1$ when $k_p +1 = k_{p+1}$).  However, by the assumptions of the lemma along with Remark \ref{rem:property-preserved-products-by-D}, we obtain that
\[
\kappa^B_{W_1}((X'_1, \ldots, X'_{n'})|_{W_1}) = \kappa^D_{\chi_{q,0}}\left(X'_{k_1}, L_{F(b'_1)} X'_{k_2}, \ldots, L_{F(b'_{q-1})} X'_{k_q}\right)
\]
as $X'_{k_p} \in  Z(D, I)$.  Consequently, we get a single $B$-value if $W_1 = \{1, \ldots, n'\}$ and otherwise we may replace $\kappa^B_\pi(X'_1, \ldots, X'_{n'})$ in sum (\ref{sum:1}) with a new sequence $(X''_1, \ldots, X''_{n''})$ where indices corresponding to $W_1$ are removed and one operator is multiplied by 
\[
L_{\kappa^D_{\chi_{q,0}}(X'_{k_1}, L_{F(b'_1)} X'_{k_2}, \ldots, L_{F(b'_{q-1})} X'_{k_q})}.
\]

For the other the sum, if we consider all $\sigma \in \Omega(X'_1, \ldots, X'_{n'})$ with
\[
\sigma|_{\{1,\ldots, n'\} \setminus  \theta(X'_1, \ldots, X'_{n'})} = \pi|_{\{1,\ldots, n'\} \setminus \theta(X'_1, \ldots, X_{n'})}
\]
then for a fixed $\sigma_0$ in this collection, if we sum $\kappa^B_{\sigma|_{W_1}}((X'_1, \ldots, X'_{n'})|_{W_1})$ over all $\sigma$ with $\sigma|_{\{1,\ldots, n'\} \setminus W_1} = \sigma_0|_{\{1,\ldots, n'\} \setminus W_1}$, then we obtain
\[
\kappa^D_{\chi_{q,0}}\left(X'_{k_1}, L_{F(b'_1)} X'_{k_2}, \ldots, L_{F(b'_{q-1})} X'_{k_q}\right)
\]
as
\[
\sum_{\tau \in BNC(k_{p+1} - k_p - 1, 0)} \kappa^D_\tau( b_{k_p + 1},  b_{k_p+2},  \cdots,  b_{k_{p+1}-1}) = F(b'_p).
\]
Hence, in sum (\ref{sum:2}), the correct $B$-value is obtained if $W_1 = \{1, \ldots, n'\}$ and otherwise correct terms are produced to get the sum over all $\sigma \in \Omega(X''_1, \ldots, X''_{n''})$ corresponding to $\pi|_{\{1,\ldots, n'\} \setminus W_1}$ of $\kappa^D_\sigma(X''_1, \ldots, X''_{n''})$.

After using the first two operations to reduce the partitions in (\ref{sum:1}) and (\ref{sum:2}) (at the cost of having sums of sums and of having different sequences), one may apply the third operation a finite number of times to eventually reduced down to sums of sum of the form (\ref{sum:1}) and (\ref{sum:2}) with $(X'_1, \ldots, X'_{n'})$ having the property that $X'_k \in Z(D, I)$ for all $k$, or $X'_k \in \varepsilon(B \otimes 1_B)$ for all $k$.  Consequently, early arguments complete the proof.
\end{proof}

\begin{lem}
\label{lem:one-direction}
Let $Z$ and $F$ be as in Theorem \ref{thm:bi-free-over-D} and suppose 
\begin{align*}
\kappa^B_{Z, \omega}(b_1, \ldots, b_{n-1}) = \kappa^D_{Z, \omega}(F(b_1), \ldots, F(b_{n-1})).
\end{align*}
for all $n\geq 1$, $\omega : \{1,\ldots, n\} \to I \sqcup J$, and $b_1, \ldots, b_{n-1} \in B$. 
Then 
\[
(\alg(\varepsilon(D \otimes 1_D), \{Z_i\}_{i \in I}), \alg(\varepsilon(1_D \otimes D^\op), \{Z_j\}_{j \in J})) \text{ is bi-free from }(\varepsilon(B \otimes 1_B), \varepsilon(1_B \otimes B^\op))
\]
with amalgamation over $D$.
\end{lem}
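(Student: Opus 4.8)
The plan is to deduce bi-freeness over $D$ from the vanishing of mixed $D$-valued bi-free cumulants on generators, via Corollary~\ref{cor:generators-vanishing-cumulants}. Writing $Z(D,J):=\{R_{d_1}Z_jR_{d_2}\mid j\in J,\ d_1,d_2\in D\}$ for the right-handed analogue of $Z(D,I)$, and noting that $\varepsilon(B\otimes 1_B)=\alg(\{L_b\}_{b\in B},\varepsilon(D\otimes 1_D))$ (and similarly on the right), Corollary~\ref{cor:generators-vanishing-cumulants} reduces the lemma to proving
\[
\kappa^D_\chi(X_1,\ldots,X_n)=0
\]
for every $\chi:\{1,\ldots,n\}\to\{\ell,r\}$ and every choice of arguments where $X_k\in\varepsilon(B\otimes 1_B)\cup Z(D,I)$ if $\chi(k)=\ell$ and $X_k\in\varepsilon(1_B\otimes B^\op)\cup Z(D,J)$ if $\chi(k)=r$, subject to at least one $X_k$ being a $B$-scalar operator ($L_b$ or $R_b$ with $b\in B$) and at least one being a $Z$-type operator. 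As a preliminary reduction: since $(\A,F\circ E,\varepsilon|_{D\otimes D^\op})$ is a $D$-$D$-non-commutative probability space, Proposition~\ref{prop:vanishing-of-scalar-cumulants} applies to $\kappa^D$, so splitting each $B$-scalar argument $L_b$ (resp.\ $R_b$) by multilinearity as $L_{F(b)}+L_{b-F(b)}$ and discarding every term containing some $L_{F(b)}$ (a $D$-scalar inserted into a cumulant of length $\geq 2$), we may assume $F(b)=0$ for each $B$-scalar argument.

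The proof is then a transcription of the arguments of Lemmas~\ref{lem:base-case-1} and~\ref{lem:base-case-2}, with $BNC(\chi)$ in place of $BNC(\chi_{n,0})$, with $Z(D,I)\cup Z(D,J)$ and $\varepsilon(B\otimes 1_B)\cup\varepsilon(1_B\otimes B^\op)$ in place of $Z(D,I)$ and $\varepsilon(B\otimes 1_B)$, and with ``interval'', ``first'' and ``last'' always read relative to $\prec_\chi$. One first proves the analogue of Lemma~\ref{lem:base-case-1} (exactly one $B$-scalar argument, say $L_b$ or $R_b$ with $F(b)=0$) by a single induction on $n$: if the $B$-scalar is $\prec_\chi$-extremal one cancels it using Remark~\ref{rem:ex-D-valued} (that $E$ is $D$-valued on $\alg(Z,\varepsilon(D\otimes D^\op))$), bi-multiplicativity, and the moment--cumulant formula~(\ref{eq:mobius}); if it is interior one expands $\kappa^D_\chi$ by~(\ref{eq:mobius}) against $\sum_\pi F(\kappa^B_\pi)$, discards via Proposition~\ref{prop:vanishing-of-scalar-cumulants} every $\pi$ in which the $B$-scalar is not a singleton block, discards by the inductive hypothesis every $\sigma\neq 1_\chi$ linking it across, uses Remark~\ref{rem:property-preserved-products-by-D} (so the reduced $Z$-type operators still satisfy~(\ref{eq:cummulant-condition-0})) with $F(b)=0$ to kill the surviving $\kappa^B$-terms, and collapses the rest to $E(\cdots)F(b)E(\cdots)=0$. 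One then upgrades to the general case (Lemma~\ref{lem:base-case-2}) by inducting on the number $m$ of $B$-scalar arguments and, within that, on $n-m$, running the ``three operations'' of Lemma~\ref{lem:base-case-2} on $\chi$-intervals: Theorem~\ref{thm:products}, bi-multiplicativity, hypothesis~(\ref{eq:cummulant-condition-0}) and Remark~\ref{rem:property-preserved-products-by-D} collapse each $\prec_\chi$-interval of $Z$-type operators flanked by $B$-scalars (or by the $\prec_\chi$-ends) into a single $B$-scalar, matching the two sums (\ref{sum:1}) and (\ref{sum:2}) term by term, until only the trivial all-one-type cases remain.

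The only genuinely new point, and the expected main obstacle, is confirming that these operations survive the passage from $BNC(\chi_{n,0})$ to $BNC(\chi)$. The clean device is Remark~\ref{rem:bi-multi-same-as-free}: applying $s_\chi^{-1}$ carries partitions of $BNC(\chi)$ to non-crossing partitions, $\chi$-intervals to ordinary intervals, and every bi-multiplicative reduction of $\kappa^B$ or $\kappa^D$ to the corresponding multiplicative reduction used in Lemmas~\ref{lem:base-case-1}--\ref{lem:base-case-2}; moreover $\mu_{BNC}(\pi,\sigma)=\mu_{NC}(s_\chi^{-1}\cdot\pi,s_\chi^{-1}\cdot\sigma)$, so the M\"obius bookkeeping is identical. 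What remains is to check by hand that a $B$-scalar ``escapes'' a $\chi$-interval on the side dictated by properties (\ref{part:bi-multi-1}) and (\ref{part:bi-multi-2}) of Definition~\ref{defn:bi-multiplicative} (with the first left/right $B$-scalar again exceptional, as in the introduction's discussion), that the case split ``$\prec_\chi$-extremal versus interior'' is exhaustive, and that the three operations respect $\prec_\chi$; this amounts to redrawing the diagrams of Lemmas~\ref{lem:base-case-1} and~\ref{lem:base-case-2} as bi-non-crossing diagrams and is routine. (The faithfulness hypothesis on $F$ in Theorem~\ref{thm:bi-free-over-D} plays no role in this direction.)
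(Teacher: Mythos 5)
Your proposal is correct in outline and follows the paper's own strategy: reduce via Corollary \ref{cor:generators-vanishing-cumulants} to the vanishing of mixed $D$-valued cumulants on generator-type entries, and obtain that vanishing by transporting the inductions of Lemmata \ref{lem:base-case-1} and \ref{lem:base-case-2} through the $s_\chi$/bi-multiplicativity dictionary of Remark \ref{rem:bi-multi-same-as-free}. The one genuine divergence is that you make no use of Lemmata \ref{lem:interchange-cumulant} and \ref{lem:swap-cumulant}. The paper invokes these (together with the commutation of $Z_i$ with $R_b$, of $Z_j$ with $L_b$, and $F(E(Z'L_b))=F(E(Z'R_b))$) to rearrange an arbitrary mixed cumulant into one of three normalized families: all-left, all-right, or the alternating form in which each run of $B$-scalars is immediately followed by a $Z$-operator of the same side, up to a trailing run of left scalars. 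That normalization is what makes ``identical arguments with $<$ replaced by $\prec_\chi$'' literally true; for instance a cumulant such as $\kappa^D_\chi(Z_{i_1},R_b,Z_{i_2})$, with a right scalar but no right $Z$'s, is handled in the paper by interchanging and flipping the scalar so that the purely free Lemma \ref{lem:base-case-2} applies verbatim, whereas in your scheme it must be absorbed into the generalized induction. Your uniform treatment is viable --- such a scalar then occupies a $\prec_\chi$-extremal or interior position and the same cancellations go through, with the flexibility of the bottom-most $B$-operator supplied by properties (\ref{part:bi-multi-1}) and (\ref{part:bi-multi-2}) of Definition \ref{defn:bi-multiplicative} rather than by Lemma \ref{lem:swap-cumulant} --- but it pushes correspondingly more configurations (mixed-side scalar tails at the bottom of the diagram, scalars on a side with no $Z$'s of that side) into the ``routine redrawing'' you defer, which is exactly the bookkeeping the rearrangement lemmas were introduced to avoid; what the paper's route buys is that the final combinatorial step is essentially a verbatim copy of the free case, at the cost of the extra reduction step. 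Your preliminary reduction to scalars with $F(b)=0$ via Proposition \ref{prop:vanishing-of-scalar-cumulants}, and your observation that the faithfulness of $F$ is not used in this direction, are both correct.
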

\begin{proof}
Combining Corollary \ref{cor:generators-vanishing-cumulants} along with Lemmata \ref{lem:interchange-cumulant} and \ref{lem:swap-cumulant} (which apply since $Z_i$ and $R_b$ commute if $i \in I$, $Z_j$, $L_b$ commute if $j \in J$, and $F(E(Z'L_b)) = F(E(Z'R_b))$ for all $Z' \in \A$ and $b \in B$), it suffices to demonstrate that:
\begin{enumerate}
\item for all $Z'_1, \ldots, Z'_n \in \epsilon(B \otimes 1_B) \cup \{ L_{d_1} Z_i L_{d_2} \, \mid \, i \in I, d_1, d_2 \in D\}$ where at least one $Z'_k$ is an element of $\epsilon(B \otimes 1_B)$ and at least one $Z'_k$ is an element of $\{ L_{d_1} Z_i L_{d_2} \, \mid \, i \in I, d_1, d_2 \in D\}$ (and $\chi(k) = \ell$ for all $k$), we have $\kappa^D_\chi(Z'_1, \ldots, Z'_n) = 0$,
\item for all $Z'_1, \ldots, Z'_n \in \epsilon(1 \otimes B^\op) \cup \{ R_{d_1} Z_j R_{d_2} \, \mid \, j \in J, d_1, d_2 \in D\}$ where at least one $Z'_k$ is an element of $\epsilon(1_B \otimes B^\op)$ and at least one $Z'_k$ is an element of $\{ R_{d_1} Z_j R_{d_2} \, \mid \, j \in J, d_1, d_2 \in D\}$ (and $\chi(k) = r$ for all $k$) we have $\kappa^D_\chi(Z'_1, \ldots, Z'_n) = 0$, and
\item for all sequences $(Z'_1, \ldots, Z'_n)$ which are concatenations of sequences of the form 
\begin{itemize}
\item $L_{b_1}, L_{b_2},\ldots, L_{b_k}, L_{d_1} Z_i L_{d_2}$ where $k\geq 0$, $b_1,\ldots, b_k \in B$, $i \in I$, $d_1, d_2 \in D$, and 
\item $R_{b_1}, R_{b_2},\ldots, R_{b_k}, R_{d_1} Z_j R_{d_2}$ where $k \geq 0$, $b_1,\ldots, b_k \in B$, $j \in J$, $d_1, d_2 \in D$
\end{itemize}
with at least one of each occurring, followed by concatenating $L_{b_1}, L_{b_2},\ldots, L_{b_k}$ on the right-hand-side where $k\geq 0$, $b_1,\ldots, b_k \in B$, we have $\kappa^D_\chi(Z'_1, \ldots, Z'_n) = 0$ (where $\chi$ is determined by the sequence) provided at least one $L_b$ or $R_b$ occurs in the sequence.
\end{enumerate}
Note (1) is true by Lemma \ref{lem:base-case-2} (note this reproves one direction of \cite{NSS2002}*{Theorem 3.5}). Similarly, (2) is true by identical arguments as used in Lemmata \ref{lem:base-case-1} and \ref{lem:base-case-2}.  In fact, (3) is also true by identical arguments as those used in Lemmata \ref{lem:base-case-1} and \ref{lem:base-case-2} once `interval' is replaced with `$\chi$-interval' and `$<$' is replaced with `$\prec_\chi$'.  Indeed, Lemmata \ref{lem:base-case-1} and \ref{lem:base-case-2} only requires combinatorial arguments together with multiplicative properties of the operator-valued free cumulant functions all of which hold when one considers the combinatorics of operator-valued bi-freeness are identical to those of operator-valued freeness by the identification illustrated in Example \ref{exam:FLOP} together with Remark \ref{rem:bi-multi-same-as-free} (and one can change the bottom most $R_b$ to an $L_b$ and vice versa).  Consequently, it will be left to the reader to observe Lemmata \ref{lem:base-case-1} and \ref{lem:base-case-2} generalize.
\end{proof}

With one direction of Theorem \ref{thm:bi-free-over-D} complete, we turn our attention to the other direction.  In order to complete the proof, we will require a method for constructing a pair of $B$-faces with any operator-valued bi-free cumulants we want.  The following does the trick.
\begin{lem}
\label{lem:get-the-cumulants-you-want}
Let $I$ and $J$ be non-empty, disjoint index sets.  For every $n \geq 1$ and $\omega : \{1,\ldots, n\} \to I \sqcup J$ let $\Theta_\omega : B^{n-1} \to B$ be complex linear in each coordinate of $B^{n-1}$.  There exist a $B$-$B$-non-commutative probability space $(\A, E, \varepsilon)$ and elements $\{Z_i\}_{i \in I} \subseteq \A_\ell$ and $\{Z_j\}_{j \in J} \subseteq \A_r$ such that if $Z = \{Z_i\}_{i \in I} \sqcup \{Z_j\}_{j \in J}$, then
\[
\kappa^B_{Z, \omega}(b_1, \ldots, b_{n-1}) = \Theta_\omega(b_1, \ldots, b_{n-1})
\]
for all $n\geq 1$, $\omega : \{1,\ldots, n\} \to I \sqcup J$, and $b_1, \ldots, b_{n-1} \in B$.
\end{lem}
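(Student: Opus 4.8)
The plan is to realize the prescribed data on a concrete operator model, namely the bi-free analogue of the Fock-space construction of ``canonical random variables'' (cf.\ \cite{N1996} and the bi-free Fock space of \cite{CNS2015-1}). It suffices to produce a single $B$-$B$-non-commutative probability space carrying a left family $\{Z_i\}_{i\in I}$ and a right family $\{Z_j\}_{j\in J}$ whose reduced bi-free cumulants are the given $\Theta_\omega$; by the moment--cumulant formula \eqref{eq:mobius} and bi-multiplicativity this is the same as prescribing the joint moments $\mu^B_{Z,\omega}$, so all of the content lies in exhibiting an actual space.

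First I would build the underlying bimodule. For each $n\geq 1$ and $\omega:\{1,\ldots,n\}\to I\sqcup J$ take a copy $\mathcal{H}_\omega$ of the $B$-bimodule $B\otimes_{\mathbb C}\cdots\otimes_{\mathbb C}B$ ($n$ factors, with left and right $B$-actions on the outermost factors), and set $\mathcal{F}=B\Omega\oplus\bigoplus_{n\geq 1}\bigoplus_{\omega}\mathcal{H}_\omega$, equipped with the two-sided $B$-action for which $\Omega$ is central, together with the canonical projection $P:\mathcal{F}\to B\Omega\cong B$. On $\mathcal{F}$ define, for $i\in I$, a left creation operator sending the $\omega$-summand into the $\omega'$-summand (where $\omega'$ prepends an $i$) by inserting a leading $1_B$, a left annihilation operator that contracts a leading $i$-symbol against the stored $B$-string using the functionals $\Theta$, and a family of left ``gauge'' operators; let $Z_i$ be their sum, chosen so that the only data it contributes are the $\Theta_\omega$ with $\omega(1)\in I$. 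Define $Z_j$ for $j\in J$ by the symmetric recipe using the \emph{right} creation/annihilation/gauge operators of the bi-free Fock model, which act on the opposite side, let $\varepsilon(b_1\otimes b_2)$ act by left multiplication by $b_1$ and right multiplication by $b_2$, and put $E(T)=P(T\Omega)$. The reason for using the bi-free (rather than the ordinary) Fock tensor structure is precisely that it forces the left operators to commute with $\varepsilon(1_B\otimes B^{\op})$ and the right operators with $\varepsilon(B\otimes 1_B)$, so that $Z_i\in\A_\ell$ and $Z_j\in\A_r$; one also checks directly that $E$ is unital, that $E(\varepsilon(b_1\otimes b_2)Z)=b_1E(Z)b_2$, and that $E(Z\varepsilon(b\otimes 1_B))=E(Z\varepsilon(1_B\otimes b))$ (the last because on $\mathcal{F}$ the right-vacuum slot and the left-vacuum slot coincide). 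Hence $(\A,E,\varepsilon)$ with $\A=\alg(\varepsilon(B\otimes B^{\op}),\{Z_i\}_{i\in I},\{Z_j\}_{j\in J})$ is a $B$-$B$-non-commutative probability space.

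Next I would compute the cumulants. Expanding a word $E(Z_{\omega(1)}C_{b_1}Z_{\omega(2)}\cdots C_{b_{n-1}}Z_{\omega(n)})$ on $\mathcal{F}$ and using $E=P(\,\cdot\,\Omega)$, every surviving term is indexed by a way of matching each creation to a later annihilation with gauge operators acting in the interior; once $s^{-1}_{\chi_\omega}$ is applied these indexing data are exactly the non-crossing partitions, and the contribution of such a pattern is the product of the $\Theta$-values over its blocks arranged bi-multiplicatively. This is precisely the right-hand side of the moment--cumulant formula \eqref{eq:mobius} with $\kappa^B$ replaced by the $\Theta_\omega$'s (extended bi-multiplicatively); since that formula determines $\kappa^B$ uniquely from $E^B$, we conclude $\kappa^B_{Z,\omega}=\Theta_\omega$ for all $\omega$.

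The main obstacle is the one already flagged in the introduction: arranging the left and right families to commute with the opposite copies of $B$ while keeping the vacuum two-sided, so that the ``special'' $B$-operator genuinely behaves as both a left and a right operator --- i.e.\ verifying $E(Z\varepsilon(b\otimes1_B))=E(Z\varepsilon(1_B\otimes b))$ together with the compatibility of the model with the reductions in the Notation section. Concretely this forces the right creation/annihilation operators to live on the bi-free Fock product of \cite{CNS2015-1} rather than on a naive tensor algebra, and the bookkeeping in the cumulant computation for $\omega$ meeting both $I$ and $J$ (where one $b$-operator carries both a left and a right role) is where the argument requires the most care. A purely algebraic alternative --- take $\A$ to be the free algebra on $\varepsilon(B\otimes B^{\op})$ and symbols $\{Z_i\},\{Z_j\}$ modulo the commutation relations defining $\A_\ell,\A_r$, define $E$ on the standard-form words of the Notation section via the moment--cumulant formula applied to the $\Theta_\omega$, and verify consistency and the $B$-$B$-ncps axioms --- trades the Fock-space cumulant computation for a routine but lengthy well-definedness check; I would use whichever is shorter to write out.
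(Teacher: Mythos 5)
Your primary (Fock-space) route has a genuine gap at exactly the point the paper itself flags in the introduction. The operators you describe --- annihilation and ``gauge'' operators that contract a leading symbol against the stored $B$-string ``using the functionals $\Theta$'' --- are the operator-valued bi-free analogue of Nica's canonical random variables, and it is precisely this abstraction that does not go through easily here: the left operators must commute with $\varepsilon(1_B\otimes B^{\op})$ and the right ones with $\varepsilon(B\otimes 1_B)$, but the $\Theta_\omega$ are only assumed complex-linear in each slot, with no $B$-bimodule compatibility whatsoever, so an annihilation/gauge operator built from $\Theta$ acting on a string $b_1\otimes\cdots\otimes b_k$ has no reason to commute with the opposite-side $B$-action on your bimodule $\mathcal{F}$; in addition, the mixed case $\omega(\{1,\ldots,n\})\cap I\neq\emptyset\neq\omega(\{1,\ldots,n\})\cap J$, where the distinguished variable $b_{n-1}$ must be treated simultaneously as a left and as a right $B$-operator, is not captured by ``prepend on the left / act on the right'' bookkeeping. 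Saying that the bi-free Fock structure ``forces'' the commutation and that this bookkeeping ``requires the most care'' names the obstacle rather than resolving it, so as written the cumulant computation for mixed $\omega$ is not established.

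Your fallback, by contrast, is the paper's actual proof, but the checks you defer as ``routine'' are where the content lies. The paper takes the universal free algebra on $1_{\A}$, the symbols $Z_i$, $Z_j$, and symbols $L_b$, $R_b$, and quotients by the relations making $b\mapsto L_b$ multiplicative, $b\mapsto R_b$ anti-multiplicative, all $L$'s commute with all $R$'s, $Z_i$ commute with every $R_b$, and $Z_j$ with every $L_b$. The step your one-sentence description elides is that, since $\Theta$ is an arbitrary family of $\bC$-multilinear maps, one must first \emph{fix}, for every $\pi\in BNC(\chi_\omega)$, a single reduction scheme of a bi-multiplicative function to full cumulants and use it to define $\hat{\Theta}_\pi$ on standard-form words (exactly one $B$-element between consecutive $Z$'s, with a terminal $L_bR_{b'}$, and with the special handling of the first left/right $B$-slot in the mixed case), and then set $E=\sum_{\pi\in BNC(\chi_\omega)}\hat{\Theta}_\pi$. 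Well-definedness of $E$ on the quotient then needs only linearity in the $B$-slots, because the standard form is unique up to such linear combinations; after that one verifies $E(L_bR_{b'}Z')=bE(Z')b'$ and $E(Z'L_b)=E(Z'R_b)$ and concludes $\kappa^B_{Z,\omega}=\Theta_\omega$ by M\"{o}bius inversion. If you intend this route, the choice of reduction, the well-definedness on the quotient, and the two expectation identities constitute the proof rather than an afterthought --- though, unlike the Fock-space route, they are indeed elementary.
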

\begin{proof}
Let $\A_0$ be the universal unital free algebra generated by symbols 
\[
\{\bC 1_\A\} \sqcup \{Z_i\}_{i \in I} \sqcup \{Z_j\}_{j \in J} \sqcup \{L_b \, \mid \, b \in B\} \sqcup \{R_b \, \mid \, b \in B\}.
\]
Let $\A$ be the unital algebra  $\A_0$ modulo the two-sided ideal $\I$ generated by 
\begin{gather*} 
\{L_{z 1_B} - z 1_\A, R_{z 1_B} - z 1_\A \, \mid \, z \in \bC\}, \qquad \{L_{zb + b'} - zL_{b} - L_{b'}, R_{zb + b'} - zR_{b} - R_{b'} \, \mid \, b, b' \in B, z \in \bC\}, \\
\{L_{bb'} - L_b L_{b'}, R_{bb'} - R_{b'} R_b, L_b R_{b'} - R_{b'} L_b \, \mid \, b,b' \in B\}, \\
\{Z_i R_b - R_b Z_i \, \mid \, b \in B, i \in I\}, \qqand \{Z_j L_b - L_b Z_j \, \mid \, b \in B, j \in J\}.
\end{gather*} 
Clearly $\A$ is a unital algebra such that if $\varepsilon : B \otimes B^{\op} \to \A$ is defined via $\varepsilon(b_1 \otimes b_2) = L_{b_1} R_{b_2}$, then $\varepsilon$ is a unital homomorphism such that $\varepsilon|_{B \otimes 1_B}$ and $\varepsilon|_{1_B \otimes B^\op}$ are injective.  Furthermore, by construction $\{Z_i\}_{i \in I} \subseteq \A_\ell$ and $\{Z_j\}_{j \in J} \subseteq \A_r$ (where $\A_\ell$ and $\A_r$ are as in Definition \ref{defn:BBncps}).

To defined an expectation $E : \A \to B$ so that $(\A, E, \varepsilon)$ is a $B$-$B$-non-commutative probability space, first note that every element in $\A$ is a linear combination of elements of the form
\[
C_{b_1} Z_{k_1} C_{b_2} Z_{k_2} \cdots C_{b_n} Z_{k_n} L_b R_{b'} + \I
\]
$n\geq 0$, $k_q\in I \sqcup J$, $b_1, \ldots, b_n, b, b' \in B$, and 
\[
C_{b_q} = \left\{
\begin{array}{ll}
L_{b_k} & \mbox{if } k_q \in I \\
R_{b_k} & \mbox{if } k_q \in J
\end{array} \right.  
\]
Furthermore, a linear combination of such elements is unique up to taking linear combinations of each $B$-term (i.e. one of the above forms can be written a non-trivial linear combination of the others only if the same sequence of $Z_k$'s are used and one can take linear combinations of the $B$-terms).

We will use these representations along with $\Theta$ to defined the correct expectation $E$ for any element of in $\A$.  To do this, note the properties of bi-multiplicative functions give a way (well, actually many ways) to reduce $\kappa^B_\pi$ to expressions involving only $\kappa^B_{1_\chi}$ for various $\chi$.  We will use $\Theta$ to define the values of $\kappa^B_{1_\chi}$ in the correct way and define the expectation to be the sum of the corresponding reduced $\kappa^B_\pi$ expressions.

To begin, for $n\geq 1$, $\omega : \{1, \ldots, n\} \to I \sqcup J$, and $b_1, \ldots, b_{n+1} \in B$, define the following:
\begin{enumerate}
\item If $n = 1$, define $\hat{\Theta}_{1_{\chi_\omega}}(L_{b_1}) = b_1$ when $\omega(1) \in I$ and $\hat{\Theta}_{1_{\chi_\omega}}(R_{b_1}) = b_1$ when $\omega(1) \in J$.
\item if $\omega(k) \in I$ for all $k$, define
\[
\hat{\Theta}_{1_{\chi_\omega}}(L_{b_1} Z_{\omega(1)}, L_{b_2} Z_{\omega(2)}, \ldots, L_{b_{n-1}} Z_{\omega(n-1)}, L_{b_n} Z_{\omega(n)} L_{b_{n+1}}) = b_1 \Theta_\omega(b_2, b_3, \ldots, b_n) b_{n+1}.
\]
\item if $\omega(k) \in J$ for all $k$, define
\[
\hat{\Theta}_{1_{\chi_\omega}}(R_{b_1} Z_{\omega(1)}, R_{b_2} Z_{\omega(2)}, \ldots, R_{b_{n-1}} Z_{\omega(n-1)}, R_{b_n} Z_{\omega(n)} R_{b_{n+1}}) = b_{n+1} \Theta_\omega(b_2, b_3, \ldots, b_n) b_{1}.
\]
\item Otherwise let $k_\ell = \min\{k \, \mid \, \omega(k) \in I\}$ and $k_r = \min\{k \, \mid \, \omega(k) \in J\}$.  Then $\{k_\ell, k_r\} = \{1, k_0\}$ for some $k_0$.  If
\[
C^{\omega(k)}_{b} = \left\{
\begin{array}{ll}
L_{b} & \mbox{if } \omega(k) \in I \\
R_{b} & \mbox{if } \omega(k) \in J
\end{array} \right.  ,
\]
define
\begin{align*}
\hat{\Theta}_{1_{\chi_\omega}} & \left(C^{\omega(1)}_{b_1}Z_{\omega(1)}, C^{\omega(2)}_{b_2} Z_{\omega(2)}, \ldots, C^{\omega(n-1)}_{b_{n-1}} Z_{\omega(n-1)}, C^{\omega(n)}_{b_{n}} Z_{\omega(n)} C^{\omega(n)}_{b_{n+1}}\right)\\ 
&= \left\{
\begin{array}{ll}
b_1 \Theta_\omega((b_2, b_3, \ldots, b_{n-1})|_{\{2,\ldots, n\} \setminus \{k_0\}}) b_{k_0} & \mbox{if } k_\ell = 1 \\
b_{k_0} \Theta_\omega((b_2, b_3, \ldots, b_{n-1})|_{\{2,\ldots, n\} \setminus \{k_0\}}) b_{1}& \mbox{if } k_r = 1
\end{array} \right. .
\end{align*}
\end{enumerate}

Subsequently, for each $\omega : \{1,\ldots, n\} \to I \sqcup J$ and for each $\pi \in BNC(\chi_\omega)$, choose one method of reduction so that if $\Phi$ is a bi-multiplicative function then $\Phi_\pi$ may be reduced to a nested expression using only $\Phi_{1_\chi}$'s.  Then for
\[
C^{\omega(k)}_{b} = \left\{
\begin{array}{ll}
L_{b} & \mbox{if } \omega(k) \in I \\
R_{b} & \mbox{if } \omega(k) \in J
\end{array} \right.  ,
\]
define
\[
\hat{\Theta}_{\pi} \left(C^{\omega(1)}_{b_1}Z_{\omega(1)}, C^{\omega(2)}_{b_2} Z_{\omega(2)}, \ldots, C^{\omega(n-1)}_{b_{n-1}} Z_{\omega(n-1)}, C^{\omega(n)}_{b_{n}} Z_{\omega(n)} C^{\omega(n)}_{b_{n+1}}\right)
\]
using the nested expression and the definitions of $\hat{\Theta}_{1_\chi}$.

Define $E : \A \to B$ by
\[
E(L_b R_{b'} + \I) = bb'
\]
for all $b,b' \in B$, and for $n\geq 1$, $\omega : \{1,\ldots, n\} \to I \sqcup J$, and
\[
C_{b_k} = \left\{
\begin{array}{ll}
L_{b_k} & \mbox{if }\omega(k) \in I \\
R_{b_k} & \mbox{if }\omega(k) \in J
\end{array} \right.  ,
\]
define
\[
E(C_{b_1} Z_{\omega(1)} \cdots C_{b_n} Z_{\omega(n)} L_b R_{b'} + \I) = \sum_{\pi \in BNC(\chi_\omega)} \hat{\Theta}_\pi(C_{b_1} Z_{\omega(1)}, \ldots, C_{b_{n-1}} Z_{\omega(n-1)}, C_{b_n} Z_{\omega(n)} C_{bb'})
\]
(where $C_{bb'} = L_{bb'}$ if $\omega(n) \in I$ and $C_{bb'} = R_{bb'}$ if $\omega(n) \in J$), and extend $E$ by linearity.  We note $E$ is well-defined on $\A$ as the elements it has been defined on are unique up to linear combinations of the $B$-terms and one easily sees there is no issue in the definition of $E$ as the definition of $\hat{\Theta}_\pi$ implies linearity in each $B$-term (i.e. $E$ is well-defined under each of the generators of $\I$; alternatively, take a $\bC$-basis $\fB$ for $B$, define $E$ using only $b \in \fB$, and extend by linearity).  

By construction of $E$ and $\Theta_\pi$ and by commutation in $\A$, one can verify that 
\[
E(L_b R_{b'} Z') = b E(Z') b' \qqand E(Z'L_b) = E(Z'R_b)
\]
for all $b, b' \in B$ and $Z' \in \A$.  Finally, as the moment-cumulant formula (\ref{eq:cumulants}) completely determines the operator-valued bi-free cumulants via the operator-valued bi-free moments, and since $E$ is bi-multiplicative, M\"{o}bius inversion implies that if $Z = \{Z_i\}_{i \in I} \sqcup \{Z_j\}_{j \in J}$, then
\[
\kappa^B_{Z, \omega}(b_1, \ldots, b_{n-1}) = \Theta_\omega(b_1, \ldots, b_{n-1})
\]
for all $n\geq 1$, $\{1,\ldots, n\} \to I \sqcup J$, and $b_1, \ldots, b_{n-1} \in B$.
\end{proof}

\begin{proof}[Proof of Theorem \ref{thm:bi-free-over-D}]
By Lemma \ref{lem:one-direction}, one direction of the proof is complete.

Conversely, suppose $(\alg(\varepsilon(D \otimes 1_D), \{Z_i\}_{i \in I}), \alg(\varepsilon(1_D \otimes D^\op), \{Z_j\}_{j \in J}))$ is bi-free from $(\varepsilon(B \otimes 1_B), \varepsilon(1_B \otimes B^\op))$ with amalgamation over $D$.  Using Lemma \ref{lem:get-the-cumulants-you-want}, there exist a $B$-$B$-non-commutative probability space $(\A', E', \varepsilon')$ and elements $\{Z'_i\}_{i \in I} \subseteq \A'_\ell$ and $\{Z'_j\}_{j \in J} \subseteq \A'_r$ such that if $Z' = \{Z'_i\}_{i \in I} \sqcup \{Z'_j\}_{j \in J}$, then
\[
\kappa^B_{Z', \omega}(b_1, \ldots, b_{n-1}) = \kappa^D_{Z, \omega}(F(b_1), \ldots, F(b_{n-1}))
\]
for all $n\geq 1$, $\omega : \{1,\ldots, n\} \to I \sqcup J$, and $b_1, \ldots, b_{n-1} \in B$.  Consequently, the first part of the proof implies that $(\alg(\varepsilon'(D \otimes 1_D), \{Z'_i\}_{i \in I}), \alg(\varepsilon'(1_D \otimes D^\op), \{Z'_j\}_{j \in J}))$ is bi-free from $(\varepsilon'(B \otimes 1_B), \varepsilon'(1_B \otimes B^\op))$ with amalgamation over $D$.  

Since $\kappa^B_{Z', \omega}(d_1, \ldots, d_{n-1}) \in D$ for all $d_1, \ldots, d_{n-1} \in D$, Theorem \ref{thm:cumulants-D-valued} implies that
\[
\kappa^D_{Z', \omega}(d_1, \ldots, d_{n-1}) = \kappa^B_{Z', \omega}(d_1, \ldots, d_{n-1}) = \kappa^D_{Z, \omega}(F(d_1), \ldots, F(d_{n-1})) = \kappa^D_{Z, \omega}(d_1, \ldots, d_{n-1}).
\]
Consequently, $(\alg(\varepsilon(D \otimes 1_D), \{Z_i\}_{i \in I}), \alg(\varepsilon(1_D \otimes D^\op), \{Z_j\}_{j \in J}))$ and $(\alg(\varepsilon'(D \otimes 1_D), \{Z'_i\}_{i \in I}), \alg(\varepsilon'(1_D \otimes D^\op), \{Z'_j\}_{j \in J}))$ have the same $D$-valued distributions.  Since both are bi-free from copies of $(B, B^{\op})$ (which have the same distribution), the $D$-valued distributions of
\[
\alg( \varepsilon(B \otimes B^{\op}), Z) \text{ with respect to }F \circ E \qqand \alg(\varepsilon(B \otimes B^{\op}), Z') \text{ with respect to } F \circ E'
\]
are equal.  However, since
\[
F(E(L_{b} T)) = F(b E(T)) \qqand F(E'(L_b T')) = F(b E'(T'))
\]
for all $T \in \alg( \varepsilon(B \otimes B^{\op}), Z)$ and the corresponding $T' \in \alg(\varepsilon(B \otimes B^{\op}), Z')$, the faithfulness condition on $F$ implies that $E(T) = E'(T')$.  Hence $\alg( \varepsilon(B \otimes B^{\op}), Z)$ and $\alg(\varepsilon(B \otimes B^{\op}), Z') $ have the same $B$-valued distributions and thus the same $B$-valued cumulants; that is
\[
\kappa^B_{Z, \omega}(b_1, \ldots, b_{n-1}) = \kappa^B_{Z', \omega}(b_1, \ldots, b_{n-1}) = \kappa^D_{Z, \omega}(F(b_1), \ldots, F(b_{n-1}))
\]
as desired.
\end{proof}

\section{$R$-Cyclic Pairs of Matrices}
\label{sec:R-cyc}

In this section, we will analyze an application of Theorem \ref{thm:bi-free-over-D}.  One use of Theorem \ref{thm:bi-free-over-D} is that if one knows $Z$ is bi-free from $(B, B^\op)$ over $D$, then one can deduce one of the $B$-valued or $D$-valued cumulants from the other.  An example of the opposite direction will be given; that is, we will establish condition (\ref{eq:cummulant-condition-1}) holds for some $Z$, $B$, and $D$ thereby enabling us to conclude bi-freeness of $Z$ from $(B, B^\op)$ over $D$.

Our particular example will relate to pairs of matrices.  To be specific, Theorem \ref{thm:R-cyc} demonstrates a condition to determine when a pair of matrices is bi-free from the scalar matrices over the scalar diagonal matrices.  The construction in bi-free probability used to consider pairs of matrices is one described below and is the same as the one used for the bi-matrix models in \cite{S2015-3}.  This thereby provides additional evidence that the following construction is the correct way to view pairs of matrices in the bi-free setting.  Note we will use $[a_{i,j}]$ to denote the matrix whose $(i,j)^\th$ entry is $a_{i,j}$.

Let $(\A, \varphi)$ be a non-commutative probability space and let $\M_d(\A)$ denote the algebra of $d \times d$ matrices with entries in $\A$.  For all $[T_{i,j}] \in \M_d(\A)$, define 
\[
\varphi_d([T_{i,j}]) = [\varphi(T_{i,j})].
\]
As in \cite{S2015-3}, one can turn the linear maps on $\M_d(\A)$, denoted $\L(\M_d(\A))$, into a $\M_d(\bC)$-$\M_d(\bC)$-non-commutative probability space in such a way that analogues of results from free probability hold in the bi-free setting.  Indeed, for $[a_{i,j}] \in \M_d(\bC)$, define
\[
L_{[a_{i,j}]}([T_{i,j}]) = \left[\sum^d_{k=1} a_{i,k} T_{k,j}   \right] \qqand R_{[a_{i,j}]}([T_{i,j}]) = \left[\sum^d_{k=1} a_{k,j} T_{i,k}   \right].
\]
Thus if $\varepsilon : \M_d(\bC) \otimes \M_d(\bC)^{\op} \to \L(\M_d(\A))$ is defined via 
\[
\varepsilon([a_{i,j}] \otimes [a'_{i,j}]) = L_{[a_{i,j}]} R_{[a'_{i,j}]},
\]
and $E_d : \L(\M_d(\A)) \to \M_d(\bC)$ is defined via
\[
E_d(Z) = \varphi_d(Z(I_d)),
\]
where $I_d$ is the $d \times d$ identity matrix, then $(\L(\M_d(\A)), E_d, \varepsilon)$ is a $\M_d(\bC)$-$\M_d(\bC)$-non-commutative probability space.

There are natural ways to embed $\M_d(\A)$ into both $\L(\M_d(\A))_\ell$ and $\L(\M_d(\A))_r$. Indeed, if $[Z_{i,j}] \in \M_d(\A)$, we define $L([Z_{i,j}]), R([Z_{i,j}]) \in \L(\M_d(\A))$ via
\[
L([Z_{i,j}]) [T_{i,j}] = \left[ \sum^d_{k=1} Z_{i,k}T_{k,j}\right] \qand R([Z_{i,j}]) [T_{i,j}] = \left[ \sum^d_{k=1} Z_{k,j} T_{i,k}\right].
\]
It is elementary to shows that $L([Z_{i,j}]) \in \L(\M_d(\A))_\ell$, $R([Z_{i,j}]) \in \L(\M_d(\A))_r$, $([Z_{i,j}] \in \M_d(\A)) \mapsto L([Z_{i,j}])$ is a unital homomorphism, and $([Z_{i,j}] \in \M_d(\A^\op)^\op) \mapsto R([Z_{i,j}])$ is a unital homomorphism (that preserve adjoints if $\A$ is a $*$-non-commutative probability space).  Furthermore 
\[
E_d(L([Z_{i,j}])) = E_d(R([Z_{i,j}])) = \varphi_d([Z_{i,j}]).
\]

The motivation for the above construction was derived from proving the following result.

\begin{thm}[specific case of \cite{S2015-1}*{Theorem 6.3.1}]
\label{thm:bi-free-with-amalgamation-over-matrix-algebra}
Let $(\A, \varphi)$ be a non-commutative probability space and let $\{(A_{\ell, k}, A_{r,k})\}_{k \in K}$ be bi-free pairs of faces with respect to $\varphi$.  Then $\{(L(\M_d(A_{\ell,k})), R(\M_N((A_{r,k})^{\op})^\op))\}_{k \in K}$ are bi-free with amalgamation over $\M_d(\bC)$ with respect to $E_d$.
\end{thm}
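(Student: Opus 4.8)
The plan is to establish the vanishing of all mixed $\M_d(\bC)$-valued bi-free cumulants and then apply Theorem~\ref{thm:bifree-classifying-theorem}. Put $B = \M_d(\bC)$, let $E^B$ denote the $B$-valued bi-free moment function built from $E_d$, and write $e_{p,q}$ for the matrix units of $\M_d(\bC)$. By Corollary~\ref{cor:generators-vanishing-cumulants} it suffices to show $\kappa^B_\chi(W_1,\ldots,W_n) = 0$ whenever $\chi:\{1,\ldots,n\}\to\{\ell,r\}$, $\epsilon:\{1,\ldots,n\}\to K$ is \emph{not} constant, and each $W_s$ is a generator of the corresponding $B$-face, i.e. $W_s = L_{[a_1]}\,L(Z\otimes e_{p,q})\,L_{[a_2]}$ with $Z\in A_{\ell,\epsilon(s)}$ when $\chi(s)=\ell$, and $W_s = R_{[a_1]}\,R(Z\otimes e_{p,q})\,R_{[a_2]}$ with $Z\in A_{r,\epsilon(s)}$ when $\chi(s)=r$. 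Since $L$ and $R$ are unital homomorphisms and the scalar matrices lie in $\varepsilon(\M_d(\bC)\otimes 1)$ resp. $\varepsilon(1\otimes \M_d(\bC)^{\op})$, each such generator is a finite linear combination of the ``elementary'' operators $L(Z\otimes e_{p',q'})$ (resp. $R(Z\otimes e_{p',q'})$); by multilinearity of the cumulant in each slot it therefore suffices to treat
\[
W_s = \begin{cases} L\big(Z^{(s)}\otimes e_{p_s,q_s}\big) & \text{if }\chi(s)=\ell\\ R\big(Z^{(s)}\otimes e_{p_s,q_s}\big) & \text{if }\chi(s)=r\end{cases}
\qquad\text{with } Z^{(s)}\in A_{\chi(s),\epsilon(s)}.
\]

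The heart of the argument is a bi-free ``$R$-cyclic'' identity paralleling Section~\ref{sec:R-cyc}: I will show that for every $\sigma\in BNC(\chi)$ and all $1\le a,b\le d$,
\[
\big[\,E^B_\sigma(W_1,\ldots,W_n)\,\big]_{a,b} \;=\; \Delta_\chi(a,b)\cdot E^{\bC}_\sigma\big(Z^{(1)},\ldots,Z^{(n)}\big),
\]
where $E^{\bC}_\sigma$ is the scalar ($B=\bC$) bi-free moment function of $(\A,\varphi)$ and $\Delta_\chi(a,b)\in\{0,1\}$ is a product of Kronecker deltas chaining the matrix indices along the order $\prec_\chi$: it records that the ``incoming'' index of the $\prec_\chi$-first operator equals $a$, the ``outgoing'' index of the $\prec_\chi$-last operator equals $b$, and, for each pair of $\prec_\chi$-consecutive operators, the outgoing index $q$ of the earlier equals the incoming index $p$ of the later. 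The crucial point is that $\Delta_\chi(a,b)$ depends only on $\chi$ and on the $p_s,q_s$, \emph{not} on $\sigma$. I would prove this by induction on $n$: the case $\sigma = 1_\chi$ is a direct computation of $E_d(W_1\cdots W_n)=\varphi_d\big((W_1\cdots W_n)(I_d)\big)$, where applying a product of elementary left and right operators to $I_d$ produces $\varphi$ of the $\prec_\chi$-ordered product $Z^{(s_\chi(1))}\cdots Z^{(s_\chi(n))}$ times exactly the above delta pattern; the general case follows by applying the bi-multiplicative reductions (\ref{part:bi-multi-3}) and (\ref{part:bi-multi-4}) of Definition~\ref{defn:bi-multiplicative} to $E^B_\sigma$, noting that every reduction step only ever produces matrices of the form $\varphi(\cdots)\otimes e_{p',q'}$ and hence carries the same index bookkeeping, and invoking the induction hypothesis on the resulting smaller tuples.

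Granting this identity, M\"obius inversion over $BNC(\chi)$ gives, using the moment-cumulant formula~(\ref{eq:cumulants}) in both the $B$-valued and the scalar settings,
\[
\big[\kappa^B_\chi(W_1,\ldots,W_n)\big]_{a,b} = \sum_{\sigma\in BNC(\chi)}\big[E^B_\sigma(W_1,\ldots,W_n)\big]_{a,b}\,\mu_{BNC}(\sigma,1_\chi) = \Delta_\chi(a,b)\cdot\kappa^{\bC}_\chi\big(Z^{(1)},\ldots,Z^{(n)}\big),
\]
since $\Delta_\chi(a,b)$ factors out of the sum. Because $\{(A_{\ell,k},A_{r,k})\}_{k\in K}$ are bi-free with respect to $\varphi$ and $\epsilon$ is not constant, the $B=\bC$ case of Theorem~\ref{thm:bifree-classifying-theorem} yields $\kappa^{\bC}_\chi(Z^{(1)},\ldots,Z^{(n)}) = 0$, whence $\kappa^B_\chi(W_1,\ldots,W_n)=0$. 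By Corollary~\ref{cor:generators-vanishing-cumulants} this establishes that $\{(L(\M_d(A_{\ell,k})), R(\M_d((A_{r,k})^{\op})^{\op}))\}_{k\in K}$ are bi-free with amalgamation over $\M_d(\bC)$ with respect to $E_d$.

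I expect the main obstacle to be the inductive verification of the displayed moment identity, and in particular the claim that $\Delta_\chi(a,b)$ is independent of $\sigma$: one must track carefully how the bi-multiplicative reductions interleave elementary left and right operators and how the ordering $\prec_\chi$ interacts with the chaining of matrix indices, so that the ``internal'' chains attached to the blocks of $\sigma$ reassemble into the single chain along $\prec_\chi$ regardless of $\sigma$. The $\op$-twist in the definition of the right embedding $R(\M_d((A_{r,k})^{\op})^{\op})$ also needs a little care when identifying the correct generators. Everything else in the argument is formal once this computation is secured.
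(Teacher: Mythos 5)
Your route is essentially the one this paper's own machinery supports (the theorem itself is quoted from elsewhere, but the relevant tools are Lemma \ref{lem:expanding-moments-of-matrices} and Corollary \ref{cor:cumulant-for-R-cyclic}): reduce to generators via Corollary \ref{cor:generators-vanishing-cumulants}, expand the $\M_d(\bC)$-valued moment/cumulant functions entrywise so that a $\sigma$-independent matrix-unit pattern factors out of the M\"obius sum, and then kill the scalar mixed cumulants by Theorem \ref{thm:bifree-classifying-theorem} with $B=\bC$. Indeed your displayed identity is exactly the first display in the proof of Corollary \ref{cor:cumulant-for-R-cyclic} specialized to elementary matrices, and once you have it the rest of your argument is fine. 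You could in fact shorten the proof by invoking Corollary \ref{cor:cumulant-for-R-cyclic} directly: since $L_{[a]}L([Z_{i,j}])L_{[a']}=L([a][Z_{i,j}][a'])$ (and similarly on the right), every generator appearing in Corollary \ref{cor:generators-vanishing-cumulants} is again of the form $L([Z'_{i,j}])$ or $R([Z'_{i,j}])$ with entries in the appropriate scalar face, so the mixed $\M_d(\bC)$-valued cumulant is a sum of mixed scalar cumulants of entries times matrix units, and these scalar cumulants vanish by bi-freeness of $\{(A_{\ell,k},A_{r,k})\}_{k\in K}$; this removes the need for your reduction to elementary operators and the induction over bi-multiplicative reductions.

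One concrete correction to your base case: applying $W_1\cdots W_n$ to $I_d$ does \emph{not} produce $\varphi$ of the $\prec_\chi$-ordered product $Z^{(s_\chi(1))}\cdots Z^{(s_\chi(n))}$; it produces $\varphi$ of the product in the \emph{original} order $Z^{(1)}\cdots Z^{(n)}$, and only the matrix units are multiplied in the $s_\chi$-order (this is precisely the content of Lemma \ref{lem:expanding-moments-of-matrices}, whose scalar factor is $\varphi(Z_{1;i_1,j_1}\cdots Z_{n;i_n,j_n})$ while the pattern is $E_{i_{s_\chi(1)},j_{s_\chi(1)}}\cdots E_{i_{s_\chi(n)},j_{s_\chi(n)}}$). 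For instance, with $W_1=R(Y\otimes E_{p_1,q_1})$ and $W_2=L(Z\otimes E_{p_2,q_2})$ one computes $E_d(W_1W_2)=\varphi(YZ)\,\delta_{q_2,p_1}E_{p_2,q_1}$, not $\varphi(ZY)\,\delta_{q_2,p_1}E_{p_2,q_1}$. This matters because $E^{\bC}_\sigma$, and hence the cumulant $\kappa^{\bC}_\chi$ that bi-freeness of the scalar pairs annihilates, is defined with the operators in their original order; since your displayed identity is stated with $E^{\bC}_\sigma(Z^{(1)},\ldots,Z^{(n)})$, which is the correct form, the slip is confined to the justification of the base case and does not affect the rest of the argument once repaired.
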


Essential to the proof of Theorem \ref{thm:bi-free-with-amalgamation-over-matrix-algebra} is the following lemma that we will make substantial use of.  For the remainder of this section, let $E_{i,j} \in \M_d(\bC)$ denote the $d \times d$ matrix unit with a $1$ in the $(i,j)^\th$ entry and zeros elsewhere.
\begin{lem}[\cite{S2015-3}*{Lemma 3.7}]
\label{lem:expanding-moments-of-matrices}
Let $(\A, \varphi)$ be a non-commutative probability space, let $\chi : \{1, \ldots, n\} \to \{\ell, r\}$, let $\{[Z_{k; i,j}]\}_{k=1}^n \subseteq \M_d(\A)$, and let $Z_k = L([Z_{k; i,j}])$ if $\chi(k) = \ell$ and $Z_k = R([Z_{k; i,j}])$ if $\chi(k) = r$.  Then
\[
E_d(Z_1 \cdots Z_n) = \sum^d_{\substack{i_1, \ldots, i_n = 1 \\ j_1, \ldots, j_n = 1}} \varphi(Z_{1; i_1, j_1}  \cdots Z_{n; i_n, j_n}) E_\chi((i_1, \ldots, i_n), (j_1, \ldots, j_n))
\]
where
\[
E_\chi((i_1, \ldots, i_n), (j_1, \ldots, j_n)) := E_{i_{s_\chi(1)}, j_{s_\chi(1)}} \cdots E_{i_{s_\chi(n)}, j_{s_\chi(n)}} \in \M_d(\bC).
\]
\end{lem}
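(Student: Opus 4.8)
The plan is to reduce to ``single-entry'' matrices and then compute both sides directly. Both sides of the claimed formula are additive in each argument $[Z_{k;i,j}]\in\M_d(\A)$: on the left, $[Z_{i,j}]\mapsto L([Z_{i,j}])$ and $[Z_{i,j}]\mapsto R([Z_{i,j}])$ are homomorphisms (hence additive), composition in $\L(\M_d(\A))$ is multi-additive, and $E_d$ and $\varphi_d$ are linear; on the right, $[Z_{k;i,j}]\mapsto Z_{k;i_k,j_k}$ is additive and $\varphi$ is linear. Since every $T\in\M_d(\A)$ is the finite sum of the matrices having $T_{p,q}$ in position $(p,q)$ and zeros elsewhere, it suffices to treat the case in which each $[Z_{k;i,j}]$ is such a single-entry matrix, say with entry $a_k\in\A$ in position $(p_k,q_k)$; write this matrix as $a_kE_{p_k,q_k}$, and $Z_k=L(a_kE_{p_k,q_k})$ when $\chi(k)=\ell$, $Z_k=R(a_kE_{p_k,q_k})$ when $\chi(k)=r$.

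First I would record the elementary action rules, read off directly from the entrywise definitions of $L$ and $R$: for all $a,b\in\A$,
\[
L(aE_{p,q})(bE_{r,s}) = \delta_{q,r}\,(ab)\,E_{p,s}
\qquad\text{and}\qquad
R(aE_{p,q})(bE_{r,s}) = \delta_{p,s}\,(ab)\,E_{r,q}.
\]
In both rules the input entry $b$ ends up to the \emph{right} of the new entry $a$; in particular, since $I_d=\sum_t E_{t,t}$, we get $Z_k(I_d)=a_kE_{p_k,q_k}$ regardless of $\chi(k)$.

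The heart of the proof is the identity
\[
Z_1 Z_2 \cdots Z_n (I_d) \;=\; (a_1 a_2 \cdots a_n)\, E_\chi\big((p_1,\ldots,p_n),(q_1,\ldots,q_n)\big),
\]
which I would establish by induction on $n$ (the case $n=1$ being the action rule itself), peeling $Z_1$ off the front. Put $\chi''=\chi|_{\{2,\ldots,n\}}$, reindexed on $\{1,\ldots,n-1\}$. The inductive hypothesis gives $Z_2\cdots Z_n(I_d)=(a_2\cdots a_n)\,E_{\chi''}((p_2,\ldots,p_n),(q_2,\ldots,q_n))$, and collapsing the resulting string of matrix units expresses this as $(a_2\cdots a_n)$ times a product of Kronecker deltas times a single matrix unit $E_{p_u,q_v}$ with $u=s_{\chi''}(1)+1$ and $v=s_{\chi''}(n-1)+1$. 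Applying $Z_1$ via the rules above produces one further delta, multiplies $a_1$ onto the left, and changes the matrix unit to $E_{p_1,q_v}$ if $\chi(1)=\ell$ or to $E_{p_u,q_1}$ if $\chi(1)=r$. To match this with $E_\chi((p_1,\ldots,p_n),(q_1,\ldots,q_n))$ I would use how deleting the first coordinate shifts the ordering $\prec_\chi$: if $\chi(1)=\ell$ then $s_\chi(1)=1$ and $s_\chi(q)=s_{\chi''}(q-1)+1$ for $q\ge 2$, so $E_\chi(\cdots)=E_{p_1,q_1}\cdot E_{\chi''}((p_2,\ldots,p_n),(q_2,\ldots,q_n))$; if $\chi(1)=r$ then $s_\chi(n)=1$ and $s_\chi(q)=s_{\chi''}(q)+1$ for $q\le n-1$, so $E_\chi(\cdots)=E_{\chi''}((p_2,\ldots,p_n),(q_2,\ldots,q_n))\cdot E_{p_1,q_1}$. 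Collapsing either of these two products reproduces exactly the delta and matrix unit just obtained, which closes the induction. Applying $\varphi_d$ entrywise then gives $E_d(Z_1\cdots Z_n)=\varphi(a_1\cdots a_n)\,E_\chi((p_1,\ldots,p_n),(q_1,\ldots,q_n))$, which is precisely the claimed sum for single-entry inputs, since $\varphi(Z_{1;i_1,j_1}\cdots Z_{n;i_n,j_n})$ vanishes unless $(i_k,j_k)=(p_k,q_k)$ for every $k$.

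The main obstacle is the combinatorial bookkeeping inside the inductive step: one must verify that removing the first index converts $s_\chi$ into $s_{\chi''}$ in exactly the way that lets the matrix-unit string $E_\chi$ shed its first letter (when $\chi(1)=\ell$) or its last letter (when $\chi(1)=r$) in step with the removal of $Z_1$. The remaining ingredients---the multi-additivity reduction, the two action rules for $L$ and $R$, and the final passage through $\varphi_d$---are routine.
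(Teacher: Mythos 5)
Your proposal is correct. Note that this paper does not actually prove the lemma; it is imported verbatim from \cite{S2015-3}*{Lemma 3.7}, so there is no in-paper argument to compare against, and the natural proof is exactly the kind of direct computation you give. Your reduction is sound: both sides are additive in each matrix argument (linearity of $L$, $R$, composition, $E_d$, $\varphi_d$, and $\varphi$), so it suffices to treat single-entry matrices $a_kE_{p_k,q_k}$; your action rules $L(aE_{p,q})(bE_{r,s})=\delta_{q,r}(ab)E_{p,s}$ and $R(aE_{p,q})(bE_{r,s})=\delta_{p,s}(ab)E_{r,q}$ follow immediately from the entrywise definitions, and the two points that carry the whole lemma are correctly identified and verified: first, both $L$ and $R$ place the new algebra element to the \emph{left}, so the scalar $\varphi$-factor comes out in the original order $a_1\cdots a_n$ even though the matrix units multiply in $s_\chi$-order; second, the bookkeeping $s_\chi(q)=s_{\chi''}(q-1)+1$ (for $\chi(1)=\ell$, with $s_\chi(1)=1$) and $s_\chi(q)=s_{\chi''}(q)+1$ (for $\chi(1)=r$, with $s_\chi(n)=1$) shows that $E_\chi$ sheds its first, respectively last, letter exactly in step with peeling off $Z_1$, which closes the induction. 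The degenerate case where the matrix-unit string collapses to $0$ is handled automatically by linearity, and the base case and the final passage through $\varphi_d$ are as you state.
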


As an immediate corollary of Lemma \ref{lem:expanding-moments-of-matrices}, we obtain the following formula for the $\M_d(\bC)$-valued cumulants for pairs of matrices via the $\bC$-valued cumulants of the entries.

\begin{cor}
\label{cor:cumulant-for-R-cyclic}
Let $(\A, \varphi)$ be a non-commutative probability space, let $\chi : \{1, \ldots, n\} \to \{\ell, r\}$, $\{[Z_{k; i,j}]\}_{k=1}^n \subseteq \M_d(\A)$, and let $Z_k = L([Z_{k; i,j}])$ if $\chi(k) = \ell$ and $Z_k = R([Z_{k; i,j}])$ if $\chi(k) = r$.  Then
\[
\kappa^{\M_d(\bC)}_\chi (Z_1,  \ldots, Z_n) = \sum^d_{\substack{i_1, \ldots, i_n = 1 \\ j_1, \ldots, j_n = 1}} \kappa^\bC_\chi (Z_{1; i_1, j_1}, \ldots,  Z_{n; i_n, j_n}) E_\chi((i_1, \ldots, i_n), (j_1, \ldots, j_n)).
\]
\end{cor}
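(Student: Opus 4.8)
The plan is to combine the moment--cumulant relation of Definition \ref{defn:kappa} with a strengthening of Lemma \ref{lem:expanding-moments-of-matrices} that computes \emph{every} block-reduced moment $E^{\M_d(\bC)}_\sigma$, not just $E^{\M_d(\bC)}_{1_\chi} = E_d(Z_1\cdots Z_n)$. Write $Z_k = L([Z_{k;i,j}])$ or $R([Z_{k;i,j}])$ as in the statement, regard $(\A,\varphi)$ as a $\bC$-$\bC$-non-commutative probability space in the trivial way, and let $E^\bC$ be its bi-free moment function (so $E^\bC_{1_\chi}(W_1,\ldots,W_n) = \varphi(W_1\cdots W_n)$). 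The key claim is that for every $\chi:\{1,\ldots,n\}\to\{\ell,r\}$ and every $\sigma\in BNC(\chi)$,
\[
E^{\M_d(\bC)}_\sigma(Z_1,\ldots,Z_n) = \sum^d_{\substack{i_1,\ldots,i_n=1\\ j_1,\ldots,j_n=1}} E^\bC_\sigma(Z_{1;i_1,j_1},\ldots,Z_{n;i_n,j_n})\, E_\chi((i_1,\ldots,i_n),(j_1,\ldots,j_n)).
\]

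Granting this claim the corollary is immediate. By Definition \ref{defn:kappa},
\[
\kappa^{\M_d(\bC)}_\chi(Z_1,\ldots,Z_n) = \sum_{\sigma\in BNC(\chi)} E^{\M_d(\bC)}_\sigma(Z_1,\ldots,Z_n)\,\mu_{BNC}(\sigma,1_\chi);
\]
substituting the claim, interchanging the two finite sums, and using the same moment--cumulant relation for the scalar cumulant, $\kappa^\bC_\chi(W_1,\ldots,W_n) = \sum_{\sigma\in BNC(\chi)} E^\bC_\sigma(W_1,\ldots,W_n)\mu_{BNC}(\sigma,1_\chi)$, produces exactly the asserted formula.

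To prove the claim I would induct on the number of blocks of $\sigma$. The base case $\sigma = 1_\chi$ is precisely Lemma \ref{lem:expanding-moments-of-matrices}. For the inductive step, if $\sigma\neq 1_\chi$ then $s_\chi^{-1}\cdot\sigma$ is non-crossing and not maximal, so either (a) $\{1,\ldots,n\}$ is a disjoint union of at least two $\chi$-intervals, each a union of blocks of $\sigma$, or (b) $\sigma$ has a block $V$ that is a proper $\chi$-interval disjoint from the block $W$ containing $\min_{\prec_\chi}(\{1,\ldots,n\})$ and $\max_{\prec_\chi}(\{1,\ldots,n\})$. In case (a), apply property (\ref{part:bi-multi-3}) of Definition \ref{defn:bi-multiplicative} to the bi-multiplicative function $E^{\M_d(\bC)}$ and, separately, to $E^\bC$, invoke the inductive hypothesis on each $\chi$-interval, and note that the matrices $E_\chi$ of the pieces multiply back into a single $E_\chi$ because the elements of the $\chi$-intervals occur consecutively in the order $\prec_\chi$, which is exactly the order in which the matrix units in $E_\chi$ are multiplied. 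In case (b), apply property (\ref{part:bi-multi-4}): the inductive hypothesis expands $E^{\M_d(\bC)}_{\sigma|_V}$ of the $V$-entries into a scalar-weighted sum of matrices; this matrix is fed into an $L_{(\cdot)}$- or $R_{(\cdot)}$-slot of the operator adjacent to $V$ inside $E^{\M_d(\bC)}_{\sigma|_W}$, which is again of the form $L([\,\cdot\,])$ or $R([\,\cdot\,])$ by a direct computation such as $L([Z_{p;i,j}])L_M = L([Z_{p;i,j}]M)$; a second use of the inductive hypothesis on $E^{\M_d(\bC)}_{\sigma|_W}$ completes the expansion, with the matrix units attached to the $V$-indices reappearing in their correct $\prec_\chi$-positions inside $E_\chi$ and the scalar weights recombining, via property (\ref{part:bi-multi-4}) applied to $E^\bC$, into $E^\bC_\sigma$.

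The step I expect to be the main obstacle is the bookkeeping in case (b): one must check that evaluating a $\chi$-interval block to a scalar-weighted matrix and then passing that matrix through an $L_{(\cdot)}$- or $R_{(\cdot)}$-insertion into a matrix operator shifts the relevant row and column indices exactly so that the surviving matrix units match the telescoping product $E_{i_{s_\chi(1)},j_{s_\chi(1)}}\cdots E_{i_{s_\chi(n)},j_{s_\chi(n)}}$ defining $E_\chi$. This is a direct but slightly fussy index chase, entirely parallel to the proof of Lemma \ref{lem:expanding-moments-of-matrices} in \cite{S2015-3}. An alternative that packages all of this combinatorics at once is to verify directly that $\sigma\mapsto \sum_{\text{indices}} E^\bC_\sigma(Z_{1;i_1,j_1},\ldots) E_\chi((i_1,\ldots,i_n),(j_1,\ldots,j_n))$ is bi-multiplicative and agrees with $E^{\M_d(\bC)}$ on every $1_\chi$ by Lemma \ref{lem:expanding-moments-of-matrices}, then appeal to uniqueness of the bi-multiplicative extension; this trades the induction for four routine identities, one per axiom in Definition \ref{defn:bi-multiplicative}.
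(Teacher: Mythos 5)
Your proposal is correct and follows essentially the same route as the paper: the paper's proof also consists of the intermediate claim that $E^{\M_d(\bC)}_\sigma(Z_1,\ldots,Z_n)$ expands entrywise as $\sum_{i_1,\ldots,i_n,j_1,\ldots,j_n}\varphi_\sigma(Z_{1;i_1,j_1},\ldots,Z_{n;i_n,j_n})\,E_\chi((i_1,\ldots,i_n),(j_1,\ldots,j_n))$ for every $\sigma\in BNC(\chi)$ (obtained from Lemma \ref{lem:expanding-moments-of-matrices} together with bi-multiplicative properties), followed by the M\"obius--cumulant formula (\ref{eq:cumulants}). Your induction on the blocks of $\sigma$ merely spells out the bi-multiplicative reduction that the paper leaves implicit, so it is a fuller write-up of the same argument rather than a different one.
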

\begin{proof}
Using Lemma \ref{lem:expanding-moments-of-matrices} and bi-multiplicative properties, one can see that if $\pi \in BNC(\chi)$, then 
\[
(E_d)_\pi(Z_1, \ldots, Z_n) = \sum^d_{\substack{i_1, \ldots, i_n = 1 \\ j_1, \ldots, j_n = 1}} \varphi_\pi(Z_{1; i_1, j_1}, \ldots, Z_{n; i_n, j_n}) E_\chi((i_1, \ldots, i_n), (j_1, \ldots, j_n)).
\]
Hence the result follows from the mobius-cumulant formula (\ref{eq:cumulants}).
\end{proof}

In order to obtain examples of pairs of matrices that are bi-free from the scalar matrices over the diagonal scalar matrices, we make the following definition (which enforces specific constraints on the $\bC$-valued cumulants of the entries of the matrices).

\begin{defn}\label{defn:R-cyclic}
Let $I$ and $J$ be disjoint index sets and let $\{[Z_{k; i,j}]\}_{k \in I} \cup \{[Z_{k; i,j}]\}_{k \in J} \subseteq \M_d(\A)$.  The pair $(\{[Z_{k; i,j}]\}_{k \in I},  \{[Z_{k; i,j}]\}_{k \in J})$ is said to be \emph{$R$-cyclic} if for every $n \geq 1$, $\omega : \{1, \ldots, n\} \to I \sqcup J$, and $1 \leq i_1, \ldots, i_n, j_1, \ldots, j_n \leq d$,
\[
\kappa^{\bC}_{\chi_\omega}(Z_{\omega(1); i_1, j_1}, Z_{\omega(2); i_2, j_2}, \ldots, Z_{\omega(n); i_n, j_n}) = 0
\]
whenever at least one of $j_{s_\chi(1)} = i_{s_\chi(2)}$, $j_{s_\chi(2)} = i_{s_\chi(3)}$, $\ldots$, $j_{s_\chi(n-1)} = i_{s_{\chi(n)}}$, $j_{s_\chi(n)} = i_{s_\chi(1)}$ fail.
\end{defn}

\begin{exam}
Let $(\A, \varphi)$ be a non-commutative probability space and consider the diagonal matrices $Z^\ell = \diag(X_1, \ldots, X_d), Z^r = \diag(Y_1, \ldots, Y_d) \in \M_d(\A)$.  Then the pair $(Z^\ell, Z^r)$ can be verified to be $R$-cyclic if and only if $\{(X_k, Y_k)\}_{k=1}^d$ is a bi-free family of pairs of faces (via Theorem \ref{thm:bifree-classifying-theorem}).
\end{exam}

\begin{exam}
\label{exam:creation-example}
Let $\F(\H)$ denote the Fock space of a Hilbert space $\H$.  For each $h \in \H$, let $l(h)$ and $r(h)$ denote the left and right creation operators corresponding to $h$ respectively and let $l^*(h)$ and $r^*(h)$ denote the left and right annihilation operators corresponding to $h$ respectively.  Let $\{h_{k;i,j} \, \mid \, 1 \leq i,j \leq d, k \in K\}$ be an orthonormal subset of $\H$ and consider the pair
\[
\left(\{[l(h_{k;i,j})], [l^*(h_{k;j,i})]\}, \{[r(h_{k;i,j})], [r^*(h_{k;j,i})]\}    \right).
\]
This pair is $R$-cyclic.  Indeed, as 
\[
\{(\{l(h_{k;i,j}), l^*(h_{k;i,j})\, \mid \, 1 \leq i,j \leq d\}, \{r(h_{k;i,j}), r^*(h_{k;i,j})\, \mid \, 1 \leq i,j \leq d\})\}_{k \in K}
\]
are bi-free and each pair is a bi-free central limit distribution (see \cite{V2014}*{Section 7}), all cumulants of order 1 or of order at least 3 vanish.  In addition, the only second order cumulants that are non-zero must be of the form
\[
\kappa^{\bC}(l^*(h), l(h')), \quad \kappa^{\bC}(r^*(h), l(h')), \quad \kappa^{\bC}(l^*(h), r(h')), \quad \kappa^{\bC}(r^*(h), r(h')), 
\]
with $\langle h, h'\rangle_\H \neq 0$.  Thus, for cumulants with entries from the matrices in the pair 
\[
\left(\{[l(h_{k;i,j})], [l^*(h_{k;j,i})]\}, \{[r(h_{k;i,j})], [r^*(h_{k;j,i})]\}    \right)
\]
to be non-zero, one requires $h = h_{k;i,j} = h'$ for some $1 \leq i,j \leq d$ and $k \in K$.  Since $l^*(h_{k;i,j})$ and $r^*(h_{k;i,j})$ both occur only in the $(j,i)$-entry of a matrix, and since $l(h_{k;i,j})$ and $r(h_{k;i,j})$ both occur only in the $(i,j)$-entry of a matrix, Definition \ref{defn:R-cyclic} is indeed verified for this pair.
\end{exam}

\begin{exam}
Let $(\A, \varphi)$ be a $^*$-non-commutative probability space and let $(X, Y)$ be a pair of elements of $\A$.  We will say that the pair $(X, Y)$ is \emph{$R$-diagonal} if all odd order  $\bC$-valued bi-free cumulants involving $(\{X, X^*\}, \{Y, Y^*\})$ are zero and
\[
\kappa^\bC_\chi(Z_1, \ldots, Z_{2n}) = 0
\]
unless $\left(Z_{s_\chi(1)}, \ldots, Z_{s_{\chi(2n)}}\right)$ is of one of the following forms:
\begin{itemize}
\item $(Z, Z^*, Z, Z^*,  \ldots, Z, Z^*)$ with $Z = X$ or $Z = Y$, 
\item $(Z^*, Z, Z^*, Z, \ldots, Z^*, Z)$ with $Z = X$ or $Z = Y$, 
\item $(X, X^*, X, X^*,  \ldots, X, X^*, Y, Y^*, Y, Y^*, \ldots, Y, Y^*)$,
\item $(X, X^*, X, X^*, \ldots, X, X^*, X, Y^*, Y, Y^*, Y, Y^*, \ldots, Y, Y^*)$,
\item $( X^*, X, X^*, X, \ldots, X^*, X,   Y^*, Y, Y^*, Y, \ldots,  Y^*, Y)$, or
\item $(X^*, X, X^*, X, \ldots, X^*, X, X^*, Y, Y^*, Y, Y^*, Y, \ldots, Y^*, Y)$
\end{itemize}
(i.e. alternate between $^*$-terms and non-$^*$-terms, with any number of $X$ terms followed by any number of $Y$ terms).  In particular, if $(X, Y)$ is $R$-diagonal, then $X$ is $R$-diagonal and $Y$ is $R$-diagonal as defined in \cite{NSS2002-R}.

It is not difficult to check that if $(X, Y)$ is $R$-diagonal, then the pair
\[
\left( \begin{bmatrix}
0 & X \\ X^* & 0
\end{bmatrix}, \begin{bmatrix}
0 & Y \\ Y^* & 0
\end{bmatrix}\right)
\]
is $R$-cyclic.
\end{exam}

Many more examples of $R$-cyclic pairs of matrices may be constructed from know examples of $R$-cyclic families of matrices (see \cite{NSS2002-R} for examples).  All such examples arise from placing specific patterns on the $\bC$-valued cumulants of matrices in the pair.

With the definition of $R$-cyclic pairs complete, we shift our attention to showing that $R$-cyclic pairs are precisely those pairs that are bi-free from the scalar matrices over the diagonal matrices.  To begin, let $\D_d$ denoted the subalgebra of $\M_d(\bC)$ consisting of all diagonal matrices and let $F : \M_d(\bC) \to \D_d$ be the conditional expectation onto the diagonal.

In order to invoke Theorem \ref{thm:bi-free-over-D}, a method for computing the $\D_d$-valued bi-free cumulants is required.  We note the following which is the bi-free analogue of \cite{NSS2002-R}*{Theorem 7.2} (and is proved using similar techniques).
\begin{lem}
\label{lem:cumulant-for-R-cyclic-diagonal}
Let $(\A, \varphi)$ be a non-commutative probability space, let $\chi : \{1, \ldots, n\} \to \{\ell, r\}$, let $\{[Z_{k; i,j}]\}_{k=1}^n \subseteq \M_d(\A)$, and let $Z_k = L([Z_{k; i,j}])$ if $\chi(k) = \ell$ and $Z_k = R([Z_{k; i,j}])$ if $\chi(k) = r$.  Suppose for all $n\geq 1$, $\omega : \{1,\ldots, n\} \to I \sqcup J$, $\chi = \chi_\omega$, and $1 \leq i_1, \ldots, i_n, j_1, \ldots, j_n\leq d$ with 
\[
j_{s_\chi(1)} = i_{s_\chi(2)}, j_{s_\chi(2)} = i_{s_\chi(3)}, \ldots, j_{s_\chi(n-1)} = i_{s_{\chi(n)}}
\]
that
\begin{align}
j_{s_\chi(n)} \neq i_{s_\chi(1)} \quad \text{ implies } \quad \kappa^\bC_{\chi}(Z_{\omega(1); i_1, j_1}, \ldots, Z_{\omega(n); i_n, j_n}) = 0. \label{hyp:cumulant}
\end{align}
Then for all $n\geq 1$, $\omega : \{1,\ldots, n\} \to I \sqcup J$, $\chi = \chi_\omega$, and
\[
\left\{\Lambda_k = \diag \left(\lambda^{(k)}_1, \ldots, \lambda^{(k)}_{d}\right)\right\}^{n}_{k=1}  \cup \left\{\Gamma_k = \diag\left(\gamma^{(k)}_1, \ldots, \gamma^{(k)}_{d}\right)\right\}^{n}_{k=1}\subseteq \D_d,
\]
if
\[
Z'_{\omega, k} =  \left\{
\begin{array}{ll}
L(\Lambda_k) Z_{\omega(k)} L(\Gamma_k)  & \mbox{if } \omega(k) \in I  \\
R(\Gamma_k) Z_{\omega(k)} R(\Lambda_k)  & \mbox{if } \omega(k) \in J
\end{array} \right.
\]
then one has
\[
\kappa^{\D_d}_\chi (Z'_{\omega, 1}, \ldots, Z'_{\omega, n}) = \sum^d_{\substack{i_1, \ldots, i_n = 1 \\ j_1, \ldots, j_n = 1 \\ j_{s_\chi(k)} = i_{s_\chi(k+1)} \forall k \\ j_{s_\chi(n)} = i_{s_{\chi(1)}}}} \left( \prod^n_{q=1} \lambda^{(q)}_{i_q}\right)   \left( \prod^n_{q=1} \gamma^{(q)}_{j_q}\right) \kappa^\bC_{\chi}(Z_{\omega(1); i_1, j_1}, \ldots, Z_{\omega(n); i_n, j_n})  E_{i_{s_\chi(1)}, i_{s_\chi(1)}}  .
\]
\end{lem}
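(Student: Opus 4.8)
The plan is to reduce the statement to two facts already available: the entrywise matrix cumulant formula of Corollary~\ref{cor:cumulant-for-R-cyclic}, and the change-of-algebra principle of Theorem~\ref{thm:cumulants-D-valued}. First I would observe that each conjugated operator $Z'_{\omega,k}$ is again a matrix operator with the entries of $[Z_{\omega(k);i,j}]$ rescaled by the diagonal entries of $\Lambda_k$ and $\Gamma_k$. Indeed, $L$ is a unital homomorphism and $R$ a unital anti-homomorphism on $\M_d(\A)$, and $L(\Lambda)$, $R(\Lambda)$ agree with the $\varepsilon$-operators $L_\Lambda$, $R_\Lambda$; since $\Lambda_k$ and $\Gamma_k$ are diagonal, conjugation only scales the $(i,j)$ entry by $\lambda^{(k)}_i\gamma^{(k)}_j$, so $Z'_{\omega,k} = L([\lambda^{(k)}_i Z_{\omega(k);i,j}\gamma^{(k)}_j])$ when $\omega(k)\in I$ and $Z'_{\omega,k} = R([\lambda^{(k)}_i Z_{\omega(k);i,j}\gamma^{(k)}_j])$ when $\omega(k)\in J$.

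Next I would apply Corollary~\ref{cor:cumulant-for-R-cyclic} to the sequence $(Z'_{\omega,1},\ldots,Z'_{\omega,n})$ and pull the scalars out of the $\bC$-valued cumulant by multilinearity, obtaining
\[
\kappa^{\M_d(\bC)}_\chi(Z'_{\omega,1},\ldots,Z'_{\omega,n}) = \sum^d_{\substack{i_1,\ldots,i_n=1\\ j_1,\ldots,j_n=1}} \left(\prod^n_{q=1}\lambda^{(q)}_{i_q}\right)\left(\prod^n_{q=1}\gamma^{(q)}_{j_q}\right)\kappa^\bC_\chi(Z_{\omega(1);i_1,j_1},\ldots,Z_{\omega(n);i_n,j_n})\, E_\chi((i_1,\ldots,i_n),(j_1,\ldots,j_n)).
\]
Then I would analyze the matrix unit product $E_\chi((i_1,\ldots,i_n),(j_1,\ldots,j_n)) = E_{i_{s_\chi(1)},j_{s_\chi(1)}}\cdots E_{i_{s_\chi(n)},j_{s_\chi(n)}}$: it vanishes unless $j_{s_\chi(q)}=i_{s_\chi(q+1)}$ for $q=1,\ldots,n-1$, in which case it equals $E_{i_{s_\chi(1)},j_{s_\chi(n)}}$. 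On those tuples where the internal adjacency conditions hold but $j_{s_\chi(n)}\neq i_{s_\chi(1)}$, hypothesis~(\ref{hyp:cumulant}), applied with $\chi=\chi_\omega$, forces $\kappa^\bC_\chi(Z_{\omega(1);i_1,j_1},\ldots,Z_{\omega(n);i_n,j_n})=0$. Hence only tuples with full cyclic adjacency survive, and for those $E_\chi = E_{i_{s_\chi(1)},i_{s_\chi(1)}}$ is diagonal. This simultaneously produces the claimed formula for $\kappa^{\M_d(\bC)}_\chi(Z'_{\omega,1},\ldots,Z'_{\omega,n})$ and shows that this quantity lies in $\D_d$.

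To pass from $\M_d(\bC)$-valued to $\D_d$-valued cumulants, I would invoke Theorem~\ref{thm:cumulants-D-valued} with $B=\M_d(\bC)$, $D=\D_d$, $F$ the conditional expectation onto the diagonal, and the family $Z = \{L([Z_{k;i,j}])\}_{k\in I}\sqcup\{R([Z_{k;i,j}])\}_{k\in J}$. Its hypothesis, that $\kappa^{\M_d(\bC)}_{Z,\omega}(\Lambda_1,\ldots,\Lambda_{n-1})\in\D_d$ for all $\omega$ and all diagonal $\Lambda_i$, is exactly the computation of the previous paragraph, since these cumulants are cumulants of matrix operators with diagonal $\varepsilon(\M_d(\bC)\otimes 1)$ or $\varepsilon(1\otimes\M_d(\bC))$ operators inserted, hence reduce via Corollary~\ref{cor:cumulant-for-R-cyclic} and~(\ref{hyp:cumulant}) to a diagonal expression. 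The proof of Theorem~\ref{thm:cumulants-D-valued} then shows the $\M_d(\bC)$-valued and $\D_d$-valued cumulant functions agree on the subalgebra generated by $Z$ and $\varepsilon(\D_d\otimes\D_d^\op)$; since each $Z'_{\omega,k} = L(\Lambda_k)Z_{\omega(k)}L(\Gamma_k)$ (or its right analogue) lies in this subalgebra, we get $\kappa^{\D_d}_\chi(Z'_{\omega,1},\ldots,Z'_{\omega,n}) = \kappa^{\M_d(\bC)}_\chi(Z'_{\omega,1},\ldots,Z'_{\omega,n})$, and combining with the formula above finishes the proof.

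The main obstacle is purely bookkeeping: confirming the identification of $Z'_{\omega,k}$ with a rescaled matrix operator in both the $I$- and $J$-cases (where $R$ reverses multiplication), and matching the cyclic-adjacency pattern produced by the matrix unit product $E_\chi$ with the precise shape of hypothesis~(\ref{hyp:cumulant}). The key observation that makes the seemingly weak hypothesis sufficient is that $E_\chi$ already imposes the internal adjacency conditions $j_{s_\chi(q)}=i_{s_\chi(q+1)}$ before the ``closing'' condition $j_{s_\chi(n)}=i_{s_\chi(1)}$ ever becomes relevant, so that (\ref{hyp:cumulant}) kills exactly the off-diagonal contributions; the two structural inputs, Corollary~\ref{cor:cumulant-for-R-cyclic} and Theorem~\ref{thm:cumulants-D-valued}, are otherwise applied essentially verbatim.
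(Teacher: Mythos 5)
Your proposal is correct, but it takes a genuinely different route from the paper. The paper stays entirely inside the $\D_d$-valued framework: it introduces the two multilinear functionals $f_\pi=\kappa^{\D_d}_\pi$ and $g_\pi$ (the explicit entrywise formula), verifies by hand that $g_\pi$ enjoys the reduction properties of a bi-multiplicative function on the relevant entries --- with hypothesis (\ref{hyp:cumulant}) used precisely to handle properties (\ref{part:bi-multi-3}) and (\ref{part:bi-multi-4}) of Definition \ref{defn:bi-multiplicative} --- then matches the full sums $\sum_\pi f_\pi=\sum_\pi g_\pi$ against the $(F\circ E_d)$-moments via Lemma \ref{lem:expanding-moments-of-matrices}, and concludes by M\"obius inversion. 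You instead work at the $\M_d(\bC)$-valued level: after checking that $Z'_{\omega,k}$ is again a matrix operator with entries $\lambda^{(k)}_i Z_{\omega(k);i,j}\gamma^{(k)}_j$ (your computation in the $J$-case, where $R$ is anti-multiplicative, is the right one), Corollary \ref{cor:cumulant-for-R-cyclic} plus multilinearity and the matrix-unit constraint give the $\M_d(\bC)$-valued cumulant, hypothesis (\ref{hyp:cumulant}) kills exactly the tuples where the cyclic closing condition fails, and the result is the claimed diagonal expression; you then transfer to $\D_d$-valued cumulants by invoking Theorem \ref{thm:cumulants-D-valued} --- more precisely the fact, established in its proof, that the $B$-valued and $D$-valued cumulant functions agree on the subalgebra generated by $Z$ and $\varepsilon(\D_d\otimes\D_d^{\op})$ once its hypothesis is checked, and your diagonal computation (specialized to one-sided diagonal insertions) is exactly that check. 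Your route is shorter and reuses the Section \ref{sec:sub} machinery as a transfer principle, avoiding the somewhat delicate verification that the explicit formula behaves like a bi-multiplicative function; the paper's route is self-contained at the level of bi-multiplicative manipulations and makes visible exactly where the hypothesis enters the reduction properties. Note also that the moment-matching step in the paper's proof encodes essentially the same information as your use of Corollary \ref{cor:cumulant-for-R-cyclic}, so the two arguments use the same inputs assembled in a different order.
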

\begin{proof}
Let $\X$ denote the complex linear span of
\[
\{L_{b_1} Z_i L_{b_2} \, \mid \, i \in I, b_1, b_2 \in \D_d\} \cup \{R_{b_1} Z_j R_{b_2} \, \mid \, j \in J, b_1, b_2 \in \D_d\}.
\]
For every $n \geq 1$, $\chi : \{1,\ldots, n\} \to \{\ell, r\}$, and $\pi \in BNC(\chi)$ define $\bC$-multi-linear functionals $f_\pi, g_\pi : \X^n \to \D_d$ as follows:  if $\omega : \{1,\ldots, n\} \to I \sqcup J$ is such that $\chi_\omega = \chi$, if
\[
\left\{\Lambda_k = \diag\left(\lambda^{(k)}_1, \ldots, \lambda^{(k)}_{d}\right)\right\}^{n}_{k=1}  \cup \left\{\Gamma_k = \diag\left(\gamma^{(k)}_1, \ldots, \gamma^{(k)}_{d}\right)\right\}^{n}_{k=1}\subseteq \D_d,
\]
and if
\[
Z'_{\omega, k} =  \left\{
\begin{array}{ll}
L(\Lambda_k) Z_{\omega(k)} L(\Gamma_k)  & \mbox{if } \omega(k) \in I  \\
R(\Gamma_k) Z_{\omega(k)} R(\Lambda_k)  & \mbox{if } \omega(k) \in J
\end{array} \right.
\]
then
\[
f_\pi(Z'_{\omega, 1}, \ldots, Z'_{\omega, n}) = \kappa^{\D_d}_\pi (Z'_{\omega,1}, \ldots, Z'_{\omega, n})
\]
and
\[
g_\pi(Z'_{\omega, 1}, \ldots, Z'_{\omega, n}) = \sum^d_{\substack{i_1, \ldots, i_n = 1 \\ j_1, \ldots, j_n = 1 \\ j_{s_\chi(k)} = i_{s_\chi(k+1)} \forall k \\ j_{s_\chi(n)} = i_{s_{\chi(1)}}}} \left( \prod^n_{q=1} \lambda^{(q)}_{i_q}\right)   \left( \prod^n_{q=1} \gamma^{(q)}_{j_q}\right) \kappa^\bC_{\pi}(Z_{\omega(1); i_1, j_1}, \ldots, Z_{\omega(n); i_n, j_n})  E_{i_{s_\chi(1)}, i_{s_\chi(1)}}.
\]

Clearly $f_\pi$ is a $\D_d$-valued bi-multiplicative function.  Furthermore, it is possible to verify that $g_\pi$ has the properties of a $\D_d$-valued bi-multiplicative function when restricted to entries of the form $L(\Lambda_k) Z_{\omega(k)} L(\Gamma_k)$ and $R(\Gamma_k) Z_{\omega(k)} R(\Lambda_k)$.  The verification of parts (\ref{part:bi-multi-1}) and   (\ref{part:bi-multi-2}) of Definition \ref{defn:bi-multiplicative} follows immediately once one realizes that each possible operation either moves an element of $\D_d$ from begin an element where the $j_{s_\chi(k)}$-term along the diagonal contributes in the product to an element where the $i_{s_\chi(k+1)}$-term along the diagonal contributes in the product (or vice versa), or leaves the contributing term alone.  For parts (\ref{part:bi-multi-3}) and (\ref{part:bi-multi-4}) of Definition \ref{defn:bi-multiplicative}, note that if $V$ is a union of blocks of $\pi$ and a $\chi$-interval, then we may write $V = \{s_\chi(m), s_\chi(m+1), \ldots, s_\chi(m+m')\}$ for some $m, m'$.  If $j_{\chi(m+m')} \neq i_{\chi(m+m')}$ then
\[
\kappa^\bC_{\pi}(Z_{\omega(1); i_1, j_1}, \ldots, Z_{\omega(n); i_n, j_n}) = 0
\]
by hypothesis (\ref{hyp:cumulant}) together with the bi-multiplicative properties of $\kappa^\bC$.  Thus such terms may be removed from the expression of $g_\pi(Z'_{\omega, 1}, \ldots, Z'_{\omega, n})$.    Consequently, the computation of $g_\pi(Z'_{\omega, 1}, \ldots, Z'_{\omega, n})$ will agree with each of the additional expressions in part (\ref{part:bi-multi-3}) and  (\ref{part:bi-multi-4}) of Definition \ref{defn:bi-multiplicative} as 
\[
\kappa^\bC_{\pi}(Z_{\omega(1); i_1, j_1}, \ldots, Z_{\omega(n); i_n, j_n}) = \kappa^\bC_{\pi|_{V^c}}((Z_{\omega(1); i_1, j_1}, \ldots, Z_{\omega(n); i_n, j_n})|_{V^c})\kappa^\bC_{\pi|_V}((Z_{\omega(1); i_1, j_1}, \ldots, Z_{\omega(n); i_n, j_n})|_V)
\]
(and, in part (\ref{part:bi-multi-4}), in each of those expressions one of the $\Lambda_k$ or $\Gamma_k$ is multiplied by $g_{\pi|_V}((Z'_{\omega, 1}, \ldots, Z'_{\omega, n})|_{V})$ and thus will appear in the $\left( \prod^n_{q=1} \lambda^{(q)}_{i_q}\right)   \left( \prod^n_{q=1} \gamma^{(q)}_{j_q}\right)$ term) yields the result.

The final result will follows by M\"{o}bius inversion and linearity provided it can be demonstrated that for all $n\geq 1$, $\omega : \{1,\ldots, n\} \to I \sqcup J$, $\chi = \chi_\omega$, and $1 \leq i_1, \ldots, i_n, j_1, \ldots, j_n \leq d$ that if
\[
Z'_{\omega, k} =  \left\{
\begin{array}{ll}
L(E_{i_k, i_k}) Z_{\omega(k)} L(E_{j_k, j_k})  & \mbox{if } \omega(k) \in I  \\
R(E_{j_k, j_k}) Z_{\omega(k)} R(E_{i_k, i_k})  & \mbox{if } \omega(k) \in J
\end{array} \right.
\]
then 
\[
\sum_{\pi \in BNC(\chi)} g_\pi(Z'_{\omega, 1}, \ldots, Z'_{\omega, n}) = \sum_{\pi \in BNC(\chi)} f_\pi(Z'_{\omega, 1}, \ldots, Z'_{\omega, n}).
\]
If $\delta_{i,j}$ denote the Kronecker delta function, then by Lemma \ref{lem:expanding-moments-of-matrices} together with the definition of $g_\pi$ and $f_\pi$ we obtain that
\begin{align*}
\sum_{\pi \in BNC(\chi)} & g_\pi(Z'_{\omega, 1}, \ldots, Z'_{\omega, n}) \\
&= \delta_{j_{s_\chi(1)}, i_{s_\chi(2)}} \cdots  \delta_{j_{s_\chi(n-1)}, i_{s_\chi(n)}}   \delta_{j_{s_\chi(n)}, i_{s_\chi(1)}}  \left( \sum_{\pi \in BNC(\chi)}  \kappa^\bC_\pi (Z_{\omega(1); i_1, j_1}, \ldots,  Z_{\omega(n); i_n, j_n}) \right)   E_{i_{s_\chi(1)}, i_{s_\chi(1)}} \\
&= \delta_{j_{s_\chi(1)}, i_{s_\chi(2)}} \cdots  \delta_{j_{s_\chi(n-1)}, i_{s_\chi(n)}}   \delta_{j_{s_\chi(n)}, i_{s_\chi(1)}}  \varphi(Z_{\omega(1); i_1, j_1} \cdots  Z_{\omega(n); i_n, j_n})    E_{i_{s_\chi(1)}, i_{s_\chi(1)}} \\
&= (F \circ E_d)(Z'_{\omega, 1} \cdots Z'_{\omega, n}) \\
&= \sum_{\pi \in BNC(\chi)} \kappa^{\D_d}_\pi(Z'_{\omega, 1}, \ldots, Z'_{\omega, n}) \\
&= \sum_{\pi \in BNC(\chi)} f_\pi(Z'_{\omega, 1}, \ldots, Z'_{\omega, n})
\end{align*}
as desired.
\end{proof}

Using Lemma \ref{lem:cumulant-for-R-cyclic-diagonal}, we are enable to construct examples of matrices that are bi-free from scalar matrices over the diagonal scalar matrices via the following result.  This result will make use of Theorem \ref{thm:bi-free-over-D} and is the bi-free analogue of \cite{NSS2002-R}*{Theorem 8.2}.

\begin{thm}
\label{thm:R-cyc}
Let $(\A, \varphi)$ be a non-commutative probability space and let $\{[Z_{k; i,j}]\}_{k \in I} \cup \{[Z_{k; i,j}]\}_{k \in J} \subseteq \M_d(\A)$.  Then  $(\{[Z_{k; i,j}]\}_{k \in I},  \{[Z_{k; i,j}]\}_{k \in J})$ is $R$-cyclic if and only if $(\{L([Z_{k; i,j}])\}_{k \in I},  \{R([Z_{k; i,j}])\}_{k \in J})$ is bi-free from $(L(\M_d(\bC)), R(\M_d(\bC)^\op))$ with amalgamation over $\D_d$ with respect to $F \circ E_d$.
\end{thm}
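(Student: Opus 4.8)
The plan is to apply Theorem~\ref{thm:bi-free-over-D} with $B = \M_d(\bC)$, $D = \D_d$, $F$ the conditional expectation onto the diagonal, $\{Z_i\}_{i\in I} = \{L([Z_{i;p,q}])\}_{i\in I}$, and $\{Z_j\}_{j\in J} = \{R([Z_{j;p,q}])\}_{j\in J}$. First I would check that $F$ satisfies the faithfulness hypothesis of that theorem: if $A\in\M_d(\bC)$ and $F(BA) = 0$ for all $B$, then taking $B = E_{k,l}$ gives $F(E_{k,l}A) = A_{l,k}E_{k,k}$, so $A = 0$. Then bi-freeness of the pair from $(L(\M_d(\bC)), R(\M_d(\bC)^{\op}))$ over $\D_d$ is equivalent, via Theorem~\ref{thm:bi-free-over-D}, to the identity $\kappa^{\M_d(\bC)}_{Z,\omega}(b_1,\ldots,b_{n-1}) = \kappa^{\D_d}_{Z,\omega}(F(b_1),\ldots,F(b_{n-1}))$ holding for all $n$, $\omega$, and $b_1,\ldots,b_{n-1}\in\M_d(\bC)$, and I would establish both implications by showing this identity is in turn equivalent to $R$-cyclicity.

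The key technical input is a translation between the matrix-valued cumulants $\kappa^B_{Z,\omega}$ and the scalar cumulants of the entries. Every operator appearing inside $\kappa^B_{Z,\omega}(b_1,\ldots,b_{n-1})$ is of the form $L(W)$ or $R(W)$, where $W$ is obtained from one of the $[Z_{k;p,q}]$ by multiplying on the appropriate side(s) by the scalar matrices $b_\ell$; applying Corollary~\ref{cor:cumulant-for-R-cyclic} and then the multilinearity of $\kappa^\bC$ to extract those scalar coefficients expresses $\kappa^B_{Z,\omega}(b_1,\ldots,b_{n-1})$ as a sum over index tuples, each summand being a product of entries of the $b_\ell$'s times $\kappa^\bC_{\chi_\omega}(Z_{\omega(1);a_1,c_1},\ldots,Z_{\omega(n);a_n,c_n})$ times the matrix unit $E_{a_{s_\chi(1)},c_{s_\chi(n)}}$, where the surviving tuples are constrained by $c_{s_\chi(q)} = a_{s_\chi(q+1)}$ for $q = 1,\ldots,n-1$ (this constraint being forced by the product $E_{\chi_\omega}((\vec a),(\vec c))$ of matrix units in Corollary~\ref{cor:cumulant-for-R-cyclic}). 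In particular, choosing each $b_\ell$ to be a single matrix unit lets one read off any prescribed $\kappa^\bC_{\chi_\omega}(Z_{\omega(1);a_1,c_1},\ldots,Z_{\omega(n);a_n,c_n})$ as the $(a_{s_\chi(1)},c_{s_\chi(n)})$-entry of a suitable $\kappa^B_{Z,\omega}(E_{\bullet,\bullet},\ldots,E_{\bullet,\bullet})$.

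With this in hand, for ``$R$-cyclic $\Rightarrow$ bi-free over $\D_d$'' I would argue that $R$-cyclicity implies hypothesis~\eqref{hyp:cumulant} of Lemma~\ref{lem:cumulant-for-R-cyclic-diagonal}, so that lemma evaluates $\kappa^{\D_d}_{Z,\omega}(F(b_1),\ldots,F(b_{n-1}))$ (inserting a diagonal matrix between consecutive $Z$'s amounts to a one- or two-sided diagonal multiplication of the matrices involved, exactly the situation of that lemma). On the other side, $R$-cyclicity additionally kills all summands of $\kappa^B_{Z,\omega}(b_1,\ldots,b_{n-1})$ except those with $c_{s_\chi(n)} = a_{s_\chi(1)}$; together with the constraint $c_{s_\chi(q)} = a_{s_\chi(q+1)}$ this determines the column indices from the row indices and collapses the products of entries of the $b_\ell$'s into products of their \emph{diagonal} entries, i.e.\ of the entries of $F(b_\ell)$. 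A term-by-term comparison then matches this with the formula from Lemma~\ref{lem:cumulant-for-R-cyclic-diagonal}. For the converse I would instead specialize the $b_\ell$'s to matrix units $E_{c,a}$ in the recovery formula of the previous paragraph and use $F(E_{c,a}) = \delta_{c,a}E_{c,c}$: if one of the adjacency conditions $c_{s_\chi(q)} = a_{s_\chi(q+1)}$ ($q < n$) fails, the corresponding $F(b_\ell)$ vanishes and \eqref{eq:cummulant-condition-1} forces $\kappa^B_{Z,\omega}(b_1,\ldots,b_{n-1}) = 0$, hence the corresponding scalar cumulant vanishes; if all of these conditions hold, then all $b_\ell$ are diagonal, so \eqref{eq:cummulant-condition-1} shows $\kappa^B_{Z,\omega}(b_1,\ldots,b_{n-1})$ is itself diagonal and its $(a_{s_\chi(1)},c_{s_\chi(n)})$-entry vanishes unless $a_{s_\chi(1)} = c_{s_\chi(n)}$ --- and these two facts together are precisely the vanishing demanded by Definition~\ref{defn:R-cyclic}.

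The step I expect to be the main obstacle is the combinatorial bookkeeping in the genuinely mixed case, where $\omega$ takes values in both $I$ and $J$: there $s_\chi$ is nontrivial, the insertion $b_\ell$ attaches to the row index of the following operator when it is a left operator but to its column index when it is a right operator, and the last variable $b_{n-1}$ plays the special role of flanking $Z_{\omega(n)}$ on the far side. Keeping careful track of which $b_\ell$ encodes which $\prec_{\chi}$-adjacency, and checking that the resulting correspondence between the $b_\ell$'s and the matrices $\Lambda_k,\Gamma_k$ of Lemma~\ref{lem:cumulant-for-R-cyclic-diagonal} is the correct one, is the delicate part; the all-left and all-right cases, where $s_\chi$ is the identity, respectively the order-reversing permutation, serve as a clean model for the argument.
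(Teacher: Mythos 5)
Your proposal is correct, and its forward direction (R-cyclic $\Rightarrow$ bi-free over $\D_d$) is essentially the paper's argument: verify the faithfulness of $F$, expand $\kappa^{\M_d(\bC)}_{Z,\omega}$ via Corollary \ref{cor:cumulant-for-R-cyclic} and multilinearity, use $R$-cyclicity to collapse the coefficients to diagonal entries of the $b_\ell$'s, and compare with Lemma \ref{lem:cumulant-for-R-cyclic-diagonal}. Where you genuinely diverge is the converse. The paper does not specialize (\ref{eq:cummulant-condition-1}) directly; instead it constructs an auxiliary family $W$ with prescribed scalar cumulants (hence $R$-cyclic by fiat), shows $W$ and $Z$ have the same $\D_d$-valued distribution, invokes the already-proved direction to conclude both are bi-free from the scalar matrices over $\D_d$, deduces that they then share the same $\M_d(\bC)$-valued distribution, and finally recovers the scalar entry cumulants of $Z$ from those of $W$ via two-sided matrix-unit compressions in Corollary \ref{cor:cumulant-for-R-cyclic}. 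Your route instead feeds matrix units $E_{c,a}$ into the identity supplied by Theorem \ref{thm:bi-free-over-D}: failure of an interior $\prec_{\chi_\omega}$-adjacency makes some $F(b_\ell)=0$ and kills the whole matrix cumulant by linearity, while if all interior adjacencies hold the matrix cumulant lies in $\D_d$ and its off-diagonal entries vanish, which is exactly the cyclic condition; I checked the index bookkeeping you flag (each of the $n-1$ inserted units governs exactly one interior adjacency, with one index of the unit tied to the outer chain index from $E_{\chi_\omega}$ and the other prescribing an inner index, and the final cyclic condition corresponds to diagonality of the entry read off), and it does close up, including at the special positions $1$, $k_0$, and $n$. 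Your converse is shorter and more self-contained, avoiding the existence lemma for prescribed cumulants and the ``same $\D_d$-distribution implies same $\M_d(\bC)$-distribution'' step; its cost is precisely the careful general verification of the recovery formula in the mixed case, which is the part you correctly identify as delicate, whereas the paper's construction-and-uniqueness argument recycles the machinery already developed for the converse of Theorem \ref{thm:bi-free-over-D} at the price of an extra layer of indirection.
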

\begin{proof}
For $k \in I$  let $Z_k = L([Z_{k;i,j}])$ and for $k \in J$ let $Z_k = R([Z_{k;i,j}])$.  Let $Z = \{Z_k\}_{k \in I} \sqcup \{Z_k\}_{k \in J}$.

For one direction, suppose $(\{[Z_{k;i,j}]\}_{k \in I},  \{[Z_{k;i,j}]\}_{k \in J})$ is $R$-cyclic.
Thus it suffices by Theorem \ref{thm:bi-free-over-D} to show that
\[
\kappa^{\M_d(\bC)}_{Z, \omega}(b_1, \ldots, b_{n-1}) =  \kappa^{\D_d}_{Z, \omega}(F(b_1), \ldots, F(b_{n-1}))  
\]
for all $n\geq 1$, $\omega : \{1,\ldots, n\} \to I \sqcup J$, and $b_1, \ldots, b_{n-1} \in \M_d(\bC)$.   Only the case where $\omega(\{1,\ldots, n\}) \cap I \neq \emptyset$ and $\omega(\{1,\ldots, n\}) \cap J \neq \emptyset$ will be shown here as the proofs of the other cases are within (or, alternatively, hold by \cite{NSS2002-R}*{Theorem 8.2}).

Fix $n\geq 1$, $\omega : \{1,\ldots, n\} \to I \sqcup J$, and $b_1, \ldots, b_{n-1} \in \M_d(\bC)$.  Let $k_\ell = \min\{k \, \mid \, \omega(k) \in I\}$ and $k_r = \min\{k \, \mid \, \omega(k) \in J\}$.  Then $\{k_\ell, k_r\} = \{1, \hat{k}\}$ for some $\hat{k}$.  We must show that  if 
\[
C^{\omega(k)}_{b} = \left\{
\begin{array}{ll}
L_{b} & \mbox{if } \omega(k) \in I \\
R_{b} & \mbox{if } \omega(k) \in J
\end{array} \right. ,
\]
then
\[
\kappa^{\M_d(\bC)}_{\chi_\omega}\left(Z_{\omega(1)}, C^{\omega(2)}_{b_1} Z_{\omega(2)}, \ldots, C^{\omega(\hat{k}-1)}_{b_{\hat{k}-2}} Z_{\omega(\hat{k}-1)}, Z_{\hat{k}}, C^{\omega(\hat{k}+1)}_{b_{\hat{k}-1}} Z_{\omega(\hat{k}+1)}, \ldots, C^{\omega(n-1)}_{b_{n-3}} Z_{\omega(n-1)}, C^{\omega(n)}_{b_{n-2}} Z_{\omega(n)} C^{\omega(n)}_{b_{n-1}}\right)
\]
and
\[
\kappa^{\D_d}_{\chi_\omega}\left(Z_{\omega(1)}, C^{\omega(2)}_{F(b_1)} Z_{\omega(2)}, \ldots, \ldots, C^{\omega(n-1)}_{F(b_{n-3})} Z_{\omega(n-1)}, C^{\omega(n)}_{F(b_{n-2})} Z_{\omega(n)} C^{\omega(n)}_{F(b_{n-1})}\right)
\]
agree.  Note a formula for the later expressions was obtained in Lemma \ref{lem:cumulant-for-R-cyclic-diagonal}.

Let $Z'_k$ denote the $k^\th$ term in the $\kappa^{\M_d(\bC)}_{\chi_\omega}$-expression and let $\chi = \chi_\omega$.  By Corollary \ref{cor:cumulant-for-R-cyclic}, we have  that if $Z'_k = L([Z'_{k; i,j}])$ if $\chi(k) = \ell$ and $Z'_k = R([Z'_{k; i,j}])$ if $\chi(k) = r$, then
\[
\kappa^{\M_d(\bC)}_\chi (Z'_1,  \ldots, Z'_n) = \sum^d_{\substack{i_1, \ldots, i_n = 1 \\ j_1, \ldots, j_n = 1}} \kappa^\bC_\chi (Z'_{1; i_1, j_1}, \ldots,  Z'_{n; i_n, j_n}) E_\chi((i_1, \ldots, i_n), (j_1, \ldots, j_n)).
\]
However, notice $E_\chi((i_1, \ldots, i_n), (j_1, \ldots, j_n)) = 0$ unless $j_{s_\chi(1)} = i_{s_\chi(2)}$, $j_{s_\chi(2)} = i_{s_\chi(3)}$, $\ldots$, $j_{s_\chi(n-1)} = i_{s_{\chi(n)}}$, so 
\[
\kappa^{\M_d(\bC)}_\chi (Z'_1,  \ldots, Z'_n) = \sum^d_{\substack{i_1, \ldots, i_n = 1 \\ j_1, \ldots, j_n = 1 \\ j_{s_\chi(1)} = i_{s_\chi(2)}, \ldots, j_{s_\chi(n-1)} = i_{s_{\chi(n)}}}} \kappa^\bC_\chi (Z'_{1; i_1, j_1}, \ldots,  Z'_{n; i_n, j_n}) E_{i_{s_\chi(1)}, j_{s_{\chi(n)}}}.
\]

For a fixed sequence $1 \leq i_1, \ldots, i_n, j_1, \ldots, j_n\leq d$ such that $j_{s_\chi(1)} = i_{s_\chi(2)}, \ldots, j_{s_\chi(n-1)} = i_{s_{\chi(n)}}$, we desire to describe 
\[
\kappa^\bC_\chi (Z'_{1; i_1, j_1}, \ldots,  Z'_{n; i_n, j_n}) E_{i_{s_\chi(1)}, j_{s_{\chi(n)}}}.
\]

Suppose $Z'_k = L_b Z_q$ for some $q \in I$ occurs in the $\kappa_\chi^{\M_d(\bC)}$-cumulant. Thus $s^{-1}_\chi(k) > 1$ (i.e. if $s^{-1}_\chi(k) = 1$, then $Z'_k$ is the first left element in the sequence and would need to be $Z_q$ for $q \in I$).  Write $b = [a_{i,j}]$ so that $Z'_k = L([\sum^d_{m=1} a_{i,m} Z_{q; m,j}])$.  In addition, we have that $k_0 = s_\chi(s^{-1}_\chi(k) - 1)$ exists  and $Z'_{k_0}$ is of the form $Z_p$ for some $p \in I$ or of the form $L_{b'} Z_p$ for some $p \in I$.  Consequently, expanding out $Z'_k$ in the $\kappa^{\M_d(\bC)}_\chi$-cumulant produces elements of the form $a_{i_k,m} Z_{q; m, j_k}$ in the $k^\th$ entry of the $\kappa^\bC_{\chi}$-cumulant and expanding out $Z'_{k_0}$ produces scalar multiplies of $Z_{p; m', j_{k_0}}$ in the $k_0^\th$ entry of the $\kappa^\bC_{\chi}$-cumulant.  Since $j_{k_0} = j_{s_\chi(s^{-1}_\chi(k) - 1)} = i_{s_\chi(s^{-1}_\chi(k) - 1 + 1)} = i_k$, the $R$-cyclic condition would imply the only non-zero terms occur when $m = i_k$.  Thus $Z_{q; i_k, j_k}$ will be left in the $\kappa^\bC_\chi$-cumulant and the complex scalar $a_{i_k, i_k}$ will be pulled out of the $\kappa^\bC_{\chi}$-cumulant.

Similarly, suppose $Z'_k = R_b Z_q$ for some $q \in J$ occurs in the $\kappa_\chi^{\M_d(\bC)}$-cumulant. Thus $s^{-1}_\chi(k) < n$ (i.e. if $s^{-1}_\chi(k) = n$, then $Z'_k$ is the first right element in the sequence and would need to be $Z_q$ for $q \in J$).  Write $b = [a_{i,j}]$ so that $Z'_k = R([a_{i,j}])R([Z_{q; i,j}]) =  R([\sum^d_{m=1} Z_{q; i,m} a_{m,j}])$.  In addition, we have that $k_0 = s_\chi(s^{-1}_\chi(k) + 1)$ exists and $Z'_{k_0}$ is of the form $Z_p$ for some $p \in J$ or of the form $R_{b'} Z_p$ for some $p \in I$.  Consequently, expanding out $Z'_k$ in the $\kappa^{\M_d(\bC)}_{\chi}$-cumulant produces elements of the form $Z_{q; i_k,m} a_{m,j_k}$ in the $k^\th$ entry of the $\kappa^{\bC}_{\chi}$-cumulant  and expanding out $Z'_{k_0}$ produces scalar multiplies of $Z_{p; i_{k_0}, m'}$ in the $k_0^\th$ entry of the $\kappa^{\bC}_{\chi}$-cumulant.  Since $i_{k_0} = i_{s_\chi(s^{-1}_\chi(k) + 1)} = j_{s_\chi(s^{-1}_\chi(k) + 1 - 1)} = j_k$, the $R$-cyclic condition would imply the only non-zero terms occur when $m = j_k$.  Thus $Z_{q; i_k, j_k}$ will be left in the $\kappa^\bC_\chi$-cumulant and the complex scalar $a_{j_k, j_k}$ will be pulled out of the $\kappa^\bC_{\chi}$-cumulant.

To deal with a term of the form $L_{b_1} Z_q L_{b_2}$ for some $q \in I$, use the same arguments as in the  $L_b Z_q$ proof to deal with $b_1$ and use the same arguments as in the $R_b Z_q$ proof to deal with $b_2$.  Similarly, to deal with a term of the form $R_{b_1} Z_q R_{b_2}$ for some $q \in J$, use the same arguments as in the  $L_b Z_q$ proof to deal with $b_2$ and use the same arguments as in the $R_b Z_q$ proof to deal with $b_1$.  Consequently, one obtains that if $b_k = [a_{k;i,j}]$ for $k \in \{2, \ldots, n+1\} \setminus \{\hat{k}\}$, then 
\[
\kappa^{\M_d(\bC)}_{\chi_\omega}\left(Z_{\omega(1)}, C^{\omega(2)}_{b_1} Z_{\omega(2)}, \ldots, C^{\omega(\hat{k}-1)}_{b_{\hat{k}-2}} Z_{\omega(\hat{k}-1)}, Z_{\hat{k}}, C^{\omega(\hat{k}+1)}_{b_{\hat{k}-1}} Z_{\omega(\hat{k}+1)}, \ldots, C^{\omega(n-1)}_{b_{n-3}} Z_{\omega(n-1)}, C^{\omega(n)}_{b_{n-2}} Z_{\omega(n)} C^{\omega(n)}_{b_{n-1}}\right)
\]
equals
\begin{align} \label{sum:annoying-R-cyclic-sum}
\sum^d_{\substack{i_1, \ldots, i_n = 1 \\ j_1, \ldots, j_n = 1 \\ j_{s_\chi(1)} = i_{s_\chi(2)}, \ldots, j_{s_\chi(n-1)} = i_{s_{\chi(n)}}}} b^\chi_{((i_1, \ldots, i_n), (j_1, \ldots, j_n))} \kappa^\bC_\chi \left(Z_{\omega(1); i_1, j_1}, \ldots,  Z_{\omega(n); i_n, j_n} \right)E_{i_{s_\chi(1)}, j_{s_{\chi(n)}}}
\end{align}
where
\[
b^\chi_{((i_1, \ldots, i_n), (j_1, \ldots, j_n))} = \left\{
\begin{array}{ll}
a_{n+1; j_n, j_n}  \left(\prod_{k, \chi(k) = \ell, k \neq n} a_{k; i_k, i_k} \right)\left(\prod_{k, \chi(k) = r} a_{k; j_k, j_k} \right) & \mbox{if } \chi(n) = \ell  \\
a_{n+1; j_n, j_n}   \left(\prod_{k, \chi(k) = \ell} a_{k; i_k, i_k} \right) \left(\prod_{k, \chi(k) = r, k \neq n} a_{k; j_k, j_k} \right)& \mbox{if } \chi(n) = r
\end{array} \right. .
\]
Furthermore, (\ref{sum:annoying-R-cyclic-sum}) must equal
\[
\sum^d_{\substack{i_1, \ldots, i_n = 1 \\ j_1, \ldots, j_n = 1 \\ j_{s_\chi(1)} = i_{s_\chi(2)}, \ldots, j_{s_\chi(n-1)} = i_{s_{\chi(n)}} \\ j_{s_\chi(n)} = i_{s_\chi(1)} }} b^\chi_{((i_1, \ldots, i_n), (j_1, \ldots, j_n))}  \kappa^\bC_\chi \left(Z_{\omega(1); i_1, j_1}, \ldots,  Z_{\omega(n); i_n, j_n} \right) E_{i_{s_\chi(1)}, i_{s_{\chi(1)}}}
\]
due to the $R$-cyclic condition.  Since this expression agrees with the expression in Lemma \ref{lem:cumulant-for-R-cyclic-diagonal} under closer examination, this direction of the proof is complete.

For the other direction, suppose $(\{L([Z_{k;i,j}])\}_{k \in I},  \{R([Z_{k;i,j}])\}_{k \in J})$ is bi-free from $(L(\M_d(\bC)), R(\M_d(\bC)^\op))$ with amalgamation over $\D_d$ with respect to $F \circ E_d$.

Note it is always possible to construct operators in some non-commutative probability space with any desired $(\ell, r)$-cumulants since one can always abstractly define pairs of families, or one can appeal to the bi-free operator model in \cite{CNS2015-1}*{Section 6}.
Consequently, we may construct another non-commutative probability space $(\A_0, \varphi_0)$ with operators 
\[
\{W_{k;i,j} \, \mid \, 1 \leq i,j \leq d, k \in I \sqcup J\} \subseteq \A_0
\]
such that for every $n \geq 1$, every $\omega : \{1,\ldots, n\} \to I \sqcup J$, and every $1 \leq i_1, \ldots, i_n, j_1, \ldots, j_n\leq d$, we have
\[
\kappa^\bC_{\chi_\omega}(W_{\omega(1); i_1, j_1}, \ldots, W_{\omega(n); i_n, j_n}) = 0
\]
if at least one of $j_{s_\chi(1)} = i_{s_\chi(2)}$, $j_{s_\chi(2)} = i_{s_\chi(3)}$, $\ldots$, $j_{s_\chi(n-1)} = i_{s_{\chi(n)}}$, $j_{s_\chi(n)} = i_{s_\chi(1)}$ fails, and otherwise if 
\[
Z'_{\omega, k} =  \left\{
\begin{array}{ll}
L(E_{i_k, i_k}) Z_{\omega(k)} L(E_{j_k, j_k})  & \mbox{if } \omega(k) \in I  \\
R(E_{j_k, j_k}) Z_{\omega(k)} R(E_{i_k, i_k})  & \mbox{if } \omega(k) \in J
\end{array} \right.
\]
then 
\[
\kappa^\bC_{\chi_\omega}(W_{\omega(1); i_1, j_1}, \ldots, W_{\omega(n); i_n, j_n}) := (i_{s_\chi(1)}, i_{s_\chi(1)})\text{-entry of } \kappa^{\D_d}_{\chi_\omega}(Z'_{\omega, 1}, \ldots, Z'_{\omega, n}).
\]
Note the occurrences of $E_{i_{s_\chi(1)}, i_{s_\chi(1)}}$ and $E_{j_{s_\chi(n)}, j_{s_\chi(n)}}$ in the definition of the $Z'_{\omega, k}$ may be removed without affecting the value of the $(i_{s_\chi(1)}, i_{s_\chi(1)})$-entry.  Furthermore, if one uses properties (\ref{part:bi-multi-1}) and (\ref{part:bi-multi-2}) of Definition \ref{defn:bi-multiplicative}, one may pair up the remaining $2n - 2$ elements of the form $E_{i,i}$ and $E_{j,j}$ to obtain $n-1$ elements of $\D_d$ in such a way that 
\[
\kappa^{\D_d}_{\chi_\omega}(Z'_{\omega, 1}, \ldots, Z'_{\omega, n}) = E_{i_{s_\chi(1)}, i_{s_\chi(1)}} \kappa^{\D_d}_{Z, \omega}(b_1, \ldots, b_{n-1})    E_{j_{s_\chi(n)}, j_{s_\chi(n)}}
\]
for some matrix units $b_1, \ldots, b_{n-1} \in \D_d$ (i.e. if $b_m$ multiplies $Z_{\omega(k)}$ on the left in the definition of $\kappa^{\D_d}_{Z, \omega}(b_1, \ldots, b_{n-1})   $ with $\omega(k) \in I$ (respect $\omega(k) \in J$), then $b_m = E_{i_k, i_k}$ (respectively $b_m = E_{j_k, j_k})$), and if $b_m$ multiplies $Z_{\omega(n)}$ on the right with $\omega(k) \in I$ (respect $\omega(k) \in J$), then $b_m = E_{j_n, j_n}$ (respectively $b_m = E_{i_n, i_n})$)).

By construction $(\{[W_{k;i,j}]\}_{k \in I}, \{W_{k;i,j}\}_{k \in J})$ is an $R$-cyclic pair.  For $k \in I$  let $W_k = L([Z_{k;i,j}])$ and for $k \in J$ let $W_k = R([Z_{k;i,j}])$.  If $W = \{W_k\}_{k \in I} \sqcup \{W_k\}_{k \in J}$, then for all $n \geq 1$, $\omega : \{1,\ldots, n\} \to I \sqcup J$, and for all $b_1, \ldots, b_{n-1} \in \D_d$, 
\[
\kappa^{\D_d}_{W, \omega}(b_1, \ldots, b_{n-1}) = \kappa^{\D_d}_{Z, \omega}(b_1, \ldots, b_{n-1}).
\]
Indeed, it suffices to prove the above equation when each $b$ is a diagonal matrix unit by linearity.  Using Lemma \ref{lem:cumulant-for-R-cyclic-diagonal} gives an expression for $\kappa^{\D_d}_{W, \omega}(b_1, \ldots, b_{n-1})$  in terms of various $\kappa^\bC_{\chi_\omega}(W_{\omega(1); i_1, j_1}, \ldots, W_{\omega(n); i_n, j_n})$.  Subsequently, adding over all possible $i_{s_\chi(1)}$ produces the diagonal matrix $ \kappa^{\D_d}_{Z, \omega}(b_1, \ldots, b_{n-1})$.  Hence $W$ and $Z$ have the same $\D_d$-valued distribution.

By the first part of this proof, $(\{L([W_{k;i,j}])\}_{k \in I},  \{R([W_{k;i,j}])\}_{k \in J})$ is bi-free from $(L(\M_d(\bC)), R(\M_d(\bC)^\op))$ with amalgamation over $\D_d$.  Hence $W$ and $Z$ have the same $\M_d(\bC)$-valued distributions and thus the same $\M_d(\bC)$-valued cumulants.

Finally, for $n \geq 1$, $\omega : \{1,\ldots, n\} \to I \sqcup J$, and $1 \leq i_1, \cdots, i_n, j_1, \ldots, j_n \leq d$, we note by Corollary \ref{cor:cumulant-for-R-cyclic} that if
\[
Z'_{\omega, k} =  \left\{
\begin{array}{ll}
L(E_{1, i_k}) Z_{\omega(k)} L(E_{j_k, 1})  & \mbox{if } \omega(k) \in I  \\
R(E_{1, j_k}) Z_{\omega(k)} R(E_{i_k, 1})  & \mbox{if } \omega(k) \in J
\end{array} \right.  \qand
W'_{\omega, k} =  \left\{
\begin{array}{ll}
L(E_{1, i_k}) W_{\omega(k)} L(E_{j_k, 1})  & \mbox{if } \omega(k) \in I  \\
R(E_{1, j_k}) W_{\omega(k)} R(E_{i_k, 1})  & \mbox{if } \omega(k) \in J
\end{array} \right. ,
\]
then Corollary \ref{cor:cumulant-for-R-cyclic} implies
\begin{align*}
\kappa^{\bC}_{\chi_\omega}(Z_{\omega(1); i_1, j_1}, \ldots, Z_{\omega(n); i_n, j_n}) &=
 (1, 1)\text{-entry of }\kappa^{\M_d(\bC)}_{\chi_\omega}(Z'_{\omega, 1}, \ldots, Z'_{\omega, n}) \\
 &=   (1, 1)\text{-entry of }\kappa^{\M_d(\bC)}_{\chi_\omega}(W'_{\omega, 1}, \ldots, W'_{\omega, n})   \\
 &= \kappa^{\bC}_{\chi_\omega}(W_{\omega(1); i_1, j_1}, \ldots, W_{\omega(n); i_n, j_n}).
\end{align*}
Therefore $(\{[Z_{k;i,j}]\}_{k \in I}, \{Z_{k;i,j}\}_{k \in J})$ is an $R$-cyclic pair as $(\{[W_{k;i,j}]\}_{k \in I}, \{W_{k;i,j}\}_{k \in J})$ is an $R$-cyclic pair by construction.
\end{proof}

\section{The Operator-Valued Partial $R$-Transform}
\label{sec:R-Trans}

As the free operator-valued transforms have been quite useful for analytic arguments, we shall begin a study of operator-valued bi-free partial transformations in this section.  In particular, this section will develop the operator-valued bi-free partial $R$-transform (Theorem \ref{thm:R-transform}) to relate bi-free additive convolution in each entry of the pair of $B$-faces.  However, as alluded to in the introduction, such transformations are not functions of two $B$-elements, but three $B$-elements.  

The reason for this can be seen via the combinatorial proof of the bi-free partial $R$-transformation given in \cite{S2015-1} that will be emulated in this section.  Indeed the proof proceeds by breaking up a joint moment of left and right operators into a sum of cumulants.  One then desires to proceed by summing over certain blocks of bi-non-crossing partition.  When one requires to sum over a block that contains both left and right nodes, the bi-multiplicative properties do not allow for the resulting $B$-operator below the block in the bi-non-crossing diagram to escape (i.e. be multiplied on one side of the result) and the resulting $B$-operator is of a different form than the $B$-operators formed on the left and right sides.   Consequently, a third $B$-variable is required to describe the $B$-operator produced by the bottom of each bi-non-crossing diagram.  Furthermore, when one sums over all such partitions, one obtains the desired expression in this third $B$-variable resulting in the $R$-transform requiring a composition.  Of course, said transformation reduces to the known transformation in the scalar-valued setting, and contains within it the operator-valued free $R$-transforms.

For the remainder of the paper,  $B$ denotes a Banach algebra.  Furthermore $b \in B$ is used for left $B$-operators whereas $d \in B$ is used for right $B$-operators (since, symbolically, $b$ and $d$ are opposites).  Recall we have taken the view that $B^\op$ is really just elements of $B$ where we take $d \mapsto R_d$ is antimultiplicative.  Finally, $c \in B$ will be used for the special third $B$-operator (that is both on the left and the right, so it is sort of $c$entral).

\begin{defn}
Let $B$ be a Banach algebra.  A \emph{Banach $B$-$B$-non-commutative probability space} is a $B$-$B$-non-commutative probability space $(\A, E, \varepsilon)$ such that $\A$ is a Banach algebra, and $E : \A \to B$,  $\varepsilon|_{B \otimes 1_B}$, and $\varepsilon|_{1_B \otimes B^\op}$ are bounded.
\end{defn}

Let $(\A, E, \varepsilon)$ be a  Banach $B$-$B$-non-commutative probability space.  For $X \in \A_\ell$ and $b \in B$, we recall from \cite{V1995} and \cite{S1998} the series:
\begin{align*}
G^\ell_X(b) &:= \sum_{n\geq 0} E((L_bX)^nL_b), \\
R^\ell_X(b) &:= \sum_{n\geq 0} \kappa^B_{\chi_{n+1, 0}}(X, \underbrace{L_b X, \ldots, L_b X}_{n \text{ entries }}),    \\
M^\ell_X(b) &:= 1 + \sum_{n\geq 1} E((L_bX)^n), \text{ and}\\
C^\ell_X(b) &:= 1 + \sum_{n\geq 1} \kappa^B_{\chi_{n, 0}}(L_b X, \ldots, L_b X).
\end{align*}
\begin{rem}
\label{rem:convergence}
Since the above four series are infinite series with entries in $B$, we must discuss their convergence.  Notice if $\left\|b\right\|$ is sufficiently small so that $\left\|L_b\right\| < 1$ (which is the case as $\varepsilon|_{B \otimes 1_B}$ is bounded), then $G^\ell_X(b)$ and $M^\ell_X(b)$ converge absolutely.  Furthermore, the moment-cumulant formula (\ref{eq:cumulants}) imply $C^\ell_X(b)$ and $R^\ell_X(b)$ converge absolutely if $\left\|b\right\|$ is sufficiently small.  Indeed the cumulant corresponding to $\chi : \{1,\ldots, n\} \to \{\ell, r\}$ is a sum of at most $|BNC(\chi)|$ terms of the form $\mu_{BNC}(\pi, 1_\chi)$ for some $\pi\in BNC(\chi)$ times $E^B_\pi$ of a tuple (each of which is bounded by a multiple of $\left\|X\right\|^n \left\|L_b\right\|^n$).  Due to the lattice structure, $|\mu_{BNC}(\pi, 1_\chi)|$ is at most $|BNC(\chi)|$.  Since $|BNC(\chi)| = |NC(n)|$ is the $n^\th$ Catalan number $c_n$ and since $c_n \leq 4^n$, each of the above sums converges absolutely near $b = 0$.
\end{rem}
\begin{rem}
\label{rem:left-formula}
Note the following relations between the above series from \cite{V1995} and \cite{S1998}*{Section 4}:
\begin{align*}
G^\ell_X(b) &= M^\ell_X(b) b,\\
C^\ell_X(b) &= 1 + bR^\ell_X(b), \text{ and}\\
M^\ell_X(b) &= C^\ell_X(M^\ell_X(b)b).
\end{align*}
\end{rem}
Similarly, for $Y \in \A_r$ and $d \in B$, define 
\begin{align*}
G^r_Y(d) &:= \sum_{n\geq 0} E((R_dY)^nR_d), \\
R^r_Y(d) &:= \sum_{n\geq 0} \kappa^B_{\chi_{0, n+1}}(Y, \underbrace{R_d Y, \ldots, R_d Y}_{n \text{ entries }}),    \\
M^r_Y(d) &:= 1 + \sum_{n\geq 1} E((R_dY)^n), \text{ and}\\
C^r_Y(d) &:= 1 + \sum_{n\geq 1} \kappa^B_{\chi_{0, n}}(R_d Y, \ldots, R_d Y).
\end{align*}
\begin{rem}
By the same arguments as in Remark \ref{rem:convergence}, the above series converge absolutely near $d = 0$. Note we are viewing these as series with entries from $B$ with values in $B$ even though $d \mapsto R_d$ is anti-multiplicative.  Consequently, one can verify that
\begin{align*}
G^r_Y(d) &= dM^r_Y(d), \\
C^r_Y(d) &= 1 + R^r_Y(d)d,  \text{ and }\\
M^r_Y(d) &= C^r_Y(d M^r_Y(d))
\end{align*}
(i.e. the same formula as in Remark \ref{rem:left-formula} hold once one uses the opposite multiplication in $B$).
\end{rem}

Finally, for $b, c, d \in B$, $X \in \A_\ell$, and $Y \in \A_r$, define
\begin{align*}
M_{X, Y}(b, c, d) &:= \sum_{n,m\geq 0} E((L_bX)^n (R_dY)^m R_c) \quad \text{ and}\\
C_{X, Y}(b,c,d) &:= c  + \sum_{n \geq 1} \kappa^B_{\chi_{n, 0}}(\underbrace{L_b X, \ldots, L_b X}_{n-1 \text{ entries }}, L_b X L_c)     + \sum_{\substack{m\geq 1 \\ n \geq 0}} \kappa_{\chi_{n,m}}(  \underbrace{L_b X, \ldots, L_b X}_{n \text{ entries }}, \underbrace{R_d Y, \ldots, R_d Y}_{m-1 \text{ entries }}, R_d Y R_c).
\end{align*}

\begin{rem}
Similar arguments to those used in Remark \ref{rem:convergence} show for all $c$ in a bounded set, the above series converge absolutely whenever $b$ and $d$ are sufficiently small.  Furthermore, notice 
\begin{align*}
M_{X, Y}(b, c, 0) &=  M^\ell_X(b)c, \\
M_{X, Y}(0, c, d) &= c M^r_Y(d), \\
C_{X, Y}(b, c, 0) &= C^\ell_X(b) c, \text{ and}\\
C_{X, Y}(0, c, d) &= cC^r_Y(d).
\end{align*}

Finally, if $(X_1, Y_1)$ and $(X_2, Y_2)$ are bi-free over $B$, then, by Theorem \ref{thm:bifree-classifying-theorem}, 
\[
(C_{X_1+X_2, Y_1+Y_2}(b,c,d) - c) = (C_{X_1, Y_1}(b,c,d) - c) + (C_{X_2, Y_2}(b,c,d) - c);
\]
that is, $R_{X, Y}(b,c,d) = C_{X, Y}(b,c,d) - c$ is a  operator-valued bi-free partial $R$-transform.
\end{rem}

The following is the simplest formula we could find to compare these series.

\begin{thm}
\label{thm:R-transform}
Let $(\A, E, \varepsilon)$ be a  Banach $B$-$B$-non-commutative probability space, let $X \in \A_\ell$, and let $Y \in \A_r$.  Then
\begin{align}
M^\ell_X(b)M_{X, Y}(b,c,d) + M_{X, Y}(b,c,d)M^r_Y(d) = M^\ell_X(b)cM^r_Y(d) + C_{X, Y}(M^\ell_X(b)b, M_{X, Y}(b, c, d), d M^r_Y(d)).  
\end{align}
\end{thm}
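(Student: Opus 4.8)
## Proof Proposal

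The plan is to mimic the combinatorial computation of the scalar-valued bi-free partial $R$-transform from \cite{S2015-1}, adapted to carry the extra $B$-operator along the bottom of each bi-non-crossing diagram. First I would expand the left-hand side by writing the moment series $M_{X,Y}(b,c,d)$, $M^\ell_X(b)$, $M^r_Y(d)$ in terms of the moment-cumulant formula (\ref{eq:mobius}), so that each $E((L_bX)^n(R_dY)^mR_c)$ becomes a sum over $\pi \in BNC(\chi_{n,m})$ of $E^B_\pi$ applied to the tuple $(L_bX,\ldots,L_bX,R_dY,\ldots,R_dY,R_c)$, and then of $\sum_{\sigma \le \pi}\kappa^B_\sigma$. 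The key organizing principle is to classify partitions by the block $V_0$ containing the last node (the one carrying $R_c$), which is a $\chi$-interval at the bottom of the diagram once one passes to $s_\chi^{-1}\cdot\pi$ by the non-crossing structure of its complement. Summing the cumulants over all partitions whose ``bottom block'' is a fixed shape produces exactly $C_{X,Y}$ evaluated at arguments built from the contributions of the left-hand and right-hand sides; by bi-multiplicativity (Definition \ref{defn:bi-multiplicative}, parts (\ref{part:bi-multi-3}) and (\ref{part:bi-multi-4})) the left-side contributions assemble into $M^\ell_X(b)b$, the right-side contributions into $dM^r_Y(d)$, and the ``central'' contribution — the material hanging off the bottom block itself — into $M_{X,Y}(b,c,d)$.

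More concretely, I would argue as follows. Fix $n,m\ge 0$ and the tuple $\vec Z=(L_bX,\ldots,L_bX,R_dY,\ldots,R_dY,R_c)$ with $\chi=\chi_{n,m}$. For each $\pi\in BNC(\chi)$ let $V_0$ be the block containing the final ($\chi$-maximal, equivalently last) node. Using Remark \ref{rem:bi-multi-same-as-free}, after applying $s_\chi^{-1}$ the picture is the standard non-crossing one: the complement of $V_0$ splits into intervals nested under the ``gaps'' of $V_0$, with the gaps alternating between $\ell$-type and $r$-type boundaries. The nodes of $V_0$ that are $\ell$-type (together with everything nested strictly inside $\ell$-gaps) contribute, after resumming over all such internal partitions via (\ref{eq:mobius}), a product of $E((L_bX)^k)$-type terms, i.e. powers whose generating function is $M^\ell_X(b)$; likewise the $r$-side gives $M^r_Y(d)$; and the single final node plus whatever is nested beneath it after the last gap gives $M_{X,Y}(b,c,d)$. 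The $B$-operators that escape the bottom block on the left and right, produced by Definition \ref{defn:bi-multiplicative}(\ref{part:bi-multi-1})--(\ref{part:bi-multi-2}), are precisely the factors $M^\ell_X(b)b$ (on the left of the block) and $dM^r_Y(d)$ (on the right), while the $B$-operator created beneath the block — which by the discussion following Theorem \ref{thm:products} can be viewed as either a left or a right operator — is exactly $M_{X,Y}(b,c,d)$. Summing the cumulants $\kappa^B_{\chi(V_0)}$ over all shapes of $V_0$ (a single $\ell$-run, possibly empty, followed by the $R_c$-carrying node, possibly preceded by an $r$-run, or a single all-$\ell$ run ending in $L_bXL_c$) reproduces the two sums defining $C_{X,Y}$ together with the leading term $c$; hence that side equals $C_{X,Y}(M^\ell_X(b)b, M_{X,Y}(b,c,d), dM^r_Y(d))$.

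The remaining bookkeeping is to account for the partitions whose bottom block does \emph{not} contain the last node — but these are exactly the partitions where $\{$last node$\}$ is its own block, or more precisely where the last node lies in a block that is itself nested, and handling the two ``boundary'' contributions is what produces the left-hand side $M^\ell_X(b)M_{X,Y}(b,c,d)+M_{X,Y}(b,c,d)M^r_Y(d)$ rather than just $M_{X,Y}$: one obtains a term in which the $R_c$-node's block ``sees'' the left stack (giving $M^\ell_X(b)$ multiplying $M_{X,Y}$) and one in which it ``sees'' the right stack (giving $M_{X,Y}$ multiplying $M^r_Y(d)$), with the genuinely mixed case subtracting off an overcounted $M^\ell_X(b)cM^r_Y(d)$ — this is why that term appears with the sign shown. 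Everything here is formal manipulation of absolutely convergent series (justified for $\|b\|,\|d\|$ small and $c$ bounded by Remark \ref{rem:convergence} and the remark preceding the theorem), plus the identities $G^\ell_X(b)=M^\ell_X(b)b$, $G^r_Y(d)=dM^r_Y(d)$, $M_{X,Y}(b,c,0)=M^\ell_X(b)c$, $M_{X,Y}(0,c,d)=cM^r_Y(d)$ from Remark \ref{rem:left-formula} and its analogues, used to identify the resummed pieces.

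I expect the main obstacle to be the careful combinatorial classification of $BNC(\chi_{n,m})$ by the bottom block and the verification that the bi-multiplicative reductions in Definition \ref{defn:bi-multiplicative}(\ref{part:bi-multi-4}) deposit the three resummed series in exactly the three argument slots of $C_{X,Y}$ — in particular tracking which escaping $B$-operator becomes $M^\ell_X(b)b$ versus $dM^r_Y(d)$, and confirming that the ``central'' operator genuinely resums to $M_{X,Y}(b,c,d)$ and not to some twisted variant. The interplay of $s_\chi$ with the alternation of $\ell$- and $r$-gaps of the bottom block, and getting the non-crossing nesting exactly right on both sides simultaneously, is the delicate point; once that is pinned down the rest is the same resummation bookkeeping as in the scalar case of \cite{S2015-1}.
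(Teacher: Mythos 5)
Your overall strategy (expand the moments into cumulants, resum over a distinguished block via bi-multiplicativity, justify rearrangement by absolute convergence) is the right flavour, but the distinguished block you pick is the wrong one, and this breaks the two structural facts the argument needs. The paper first splits off the partitions in which no block contains both left and right indices; these contribute exactly $M^\ell_X(b)\,c\,M^r_Y(d)$ (it is an additive contribution, not an ``overcounted'' term to be subtracted from the mixed case). For the remaining partitions it classifies by the \emph{outermost} (topmost) block containing both left and right indices. Maximality of that block is what guarantees no block can surround it, so the complement genuinely decomposes into left-only gaps (each resumming to $E((L_bX)^k)$, hence the argument $M^\ell_X(b)b$), right-only gaps (giving $dM^r_Y(d)$), and a single bottom region; that bottom region is an arbitrary smaller copy of the whole configuration, it is where the $R_c$-node lives, and it resums to $M_{X,Y}(b,c,d)$ --- this is the sole source of the composition in the third argument of $C_{X,Y}$. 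With your choice --- the block $V_0$ containing the $R_c$-node --- none of this holds: $V_0$ need not be a $\chi$-interval nor sit ``at the bottom,'' mixed blocks can surround it and connect the left and right sides above it, so the outer region does not factor into $M^\ell$- and $M^r$-type series; moreover property (\ref{part:bi-multi-4}) of Definition \ref{defn:bi-multiplicative} inserts the value of the inner block $V_0$ into the surrounding expression, i.e.\ you obtain a cumulant nested inside a moment, the opposite of the moment-inside-cumulant structure $C_{X,Y}(M^\ell_X(b)b,\,M_{X,Y}(b,c,d),\,dM^r_Y(d))$ you need; and since $R_c$ then sits inside the cumulant itself, there is no mechanism producing $M_{X,Y}(b,c,d)$ in the third slot at all.

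The final bookkeeping is also not what you describe. After the correct resummation the mixed partitions give $\sum_{t,s\geq 1}\kappa^B_{\chi_{t,s}}(L_{M^\ell_X(b)b}X,\ldots,L_{M^\ell_X(b)b}X, R_{dM^r_Y(d)}Y,\ldots,R_{dM^r_Y(d)}YR_{M_{X,Y}(b,c,d)})$, which is $C_{X,Y}(M^\ell_X(b)b,M_{X,Y}(b,c,d),dM^r_Y(d))$ minus its pure-left and pure-right parts, namely minus $M^\ell_X(b)M_{X,Y}(b,c,d)$ and $M_{X,Y}(b,c,d)M^r_Y(d)$ plus $M_{X,Y}(b,c,d)$, using $C^\ell_X(M^\ell_X(b)b)=M^\ell_X(b)$ and $C^r_Y(dM^r_Y(d))=M^r_Y(d)$ from Remark \ref{rem:left-formula}; moving these across yields the left-hand side. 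Your account of the cross terms (the $R_c$-block ``seeing'' the left or right stack, with $M^\ell_X(b)cM^r_Y(d)$ subtracted as an overcount) does not correspond to a correct accounting, and your sentence about ``partitions whose bottom block does not contain the last node'' conflates the block of the $R_c$-node with the relevant topmost mixed block, which generically does \emph{not} contain the $R_c$-node.
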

\begin{rem}
The above produces precisely the bi-free partial $R$-transforms in \cite{V2013} and \cite{S2015-1} in the case that $B = \bC$ by letting $b = z$, $d = w$, and $c = 1$.  The above expression may be simplified in the case that $X \in \A_\ell \cap \A_r$, $Y \in \A_\ell \cap \A_r$, and/or $B$ is commutative.

Note if $d = 0$ is substituted into the above expression, we obtain
\[
M^\ell_X(b)M^\ell_X(b)c + M^\ell_X(b)c = M^\ell_X(b)c + C_{X, Y}(M^\ell_X(b)b, M^\ell_X(b)c, 0).
\]
By analyzing the last term, we see this implies
\[
M^\ell_X(b)M^\ell_X(b)c =  C^\ell_X(M^\ell_X(b)b) M^\ell_X(b)c.
\]
Consequently, taking $c = 1$ and using the fact that $M^\ell_X(b)$ is invertible for sufficiently small $b$, we obtain that $M^\ell_X(b) = C^\ell_X(M^\ell_X(b)b)$; that is, the formula from Remark \ref{rem:left-formula} are recovered from Theorem \ref{thm:R-transform}.
\end{rem}

\begin{proof}[Proof of Theorem \ref{thm:R-transform}]
For $\chi : \{1,\ldots, n\} \to \{\ell, r\}$, let $\bncs(\chi)$ denote the collection of all $\pi \in BNC(\chi)$ such that if $\chi(p) = \ell$ and $\chi(q) = r$, then $p \nsim_\pi q$ (that is, all bi-non-crossing partitions that can be split via a vertical line into two non-crossing partitions; one on the left and one on the right).

For $n, m\geq 1$, using bi-multiplicative properties, we obtain that
\begin{align*}
&E((L_bX)^n (R_dY)^m L_c)\\
 &= \sum_{\pi \in \bncs(\chi_{n,m})} \kappa^B_{\pi}(  \underbrace{L_b X, \ldots, L_b X}_{n \text{ entries }}, \underbrace{R_d Y, \ldots, R_d Y}_{m-1 \text{ entries }}, R_d Y R_c)  \\
& \quad + \sum_{\substack{\pi \in BNC(\chi_{n,m}) \\ \pi \notin \bncs(\chi_{n,m})}} \kappa^B_{\pi}(  \underbrace{L_b X, \ldots, L_b X}_{n \text{ entries }}, \underbrace{R_d Y, \ldots, R_d Y}_{m-1 \text{ entries }}, R_d Y R_c) \\
&= E((L_bX)^n) c E((R_dY)^m) + \sum_{\substack{\pi \in BNC(\chi_{n,m}) \\ \pi \notin \bncs(\chi_{n,m})}} \kappa^B_{\pi}(  \underbrace{L_b X, \ldots, L_b X}_{n \text{ entries }}, \underbrace{R_d Y, \ldots, R_d Y}_{m-1 \text{ entries }}, R_d Y R_c).
\end{align*}
Let 
\[
\Theta_{n,m}(b,c,d) = \sum_{\substack{\pi \in BNC(\chi_{n,m}) \\ \pi \notin \bncs(\chi_{n,m})}} \kappa^B_{\pi}(  \underbrace{L_b X, \ldots, L_b X}_{n \text{ entries }}, \underbrace{R_d Y, \ldots, R_d Y}_{m-1 \text{ entries }}, R_d Y R_c) .
\]
Notice every partition $\pi \in BNC(\chi_{n,m}) \setminus \bncs(\chi_{n,m})$ must have a block $W$ such that 
\[
W \cap \{1, \ldots, n\} \neq \emptyset \qqand W \cap \{n+1, \ldots, n+m\} \neq \emptyset.
\]
Let $V_\pi$ denote the block of $\pi$ with both left and right indices such that $\min(V_\pi)$ is smallest among all blocks $W$ of $\pi$ with both left and right indices.  

Rearrange the summing in $\Theta_{n,m}(b,c,d)$ (which may be done as it converges absolutely) by first choosing $t \in \{1,\ldots, n\}$, $s \in \{1,\ldots, m\}$, and $V \subseteq \{1,\ldots, n+m\}$ such that
\begin{align*}
V_\ell &:= V \cap \{1, \ldots, n\}=\{u_1 < u_2 < \cdots < u_t\} \qand\\
V_r &:= V \cap \{n+1, \ldots, n+m\} = \{v_1 < v_2 < \cdots < v_s\},
\end{align*}
and then sum over all $\pi \in BNC(\chi_{n,m}) \setminus \bncs(\chi_{n,m})$ such that $V_\pi = V$.  If one defines $u_0 = 0$, $v_0=n$, $u_{t+1} = n+1$, and $v_{s+1} = n+m+1$, the fact that $\pi \in BNC(\chi_{n,m}) \setminus \bncs(\chi_{n,m})$ implies if $V_\pi = V$ then no block of $\pi$ contains indices from both intervals $(u_{k_1}, u_{k_1+1})$ and $(u_{k_2}, u_{k_2+1})$ when $k_1 \neq k_2$, from both intervals $(v_{k_1}, v_{k_1+1})$ and $(v_{k_2}, v_{k_2+1})$ when $k_1 \neq k_2$, and from both intervals $(u_{k_1}, u_{k_1+1})$ and $(v_{k_2}, v_{k_2+1})$ unless $k_1 = t$ and $k_2 = s$.  In particular, examining all $\pi$ such that $V_\pi = V$, each $(t+s+1)$-tuple consisting of bi-non-crossing partitions on each of the sets $(u_k, u_{k+1})$ for $k \in \{0, 1, \ldots, t-1\}$, $(v_k, v_{k+1})$ for $k \in \{0,1, \ldots, s-1\}$, and $(u_t, u_{t+1}) \cup (v_s, v_{s+1})$ occurs precisely once.  The following diagram illustrates and example when $V_\pi = \{3_\ell, 6_\ell, 3_r, 6_r\}$.  Note $\pi_1, \ldots, \pi_5$ can be any bi-non-crossing diagram on their nodes.  Further, we really should draw all of the left nodes above all of the right nodes.
\begin{align*}
\begin{tikzpicture}[baseline]
	\draw[thick, dashed] (-1,4.5) -- (-1,-.5) -- (1,-.5) -- (1,4.5);
	\draw[thick] (-1, 3) -- (0,3) -- (-0,1.5) -- (-1, 1.5);
	\draw[thick] (1, 3) -- (0,3) -- (-0,1.5) -- (1, 1.5);
	\node[left] at (-1, 4) {$1_\ell$};
	\draw[black, fill=black] (-1,4) circle (0.05);	
	\node[left] at (-1, 3.5) {$2_\ell$};
	\draw[black, fill=black] (-1,3.5) circle (0.05);
	\node[left] at (-1, 3) {$3_\ell$};
	\draw[black, fill=black] (-1,3) circle (0.05);
	\node[left] at (-1, 2.5) {$4_\ell$};
	\draw[black, fill=black] (-1,2.5) circle (0.05);
	\node[left] at (-1, 2) {$5_\ell$};
	\draw[black, fill=black] (-1,2) circle (0.05);
	\node[left] at (-1, 1.5) {$6_\ell$};
	\draw[black, fill=black] (-1,1.5) circle (0.05);	
	\node[left] at (-1, 1) {$7_\ell$};
	\draw[black, fill=black] (-1,1) circle (0.05);
	\node[left] at (-1, .5) {$8_\ell$};
	\draw[black, fill=black] (-1,.5) circle (0.05);
	\node[left] at (-1, 0) {$9_\ell$};
	\draw[black, fill=black] (-1,0) circle (0.05);
	\node[right] at (1, 4) {$1_r$};
	\draw[black, fill=black] (1,4) circle (0.05);	
	\node[right] at (1, 3.5) {$2_r$};
	\draw[black, fill=black] (1,3.5) circle (0.05);
	\node[right] at (1, 3) {$3_r$};
	\draw[black, fill=black] (1,3) circle (0.05);
	\node[right] at (1, 2.5) {$4_r$};
	\draw[black, fill=black] (1,2.5) circle (0.05);
	\node[right] at (1, 2) {$5_r$};
	\draw[black, fill=black] (1,2) circle (0.05);
	\node[right] at (1, 1.5) {$6_r$};
	\draw[black, fill=black] (1,1.5) circle (0.05);	
	\node[right] at (1, 1) {$7_r$};
	\draw[black, fill=black] (1,1) circle (0.05);
	\node[right] at (1, .5) {$8_r$};
	\draw[black, fill=black] (1,.5) circle (0.05);
	\node[right] at (1, 0) {$9_r$};
	\draw[black, fill=black] (1,0) circle (0.05);
	\draw[thick] (-.5,3.75) ellipse (.25cm and .5cm);
	\node[below] at (-.5, 3.95) {$\pi_1$};
	\draw[thick] (-1,4) -- (-.73, 4);
	\draw[thick] (-1,3.5) -- (-.73, 3.5);
	\draw[thick] (.5,3.75) ellipse (.25cm and .5cm);
	\node[below] at (.5, 3.95) {$\pi_2$};
	\draw[thick] (1,4) -- (.73, 4);
	\draw[thick] (1,3.5) -- (.73, 3.5);
	\draw[thick] (-.5,2.25) ellipse (.25cm and .5cm);
	\node[below] at (-.5, 2.45) {$\pi_3$};
	\draw[thick] (-1,2.5) -- (-.73, 2.5);
	\draw[thick] (-1,2) -- (-.73, 2);
	\draw[thick] (.5,2.25) ellipse (.25cm and .5cm);
	\node[below] at (.5, 2.45) {$\pi_4$};
	\draw[thick] (1,2.5) -- (.73, 2.5);
	\draw[thick] (1,2) -- (.73, 2);
	\draw[thick] (0,.5) ellipse (.75cm and .75cm);
	\node[below] at (0, .7) {$\pi_5$};
	\draw[thick] (-1, 1) -- (-.57, 1);
	\draw[thick] (-1, 0) -- (-.57, 0);
	\draw[thick] (-1, .5) -- (-.75, .5);
	\draw[thick] (1, 1) -- (.57, 1);
	\draw[thick] (1, 0) -- (.57, 0);
	\draw[thick] (1, .5) -- (.75, .5);
\end{tikzpicture}
\end{align*}

Rearranging the sum  produces
\begin{align*}
\Theta_{n,m}(b,c,d) &= \sum^n_{t=1} \sum^m_{s=1} \sum_{\substack{V \\V_\ell = \{u_1 < u_2 < \cdots < u_t\} \\
V_r = \{v_1 < v_2 < \cdots < v_s\}   }}\sum_{\substack{\pi \in BNC(\chi_{n,m}) \\ \pi \notin \bncs(\chi_{n,m}) \\ V_\pi = V} } \kappa^B_{\pi}(  \underbrace{L_b X, \ldots, L_b X}_{n \text{ entries }}, \underbrace{R_d Y, \ldots, R_d Y}_{m-1 \text{ entries }}, R_d Y R_c).
\end{align*}
Furthermore, using bi-multiplicative properties, the right-most sum in this expression is precisely
\[
\kappa^B_{\chi_{t,s}}\left(L_{b_1}X, \ldots, L_{b_t}X, R_{d_1}  Y, \ldots, R_{d_{s-1}}  Y, R_{d_s}  Y R_{E((L_bX)^{n-u_t}(R_dY)^{m-v_s}R_c) }  \right)
\]
where
\[
b_k = E((L_bX)^{u_k - u_{k-1}-1})b \qqand d_k = d E((R_dY)^{v_k - v_{k-1}-1})
\]
(that is, each $L_bX$ gets multiplied on the left by a $L_{E((L_bX)^{u_k - u_{k-1}-1})}$ for the terms in the interval $(u_{k-1}, u_k)$, each $R_dY$ gets multiplied on the left by $R_{ E((R_dY)^{v_k - v_{k-1}-1})}$ (thereby producing $R_{dE((R_dY)^{v_k - v_{k-1}-1})}$) from the terms in the interval $(v_{k-1}, v_k)$, and all the terms in the intervals $(u_t, u_{t+1})$ and $(v_s, v_{s+1})$ combine to produce a single element of $B$ (and this one will always contain $c$) as all bi-non-crossing partitions are permitted on these indices).  Consequently, we obtain that $\Theta_{n,m}(b,c,d)$ equals
\begin{align}
\sum_{\substack{t \in \{1,\ldots, n\}, s \in \{1,\ldots, m\} \\i_0, i_1, \ldots, i_t \in \{0, 1, \ldots, n\} \\j_0, j_1,\ldots, j_s \in \{0, 1, \ldots, m\}  \\ i_0 + i_1 + \cdots + i_t = n-t \\ j_0 + j_1 + \cdots + j_s = m-s}}  \kappa^B_{\chi_{t,s}}\left(L_{f(i_1)}X, \ldots, L_{f(i_t)}X, R_{g(j_1)}  Y, \ldots, R_{g(j_{s-1})}  Y, R_{g(j_s)}  Y R_{E((L_bX)^{i_0}(R_dY)^{j_0}R_c) }  \right) \label{eq:messy-R}
\end{align}
where
\[
f(k) = E((L_bX)^{k})b \qqand g(k) = d E((R_dY)^{k}).
\]
Note
\[
\sum_{k\geq 0} f(k) = M^\ell_X(b) b \qqand \sum_{k \geq 0} g(k) = d M^r_Y(d).
\]

To complete the proof, expand $M_{X, Y}(b, c, d)$ (using the fact everything converges absolutely so we can rearrange sums as we would like) to obtain
\begin{align*}
M_{X, Y}(b,c,d) &= c + \sum_{n\geq 1} E((L_bX)^n R_c) + \sum_{m\geq 1} E((R_dY)^m R_c) + \sum_{n,m \geq 1} E((L_bX)^n (R_dY)^m R_c) \\
&= \sum_{n,m\geq 0} E((L_bX)^n)cE((R_dY)^m)  + \sum_{n,m\geq 1} \Theta_{n,m}(b,c,d) \\
&= M^\ell_X(b)cM^r_Y(d)  + \sum_{n,m\geq 1} \Theta_{n,m}(b,c,d).
\end{align*}
By rearranging the remaining sum involving $\Theta_{n,m}(b,c,d)$ to sum over all fixed $t, s$ in equation (\ref{eq:messy-R}), (so we sum $f(k)$ and $g(k)$ over all $k\geq 0$ in each entry) we obtain
\begin{align*}
\sum_{n,m\geq 1}& \Theta_{n,m}(b,c,d) \\
&= \sum_{s,t\geq 1} \kappa^B_{\chi_{t, s}}     (  \underbrace{L_{M^\ell_X(b)b} X, \ldots, L_{M^\ell_X(b)b} X}_{t \text{ entries }}, \underbrace{R_{dM^r_Y(d)} Y, \ldots, R_{dM^r_Y(d)} Y}_{s-1 \text{ entries }}, R_{dM^r_Y(d)} Y R_{M_{X, Y}(b,c,d)})     \\
&=  C_{X, Y}(M^\ell_X(b)b, M_{X, Y}(b, c, d), d M^r_Y(d)) \\
& \quad - M_{X, Y}(b, c, d) C^r_Y(d M^r_Y(d)) - C^\ell_X(M^\ell_X(b)b)M_{X, Y}(b,c,d) + M_{X, Y}(b,c,d)        \\
&= C_{X, Y}(M^\ell_X(b)b, M_{X, Y}(b, c, d), d M^r_Y(d))- M_{X, Y}(b, c, d) M^r_Y(d)  - M^\ell_X(b)M_{X, Y}(b,c,d) + M_{X, Y}(b,c,d)
\end{align*}
The result follows by combining these equations.
\end{proof}

\section{The Operator-Valued $S$-Transform Revisited}
\label{sec:free-S}

The goal of this section is to develop a different proof of the operator-valued free $S$-transform formula first produced in \cite{D2006}.   This proof is more along the lines of the `Fourier' transform arguments of \cite{NS1997} and the technology developed will be essential in the coming sections.

Let $(\A, E, \varepsilon)$ be a Banach $B$-$B$-non-commutative probability space, let $X \in \A_\ell$, and $Y \in \A_r$ be such that $E(X)$ and $E(Y)$ are invertible in $B$.  Define
\begin{align*}
\Phi_{\ell, X}(b) &:= C^\ell_X(b) - 1 = \sum_{n\geq 1} \kappa^B_{\chi_{n, 0}}(L_b X, \ldots, L_b X) \\
\Phi_{r, Y}(d) &:= C^r_Y(d) - 1 = \sum_{n\geq 1} \kappa^B_{\chi_{0, n}}(R_d Y, \ldots, R_d Y).
\end{align*}
Again, arguments in Remark \ref{rem:convergence} imply $\Phi_{\ell, X}(b)$ and $\Phi_{r, Y}(d)$ converge absolutely to elements of $B$ when $b$ and $d$ are near 0.  Note that $\Phi_{\ell, X}$ and $\Phi_{r, Y}$, viewed as functions on $B$, are Fr\'{e}chet differentiable on their domains (i.e. they are analytic there).  Clearly $\Phi_{\ell, X}(0) = \Phi_{r, Y}(0) = 0$ and the Fr\'{e}chet differential of $\Phi_{\ell, X}$ and $\Phi_{r, Y}$ at $0$ are the bounded linear maps
\[
c \mapsto c E(X) \qqand c \mapsto E(Y)c
\]
respectively (as the first order cumulants and moments agree).  Since $E(X)$ and $E(Y)$ are invertible in $B$, the Fr\'{e}chet differential of $\Phi_{\ell, X}$ and $\Phi_{r, Y}$ at zero have bounded inverses.  Consequently, the usual Banach space inverse function theorem implies there are neighbourhoods $U_X$, $U_Y$, $V_X$, and $V_Y$ of zero such that $U_X$ is in the domain of $\Phi_{\ell, X}$, $U_Y$ is in the domain of $\Phi_{r, Y}$, $\Phi_{\ell, X}$ is a homeomorphism from $U_X$ to $V_X$, and $\Phi_{r, Y}$ is a homeomorphism from $U_Y$ to $V_Y$.  Therefore there exist functions $\Phi^\inv_{\ell, X}$ and $\Phi^\inv_{r, Y}$ defined on neighbourhoods of $0$ in $B$ that are the compositional inverses of $\Phi_{\ell, X}$ and $\Phi_{r, Y}$ near zero respectively.  

We desire to consider the functions $b \mapsto b^{-1} \Phi^\inv_{\ell, X}(b)$ and $d \mapsto \Phi^\inv_{r, Y}(d) d^{-1}$ when $b$ and $d$ are near zero.  Of course $b^{-1}$ and $d^{-1}$ do not exist near zero, but the following (which is pretty much a carbon copy of \cite{D2006}*{Lemma 2.1}) implies these functions do indeed make sense.  In the remainder of the paper, there are on occasions when we will write $b^{-1}$ for $b \in B$ not necessarily invertible.  All such expressions will be valid via argument similar to the following, or because the $b^{-1}$ is really multiplied by $b$ in disguise and we are writing $b^{-1}$ only for notational convenience.  
\begin{lem}
\label{lem:divide}
Assuming $E(X)$ is invertible, there exists an open neighbourhood of $0$ in $B$ and a unique analytic $B$-valued function $\theta_X$ defined there  such that $\Phi^\inv_{\ell, X}(b) = b \theta_X(b)$.  Similarly, assuming $E(Y)$ is invertible, there exists an open neighbourhood of $0$ in $B$ and a unique analytic $B$-valued function $\theta_Y$ defined there such that  $\Phi^\inv_{r, Y}(d) = \theta_Y(d) d$.
\end{lem}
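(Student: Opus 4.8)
\emph{Proof proposal.} The plan is to prove that $\Phi_{\ell, X}$ is itself left-divisible by $b$, and then transport this to its compositional inverse; the $Y$-statement is symmetric. First I would apply part (\ref{part:bi-multi-2}) of Definition \ref{defn:bi-multiplicative} at position $p=1$ to each cumulant appearing in $\Phi_{\ell, X}$. Since $\chi_{n,0}(1)=\ell$ and there is no earlier left index, the relevant $q$ is $-\infty$, so for every $n\geq 1$
\[
\kappa^B_{\chi_{n,0}}(L_b X, L_b X, \ldots, L_b X) = b\,\kappa^B_{\chi_{n,0}}(X, L_b X, \ldots, L_b X).
\]
Summing over $n$ — legitimate because, by the Catalan estimate recorded in Remark \ref{rem:convergence}, the series $\psi_X(b):=\sum_{n\geq 1}\kappa^B_{\chi_{n,0}}(X, L_bX,\ldots,L_bX)$ converges absolutely for $b$ near $0$ — gives $\Phi_{\ell, X}(b) = b\,\psi_X(b)$ with $\psi_X$ analytic near $0$ and $\psi_X(0) = \kappa^B_{\chi_{1,0}}(X) = E(X)$. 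Since $E(X)$ is assumed invertible and the invertible elements of $B$ form an open set on which inversion is analytic, $\psi_X(b)$ is invertible with $b\mapsto \psi_X(b)^{-1}$ analytic on some neighbourhood of $0$.

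Next I would substitute $\Phi^\inv_{\ell, X}(b)$ into this identity. After shrinking the neighbourhoods furnished by the inverse function theorem so that $\Phi^\inv_{\ell, X}$ maps into the region where $\psi_X$ is invertible, the relation $\Phi_{\ell, X}(\Phi^\inv_{\ell, X}(b)) = b$ reads $\Phi^\inv_{\ell, X}(b)\,\psi_X(\Phi^\inv_{\ell, X}(b)) = b$, hence
\[
\Phi^\inv_{\ell, X}(b) = b\,\psi_X\!\left(\Phi^\inv_{\ell, X}(b)\right)^{-1}.
\]
So $\theta_X(b) := \psi_X(\Phi^\inv_{\ell, X}(b))^{-1}$ is the required function, and it is analytic near $0$ as a composition of analytic maps with inversion in $B$. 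For uniqueness, if $\theta$ and $\tilde\theta$ are both analytic near $0$ with $b\theta(b) = b\tilde\theta(b) = \Phi^\inv_{\ell, X}(b)$, write $g := \theta - \tilde\theta$ and expand it into homogeneous Taylor components $g = \sum_{k\geq 0} P_k$; comparing homogeneous degrees in $b\,g(b) \equiv 0$ forces $b\,P_k(b) \equiv 0$ for every $k$. Finally, substituting $b = 1_B + t b_0$ into $b\,P_k(b) = 0$ (now an identity of polynomials, so valid for all $b\in B$) and comparing powers of $t$ yields, by the obvious downward recursion, $P_k(b_0) = 0$ for all $b_0$; thus $g \equiv 0$.

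The statement for $\Phi_{r, Y}$ runs identically with left and right interchanged: part (\ref{part:bi-multi-2}) of Definition \ref{defn:bi-multiplicative} at $p=1$ with $\chi(1)=r$ now pushes the leftmost $R_d$ out on the \emph{right}, giving $\Phi_{r, Y}(d) = \psi_Y(d)\,d$ with $\psi_Y$ analytic near $0$ and $\psi_Y(0) = E(Y)$ invertible; the same computation with $\Phi^\inv_{r, Y}$ produces $\theta_Y(d) := \psi_Y(\Phi^\inv_{r, Y}(d))^{-1}$ and the same uniqueness argument applies. I expect the only genuinely delicate point to be the uniqueness clause, i.e.\ cancelling the (possibly non-invertible) factor $b$ on the left, which is precisely why I would route it through the homogeneous-expansion argument above rather than any naive cancellation; every other step is bookkeeping with the absolute-convergence bounds already established in Remark \ref{rem:convergence}.
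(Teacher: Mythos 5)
Your proposal is correct, but the existence half takes a genuinely different route from the paper. Both arguments start from the same factorization $\Phi_{\ell, X}(b) = b\,\psi_X(b)$ (justified, as you say, by property (\ref{part:bi-multi-2}) of Definition \ref{defn:bi-multiplicative} applied to the bi-multiplicative cumulant function, with convergence from Remark \ref{rem:convergence}). From there the paper does \emph{not} invoke the compositional inverse at all: it reformulates the problem as solving $\theta_X(b)\,\psi_X(b\,\theta_X(b)) = 1$ and gets existence and analyticity of $\theta_X$ from the implicit function theorem for Banach spaces, using that the differential of $x \mapsto x\,\psi_X(bx)$ at $b=0$ is $c \mapsto cE(X)$, which is invertible; the identity $\Phi^\inv_{\ell,X}(b) = b\,\theta_X(b)$ then drops out since $\Phi_{\ell,X}(b\theta_X(b)) = b$. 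You instead write $\theta_X$ explicitly as $\psi_X(\Phi^\inv_{\ell,X}(b))^{-1}$, trading the implicit function theorem for openness of the invertibles and analyticity of inversion in $B$. Your route is more explicit and arguably lighter, but note it needs $\Phi^\inv_{\ell,X}$ to be \emph{analytic}, not merely the homeomorphism recorded in the paragraph preceding Lemma \ref{lem:divide}; this is fine (the holomorphic inverse function theorem in Banach spaces gives it, and the paper uses analyticity of $\Phi^\inv$ freely afterwards), whereas the paper's argument produces analyticity of $b \mapsto \Phi^\inv_{\ell,X}(b) = b\theta_X(b)$ as a byproduct rather than an input. Your uniqueness argument via homogeneous components and the substitution $b = 1_B + tb_0$ is more detailed than the paper's one-line appeal to power series and is sound — the only nitpick is that the recursion killing the polarized coefficients runs upward from the $t^0$ term, not downward. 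The right-sided statement transposes correctly in both treatments.
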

\begin{proof}
Note uniqueness is clear by taking power series expansions about zero.  To construct $\theta_X$, note that 
\[
\Phi_{\ell, X}(b) = b \phi_{\ell, X}(b) \quad \text{ where } \quad  \phi_{\ell, X}(b) = \sum_{n\geq 1} \kappa^B_{\chi_{n, 0}}(X, \underbrace{L_b X, \ldots, L_b X}_{n-1 \text{ copies}}).
\]
Hence we desire $\theta_X$ such that $b = \Phi_{\ell, X}(b \theta_X(b)) = b \theta_X(b) \phi_{\ell, X}(b \theta_X(b))$.   Hence it suffices to find $\theta_X$ so that
\begin{align}
\theta_X(b) \phi_{\ell, X}(b \theta_X(b)) = 1.   \label{eq:check-can-divide-by-b}
\end{align}
The existence of $\theta_X$ then follows from an application of the implicit function theorem for functions between Banach spaces (see \cite{HG1927} and \cite{G1935}*{page 655}) to the function $f(b_1, b_2) = b_2 \varphi_{\ell, X}(b_1b_2) - 1$.  Indeed $\theta_X(0) = E(X)^{-1}$ is a solution of (\ref{eq:check-can-divide-by-b}) at $b = 0$ and the Fr\'{e}chet differential of the function $x \mapsto x \phi_{\ell, X}(bx)$ at $b = 0$ is the map $c \mapsto c E(X)$, which has bounded inverse.  

The proof for $Y$ is near identical (just multiply in two elements of $B$ using the opposite order).
\end{proof}

Using the above, we many now define the operator-valued free $S$-transform.

\begin{defn}
\label{defn:op-free-S}
Let $(\A, E, \varepsilon)$ be a Banach $B$-$B$-non-commutative probability space, let $X \in \A_\ell$, and $Y \in \A_r$ be such that $E(X)$ and $E(Y)$ are invertible in $B$.  The \emph{left $S$-transform of $X$} and the \emph{right $S$-transform of $Y$} are the $B$-valued analytic functions defined on a neighbourhood of zero by
\[
S^\ell_X(b) = b^{-1} \Phi^\inv_{\ell, X}(b) \qqand S^r_Y(d) = \Phi^\inv_{r, Y}(d) d^{-1}
\]
respectively.
\end{defn}

A priori, it is not clear that Definition \ref{defn:op-free-S} agrees with \cite{D2006}*{Definition 2.6}.  Indeed, in \cite{D2006}, one instead considers the functions
\[
\Psi_{\ell, X}(b) = \sum_{n\geq 1} E((L_b X)^n) \qqand \Psi_{r, Y}(d) = \sum_{n\geq 1} E((R_d Y)^n),
\]
shows $b \mapsto b^{-1} \Psi^{\inv}_{\ell, X}(b)$ and $d \mapsto \Psi^\inv_{r, Y}(d) d^{-1}$ are analytic $B$-valued functions near zero, and defines
\[
S^\ell_X(b) = (1+b)b^{-1} \Psi^\inv_{\ell, X}(b) \qqand S^r_Y(d) = \Phi^\inv_{r, Y}(d) d^{-1} (1+d).
\]
The following demonstrates these definitions agree.
\begin{prop}
Let $(\A, E, \varepsilon)$ be a Banach $B$-$B$-non-commutative probability space, let $X \in \A_\ell$, and $Y \in \A_r$ be such that $E(X)$ and $E(Y)$ are invertible in $B$.  Then
\[
b^{-1} \Phi^\inv_{\ell, X}(b) = (1+b)b^{-1} \Psi^\inv_{\ell, X}(b) \qqand \Phi^\inv_{r, Y}(d) d^{-1} = \Phi^\inv_{r, Y}(d) d^{-1} (1+d).
\]
\end{prop}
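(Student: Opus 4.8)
\section*{Proof proposal}

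The plan is to read the two identities off from the functional equations between the moment and cumulant series recorded in Section~\ref{sec:R-Trans}. Note first that, in that notation, $\Psi_{\ell, X}(b) = M^\ell_X(b) - 1$ and $\Phi_{\ell, X}(b) = C^\ell_X(b) - 1$, with all four series converging absolutely for $b$ near $0$ (Remark~\ref{rem:convergence}). Hence the relation $M^\ell_X(b) = C^\ell_X(M^\ell_X(b)\,b)$ from Remark~\ref{rem:left-formula} rewrites as
\[
\Psi_{\ell, X}(b) = \Phi_{\ell, X}\big( (1 + \Psi_{\ell, X}(b))\, b \big)
\]
for all sufficiently small $b$, and the analogous right-hand formula $M^r_Y(d) = C^r_Y(d\,M^r_Y(d))$ gives $\Psi_{r, Y}(d) = \Phi_{r, Y}\big( d\,(1 + \Psi_{r, Y}(d)) \big)$.

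Next I would substitute $b = \Psi^\inv_{\ell, X}(w)$, which is legitimate on a neighbourhood of $0$: the Fr\'echet differential of $\Psi_{\ell, X}$ at $0$ is the invertible map $c \mapsto c\,E(X)$, so by the Banach space inverse function theorem $\Psi^\inv_{\ell, X}$ exists near $0$ and vanishes at $0$. This turns the first display into $w = \Phi_{\ell, X}\big( (1+w)\,\Psi^\inv_{\ell, X}(w) \big)$; since $(1+w)\Psi^\inv_{\ell, X}(w) \to 0$ as $w \to 0$, applying $\Phi^\inv_{\ell, X}$ (which likewise exists near $0$, its differential at $0$ being $c \mapsto c\,E(X)$) yields
\[
\Phi^\inv_{\ell, X}(w) = (1+w)\,\Psi^\inv_{\ell, X}(w)
\]
for $w$ near $0$. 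Now write $\Phi^\inv_{\ell, X}(w) = w\,\theta_X(w)$ by Lemma~\ref{lem:divide} and $\Psi^\inv_{\ell, X}(w) = w\,\eta_X(w)$ by the companion factorization recalled from \cite{D2006} in the discussion preceding the proposition; the display becomes $w\,\theta_X(w) = (1+w)\,w\,\eta_X(w) = w\,(1+w)\,\eta_X(w)$, so the uniqueness clause of Lemma~\ref{lem:divide} forces $\theta_X(w) = (1+w)\,\eta_X(w)$, which is exactly $b^{-1}\Phi^\inv_{\ell, X}(b) = (1+b)\,b^{-1}\Psi^\inv_{\ell, X}(b)$.

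The right-hand identity is proved in the same way, starting from $\Psi_{r, Y}(d) = \Phi_{r, Y}(d(1+\Psi_{r, Y}(d)))$, substituting $d = \Psi^\inv_{r, Y}(w)$, and applying $\Phi^\inv_{r, Y}$; the only change is that the factors $1+w$ and $w$ now sit on the right, giving first $\Phi^\inv_{r, Y}(w) = \Psi^\inv_{r, Y}(w)(1+w)$ and then, after cancelling the right factor $w$ via Lemma~\ref{lem:divide}, the formula $\Phi^\inv_{r, Y}(d)\,d^{-1} = \Psi^\inv_{r, Y}(d)\,d^{-1}(1+d)$. I do not anticipate a genuine obstacle here; the only points requiring care are shrinking neighbourhoods so that all of the compositions and compositional inverses above are simultaneously defined, and interpreting every occurrence of $b^{-1}$ and $d^{-1}$ through the analytic factorizations of Lemma~\ref{lem:divide} (and its $\Psi$-analogue) rather than as honest inverses in $B$.
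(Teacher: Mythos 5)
Your proposal is correct, but it reaches the key identity by a different route than the paper. Both arguments reduce the claim (after composing with $\Psi_{\ell,X}$, respectively substituting $b=\Psi^{\inv}_{\ell,X}(w)$, and handling the $b^{-1}$ via the analytic factorizations of Lemma \ref{lem:divide} and its $\Psi$-analogue) to the functional equation $\Psi_{\ell,X}(b)=\Phi_{\ell,X}\big((1+\Psi_{\ell,X}(b))b\big)$ and its right-handed counterpart. You obtain that equation for free by observing $\Psi_{\ell,X}=M^\ell_X-1$ and $\Phi_{\ell,X}=C^\ell_X-1$ and quoting the relation $M^\ell_X(b)=C^\ell_X(M^\ell_X(b)b)$ of Remark \ref{rem:left-formula} (cited there from \cite{V1995} and \cite{S1998}); the paper instead re-proves exactly this equation from scratch, expanding $\Psi_{\ell,X}(b)=\sum_{n}\sum_{\pi\in BNC(\chi_{n,0})}\kappa^B_\pi(L_bX,\ldots,L_bX)$ and resumming over the block containing the last index via bi-multiplicativity. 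Your route is shorter and there is no circularity, since Remark \ref{rem:left-formula} is taken as known and does not depend on this proposition; what the paper's self-contained combinatorial derivation buys is independence from the cited free-probability relations and a first run of the block-resummation technique that is reused in Lemmata \ref{lem:S-lem-1}--\ref{lem:S-lem-3}. Two minor points: your existence argument for $\Psi^{\inv}_{\ell,X}$, $\Psi^{\inv}_{r,Y}$ via the inverse function theorem is fine (the differentials at $0$ are $c\mapsto cE(X)$ and $c\mapsto E(Y)c$), and you correctly prove the intended right-hand identity $\Phi^{\inv}_{r,Y}(d)d^{-1}=\Psi^{\inv}_{r,Y}(d)d^{-1}(1+d)$, the displayed statement containing a typographical $\Phi$ where $\Psi$ is meant.
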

\begin{proof}
To show $b^{-1} \Phi^\inv_{\ell, X}(b) = (1+b)b^{-1} \Psi^\inv_{\ell, X}(b)$ it suffices to show
\[
\Phi^\inv_{\ell, X}(b) = (1+b) \Psi^\inv_{\ell, X}(b)
\]
by uniqueness of power series expansions.  By replacing $b$ with $\Psi_{\ell, X}(b)$ in the above equation (which makes sense as these are analytic functions that take zero to zero), it suffices to show that
\[
\Phi^\inv_{\ell, X}(\Psi_{\ell, X}(b)) = (1+\Psi_{\ell, X}(b))b
\]
or, equivalently,
\[
\Psi_{\ell, X}(b) = \Phi_{\ell, X}((1+\Psi_{\ell, X}(b))b).
\]

To see the above equation, note
\begin{align}
\Psi_{\ell, X}(b) = \sum_{n\geq 1} E((L_b X)^n) = \sum_{n \geq 1} \sum_{\pi \in BNC(\chi_{n,0})} \kappa^B_{\pi}(L_b X, \ldots, L_b X). \label{eq:S-1}
\end{align}
For each $n \geq 1$ and $\pi \in BNC(\chi_{n,0})$, let $V_\pi$ denote the block of $\pi$ containing $n$.  For each set $V = \{1 \leq k_1 < k_2 < \cdots < k_t = n\}$, note, using bi-multiplicative properties, that
\[
\sum_{\substack{\pi \in BNC(\chi_{n,0}) \\ V_\pi = V}} \kappa^B_{\pi}(L_b X, \ldots, L_b X) = \kappa^B_{\chi_{t, 0}}\left( L_{E((L_bX)^{k_1-k_0 - 1})b} X, \ldots, L_{E((L_bX)^{k_t - k_{t-1}-1})b} X\right) 
\]
where $k_0 = 0$.  Below is a bi-non-crossing diagram to aid the reader in seeing how the bi-multiplicative properties are applied.
\begin{align*}
\begin{tikzpicture}[baseline]
	\draw[thick, dashed] (-.5,0) -- (9.5, 0);
	\node[below] at (0, 0) {$L_b X$};
	\draw[black, fill=black] (0,0) circle (0.05);	
	\node[below] at (1, 0) {$L_b X$};
	\draw[black, fill=black] (1,0) circle (0.05);	
	\node[below] at (2, 0) {$L_b X$};
	\draw[black, fill=black] (2,0) circle (0.05);	
	\node[below] at (3, 0) {$L_b X$};
	\draw[black, fill=black] (3,0) circle (0.05);	
	\node[below] at (4, 0) {$L_b X$};
	\draw[black, fill=black] (4,0) circle (0.05);	
	\node[below] at (5, 0) {$L_b X$};
	\draw[black, fill=black] (5,0) circle (0.05);	
	\node[below] at (6, 0) {$L_b X$};
	\draw[black, fill=black] (6,0) circle (0.05);	
	\node[below] at (7, 0) {$L_b X$};
	\draw[black, fill=black] (7,0) circle (0.05);	
	\node[below] at (8, 0) {$L_b X$};
	\draw[black, fill=black] (8,0) circle (0.05);	
	\node[below] at (9, 0) {$L_b X$};
	\draw[black, fill=black] (9,0) circle (0.05);	
	\draw[thick] (2, 0) -- (2, 1) -- (9, 1) -- (9,0);
	\draw[thick] (6, 0) -- (6, 1);
	\draw[thick] (4,.5) ellipse (1.25cm and .25cm);
	\node[below] at (4, .7) {$\pi_2$};
	\draw[thick] (4, 0) -- (4, .25);
	\draw[thick] (3, 0) -- (3, .34);
	\draw[thick] (5, 0) -- (5, .34);
	\draw[thick] (.5,.5) ellipse (1cm and .25cm);
	\node[below] at (.5, .7) {$\pi_1$};
	\draw[thick] (0,0) -- (0, .29);
	\draw[thick] (1,0) -- (1, .29);
	\draw[thick] (7.5,.5) ellipse (1cm and .25cm);
	\node[below] at (7.5, .7) {$\pi_3$};
	\draw[thick] (7,0) -- (7, .29);
	\draw[thick] (8,0) -- (8, .29);
\end{tikzpicture}
\end{align*}

Since the sum in (\ref{eq:S-1}) converges absolutely, (\ref{eq:S-1}) may be rearranged to sum over all bi-non-crossing partitions (for any $n$) where $|V_\pi| = t$.  In doing so, the $t$-tuple $(k_1 - k_0 - 1, \ldots, k_t - k_{t-1} - 1)$ takes on all values in $(\bN \cup \{0\})^t$ exactly once so we obtain that
\begin{align*}
\Psi_{\ell, X}(b) &= \sum_{t \geq 1} \kappa^B_{\chi_{t, 0}}( L_{(1+\Psi_{\ell, X}(b))b} X, \ldots, L_{(1+\Psi_{\ell, X}(b))b}X)  \\
&= \Phi_{\ell, X}((1+\Psi_{\ell, X}(b))b)
\end{align*}
as desired.  The other proof is nearly identical (using $d \mapsto R_d$ is antimultiplicative).
\end{proof}

We now state the following result which describes how the $S$-transforms behave with respect to products of operators.
\begin{thm}[\cite{D2006}*{Theorem 1.1}]
\label{thm:free-S}
Let $(\A, E, \varepsilon)$ be a Banach $B$-$B$-non-commutative probability space, let $(X_1, Y_1)$ and $(X_2, Y_2)$ be bi-free over $B$.  Assume that $E(X_k)$ and $E(Y_k)$ are invertible.  Then
\begin{align*}
S^\ell_{X_1X_2}(b) &= S^\ell_{X_2}(b) S^\ell_{X_1}(S^\ell_{X_2}(b)^{-1} b S_{X_2}(b)) \text{ and}\\
S^r_{Y_1Y_2}(d) &= S^r_{Y_1}(S^r_{Y_2}(d) d S_{Y_2}(d)^{-1})  S^r_{Y_2}(d)
\end{align*}
each on a neighbourhood of zero.
\end{thm}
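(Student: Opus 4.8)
\emph{Strategy.} It is enough to prove the first identity; the second follows by running the same argument with the roles of left and right operators interchanged, the only structural difference being that $d\mapsto R_d$ is anti-multiplicative, which is precisely what reverses the order of the conjugation in the second displayed formula. Recall from Definition \ref{defn:op-free-S} that $S^\ell_X(b)=b^{-1}\Phi^\inv_{\ell,X}(b)$, equivalently $\Phi^\inv_{\ell,X}(b)=b\,S^\ell_X(b)$, where this quotient genuinely makes sense near $0$ by Lemma \ref{lem:divide}. A short algebraic manipulation shows that the asserted formula for $S^\ell_{X_1X_2}$ is equivalent to the composition identity
\[
\Phi^\inv_{\ell,X_1X_2}(b)=S^\ell_{X_2}(b)\,\Phi^\inv_{\ell,X_1}\!\big(S^\ell_{X_2}(b)^{-1}\,b\,S^\ell_{X_2}(b)\big)
\]
on a neighbourhood of $0$, so the plan is to establish this identity for the compositional inverses of the cumulant generating series and then undo the substitution.

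\emph{Expansion of the cumulant series of the product.} Write $L_bX_1X_2=(L_bX_1)(X_2)$ as a product of two elements of $\A_\ell$ and apply Theorem \ref{thm:products}: for each $n\ge1$,
\[
\kappa^B_{\chi_{n,0}}\big(L_bX_1X_2,\ldots,L_bX_1X_2\big)=\sum_{\substack{\sigma\in BNC(\chi_{2n,0})\\ \sigma\vee\widehat{0_{\chi_{n,0}}}=1_{\chi_{2n,0}}}}\kappa^B_\sigma\big(L_bX_1,X_2,\ldots,L_bX_1,X_2\big),
\]
and note $BNC(\chi_{2n,0})=NC(2n)$ since $\chi_{2n,0}$ is all-left. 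Because $(X_1,Y_1)$ is bi-free from $(X_2,Y_2)$ over $B$, applying the cumulant criterion of Theorem \ref{thm:bifree-classifying-theorem} blockwise (via the bi-multiplicative reductions) forces $\kappa^B_\sigma=0$ unless no block of $\sigma$ meets both the odd positions (carrying $L_bX_1$) and the even positions (carrying $X_2$); hence only $\sigma=\sigma_1\sqcup\sigma_2$ survive, with $\sigma_1\in NC(n)$ on the odd and $\sigma_2\in NC(n)$ on the even positions, constrained to be jointly non-crossing and connected in the sense above. For the alternating word $Z_1Z_2\cdots Z_{2n}$ this is exactly the Nica--Speicher combinatorial lemma behind multiplicative free convolution (see \cite{NS1997} and \cite{NSBook}): $\sigma_2$ is then uniquely determined by $\sigma_1$, namely $\sigma_2=K(\sigma_1)$, the Kreweras complement. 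Therefore
\[
\Phi_{\ell,X_1X_2}(b)=\sum_{n\ge1}\ \sum_{\sigma_1\in NC(n)}\kappa^B_{\sigma_1\sqcup K(\sigma_1)}\big(L_bX_1,X_2,\ldots,L_bX_1,X_2\big).
\]

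\emph{Evaluation and inversion.} For fixed $\sigma_1$ the blocks of $K(\sigma_1)$ nest inside the gaps of $\sigma_1$, so iterating the bi-multiplicative reductions of Definition \ref{defn:bi-multiplicative} (chiefly property (\ref{part:bi-multi-4})) rewrites $\kappa^B_{\sigma_1\sqcup K(\sigma_1)}(L_bX_1,X_2,\ldots)$ as a cumulant $\kappa^B_{\sigma_1}$ in the $X_1$-entries alone, each entry carrying a left $B$-multiplier built from $b$ and from the expectations $E(\,\cdot\,)$ of the intervening runs of $X_2$'s, which are the innermost $K(\sigma_1)$-blocks and hence are evaluated first. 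Re-summing over $n$ and over $\sigma_1\in NC(n)$ and regrouping, each run of $X_2$'s, summed over all of its possible lengths, contributes a factor of the moment series $M^\ell_{X_2}$, and the double sum collapses to a single series of the form $\Phi_{\ell,X_1}(b')$ for an explicit $B$-element $b'$ built from $b$, $M^\ell_{X_2}(b)$ and $G^\ell_{X_2}(b)=M^\ell_{X_2}(b)b$. Passing to compositional inverses --- every function in sight being an analytic local homeomorphism near $0$ with boundedly invertible Fr\'echet differential, as in the discussion preceding Lemma \ref{lem:divide} --- and invoking the relations of Remark \ref{rem:left-formula} turns the occurrences of $M^\ell_{X_2}$ into occurrences of $\Phi^\inv_{\ell,X_2}(b)=b\,S^\ell_{X_2}(b)$ and produces exactly the boxed composition identity; undoing the substitution of the first paragraph completes the proof of the first formula. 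The second formula is proved the same way, with $R$'s replacing $L$'s; since $d\mapsto R_d$ is anti-multiplicative the Kreweras nesting is threaded on the opposite side, which reverses the conjugation and yields $S^r_{Y_1}\big(S^r_{Y_2}(d)\,d\,S^r_{Y_2}(d)^{-1}\big)\,S^r_{Y_2}(d)$.

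\emph{Main obstacle.} The delicate point is the evaluation step: tracking, through the iterated bi-multiplicative reductions, exactly how the $B$-valued contributions of the $X_2$-runs are threaded through the $X_1$-cumulant, and verifying that their net effect is a genuine two-sided conjugation $S^\ell_{X_2}(b)^{-1}(\,\cdot\,)S^\ell_{X_2}(b)$ preceded by a left multiplication by $S^\ell_{X_2}(b)$ --- rather than the single one-sided multiplication that one sees when $B$ is commutative. This requires being careful with the ``final'' factor of each generating series, i.e.\ with the discrepancy between $\Phi_{\ell,\,\cdot}$ and $\Psi_{\ell,\,\cdot}$ (equivalently between the $C$- and $M$-series of Remark \ref{rem:left-formula}), which is the operator-valued incarnation of the $\tfrac{1+z}{z}$ normalization in the classical $S$-transform and the reason Dykema's $S$-transform carries the extra factor $(1+b)$.
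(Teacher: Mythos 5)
Your reduction of the theorem to the composition identity $\Phi^\inv_{\ell,X_1X_2}(b)=S^\ell_{X_2}(b)\,\Phi^\inv_{\ell,X_1}\bigl(S^\ell_{X_2}(b)^{-1}bS^\ell_{X_2}(b)\bigr)$ is correct, as is the expansion via Theorem \ref{thm:products}, the vanishing of mixed cumulants, and the observation that connectedness forces the even part of a surviving partition to be the Kreweras complement $K(\sigma_1)$ of the odd part. The gap is in the ``Evaluation and inversion'' step, and it is not a presentational gap but a false claim. First, the blocks of $K(\sigma_1)$ contribute \emph{cumulants} $\kappa^B$ of the $X_2$-entries, not expectations of ``runs'' of $X_2$'s: since $K(\sigma_1)$ is uniquely determined by $\sigma_1$, there is no independent summation over partitions of the even positions, hence no moment--cumulant inversion is available and no factor of the moment series $M^\ell_{X_2}$ can appear. (Also, $K(\sigma_1)$-blocks need not be innermost: for $\sigma_1=0_n$ one has $K(\sigma_1)=1_n$, which is outermost, so they are not ``evaluated first.'') Second, the asserted collapse $\Phi_{\ell,X_1X_2}(b)=\Phi_{\ell,X_1}(b')$ with $b'$ built only from $b$, $M^\ell_{X_2}(b)$ and $G^\ell_{X_2}(b)$ is false already for $B=\bC$: writing $\Phi_{X_1X_2}(z)=\Phi_{X_1}(w(z))$ and comparing coefficients using $\kappa_1(X_1X_2)=\kappa_1(X_1)\kappa_1(X_2)$ and $\kappa_2(X_1X_2)=\kappa_2(X_1)\kappa_1(X_2)^2+\kappa_1(X_1)^2\kappa_2(X_2)$ forces $w_1=\kappa_1(X_2)$ and $w_2=\kappa_1(X_1)\kappa_2(X_2)$, so the inserted argument necessarily depends on the distribution of $X_1$ as well and cannot be an expression in $b$ and the $X_2$-series alone.

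What is actually true (and what the paper proves) is that the regrouped sums produce the mixed alternating ``pinched'' cumulant series $\psi_\ell$, yielding $\Phi_{\ell,X_1X_2}(b)=\psi_\ell(L_bX_1,X_2)\psi_\ell(X_2,L_bX_1)=\Phi_{\ell,X_2}(\psi_\ell(L_bX_1,X_2))$ together with $\psi_\ell(X_2,L_bX_1)\psi_\ell(L_bX_1,X_2)=\Phi_{\ell,X_1}(\psi_\ell(X_2L_b,X_1))$ (Lemmata \ref{lem:S-lem-1}--\ref{lem:S-lem-3} and equation (\ref{eq:move-around-B-in-pinched})); the dependence on both variables only disentangles \emph{after} substituting $b\mapsto\Phi^\inv_{\ell,X_1X_2}(b)$, where equations (\ref{eq:pinched-to-inverse-of-full}), (\ref{eq:inverse-times-pinched}) and Lemma \ref{lem:S-lem-4} identify $\psi_\ell\bigl(X_2,L_{\Phi^\inv_{\ell,X_1X_2}(b)}X_1\bigr)=S^\ell_{X_2}(b)^{-1}$, which is precisely where the two-sided conjugation $S^\ell_{X_2}(b)^{-1}bS^\ell_{X_2}(b)$ enters. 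This threading of the noncommutative $B$-values through the $X_1$-cumulant is exactly the point you label the ``main obstacle'' and then do not resolve; without an analogue of the $\psi_\ell$-machinery (or some other device playing its role), your argument does not reach the stated formula.
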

Note the condition that $X_1$ and $X_2$ are free and $Y_1$ and $Y_2$ are free imply $E(X_1X_2) = E(X_1)E(X_2)$ and $E(Y_1Y_2) = E(Y_2)E(Y_1)$ are invertible.  Furthermore, since the Fr\'{e}chet differential of $\Phi^\inv_{\ell, X_2}$ and $\Phi^\inv_{r, Y_2}$ at zero are $c \mapsto c E(X_2)$ and $c \mapsto E(Y_2)c$ respectively,  one sees that $S^\ell_{X_2}(0)$ and $S^r_{Y_2}(0)$ are invertible so $S^\ell_{X_2}(b)^{-1}$ and $S_{Y_2}(d)^{-1}$ make sense on a neighbourhood of zero (and the compositions in Theorem \ref{thm:free-S} make sense).

To provide a proof of  Theorem \ref{thm:free-S} along the lines of \cite{NS1997}, some additional machinery is required.
Given a non-crossing partition $\pi \in NC(n)$, the \emph{Kreweras complement of $\pi$}, denoted $K(\pi)$, is the non-crossing partition on $\{1, \ldots, n\}$ with non-crossing diagram obtained by drawing $\pi$ via the standard non-crossing diagram on $\{1,\ldots, n\}$, placing nodes $1', 2', \ldots, n'$ with $k'$ directly to the right of $k$, and drawing the largest non-crossing partition on $1', 2', \ldots, n'$ that does not intersect $\pi$, which is then $K(\pi)$.  The following diagram exhibits that if $\pi = \{\{1,6\}, \{2\},  \{3,4, 5\}, \{7\}\}$, then $K(\pi) = \{\{1,2,5\}, \{3\}, \{4\}, \{6,7\}\}$.
\begin{align*}
	\begin{tikzpicture}[baseline]
	\draw[thick] (1,0) -- (1,1) -- (6,1) -- (6,0);
	\draw[thick] (3,0) -- (3,.5) -- (5,.5) -- (5,0);
	\draw[thick] (4,0) -- (4,.5);
	\draw[thick,red] (1.5,0) -- (1.5,0.75) -- (5.5,0.75)--(5.5, 0);
	\draw[thick,red] (2.5,0) -- (2.5,0.75);
	\draw[thick,red] (6.5,0) -- (6.5,0.5) -- (7.5,0.5)--(7.5, 0);
	\draw[thick, dashed] (0.5,0) -- (8,0);
	\node[below] at (1, 0) {1};
	\draw[fill=black] (1,0) circle (0.05);
	\node[below] at (2, 0) {2};
	\draw[fill=black] (2,0) circle (0.05);
	\node[below] at (3, 0) {3};
	\draw[fill=black] (3,0) circle (0.05);
	\node[below] at (4, 0) {4};
	\draw[fill=black] (4,0) circle (0.05);
	\node[below] at (5, 0) {5};
	\draw[fill=black] (5,0) circle (0.05);
	\node[below] at (6, 0) {6};
	\draw[fill=black] (6,0) circle (0.05);
	\node[below] at (7, 0) {7};
	\draw[fill=black] (7,0) circle (0.05);
	\node[below] at (1.5, 0) {$1'$};
	\draw[red, fill=red] (1.5,0) circle (0.05);
	\node[below] at (2.5, 0) {$2'$};
	\draw[red, fill=red] (2.5,0) circle (0.05);
	\node[below] at (3.5, 0) {$3'$};
	\draw[red, fill=red] (3.5,0) circle (0.05);
	\node[below] at (4.5, 0) {$4'$};
	\draw[red, fill=red] (4.5,0) circle (0.05);
	\node[below] at (5.5, 0) {$5'$};
	\draw[red, fill=red] (5.5,0) circle (0.05);
	\node[below] at (6.5, 0) {$6'$};
	\draw[red, fill=red] (6.5,0) circle (0.05);
	\node[below] at (7.5, 0) {$7'$};
	\draw[red, fill=red] (7.5,0) circle (0.05);
	\end{tikzpicture}
\end{align*}

For $n \geq 1$ let $\sigma_n$ be the partition on $\{1,\ldots, 2n\}$ with blocks $\{1,2\}, \{3, 4\}, \ldots, \{2n-1, 2n\}$, and let
\begin{align*}
BNC'_\ell(n) & = \{ \pi \in BNC(\chi_{2n, 0}) \, \mid \, \pi \vee \sigma_n = 1_{\chi_{2n,0}}, \{1\} \text{ is a block of } \pi, 2p \nsim_{\pi} 2q+1 \text{ for all }p,q \in \bN \cup \{0\}\} \\
BNC'_r(n) & = \{ \pi \in BNC(\chi_{0, 2n}) \, \mid \, \pi \vee \sigma_n = 1_{\chi_{0, 2n}}, \{1\} \text{ is a block of } \pi, 2p \nsim_{\pi} 2q+1 \text{ for all }p,q \in \bN \cup \{0\}\}
\end{align*}
(which are actually the same sets of partitions).  Note if $\pi \in BNC'_k(n)$ for $k \in \{\ell, r\}$, then $\pi|_{\{1, 3, \ldots, 2n-1\}}$ and $\pi|_{\{2, 4, \ldots, 2n\}}$ are Kreweras complements of each other.  Consequently, that there is a bijection between non-crossing partitions $\pi$ on $\{1, 3, \ldots, 2n-1\}$ with $\{1\}$ a block of $\pi$ and $BNC'_k(n)$ via sending $\pi$ to $\pi \cup K(\pi)$ (viewing $K(\pi)$ on $\{2, 4, \ldots, 2n\}$).

If $Z_1, Z_2 \in \A_\ell$ and $Z'_1, Z'_2 \in \A_r$, define
\begin{align*}
\psi_\ell(Z_1, Z_2) &:= \sum_{n\geq 1} \sum_{\pi\in BNC'_\ell(n)} \kappa_\pi(1_\A, Z_1, Z_2, Z_1, Z_2, \ldots, Z_2, Z_1) \text{ and} \\
\psi_r(Z'_1, Z'_2) &:= \sum_{n\geq 1} \sum_{\pi\in BNC'_r(n)} \kappa_\pi(1_\A, Z'_1, Z'_2, Z'_1, Z'_2, \ldots, Z'_2, Z'_1)
\end{align*}
provided these sums converge.  Note that both sums converge absolutely provided $\left\|Z_1\right\|\left\|Z_2\right\| < 1$ and $\left\|Z'_1\right\|\left\|Z'_2\right\| < 1$.  

It is helpful to note
\begin{align}
\label{eq:move-around-B-in-pinched}
\psi_\ell(X_2 L_b, X_1) = \psi_\ell(X_2, L_b X_1) b \qqand \psi_r(Y_2 R_d, Y_1) = d \psi_r(Y_2, R_d Y_1),
\end{align}
which is true by bi-multiplicative properties.

Several Lemmata relating $\psi_\ell$, $\Phi_\ell$, $\psi_r$, and $\Phi_r$ will be required.
\begin{lem}
\label{lem:S-lem-1}
Let $(\A, E, \varepsilon)$ be a Banach $B$-$B$-non-commutative probability space, let $(X_1, Y_1)$ and $(X_2, Y_2)$ be bi-free over $B$.   Then
\[
\Phi_{\ell, X_1X_2}(b) = \psi_\ell(L_b X_1, X_2) \psi_\ell(X_2, L_b X_1) \qqand \Phi_{r, Y_1Y_2}(d) = \psi_r(Y_2, R_d Y_1) \psi_r(R_d Y_1, Y_2)
\]
for $b$ and $d$ sufficiently small.
\end{lem}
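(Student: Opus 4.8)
The plan is to expand $\Phi_{\ell,X_1X_2}$ with the cumulants‑of‑products formula, discard the terms that bi‑freeness kills, and recognise what remains as a product of two copies of $\psi_\ell$ via a Kreweras‑type splitting of partitions. Concretely, I would start from $\Phi_{\ell,X_1X_2}(b)=\sum_{n\ge1}\kappa^B_{\chi_{n,0}}(L_bX_1X_2,\dots,L_bX_1X_2)$, regard each factor $L_bX_1X_2$ as the product of $L_bX_1\in\A_\ell$ and $X_2\in\A_\ell$, and apply Theorem \ref{thm:products}. Since the partition $\widehat{0_{\chi_{n,0}}}$ occurring there is exactly $\sigma_n$, this yields
\[
\Phi_{\ell,X_1X_2}(b)=\sum_{n\ge1}\ \sum_{\substack{\sigma\in BNC(\chi_{2n,0})\\ \sigma\vee\sigma_n=1_{2n,0}}}\kappa^B_\sigma(L_bX_1,X_2,L_bX_1,X_2,\dots,L_bX_1,X_2),
\]
with $L_bX_1$ in the odd positions and $X_2$ in the even positions.

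Next I would prune this sum using bi‑freeness. Reducing $\kappa^B_\sigma$ block by block via Definition \ref{defn:bi-multiplicative}, any block of $\sigma$ containing both an odd and an even position yields (after the reduction, with the inserted $L_c$'s from nested blocks regarded as elements of $\varepsilon(B\otimes1_B)$ attachable to either pair) a $B$‑valued cumulant mixing $\alg(\varepsilon(B\otimes1_B),X_1)$ and $\alg(\varepsilon(B\otimes1_B),X_2)$, which vanishes by Theorem \ref{thm:bifree-classifying-theorem}; when this occurs in an inner block the vanishing propagates through the reduction by Proposition \ref{prop:vanishing-of-scalar-cumulants}. So only $\sigma$ with $2p\nsim_\sigma2q+1$ for all $p,q$ survive, and for such $\sigma$ the condition $\sigma\vee\sigma_n=1_{2n,0}$ forces $\sigma=\sigma_1\cup K(\sigma_1)$ with $\sigma_1\in NC(n)$ supported on the $L_bX_1$‑positions and $K(\sigma_1)$ on the $X_2$‑positions — the bi‑non‑crossing version of the classical Kreweras‑complement fact. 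Thus
\[
\Phi_{\ell,X_1X_2}(b)=\sum_{n\ge1}\ \sum_{\sigma_1\in NC(n)}\kappa^B_{\sigma_1\cup K(\sigma_1)}(L_bX_1,X_2,L_bX_1,X_2,\dots,L_bX_1,X_2).
\]
All rearrangements of these infinite series are valid for $\|b\|$ small, since everything converges absolutely as in Remark \ref{rem:convergence}.

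The heart of the argument is then a decomposition of these partitions. Given $\sigma_1\in NC(n)$, let $V$ be the block of $\sigma_1$ containing its first node and $a$ its largest element. Non‑crossingness of $\sigma_1$ — and of its Kreweras complement — forces $\sigma=\sigma_1\cup K(\sigma_1)$ to split, as a union of blocks forming $\chi$‑intervals, into an ``inner'' partition on the first $2a-1$ positions (carrying $V$ and everything nested under it) and an ``outer'' partition on the last $2(n-a)+1$ positions. Reading off the bi‑multiplicative reduction of $\kappa^B_\sigma$ — the outer partition detaches on the right by Definition \ref{defn:bi-multiplicative}(\ref{part:bi-multi-3}), and the blocks nested under $V$ enter as $L$‑operators by Definition \ref{defn:bi-multiplicative}(\ref{part:bi-multi-4}) — and reinstating in each half the extra node that the argument $1_\A$ represents in $\psi_\ell$, one obtains a bijection between $\bigsqcup_{n\ge1}NC(n)$ and the set of pairs $(\pi,\rho)$ with $\pi\in BNC'_\ell(m)$, $\rho\in BNC'_\ell(k)$, $m+k=n+1$, under which
\[
\kappa^B_{\sigma_1\cup K(\sigma_1)}(L_bX_1,X_2,\dots,L_bX_1,X_2)=\kappa^B_\pi(1_\A,L_bX_1,X_2,\dots,X_2,L_bX_1)\;\kappa^B_\rho(1_\A,X_2,L_bX_1,\dots,L_bX_1,X_2).
\]
Summing over $n$ and $\sigma_1$, the right‑hand side factors as $\psi_\ell(L_bX_1,X_2)\,\psi_\ell(X_2,L_bX_1)$, which is the first identity; the identity for $\Phi_{r,Y_1Y_2}$ is obtained verbatim with $L$ replaced by $R$, ``left'' by ``right'', and $B$ given its opposite multiplication.

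The steps through the two displayed reductions are routine — direct appeals to Theorem \ref{thm:products}, Theorem \ref{thm:bifree-classifying-theorem}, and bi‑multiplicativity. The main obstacle is the combinatorial bookkeeping in the last step: to describe exactly how $V$, the blocks nested under it, and the matching blocks of $K(\sigma_1)$ are parcelled out between $\pi$ and $\rho$, and to verify that in the bi‑multiplicative reduction every $B$‑operator produced lands in the slot dictated by the reductions of $\kappa^B_\pi$ and $\kappa^B_\rho$, so that the split is genuinely a product with nothing carried across the cut. The cases $n=1$ and $n=2$ already exhibit this pattern and can serve as a template for the general verification.
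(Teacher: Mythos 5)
Your proposal is correct and follows essentially the same route as the paper's proof: expand $\Phi_{\ell,X_1X_2}$ via Theorem \ref{thm:products}, discard mixed terms using bi-freeness, split each surviving partition at the block containing the first node (your cut at position $2a-1$ is exactly the paper's unique odd $m$ with $1\sim_\pi m$ and $m+1\sim_\pi 2n$), factor by bi-multiplicativity into two cumulants with a prepended $1_\A$, and resum to get $\psi_\ell(L_bX_1,X_2)\,\psi_\ell(X_2,L_bX_1)$. Your explicit Kreweras parametrization $\sigma=\sigma_1\cup K(\sigma_1)$ is an equivalent (and correct) reformulation of the paper's description of the surviving partitions, and the only cosmetic slip is citing Proposition \ref{prop:vanishing-of-scalar-cumulants} for propagating the vanishing of an inner mixed block, which is really just multilinearity of the cumulant functions.
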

\begin{proof}
For $n \geq 1$ let $\sigma_n$ be the partition on $\{1,\ldots, 2n\}$ with blocks $\{1,2\}, \{3, 4\}, \ldots, \{2n-1, 2n\}$.  Since $X_1$ and $X_2$ are free over $B$, Theorem \ref{thm:products} implies that
\begin{align}
\Phi_{\ell, X_1X_2}(b) &= \sum_{n\geq 1} \kappa_{\chi_{n,0}}( L_bX_1X_2, \ldots, L_b X_1X_2)  \nonumber \\
&= \sum_{n\geq 1}  \sum_{\substack{\pi \in BNC(\chi_{2n,0}) \\ \pi \vee \sigma_n = 1_{\chi_{2n,0}}}}   \kappa^B_{\pi}( L_bX_1, X_2, L_b X_1, X_2, \ldots, L_b X_1, X_2) \nonumber \\
&= \sum_{n\geq 1}  \sum_{\substack{\pi \in BNC(\chi_{2n,0}) \\ \pi \vee \sigma_n = 1_{\chi_{2n,0}} \\ 2p \nsim_\pi 2q+1 \forall p, q  }}   \kappa^B_{\pi}( L_bX_1, X_2, L_b X_1, X_2, \ldots, L_b X_1, X_2). \label{eq:sumpsi}
\end{align}

For any fixed $\pi$ in the sum, the conditions $\pi \vee \sigma_n = 1_{\chi_{2n,0}}$ and  $2p \nsim_\pi 2q+1$ for all $p, q$ implies there exists a unique odd number $m$ such that $1 \sim_\pi m$ and $m+1 \sim_\pi 2n$. If $\pi'$ is the partition on $\{1,\ldots, m+1\}$ obtained from $\pi|_{\{1, \ldots, m\}}$ by replacing each number $k$ in each block of $\pi|_{\{1, \ldots, m\}}$ with $k+1$ and adding the block $\{1\}$, then $\pi' \in BNC'_\ell(\frac{m+1}{2})$.  Furthermore, if $\pi''$ is the partition on $\{1, \ldots, 2n - m+1\}$ obtained from $\pi|_{\{m+1, \ldots, 2n\}}$ by replacing each number $k$ in each block of $\pi|_{\{1, \ldots, m\}}$ with $k-m+1$ and adding the block $\{1\}$, then $\pi'' \in BNC'_\ell(\frac{2n - m+1}{2})$.  The diagrams below given an example of such a $\pi$ (with $m = 5$) and how one constructs $\pi'$ and $\pi''$ (the dotted lines represent $\sigma_n$ in the first picture).
\begin{gather*}
\begin{tikzpicture}[baseline]
	\draw[thick, blue, densely dotted] (0,0) -- (.5, 0.25) -- (1,0);
	\draw[thick, blue, densely dotted] (2,0) -- (2.5, 0.25) -- (3,0);
	\draw[thick, blue, densely dotted] (4,0) -- (4.5, 0.25) -- (5,0);
	\draw[thick, blue, densely dotted] (6,0) -- (6.5, 0.25) -- (7,0);
	\draw[thick, blue, densely dotted] (8,0) -- (8.5, 0.25) -- (9,0);
	\draw[thick, blue, densely dotted] (10,0) -- (10.5, 0.25) -- (11,0);
	\draw[thick, dashed] (-.25,0) -- (11.25, 0);
	\node[below] at (0, 0) {$L_b X_1$};
	\draw[black, fill=black] (0,0) circle (0.05);	
	\node[below] at (1, 0) {$X_2$};
	\draw[black, fill=black] (1,0) circle (0.05);	
	\node[below] at (2, 0) {$L_b X_1$};
	\draw[black, fill=black] (2,0) circle (0.05);	
	\node[below] at (3, 0) {$X_2$};
	\draw[black, fill=black] (3,0) circle (0.05);	
	\node[below] at (4, 0) {$L_b X_1$};
	\draw[black, fill=black] (4,0) circle (0.05);	
	\node[below] at (5, 0) {$X_2$};
	\draw[black, fill=black] (5,0) circle (0.05);	
	\node[below] at (6, 0) {$L_b X_1$};
	\draw[black, fill=black] (6,0) circle (0.05);	
	\node[below] at (7, 0) {$X_2$};
	\draw[black, fill=black] (7,0) circle (0.05);	
	\node[below] at (8, 0) {$L_b X_1$};
	\draw[black, fill=black] (8,0) circle (0.05);	
	\node[below] at (9, 0) {$X_2$};
	\draw[black, fill=black] (9,0) circle (0.05);	
	\node[below] at (10, 0) {$L_b X_1$};
	\draw[black, fill=black] (10,0) circle (0.05);	
	\node[below] at (11, 0) {$X_2$};
	\draw[black, fill=black] (11,0) circle (0.05);	
	\draw[thick] (0,0) -- (0,.75) -- (4, .75) -- (4,0);
	\draw[thick] (1,0) -- (1, .5) -- (3, .5) -- (3, 0);
	\draw[thick] (5, 0) -- (5, 1) -- (11, 1) -- (11, 0);
	\draw[thick] (6, 0) -- (6, .75) -- (10, .75) -- (10,0);
	\draw[thick] (7, 0) -- (7, .5) -- (9, .5) -- (9, 0);
\end{tikzpicture}
\\
\Downarrow
\\
\begin{tikzpicture}[baseline]
	\draw[thick, blue, densely dotted] (0,0) -- (-.5, 0.25) -- (-1,0);
	\draw[thick, blue, densely dotted] (2,0) -- (1.5, 0.25) -- (1,0);
	\draw[thick, blue, densely dotted] (4,0) -- (3.5, 0.25) -- (3,0);
	\draw[thick, dashed] (-1.25,0) -- (4.25, 0);
	\node[below] at (-1, 0) {$1_\A$};
	\draw[black, fill=black] (-1,0) circle (0.05);	
	\node[below] at (0, 0) {$L_b X_1$};
	\draw[black, fill=black] (0,0) circle (0.05);	
	\node[below] at (1, 0) {$X_2$};
	\draw[black, fill=black] (1,0) circle (0.05);	
	\node[below] at (2, 0) {$L_b X_1$};
	\draw[black, fill=black] (2,0) circle (0.05);	
	\node[below] at (3, 0) {$X_2$};
	\draw[black, fill=black] (3,0) circle (0.05);	
	\node[below] at (4, 0) {$L_b X_1$};
	\draw[black, fill=black] (4,0) circle (0.05);	
	\draw[thick] (0,0) -- (0,.75) -- (4, .75) -- (4,0);
	\draw[thick] (1,0) -- (1, .5) -- (3, .5) -- (3, 0);
\end{tikzpicture}
\qquad
\begin{tikzpicture}[baseline]
	\draw[thick, blue, densely dotted] (4,0) -- (4.5, 0.25) -- (5,0);
	\draw[thick, blue, densely dotted] (6,0) -- (6.5, 0.25) -- (7,0);
	\draw[thick, blue, densely dotted] (8,0) -- (8.5, 0.25) -- (9,0);
	\draw[thick, blue, densely dotted] (10,0) -- (10.5, 0.25) -- (11,0);
	\draw[thick, dashed] (3.75,0) -- (11.25, 0);	
	\node[below] at (4, 0) {$1_\A$};
	\draw[black, fill=black] (4,0) circle (0.05);	
	\node[below] at (5, 0) {$X_2$};
	\draw[black, fill=black] (5,0) circle (0.05);	
	\node[below] at (6, 0) {$L_b X_1$};
	\draw[black, fill=black] (6,0) circle (0.05);	
	\node[below] at (7, 0) {$X_2$};
	\draw[black, fill=black] (7,0) circle (0.05);	
	\node[below] at (8, 0) {$L_b X_1$};
	\draw[black, fill=black] (8,0) circle (0.05);	
	\node[below] at (9, 0) {$X_2$};
	\draw[black, fill=black] (9,0) circle (0.05);	
	\node[below] at (10, 0) {$L_b X_1$};
	\draw[black, fill=black] (10,0) circle (0.05);	
	\node[below] at (11, 0) {$X_2$};
	\draw[black, fill=black] (11,0) circle (0.05);	
	\draw[thick] (5, 0) -- (5, 1) -- (11, 1) -- (11, 0);
	\draw[thick] (6, 0) -- (6, .75) -- (10, .75) -- (10,0);
	\draw[thick] (7, 0) -- (7, .5) -- (9, .5) -- (9, 0);
\end{tikzpicture}
\end{gather*}

Using bi-multiplicative properties
\begin{align}
\kappa^B_{\pi}&( L_bX_1, X_2, L_b X_1, X_2, \ldots, L_b X_1, X_2) \nonumber \\ &= \kappa^B_{\pi'}(1_\A, L_bX_1, X_2, L_bX_1, X_2, \ldots, X_2, L_bX_1) \kappa^B_{\pi''}(1_\A, X_2, L_bX_1, X_2, L_bX_1, \ldots, L_bX_1, X_2). \label{eq:prod-ofkappa}
\end{align}
Since the map $\pi \mapsto (\pi', \pi'')$ is easily seen to be a bijection from 
\[
\{\pi \, \mid \, n \geq 1, \pi \in BNC(\chi_{2n,0}), \pi \vee \sigma_n = 1_{\chi_{2n,0}},  2p \nsim_\pi 2q+1 \text{ for all } p, q \}
\]
to $\{BNC'_\ell(m) \times BNC'_\ell(k) \, \mid \, m,k\geq 1\}$, the result follows by rearranged sum (\ref{eq:sumpsi}) and using the definition of $\psi_\ell$ (together with the fact that $X_1$ and $X_2$ are free over $B$).

The claim for other equation follows by similar arguments (where the bi-multiplicative properties give the opposite product in (\ref{eq:prod-ofkappa})).
\end{proof}

\begin{lem}
\label{lem:S-lem-2}
Let $(\A, E, \varepsilon)$ be a Banach $B$-$B$-non-commutative probability space, let $(X_1, Y_1)$ and $(X_2, Y_2)$ be bi-free over $B$.   Then
\[
\Phi_{\ell, X_1X_2}(b) = \Phi_{\ell, X_2}(\psi_\ell(L_b X_1, X_2))     \qqand \Phi_{r, Y_1Y_2}(d) = \Psi_{r, Y_2}(\psi_r(R_d Y_1, Y_2))
\]
for $b$ and $d$ sufficiently small.
\end{lem}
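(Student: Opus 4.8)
The plan is to combine Lemma \ref{lem:S-lem-1} with the combinatorial description of $\Phi_{\ell,X_2}$ as a sum of full cumulants over $BNC'$-type partitions. First I would rewrite, via Lemma \ref{lem:S-lem-1}, the left-hand side as
\[
\Phi_{\ell,X_1X_2}(b) = \psi_\ell(L_bX_1,X_2)\,\psi_\ell(X_2,L_bX_1),
\]
and then recognize the right-hand side target, $\Phi_{\ell,X_2}(\psi_\ell(L_bX_1,X_2))$, by expanding $\Phi_{\ell,X_2}$ as $\sum_{t\geq1}\kappa^B_{\chi_{t,0}}(L_cX_2,\ldots,L_cX_2)$ with $c = \psi_\ell(L_bX_1,X_2)$. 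The idea is that the factor $\psi_\ell(X_2,L_bX_1)$ on the right of the product above precisely accounts for the "tail" block of $X_2$'s needed to build $\Phi_{\ell,X_2}$ after the substitution, so that the two expressions coincide.

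More concretely, I would expand $\Phi_{\ell,X_1X_2}(b) = \sum_{n\geq1}\kappa^B_{\chi_{n,0}}(L_bX_1X_2,\ldots,L_bX_1X_2)$ and apply Theorem \ref{thm:products} to split each product $X_1X_2$, obtaining a sum over $\pi\in BNC(\chi_{2n,0})$ with $\pi\vee\sigma_n = 1_{\chi_{2n,0}}$ and $2p\nsim_\pi 2q+1$; this is the same starting point as in the proof of Lemma \ref{lem:S-lem-1}. This time, instead of splitting at the first odd node $m$ with $1\sim_\pi m$, I would group the partitions according to the block structure of the $X_2$-nodes (the even positions $2,4,\ldots,2n$). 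For a fixed pattern of which $X_2$-nodes lie in a common block — equivalently a non-crossing partition on the even nodes — bi-multiplicative properties let me pull the cumulants of the $X_1$-parts inside, and by the defining formula for $\psi_\ell$ together with equation (\ref{eq:move-around-B-in-pinched}) these collapse to $L_{\psi_\ell(L_bX_1,X_2)}$ inserted between consecutive $X_2$-terms (using $\psi_\ell(X_2,L_bX_1)$ for the outermost/tail piece, i.e.\ the $1_\A$-slot in the definition of $\psi_\ell$). Summing over all such patterns then yields exactly $\sum_{t\geq1}\kappa^B_{\chi_{t,0}}(L_{\psi_\ell(L_bX_1,X_2)}X_2,\ldots,L_{\psi_\ell(L_bX_1,X_2)}X_2) = \Phi_{\ell,X_2}(\psi_\ell(L_bX_1,X_2))$.

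The argument for $Y$ is the mirror image: $d\mapsto R_d$ is anti-multiplicative, the product $Y_1Y_2$ splits via Theorem \ref{thm:products} in the reversed order, bi-multiplicative properties give the opposite-side insertions, and the second identity in (\ref{eq:move-around-B-in-pinched}) supplies the needed relocation of $R_d$; one lands on $\Psi_{r,Y_2}(\psi_r(R_dY_1,Y_2))$ — note here it is $\Psi$ (moments), since for the right side the combinatorics of the tail produces the full moment series rather than the cumulant series, matching the asymmetry already visible in the $S$-transform definitions. Convergence of all rearrangements is justified, as in Remark \ref{rem:convergence}, by absolute convergence for $b,d$ sufficiently small. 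The main obstacle I anticipate is the bookkeeping in the middle step: correctly matching the bijection between $BNC$-partitions $\pi$ appearing after Theorem \ref{thm:products} and pairs (non-crossing partition on even nodes, choice of how the $X_1$-blocks nest inside the gaps), and verifying that the bi-multiplicative reductions insert exactly $L_{\psi_\ell(L_bX_1,X_2)}$ and not some off-by-one variant — this is where one must be careful about which slot plays the role of the $1_\A$-entry in $\psi_\ell$.
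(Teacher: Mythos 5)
Your overall strategy (expand $\Phi_{\ell,X_1X_2}(b)$ via Theorem \ref{thm:products}, regroup the absolutely convergent sum, absorb pieces into $\psi_\ell$) is the paper's, and you land on the correct final formula, but the key regrouping step as you describe it does not work. You propose to fix the induced non-crossing partition on the $X_2$-nodes (the even positions) and then sum only ``the cumulants of the $X_1$-parts''. This cannot produce insertions equal to $\psi_\ell(L_bX_1,X_2)$, since $\psi_\ell(L_bX_1,X_2)$ is itself a sum over configurations containing $X_2$-entries: to generate it one must also sum over the $X_2$-blocks nested inside the gaps, i.e.\ one must \emph{not} have frozen the full even-node partition. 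Conversely, if you do freeze the full even partition $\tau$ and collapse, you obtain $\kappa^B_\tau$ of decorated $X_2$'s, and summing over all $\tau$ produces a moment-type series rather than the cumulant series $\Phi_{\ell,X_2}$. Already with two copies of $L_bX_1X_2$ the unique partition compatible with $\tau$ consisting of two even singletons has value $\kappa^B_{\chi_{2,0}}(L_bX_1, L_{E(X_2)b}X_1)\,E(X_2)$, which in the correct bookkeeping is part of the $t=1$ term $\kappa^B_{\chi_{1,0}}(L_{\psi_\ell(L_bX_1,X_2)}X_2)$, not of any two-block even pattern. The grouping that works (and is the paper's) is coarser: fix only the block $V_\pi$ containing the last node $2n$, which consists of $X_2$-positions; each gap between consecutive elements of $V_\pi$ begins and ends with an $X_1$-node, and after adding a singleton (the $1_\A$-slot) the restriction of $\pi$ to a gap ranges bijectively over $BNC'_\ell$, the connectivity constraint localizing to each gap. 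Hence each gap, $X_1$'s and nested $X_2$'s together, sums to $\psi_\ell(L_bX_1,X_2)$ and attaches as $L_{\psi_\ell(L_bX_1,X_2)}$ to the following $X_2$ of $V_\pi$, yielding $\sum_{t\geq1}\kappa^B_{\chi_{t,0}}(L_{\psi_\ell(L_bX_1,X_2)}X_2,\ldots,L_{\psi_\ell(L_bX_1,X_2)}X_2)$ directly.

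Two further points. The factor $\psi_\ell(X_2,L_bX_1)$ plays no role in this lemma: every gap, including the initial one, is flanked by $X_1$-nodes, so only $\psi_\ell(L_bX_1,X_2)$ ever appears; and your opening suggestion that the result can be ``recognized'' from Lemma \ref{lem:S-lem-1} is not an argument---the identity $\psi_\ell(L_bX_1,X_2)\,\psi_\ell(X_2,L_bX_1)=\Phi_{\ell,X_2}(\psi_\ell(L_bX_1,X_2))$ is a consequence of the two lemmata, not a route to the second. Finally, your explanation of the right-hand statement is backwards: the mirror argument produces the cumulant series $\Phi_{r,Y_2}(\psi_r(R_dY_1,Y_2))$, exactly as on the left; the ``$\Psi_{r,Y_2}$'' in the statement is a typographical slip (as the use of this lemma in equation (\ref{eq:pinched-to-inverse-of-full}) confirms), and replacing the cumulant series by the moment series there would give a false identity, so there is no genuine left/right asymmetry to explain.
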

\begin{proof}
Note $\Phi_{\ell, X_2}(\psi_\ell(L_b X_1, X_2))$ makes sense for $b$ sufficiently small.  

For $n \geq 1$ let $\sigma_n$ be the partition on $\{1,\ldots, 2n\}$ with blocks $\{1,2\}, \{3, 4\}, \ldots, \{2n-1, 2n\}$.  Since $X_1$ and $X_2$ are free over $B$, Theorem \ref{thm:products} implies that
\begin{align}
\Phi_{\ell, X_1X_2}(b) &= \sum_{n\geq 1}  \sum_{\substack{\pi \in BNC(\chi_{2n,0}) \\ \pi \vee \sigma_n = 1_{\chi_{2n,0}} \\ 2p \nsim_\pi 2q+1 \forall p, q  }}   \kappa^B_{\pi}( L_bX_1, X_2, L_b X_1, X_2, \ldots, L_b X_1, X_2).  \label{eq:sumphipsi}
\end{align}
For each $\pi$ in the sum in (\ref{eq:sumphipsi}), let $V_\pi$ denote the block of $\pi$ containing $2n$.  If $V_\pi = \{1 \leq k_1 < k_2 < \cdots < k_t = 2n\}$ and $k_0 = 0$, then, for each $0 \leq m \leq t-1$, $\pi'_m = \pi|_{\{k_m + 1, k_m + 2, \ldots, k_{m+1}-1\}}$ is an element of $BNC'_\ell(q)$ for some $q$ once the numbers in the blocks of $\pi|_{\{k_m + 1, k_m + 2, \ldots, k_{m+1}-1\}}$ are shifted and the singleton block $\{1\}$ is added to the front.  
Below is an example of such a $\pi$ with $V_\pi = \{3, 10, 12\}$ (again, the dotted lines represent $\sigma_{n}$).
\begin{align*}
\begin{tikzpicture}[baseline]
	\draw[thick, blue, densely dotted] (0,0) -- (.5, 0.25) -- (1,0);
	\draw[thick, blue, densely dotted] (2,0) -- (2.5, 0.25) -- (3,0);
	\draw[thick, blue, densely dotted] (4,0) -- (4.5, 0.25) -- (5,0);
	\draw[thick, blue, densely dotted] (6,0) -- (6.5, 0.25) -- (7,0);
	\draw[thick, blue, densely dotted] (8,0) -- (8.5, 0.25) -- (9,0);
	\draw[thick, blue, densely dotted] (10,0) -- (10.5, 0.25) -- (11,0);
	\draw[thick, dashed] (-.25,0) -- (11.25, 0);
	\node[below] at (0, 0) {$L_b X_1$};
	\draw[black, fill=black] (0,0) circle (0.05);	
	\node[below] at (1, 0) {$X_2$};
	\draw[black, fill=black] (1,0) circle (0.05);	
	\node[below] at (2, 0) {$L_b X_1$};
	\draw[black, fill=black] (2,0) circle (0.05);	
	\node[below] at (3, 0) {$X_2$};
	\draw[black, fill=black] (3,0) circle (0.05);	
	\node[below] at (4, 0) {$L_b X_1$};
	\draw[black, fill=black] (4,0) circle (0.05);	
	\node[below] at (5, 0) {$X_2$};
	\draw[black, fill=black] (5,0) circle (0.05);	
	\node[below] at (6, 0) {$L_b X_1$};
	\draw[black, fill=black] (6,0) circle (0.05);	
	\node[below] at (7, 0) {$X_2$};
	\draw[black, fill=black] (7,0) circle (0.05);	
	\node[below] at (8, 0) {$L_b X_1$};
	\draw[black, fill=black] (8,0) circle (0.05);	
	\node[below] at (9, 0) {$X_2$};
	\draw[black, fill=black] (9,0) circle (0.05);	
	\node[below] at (10, 0) {$L_b X_1$};
	\draw[black, fill=black] (10,0) circle (0.05);	
	\node[below] at (11, 0) {$X_2$};
	\draw[black, fill=black] (11,0) circle (0.05);	
	\draw[thick] (0,0) -- (0,.5) -- (2, .5) -- (2,0);
	\draw[thick] (3, 0) -- (3, .75) -- (11, .75) -- (11, 0);
	\draw[thick] (9, 0) -- (9, .75);
	\draw[thick] (4, 0) -- (4, .5) -- (8, .5) -- (8, 0);
	\draw[thick] (6, 0) -- (6, .5);
\end{tikzpicture}
\end{align*}

As the map from
\[
\{\pi \, \mid \, n \geq 1, \pi \in BNC(\chi_{2n,0}), \pi \vee \sigma_n = 1_{\chi_{2n,0}},  2p \nsim_\pi 2q+1 \text{ for all }p, q \}
\]
to
\[
\{\{  BNC'_\ell(m_1) \times \cdots \times BNC'_\ell(m_q)  \,\mid m_p \geq 1\} \, \mid \, q \geq 1\}
\]
that sends $\pi$ to the element of $(\pi'_1, \ldots, \pi'_t) \in BNC'_\ell(m_1) \times \cdots \times BNC'_\ell(m_t)$, where $t = |V_\pi|$ and $\pi'_1, \ldots, \pi'_q$ are described above, is a bijection, and since the sum in (\ref{eq:sumphipsi}) can be rearranged as it converges absolutely, we obtain via bi-multiplicative properties that 
\[
\Phi_{\ell, X_1X_2}(b) = \sum_{t \geq 1} \kappa_{\chi_{t, 0}}(L_{\psi_\ell(L_b X_1, X_2)} X_2, \ldots, L_{\psi_\ell(L_b X_1, X_2)} X_2) = \Phi_{\ell, X_2}(\psi_\ell(L_b X_1, X_2)).
\]

The proof for $\Phi_{r, Y_1Y_2}(d) = \Psi_{r, Y_2}(\psi_r(R_d Y_1, Y_2))$ is identical.
\end{proof}

Furthermore, if we combine the proofs of Lemmata \ref{lem:S-lem-1} and \ref{lem:S-lem-2}, we obtain the following.
\begin{lem}
\label{lem:S-lem-3}
Let $(\A, E, \varepsilon)$ be a Banach $B$-$B$-non-commutative probability space, let $(X_1, Y_1)$ and $(X_2, Y_2)$ be bi-free over $B$.   Then 
\[
\psi_\ell(X_2, L_b X_1) \psi_\ell(L_b X_1, X_2) = \Phi_{\ell, X_1}(\psi_\ell(X_2 L_b, X_1)) \qand \psi_r(R_d Y_1, Y_2) \psi_r(Y_2, R_d Y_1)  = \Phi_{r, Y_1}(\psi_r(Y_2 R_d, Y_1))
\]
for sufficiently small $b$ and $d$.
\end{lem}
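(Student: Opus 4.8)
The plan is to prove Lemma \ref{lem:S-lem-3} by exactly the same combinatorial bijection used in the proofs of Lemma \ref{lem:S-lem-1} and Lemma \ref{lem:S-lem-2}, only re-bracketed so that the ``pinching'' happens on the $X_1$-terms rather than the $X_2$-terms. I will treat only the left identity $\psi_\ell(X_2, L_b X_1)\psi_\ell(L_b X_1, X_2) = \Phi_{\ell, X_1}(\psi_\ell(X_2 L_b, X_1))$; the right one is identical with the order of multiplication in $B$ reversed.

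First I would start from the definition of $\psi_\ell$ and expand the product:
\[
\psi_\ell(X_2, L_b X_1)\,\psi_\ell(L_b X_1, X_2) = \sum_{m,k\geq 1}\ \sum_{\substack{\pi'\in BNC'_\ell(m)\\ \pi''\in BNC'_\ell(k)}} \kappa^B_{\pi'}(1_\A, X_2, L_bX_1, \ldots, L_bX_1, X_2)\,\kappa^B_{\pi''}(1_\A, L_bX_1, X_2, \ldots, X_2, L_bX_1).
\]
By the bijection established in the proof of Lemma \ref{lem:S-lem-1} (reading equation (\ref{eq:prod-ofkappa}) in reverse, but now gluing a block of $X_2$-nodes at the seam), each such pair $(\pi', \pi'')$ assembles into a single $\pi\in BNC(\chi_{2n,0})$ with $\pi\vee\sigma_n = 1_{\chi_{2n,0}}$ and $2p\nsim_\pi 2q+1$, where now the two distinguished $X_2$-nodes (the ``$1_\A$'' slots) fuse into one block of $\pi$ sitting on the $X_2$ coordinates — the mirror image of the set-up in Lemma \ref{lem:S-lem-1}. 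Concretely I would re-index so that the relevant sequence is $(X_2, L_bX_1, X_2, L_bX_1, \ldots)$ starting and ending with $X_2$, isolate the block $V_\pi$ through the $X_2$-seam, and apply bi-multiplicativity to factor $\kappa^B_\pi$ across that block. The next step is to recognise, exactly as in Lemma \ref{lem:S-lem-2} but with the roles of $X_1$ and $X_2$ interchanged, that summing over all $\pi$ whose seam-block $V_\pi$ has a fixed size $t$ collapses the $X_2$-decorated subpartitions into $t$ copies of $\psi_\ell$ and leaves $\kappa^B_{\chi_{t,0}}$ applied to $t$ copies of $X_1$ each pre-multiplied on the left by $L_{\psi_\ell(X_2 L_b, X_1)}$; here I would invoke (\ref{eq:move-around-B-in-pinched}) to push the $L_b$ into the $\psi_\ell$ argument so that the decoration is $\psi_\ell(X_2 L_b, X_1)$ rather than $\psi_\ell(X_2, L_b X_1)b$. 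Summing over $t\geq 1$ then gives $\sum_{t\geq 1}\kappa^B_{\chi_{t,0}}(L_{\psi_\ell(X_2 L_b, X_1)}X_1, \ldots, L_{\psi_\ell(X_2 L_b, X_1)}X_1) = \Phi_{\ell, X_1}(\psi_\ell(X_2 L_b, X_1))$, which is the claim. Throughout, $X_1$ and $X_2$ bi-free over $B$ is used so that Theorem \ref{thm:products} applies and so that all the intervening subpartitions carry only $X_1$- or only $X_2$-entries.

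The main obstacle I anticipate is purely bookkeeping: keeping straight which seam is glued (the construction of Lemma \ref{lem:S-lem-1} cuts at an $X_1$-block, the construction of Lemma \ref{lem:S-lem-2} cuts at an $X_2$-block, and Lemma \ref{lem:S-lem-3} needs to do both simultaneously), and verifying that the composite map $\pi\mapsto (\pi', \pi'', \text{decorating subpartitions})$ is a genuine bijection onto the product index set rather than merely a surjection. In practice the cleanest write-up is to say explicitly that Lemma \ref{lem:S-lem-3} ``follows by combining the proofs of Lemmata \ref{lem:S-lem-1} and \ref{lem:S-lem-2}'': one runs the argument of Lemma \ref{lem:S-lem-1} to see that each $\pi$ counted by the left-hand side decomposes at its first $X_1$-block, and simultaneously the argument of Lemma \ref{lem:S-lem-2} to see that, above that seam, the $X_2$-material organises into independent copies of $\psi_\ell$; the two decompositions are compatible because they act on disjoint sets of nodes. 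I would also need to double-check convergence — but this is immediate from $\|L_b\|\|X_1\|\|X_2\|$ small together with the Catalan-number bound of Remark \ref{rem:convergence}, since everything in sight is a subsum of an absolutely convergent series, so all the rearrangements of sums are legitimate.
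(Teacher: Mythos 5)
Your proposal is correct and takes essentially the same route as the paper's proof: the paper likewise uses the bijection from Lemma \ref{lem:S-lem-1} to rewrite $\psi_\ell(X_2, L_b X_1)\,\psi_\ell(L_b X_1, X_2)$ as a single sum of $\kappa^B_\pi(X_2, L_bX_1, \ldots, X_2, L_bX_1)$ over glued partitions, converts this via bi-multiplicativity (equivalently (\ref{eq:move-around-B-in-pinched})) to $\kappa^B_\pi(X_2L_b, X_1, \ldots, X_2L_b, X_1)$, and then resums over the block containing $2n$ exactly as in Lemma \ref{lem:S-lem-2} to obtain $\Phi_{\ell, X_1}(\psi_\ell(X_2L_b, X_1))$, with the right-hand identity handled by reversing multiplication in $B$. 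The only inaccuracies are cosmetic descriptions of the cited bijection (the glued sequence ends with $L_bX_1$ rather than $X_2$, and the $1_\A$-slots are dropped singleton blocks rather than nodes that fuse into a block on the $X_2$-coordinates) and do not affect the argument.
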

\begin{proof}
Indeed, by the arguments in Lemma \ref{lem:S-lem-1} followed by using bi-multiplicative properties, one obtains that
\begin{align*}
\psi_\ell  (X_2, L_b X_1) \psi_\ell(L_b X_1, X_2) &= \sum_{n\geq 1}  \sum_{\substack{\pi \in BNC(\chi_{2n,0}) \\ \pi \vee \sigma_n = 1_{\chi_{2n,0}} \\ 2p \nsim_\pi 2q+1 \forall p, q  }}   \kappa^B_{\pi}(X_2, L_b X_1, X_2, L_b X_1, \ldots, X_2, L_b X_1) \\
&= \sum_{n\geq 1}  \sum_{\substack{\pi \in BNC(\chi_{2n,0}) \\ \pi \vee \sigma_n = 1_{\chi_{2n,0}} \\ 2p \nsim_\pi 2q+1 \forall p, q  }}   \kappa^B_{\pi}(X_2L_b,  X_1, X_2L_b,  X_1, \ldots, X_2L_b ,X_1).
\end{align*}
However, by the arguments in Lemma \ref{lem:S-lem-2}, if we rearrange the above sum to sum over the block of $\pi$ containing $2n$, we obtain
\[
\psi_\ell  (X_2, L_b X_1) \psi_\ell(L_b X_1, X_2) = \Phi_{\ell, X_1}(\psi_\ell(X_2L_b, X_1))
\]
as desired. 

The right-side again follows via the opposite multiplication need for the bi-multiplicative properties.
\end{proof}

Note Lemmata \ref{lem:S-lem-1} and \ref{lem:S-lem-2} immediately imply other relations.  Indeed Lemma \ref{lem:S-lem-2} implies
\begin{align}
\label{eq:pinched-to-inverse-of-full}
\psi_\ell(L_b X_1, X_2) = \Phi^\inv_{\ell, X_2}(\Phi_{\ell, X_1X_2}(b)) \qqand \psi_r(R_d Y_1, Y_2) = \Phi^\inv_{r, Y_2}(\Phi_{r, Y_1Y_2}(d)).
\end{align}
Then applying Lemma \ref{lem:S-lem-1} to equation (\ref{eq:pinched-to-inverse-of-full}) produces
\begin{align}
\label{eq:inverse-times-pinched}
\Phi^\inv_{\ell, X_2}(\Phi_{\ell, X_1X_2}(b)) \psi_\ell(X_2, L_b X_1) = \Phi_{\ell, X_1X_2}(b) \qand \psi_r(Y_2, R_d Y_1)\Phi^\inv_{r, Y_2}(\Phi_{r, Y_1Y_2}(d) )= \Phi_{r, Y_1Y_2}(d).
\end{align}
Finally, we need the following result.
\begin{lem}
\label{lem:S-lem-4}
Let $(\A, E, \varepsilon)$ be a Banach $B$-$B$-non-commutative probability space, let $(X_1, Y_1)$ and $(X_2, Y_2)$ be bi-free over $B$.  Then
\[
\psi_\ell\left(X_2, L_{\Phi_{\ell, X_1X_2}^\inv(b)} X_1\right) = S^\ell_{X_2}(b)^{-1} \qqand \psi_r\left(Y_2, R_{\Phi^\inv_{Y_1Y_2}(d)} Y_1\right) = S^r_{Y_2}(d)^{-1}
\]
for $b$ and $d$ sufficiently small.
\end{lem}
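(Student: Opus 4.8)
The plan is to obtain the identity by substituting into the relations already derived in equations (\ref{eq:pinched-to-inverse-of-full}) and (\ref{eq:inverse-times-pinched}), and then to cancel a leading factor of $b$ using the factorisation $\Phi^\inv_{\ell, X_2}(b) = b\, S^\ell_{X_2}(b)$ supplied by Lemma \ref{lem:divide} and Definition \ref{defn:op-free-S}.

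First I would recall the left-hand part of equation (\ref{eq:inverse-times-pinched}),
\[
\Phi^\inv_{\ell, X_2}\!\left(\Phi_{\ell, X_1X_2}(b)\right)\, \psi_\ell(X_2, L_b X_1) = \Phi_{\ell, X_1X_2}(b),
\]
valid for $b$ in a neighbourhood of $0$. Since $\Phi^\inv_{\ell, X_1X_2}$ is analytic near $0$ and vanishes there, I may replace $b$ by $\Phi^\inv_{\ell, X_1X_2}(b)$ for $b$ sufficiently small; using $\Phi_{\ell, X_1X_2}(\Phi^\inv_{\ell, X_1X_2}(b)) = b$ this yields
\[
\Phi^\inv_{\ell, X_2}(b)\, \psi_\ell\!\left(X_2, L_{\Phi^\inv_{\ell, X_1X_2}(b)} X_1\right) = b.
\]
By Lemma \ref{lem:divide} and Definition \ref{defn:op-free-S} we have $\Phi^\inv_{\ell, X_2}(b) = b\, \theta_{X_2}(b)$, where $\theta_{X_2} = S^\ell_{X_2}$ is an honest analytic $B$-valued function near $0$, so the displayed identity becomes
\[
b\, S^\ell_{X_2}(b)\, \psi_\ell\!\left(X_2, L_{\Phi^\inv_{\ell, X_1X_2}(b)} X_1\right) = b.
\]
Finally I would cancel the leading $b$: writing $R(b) = S^\ell_{X_2}(b)\, \psi_\ell(X_2, L_{\Phi^\inv_{\ell, X_1X_2}(b)} X_1) - 1$, an analytic $B$-valued function near $0$, the last identity reads $b\,R(b) = 0$, and comparing homogeneous parts of the power-series expansion of $R$ at $0$ (evaluating at $b = \epsilon c$ with $\epsilon \to 0$ and using that $B$ is unital) forces $R \equiv 0$ near $0$ — this is precisely the kind of ``$b^{-1}$ in disguise'' argument mentioned before Lemma \ref{lem:divide}. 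Hence $S^\ell_{X_2}(b)\, \psi_\ell(X_2, L_{\Phi^\inv_{\ell, X_1X_2}(b)} X_1) = 1$, and since $S^\ell_{X_2}(0) = \theta_{X_2}(0) = E(X)^{-1}$ is invertible, $S^\ell_{X_2}(b)$ is invertible for small $b$, giving $\psi_\ell(X_2, L_{\Phi^\inv_{\ell, X_1X_2}(b)} X_1) = S^\ell_{X_2}(b)^{-1}$. The statement for $Y$ is obtained identically, starting from the right-hand part of (\ref{eq:inverse-times-pinched}) and using $\Phi^\inv_{r, Y_2}(d) = S^r_{Y_2}(d)\,d$, with left and right multiplication interchanged.

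The one delicate point is the very last cancellation of $b$, since $b$ is not invertible; it is routine but genuinely requires the small power-series/unitality argument (or an appeal to the earlier remark). Everything preceding it is a direct substitution into identities already established, so I do not anticipate further obstacles.
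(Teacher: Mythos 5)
Your proposal is correct and follows essentially the same route as the paper: substitute $\Phi^\inv_{\ell, X_1X_2}(b)$ into equation (\ref{eq:inverse-times-pinched}), rewrite $\Phi^\inv_{\ell, X_2}(b)$ as $b\,S^\ell_{X_2}(b)$ via Definition \ref{defn:op-free-S}, cancel the leading $b$ by uniqueness of power series expansions of analytic functions, and invert $S^\ell_{X_2}(b)$ (invertible near zero). Your extra care about the $b$-cancellation and the final inversion only makes explicit what the paper leaves implicit.
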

\begin{proof}
For the left-equation, by replacing $b$ with $\Phi_{\ell, X_1X_2}^\inv(b)$ in (\ref{eq:inverse-times-pinched}), which is a valid operation near $b = 0$, we obtain that
\[
\Phi^\inv_{\ell, X_2}(b) \psi_\ell\left(X_2, L_{\Phi_{\ell, X_1X_2}^\inv(b)} X_1\right)  = b.
\]
By Definition \ref{defn:op-free-S}, we obtain
\[
b S^\ell_{X_2}(b)  \psi_\ell\left(X_2, L_{\Phi_{\ell, X_1X_2}^\inv(b)} X_1\right) = b.
\]
Hence, as both sides are analytic functions near zero, we obtain using the uniqueness of power series expansions that
\[
S^\ell_{X_2}(b)  \psi_\ell\left(X_2, L_{\Phi_{\ell, X_1X_2}^\inv(b)} X_1\right)  = 1
\]
so the left-equation follows.  The right-equation follows by similar arguments.
\end{proof}

\begin{proof}[Proof of Theorem \ref{thm:free-S}]
Only the left $S$-transform will be demonstrated as the right $S$-transform will follow by similar arguments where any two elements of $B$ must be multiplied in the opposite order.

Note
\begin{align*}
\Phi_{\ell, X_1X_2}(b) b &= \psi_\ell(L_b X_1, X_2) \psi_\ell(X_2, L_b X_1) b & & \text{by Lemma } \ref{lem:S-lem-1}\\
&= \psi_\ell(L_b X_1, X_2) \psi_\ell(X_2 L_b,  X_1)  & & \text{by }(\ref{eq:move-around-B-in-pinched})\\
&= \psi_\ell(L_b X_1, X_2) \Phi^\inv_{\ell, X_1}( \Phi_{\ell, X_1}(\psi_\ell(X_2 L_b,  X_1))) \\
&= \psi_\ell(L_b X_1, X_2) \Phi^\inv_{\ell, X_1}\left(\psi_\ell(X_2, L_b X_1) \psi_\ell(L_bX_1, X_2)   \right) & & \text{by Lemma } \ref{lem:S-lem-3} \\
&=  \psi_\ell(L_b X_1, X_2) \Phi^\inv_{\ell, X_1}\left(\psi_\ell(X_2, L_b X_1) \Phi^{\inv}_{\ell, X_2}(\Phi_{\ell, X_1X_2}(b)) \right)  & & \text{by }(\ref{eq:pinched-to-inverse-of-full}).
\end{align*}
By replacing $b$ with $\Phi^{\inv}_{\ell, X_1X_2}(b)$ into the left- and right-most sides of this equation, and by applying Lemma \ref{lem:S-lem-4} and equation (\ref{eq:pinched-to-inverse-of-full}), we obtain that
\[
b \Phi^\inv_{\ell, X_1X_2}(b) = \Phi^\inv_{\ell, X_2}(b) \Phi^{\inv}_{\ell, X_1}\left(S^\ell_{X_2}(b)^{-1} \Phi^{\inv}_{\ell, X_2}(b) \right).
\]
Therefore, by using Definition \ref{defn:op-free-S}, we obtain
\begin{align*}
b^2 S^\ell_{X_1X_2}(b) &= \Phi^\inv_{\ell, X_2}(b) \left(S^\ell_{X_2}(b)^{-1} \Phi^{\inv}_{\ell, X_2}(b) \right)     S^\ell_{X_1}\left(S^\ell_{X_2}(b)^{-1} \Phi^{\inv}_{\ell, X_2}(b) \right) \\
& = b \Phi^{\inv}_{\ell, X_2}(b)   S^\ell_{X_1}\left(S^\ell_{X_2}(b)^{-1} \Phi^{\inv}_{\ell, X_2}(b) \right) \\
& = b^2 S^\ell_{X_2}(b)   S^\ell_{X_1}\left(S^\ell_{X_2}(b)^{-1} b S^\ell_{X_2}(b)\right).
\end{align*}
Therefore, by comparing the power series of the two analytic functions on both sides, the result follows.
\end{proof}

For later purposes, we note the following result whose proof is contained in the proof of Theorem \ref{thm:free-S}.
\begin{lem}
\label{lem:free-S-lem-for-T}
Let $(\A, E, \varepsilon)$ be a Banach $B$-$B$-non-commutative probability space, let $(X_1, Y_1)$ and $(X_2, Y_2)$ be bi-free over $B$.  Then
\begin{align*}
\psi_\ell\left(X_2 L_{\Phi^{\inv}_{\ell, X_1X_2}(b)},  X_1\right) &= \Phi^{\inv}_{\ell, X_1}\left(S^\ell_{X_2}(b)^{-1}  b S^\ell_{X_2}(b) \right) \quad \text{ and } \\ \psi_r\left(Y_2 R_{\Phi^{\inv}_{r, Y_1Y_2}(d)},  Y_1\right) &= \Phi^{\inv}_{r, Y_1}\left(S^r_{Y_2}(d) d S^r_{Y_2}(d)^{-1} \right).
\end{align*}
\end{lem}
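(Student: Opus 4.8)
The plan is to isolate the desired identity from the chain of equalities established in the proof of Theorem \ref{thm:free-S}; essentially no new ingredient is needed. As there, only the left-hand statement will be discussed, the right-hand one following after all products in $B$ are taken in the opposite order (i.e.\ by passing to $B^{\op}$).

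First I would take the first identity of Lemma \ref{lem:S-lem-3},
\[
\psi_\ell(X_2, L_b X_1)\,\psi_\ell(L_b X_1, X_2) = \Phi_{\ell, X_1}\bigl(\psi_\ell(X_2 L_b, X_1)\bigr),
\]
valid for $b$ in a neighbourhood of $0$. Since $\psi_\ell(X_2 L_b, X_1)$ is an analytic $B$-valued function vanishing at $b = 0$, it lies in the domain on which $\Phi^\inv_{\ell, X_1}$ inverts $\Phi_{\ell, X_1}$ once $b$ is small, so I may apply $\Phi^\inv_{\ell, X_1}$ to both sides and then substitute the first identity of (\ref{eq:pinched-to-inverse-of-full}), namely $\psi_\ell(L_b X_1, X_2) = \Phi^\inv_{\ell, X_2}\bigl(\Phi_{\ell, X_1X_2}(b)\bigr)$, to obtain
\[
\psi_\ell(X_2 L_b, X_1) = \Phi^\inv_{\ell, X_1}\!\Bigl(\psi_\ell(X_2, L_b X_1)\,\Phi^\inv_{\ell, X_2}\bigl(\Phi_{\ell, X_1X_2}(b)\bigr)\Bigr).
\]

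Next I would replace $b$ by $\Phi^\inv_{\ell, X_1X_2}(b)$ throughout, which is legitimate near $0$ because $\Phi^\inv_{\ell, X_1X_2}$ is analytic and fixes $0$. The left side becomes $\psi_\ell\bigl(X_2 L_{\Phi^\inv_{\ell, X_1X_2}(b)}, X_1\bigr)$; on the right, Lemma \ref{lem:S-lem-4} turns $\psi_\ell(X_2, L_{\Phi^\inv_{\ell, X_1X_2}(b)} X_1)$ into $S^\ell_{X_2}(b)^{-1}$ (the inverse existing near $0$ since $S^\ell_{X_2}(0) = E(X_2)^{-1}$), while $\Phi^\inv_{\ell, X_2}\bigl(\Phi_{\ell, X_1X_2}(\Phi^\inv_{\ell, X_1X_2}(b))\bigr) = \Phi^\inv_{\ell, X_2}(b)$. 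Hence
\[
\psi_\ell\bigl(X_2 L_{\Phi^\inv_{\ell, X_1X_2}(b)}, X_1\bigr) = \Phi^\inv_{\ell, X_1}\!\bigl(S^\ell_{X_2}(b)^{-1}\,\Phi^\inv_{\ell, X_2}(b)\bigr).
\]
Finally I would invoke Definition \ref{defn:op-free-S} (equivalently Lemma \ref{lem:divide}) in the form $\Phi^\inv_{\ell, X_2}(b) = b\,S^\ell_{X_2}(b)$, so that the argument of $\Phi^\inv_{\ell, X_1}$ is $S^\ell_{X_2}(b)^{-1} b\,S^\ell_{X_2}(b)$, which is the claim; the right-hand identity is obtained identically from the $r$-versions of Lemmata \ref{lem:S-lem-3} and \ref{lem:S-lem-4} and of (\ref{eq:pinched-to-inverse-of-full}), where the bi-multiplicative reductions multiply elements of $B$ in the opposite order.

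I do not anticipate a real obstacle here: the argument is pure bookkeeping, chaining Lemmata \ref{lem:S-lem-3} and \ref{lem:S-lem-4}, equation (\ref{eq:pinched-to-inverse-of-full}), and Definition \ref{defn:op-free-S}. The only care needed is in verifying that all the compositions, the substitution $b \mapsto \Phi^\inv_{\ell, X_1X_2}(b)$, and the inverse $S^\ell_{X_2}(b)^{-1}$ are simultaneously defined on one neighbourhood of $0$ in $B$ — which follows from continuity together with the invertibility of $E(X_1)$ and $E(X_2)$ — and in concluding the equality of the two analytic $B$-valued functions by uniqueness of their power-series expansions about $0$, exactly as at the end of the proof of Theorem \ref{thm:free-S}.
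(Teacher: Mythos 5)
Your proposal is correct and is essentially the paper's argument: the paper remarks that the lemma's proof is contained in the proof of Theorem \ref{thm:free-S}, and the steps you isolate (Lemma \ref{lem:S-lem-3}, inverting $\Phi_{\ell,X_1}$, equation (\ref{eq:pinched-to-inverse-of-full}), the substitution $b \mapsto \Phi^\inv_{\ell,X_1X_2}(b)$, Lemma \ref{lem:S-lem-4}, and $\Phi^\inv_{\ell,X_2}(b) = bS^\ell_{X_2}(b)$) are exactly the chain used there, with the right-hand identity obtained by reversing all products in $B$.
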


\section{The Operator-Valued Partial $T$-Transform}
\label{sec:T}

In this section, the operator-valued bi-free partial transformation that enables additive convolution in one pair and multiplicative convolution in the other is developed.  Using the results of Section \ref{sec:free-S}, the proof is nearly identical to the proof of \cite{S2015-2}*{Theorem 3.5} once bi-multiplicative functions are considered and bookkeeping is done.

There is an additional transformation that we will require to define the operator-valued bi-free partial $S$- and $T$-transforms.  Let $(\A, E, \varepsilon)$ be a Banach $B$-$B$-non-commutative probability space and let $X \in \A_\ell$ and $Y \in \A_r$.  For $b,c,d \in B$, define
\[
K_{X, Y}(b,c,d) := \sum_{n,m\geq 1} \kappa^B_{\chi_{n,m}}(  \underbrace{L_b X, \ldots, L_b X}_{n \text{ entries }}, \underbrace{R_d Y, \ldots, R_d Y}_{m-1 \text{ entries }}, R_d Y R_c).
\]
Again, by Remark \ref{rem:convergence}, $K_{X, Y}(b,c,d)$ converges absolutely for all $c$ in a bounded set provided $b$ and $d$ are sufficiently small.

\begin{defn}
\label{defn:T-Transform}
Let $(\A, E, \varepsilon)$ be a Banach $B$-$B$-non-commutative probability space and let $X \in \A_\ell$ and $Y \in \A_r$ be such that $E(Y)$ is invertible.  The \emph{operator-valued bi-free partial $T$-transform of $(X, Y)$} is the analytic function defined by
\begin{align*}
T_{X, Y}(b,c,d) &:= c + K_{X, Y}\left(b, c, \Phi^\inv_{r,Y}(d)\right) d^{-1} \\
&\,\, = c + K_{X, Y}\left(b, c, S^r_Y(d) d \right) d^{-1}
\end{align*}
for any bounded collection of $c$ provided $b$ and $d$ sufficiently small.
\end{defn}

\begin{rem}
One may be worried about the $d^{-1}$ term in Definition \ref{defn:T-Transform}.  However, notice in the definition of $K_{X, Y}(b,c,d)$ that every term in the infinite sum is multiplied by $d$ on the right due to the properties of bi-multiplicative functions.  Consequently, the $d^{-1}$ really is representing 
\[
\Phi^\inv_{r,Y}(d) d^{-1} = S^r_Y(d)
\]
once one factors out the extraneous $d$-term in $K_{X, Y}(b,c,d)$.  Thus $T_{X, Y}(b,c,d)$ is well-defined.

Note in the case $B = \bC$ that $T_{X, Y}$ agrees with the bi-free partial $T$-transform defined in \cite{S2015-2}*{Proposition 3.2} by letting $b = z$, $d = w$, and $c = 1$.  Furthermore, the following result is the operator-valued analogue of \cite{S2015-2}*{Theorem 3.5} and reduces to \cite{S2015-2}*{Theorem 3.5} when $B = \bC$, $b = z$, $d = w$, and $c = 1$.
\end{rem}

\begin{thm}
\label{thm:T-property}
Let $(\A, E, \varepsilon)$ be a Banach $B$-$B$-non-commutative probability space, let $(X_1, Y_1)$ and $(X_2, Y_2)$ be bi-free over $B$ with $E(Y_1)$ and $E(Y_2)$ invertible.  Then
\[
T_{X_1+ X_2, Y_1Y_2}(b,c,d) = T_{X_1, Y_1}\left(b, \, T_{X_2, Y_2}(b,c,d) S^r_{Y_2}(d)^{-1}, \, S^r_{Y_2}(d) d S^r_{Y_2}(d)^{-1}\right) S^r_{Y_2}(d)
\]
for any bounded collection of $c$ provided $b$ and $d$ sufficiently small.
\end{thm}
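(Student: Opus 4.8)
The plan is to emulate the combinatorial proof of \cite{S2015-2}*{Theorem 3.5}, now bootstrapping on the machinery of Section \ref{sec:free-S}, with every scalar manipulation replaced by a bi-multiplicative reduction and careful bookkeeping of the $B$-operators that are produced. Write $d' := \Phi^\inv_{r, Y_1Y_2}(d)$. By Definition \ref{defn:T-Transform} (and the remark following it, which legitimises the symbol $d^{-1}$), it suffices to analyse
\[
K_{X_1+X_2,\,Y_1Y_2}(b, c, d') = \sum_{n,m\geq 1} \kappa^B_{\chi_{n,m}}\bigl(L_b(X_1+X_2), \ldots, R_{d'}(Y_1Y_2), \ldots, R_{d'}(Y_1Y_2)R_c\bigr),
\]
with $n$ left entries and $m$ right entries, then peel off the extraneous rightmost $d'$-factor and divide by $d$. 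By the estimates of Remark \ref{rem:convergence} this series converges absolutely for $b,d$ small and $c$ bounded, so all the rearrangements below are justified.

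First I would expand each entry. Multilinearity splits $L_b(X_1+X_2)$ into $L_bX_1 + L_bX_2$, and Theorem \ref{thm:products} rewrites each $\kappa^B$ involving a product $R_{d'}(Y_1Y_2)$ as a sum, over bi-non-crossing partitions obeying the join condition in Theorem \ref{thm:products}, of cumulants in the doubled sequence with the individual operators $Y_1,Y_2$ alternating --- this is exactly the $\sigma_m$-interleaving device used in Lemmata \ref{lem:S-lem-1} and \ref{lem:S-lem-2}. Because $(X_1,Y_1)$ and $(X_2,Y_2)$ are bi-free over $B$, Theorem \ref{thm:bifree-classifying-theorem} annihilates every resulting term whose partition has a block meeting both pairs; so only partitions all of whose blocks are monochromatic in the pair-index $\{1,2\}$ survive, and on each surviving partition the blocks split cleanly into a pair-$1$ part and a pair-$2$ part.

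Next I would reorganise the surviving sum by the block containing the final right node, and then inductively inward by the successive outermost blocks, applying properties (\ref{part:bi-multi-3}) and (\ref{part:bi-multi-4}) of Definition \ref{defn:bi-multiplicative} to factor each cumulant into nested pieces. The pair-$2$ right-cumulants assemble, as in the proof of Lemma \ref{lem:S-lem-2}, into the pinched functions $\psi_r$; then equations (\ref{eq:pinched-to-inverse-of-full}) and (\ref{eq:inverse-times-pinched}) together with $d' = \Phi^\inv_{r,Y_1Y_2}(d)$ collapse the $Y_2$-dependence via Lemma \ref{lem:S-lem-4} and Lemma \ref{lem:free-S-lem-for-T}: the right variable $d'$ passes through a $\psi_r$ as the conjugate $S^r_{Y_2}(d)\,d\,S^r_{Y_2}(d)^{-1}$, an extra $S^r_{Y_2}(d)^{-1}$ is produced from outside, and an $S^r_{Y_2}(d)$ is left over on the right (the $B$-operators being moved across a $\psi_r$ exactly as in (\ref{eq:move-around-B-in-pinched})). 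What remains once the $Y_2$'s are absorbed is precisely $c + K_{X_1,Y_1}\bigl(b, \cdot\,, \Phi^\inv_{r,Y_1}(\cdot)\bigr)(\cdot)^{-1}$ evaluated at the conjugated central argument $T_{X_2,Y_2}(b,c,d)\,S^r_{Y_2}(d)^{-1}$ and the conjugated right argument $S^r_{Y_2}(d)\,d\,S^r_{Y_2}(d)^{-1}$, namely $T_{X_1,Y_1}\bigl(b,\,T_{X_2,Y_2}(b,c,d)S^r_{Y_2}(d)^{-1},\,S^r_{Y_2}(d)dS^r_{Y_2}(d)^{-1}\bigr)$; re-multiplying by the leftover $S^r_{Y_2}(d)$ yields the claimed identity. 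Since both sides are $B$-valued analytic functions near $(b,c,d)=(0,0,0)$ agreeing as formal power series, uniqueness of power series expansions completes the argument (note $E(Y_1Y_2)=E(Y_2)E(Y_1)$ is invertible, so $T_{X_1+X_2,Y_1Y_2}$ is defined, and $E(Y_2)$ invertible makes $S^r_{Y_2}(d)^{\pm 1}$ meaningful near zero).

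The hard part will be the middle step: organising the triple sum --- over the $X_1$/$X_2$ pattern, the expansion of each $Y_1Y_2$, and the bi-non-crossing partition --- into the correct nested factorisation, and checking at each bi-multiplicative reduction that the $B$-operator produced \emph{below} a block straddling a pair-$2$ region is exactly the one the composition demands; in particular, that the conjugations by $S^r_{Y_2}(d)$ and $S^r_{Y_2}(d)^{-1}$ occur with the right multiplicities and in the right positions, and that every apparent $d^{-1}$ is genuinely an $S^r$ in disguise so that no inverse of a non-invertible element is ever taken. This is pure bookkeeping, but delicate; it is the bi-free analogue of the chain of identities culminating in the proof of Theorem \ref{thm:free-S}, and Lemma \ref{lem:free-S-lem-for-T} was isolated precisely to carry it out.
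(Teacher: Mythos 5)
Your outline follows the same strategy as the paper (expand $K_{X_1+X_2,\,Y_1Y_2}$ via Theorem \ref{thm:products} with the right entries doubled, kill blocks joining $Y_1$- and $Y_2$-slots by bi-freeness, resum into the pinched series $\psi_r$, then substitute $d\mapsto\Phi^\inv_{r,Y_1Y_2}(d)$ and invoke equations (\ref{eq:move-around-B-in-pinched}), (\ref{eq:pinched-to-inverse-of-full}), (\ref{eq:inverse-times-pinched}) and Lemma \ref{lem:free-S-lem-for-T}), but the argument stops exactly where the theorem's content lies. The sentence ``what remains once the $Y_2$'s are absorbed is precisely $c+K_{X_1,Y_1}(\cdots)$ evaluated at the conjugated central argument'' is the assertion to be proved, not a consequence of anything you have established. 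What is missing is the case analysis on the block $V_\pi$ containing $1_\ell$: whether it meets even-indexed ($Y_2$-type) or odd-indexed ($Y_1$-type) right nodes splits the sum as $\Psi_e+\Psi_o$, and one must then show (i) $\Psi_e=K_{X_2,Y_2}(b,c,\psi_r(R_dY_1,Y_2))$, (ii) the auxiliary sum over partitions whose first left block meets $Y_1$-nodes has a special \emph{bottom} region contributing the central slot, which evaluates to $c\,\psi_r(Y_2,R_dY_1)+K_{X_2,Y_2}(b,c,\psi_r(R_dY_1,Y_2))\,\psi_r(R_dY_1,Y_2)^{-1}$, and (iii) $\Psi_o$ is $K_{X_1,Y_1}$ of that central argument with right argument $d\,\psi_r(Y_2,R_dY_1)$, up to the correction factor $(d\,\psi_r(Y_2,R_dY_1))^{-1}d$. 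Step (ii) is the sole source of the composition in the third variable, and it is never derived; you only name it ``delicate bookkeeping.'' Your proposed reorganization ``by the block containing the final right node'' is also not the right organizing principle here (it is the one used for the free $S$-transform in Lemma \ref{lem:S-lem-2}); for the $T$-transform the dichotomy must be taken at the block of the \emph{first left} node, since that is what separates the $K_{X_2,Y_2}$-contribution from the $K_{X_1,Y_1}$-contribution.

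A second, smaller omission: even granting the three summation identities, the passage to the stated formula is a nontrivial algebraic verification after the substitution $d\mapsto\Phi^\inv_{r,Y_1Y_2}(d)$ — one must check that the factors $\psi_r(R_dY_1,Y_2)^{-1}$ and $(d\,\psi_r(Y_2,R_dY_1))^{-1}d$ turn into $d^{-1}S^r_{Y_2}(d)^{-1}$ and $S^r_{Y_2}(d)\,d^{-1}$ respectively, and that expanding $T_{X_1,Y_1}\bigl(b,\,T_{X_2,Y_2}(b,c,d)S^r_{Y_2}(d)^{-1},\,S^r_{Y_2}(d)dS^r_{Y_2}(d)^{-1}\bigr)S^r_{Y_2}(d)$ reproduces $c+K_{X_1+X_2,Y_1Y_2}(b,c,\Phi^\inv_{r,Y_1Y_2}(d))d^{-1}$ term by term, with every formal $d^{-1}$ genuinely cancelled. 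You flag this concern but do not carry it out, so the proposal as written is a plan consistent with the paper's proof rather than a proof.
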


Note that since $(X_1, Y_1)$ and $(X_2, Y_2)$ be bi-free over $B$, $Y_1$ and $Y_2$ are free over $B$ so $E(Y_1Y_2) = E(Y_2)E(Y_1)$ is invertible.  Hence $T_{X_1+ X_2, Y_1Y_2}(b,c,d)$ makes sense.  Furthermore, recall $S^r_{Y_2}(d)^{-1}$ exists for $d$ sufficiently close to zero.

Our proof begins by analyzing
\[
K_{X_1+X_2, Y_1Y_2}(b,c,d) = \sum_{n,m\geq 1} \kappa^B_{\chi_{n,m}}(  \underbrace{L_b (X_1+X_2), \ldots, L_b (X_1+X_2)}_{n \text{ entries }}, \underbrace{R_d Y_1Y_2, \ldots, R_d Y_1Y_2}_{m-1 \text{ entries }}, R_d Y_1Y_2 R_c).
\]
For fix $n,m \geq 1$, let $\sigma_{n,m}$ denote the element of $BNC(n,2m)$ with blocks $\{\{k_\ell\}\}_{k=1}^n \cup \{\{(2k-1)_r, (2k)_r\}\}_{k=1}^m$.  Thus Theorem \ref{thm:products} implies that
\begin{align*}
& \kappa^B_{\chi_{n,m}}(  \underbrace{L_b (X_1+X_2), \ldots, L_b (X_1+X_2)}_{n \text{ entries }}, \underbrace{R_d Y_1Y_2, \ldots, R_d Y_1Y_2}_{m-1 \text{ entries}}, R_d Y_1Y_2 R_c) \\
&= \sum_{\substack{ \pi \in BNC(n, 2m) \\ \pi \vee \sigma_{n,m} = 1_{n,2m} }} \kappa^B_\pi(\underbrace{L_b (X_1+X_2), \ldots, L_b (X_1+X_2)}_{n \text{ entries}}, \underbrace{R_d Y_1, Y_2, R_d Y_1, Y_2, \ldots, R_d Y_1, Y_2, R_d Y_1, Y_2 R_c }_{R_d Y_1 \text{ occurs }m \text{ times}}).
\end{align*}
Notice that if $\pi  \in BNC(n, 2m)$ and $\pi \vee \sigma_{n,m} = 1_{n,2m}$, then any block of $\pi$ containing a $k_\ell$ must contain a $j_r$ for some $j$.  Furthermore, if $1 \leq k < j \leq n$ are such that $k_\ell$ and $j_\ell$ are in the same block of $\pi$, then $q_\ell$ must be in the same block as $k_\ell$ for all $k \leq q \leq j$.  Moreover, since $(X_1, Y_1)$ and $(X_2, Y_2)$ are bi-free, we note that 
\[
\kappa^B_\pi(\underbrace{L_b (X_1+X_2), \ldots, L_b (X_1+X_2)}_{n \text{ entries}}, \underbrace{R_d Y_1, Y_2, R_d Y_1, Y_2, \ldots, R_d Y_1, Y_2, R_d Y_1, Y_2 R_c }_{R_d Y_1 \text{ occurs }m \text{ times}})= 0
\]
if $\pi$ contains a block containing a $(2k)_r$ and a $(2j-1)_r$ for some $k, j$.

For $n,m\geq 1$, let $BNC_T(n,m)$ denote all $\pi \in BNC(n, 2m)$ such that $\pi \vee \sigma_{n,m} = 1_{n,2m}$ and $\pi$ contains no blocks containing both a $(2k)_r$ and a $(2j-1)_r$ for some $k, j$.  Consequently, we obtain
\begin{align*}
&K_{X_1+X_2, Y_1Y_2}(b,c,d) \\
&= \sum_{n,m\geq 1}  \sum_{\pi \in BNC_T(n,m)} \kappa^B_\pi(\underbrace{L_b (X_1+X_2), \ldots, L_b (X_1+X_2)}_{n \text{ entries}}, \underbrace{R_d Y_1, Y_2, R_d Y_1, Y_2, \ldots, R_d Y_1, Y_2, R_d Y_1, Y_2 R_c }_{R_d Y_1 \text{ occurs }m \text{ times}}).
\end{align*}
We desire to divide up this sum into two different sums based on types of partitions in $BNC_T(n,m)$ (which is allowed as everything converges absolutely).  Let $BNC_T(n,m)_e$ denote all $\pi \in BNC_T(n,m)$ such that the block containing $1_\ell$ also contains a $(2k)_r$ for some $k$, and let $BNC_T(n,m)_o$ denote all $\pi \in BNC_T(n,m)$ such that the block containing $1_\ell$ also contains a $(2k-1)_r$ for some $k$.  Note $BNC_T(n,m)_e$ and $BNC_T(n,m)_o$ are disjoint and $BNC_T(n,m)_e \cup BNC_T(n,m)_o = BNC_T(n,m)$ by previous discussions.  Therefore, if for $p \in \{o,e\}$ we define
\[
\Psi_p(b,c,d) := \sum_{\substack{ n,m\geq 1 \\ \pi \in BNC_T(n,m)_p}} \kappa^B_\pi(\underbrace{L_b (X_1+X_2), \ldots, L_b (X_1+X_2)}_{n \text{ entries}}, \underbrace{R_d Y_1, Y_2, \ldots, R_d Y_1, Y_2, R_d Y_1, Y_2 R_c }_{R_d Y_1 \text{ occurs }m \text{ times}})
\]
then
\[
K_{X_1+X_2, Y_1Y_2}(b,c,d) = \Psi_e(b,c,d) + \Psi_o(b,c,d).
\]
Expressions for $\Psi_e(b,c,d)$ and $\Psi_o(b,c,d)$ will be derived beginning with $\Psi_e(b,c,d)$.  However, we will not use the same rigour that we did in Section \ref{sec:R-Trans}.

\begin{lem}
\label{lem:T-case-1}
Under the above notation and assumptions,
\[
\Psi_e(b,c,d) = K_{X_2, Y_2}\left(b, c, \psi_r(R_d Y_1, Y_2)\right). 
\]
\end{lem}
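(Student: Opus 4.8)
The plan is to follow the scheme of the proof of Lemma \ref{lem:S-lem-2} almost verbatim, adding the bookkeeping that the bi-multiplicative reductions of Definition \ref{defn:bi-multiplicative} require.

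Starting from the defining sum for $\Psi_e(b,c,d)$, I would first expand every entry $L_b(X_1+X_2)$ by multilinearity of $\kappa^B_\pi$. Since $(X_1,Y_1)$ and $(X_2,Y_2)$ are bi-free over $B$, a term survives only when each block of $\pi$ is \emph{monochromatic}; together with the $BNC_T(n,m)$ constraint (no block meets both an even and an odd right node) the blocks split into $(X_1,Y_1)$-blocks (odd right nodes $R_dY_1$, possibly some $X_1$ left nodes) and $(X_2,Y_2)$-blocks (even right nodes $Y_2$, possibly some $X_2$ left nodes). A non-crossing ``trapping'' argument along the order $\prec_{\chi_{n,2m}}$ --- using that $\pi\vee\sigma_{n,m}=1_{n,2m}$ forbids any block of left nodes alone, and that an $(X_1,Y_1)$-block containing a left node would confine its whole $\sigma_{n,m}$-connected component strictly between its $\prec_{\chi_{n,2m}}$-endpoints, hence away from $1_\ell$, contradicting $\pi\in BNC_T(n,m)_e$ --- shows that no left node lies in an $(X_1,Y_1)$-block, so each $L_b(X_1+X_2)$ may be replaced by $L_bX_2$.

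The combinatorial core then mirrors Lemma \ref{lem:S-lem-2}. One singles out the $(X_2,Y_2)$-block $W_\pi$ that will become the backbone (the block of the last node $(2m)_r=Y_2R_c$; the $BNC_T(n,m)_e$ hypothesis forces $1_\ell$, and in fact all the left nodes, to lie in it), writes $W_\pi$ in $\prec_{\chi_{n,2m}}$-order, and rearranges the absolutely convergent sum by the value of $|W_\pi|$, so that each of the ``gaps'' $W_\pi$ cuts the sequence into carries an independent bi-non-crossing partition. Because each even right node $(2k)_r$ is $\sigma_{n,m}$-paired with the odd right node $(2k-1)_r$ immediately following it in $\prec_{\chi_{n,2m}}$, one checks that every gap, after prepending the $1_\A$ slot, is precisely one of the partitions $\rho\in BNC'_r(q)$ of an alternating word $1_\A,R_dY_1,Y_2,R_dY_1,\ldots,R_dY_1$ that is summed in $\psi_r(R_dY_1,Y_2)$ (Section \ref{sec:free-S}), the $1_\A$ being the attachment to the enclosing $Y_2$. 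The reductions of Definition \ref{defn:bi-multiplicative} collapse each gap to a $B$-element $w$, and (\ref{eq:move-around-B-in-pinched}) absorbs the leading $R_d$, turning the pattern $R_dY_1,Y_2$ into $R_wY_2$; hence $W_\pi$ now reads $\kappa^B_{\chi_{n,m'}}(L_bX_2,\ldots,L_bX_2,R_{w_1}Y_2,\ldots,R_{w_{m'-1}}Y_2,R_{w_{m'}}Y_2R_c)$ where $m'$ is the number of right nodes of $W_\pi$. Summing each $w_k$ over all gap configurations replaces it with $\psi_r(R_dY_1,Y_2)$, and then summing over $n,m'\ge1$ gives exactly $K_{X_2,Y_2}(b,c,\psi_r(R_dY_1,Y_2))$; all rearrangements are legitimate for $b,d$ small and $c$ bounded by Remark \ref{rem:convergence}.

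The main obstacle is making the identification in the previous paragraph precise --- exhibiting a value-preserving bijection between $\bigsqcup_{n,m}BNC_T(n,m)_e$ and the pairs (backbone partition $1_{\chi_{n,m'}}$, $m'$-tuple of $BNC'_r$-packets) --- and in particular tracking, slot by slot, which side each intermediate $B$-operator escapes to during the bi-multiplicative reductions, since the $c$-carrying entry together with the antimultiplicativity of $d\mapsto R_d$ make the bookkeeping asymmetric. Everything else transcribes the proof of Lemma \ref{lem:S-lem-2}.
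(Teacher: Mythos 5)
Your proposal is correct and follows essentially the same route as the paper's proof: one rearranges the absolutely convergent sum according to the backbone block (the block containing $1_\ell$, equivalently $(2m)_r$, which by $\pi\vee\sigma_{n,m}=1_{n,2m}$ and the $e$-condition must contain all left nodes, with bi-freeness killing the $X_1$ contributions there), identifies each right-hand gap with a $BNC'_r$ packet summing to $\psi_r(R_dY_1,Y_2)$, and attaches these values to the adjacent $Y_2$ nodes to recover $K_{X_2,Y_2}(b,c,\psi_r(R_dY_1,Y_2))$. The only small slip is attributing the attachment step to (\ref{eq:move-around-B-in-pinched}); it is really property (\ref{part:bi-multi-4}) of Definition \ref{defn:bi-multiplicative} that turns each gap value $w$ into $R_wY_2$ on the backbone, exactly as in the paper.
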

\begin{proof}
We desire to rearrange the sum in $\Psi_e(b,c,d)$ (which we can as it converges absolutely) to sum over all $\pi$ with the same block containing $1_\ell$.  The result will then follow by applying bi-multiplicative properties.

Fix $n,m \geq 1$.  If $\pi \in BNC_T(n,m)_e$, then the block $V_\pi$ containing $1_\ell$ must also contain $(2k)_r$ for some $k$ and thus all of $(2m)_r, 1_\ell, 2_\ell, \ldots, n_\ell$ must be in $V_\pi$ in order for $\pi \vee \sigma_{n,m} = 1_{n,2m}$.  Below is an example of such a $\pi$.  Note the dotted lines represent $\sigma_{n,m}$ and we really should draw all of the left nodes above all of the right notes.
\begin{align*}
	\begin{tikzpicture}[baseline]
	\draw[thick, dashed] (-1,5.75) -- (-1,-.25) -- (1,-.25) -- (1,5.75);
	\draw[thick, blue, densely dotted] (1, 5.5) -- (0.75, 5.25) -- (1, 5);
	\draw[thick, blue, densely dotted] (1, 4.5) -- (0.75, 4.25) -- (1, 4);
	\draw[thick, blue, densely dotted] (1, 3.5) -- (0.75, 3.25) -- (1, 3);
	\draw[thick, blue, densely dotted] (1, 2.5) -- (0.75, 2.25) -- (1, 2);
	\draw[thick, blue, densely dotted] (1, 1.5) -- (0.75, 1.25) -- (1, 1);
	\draw[thick, blue, densely dotted] (1, 0.5) -- (0.75, 0.25) -- (1, 0);
	\draw[ggreen, thick] (-1,5.5) -- (-0.25,5.5) -- (-.25,0) -- (1, 0);
	\draw[thick] (1,1.5) -- (0.25,1.5) -- (0.25,3.5) -- (1, 3.5);
	\draw[thick] (1,4.5) -- (0.25,4.5) -- (0.25,5.5) -- (1, 5.5);
	\draw[thick] (1, 2.5) -- (.25, 2.5);
	\draw[ggreen, thick] (-1, 5) -- (-0.25, 5);
	\draw[ggreen, thick] (-1, 4.5) -- (-0.25, 4.5);
	\draw[ggreen, thick] (-1, 4) -- (-0.25, 4);
	\draw[ggreen, thick] (-1, 3.5) -- (-0.25, 3.5);
	\draw[ggreen, thick] (1, 4) -- (-0.25, 4);
	\draw[ggreen, thick] (1, 1) -- (-0.25, 1);
	\node[left] at (-1, 5.5) {$L_b X_2$, $1_\ell$};
	\draw[ggreen, fill=ggreen] (-1,5.5) circle (0.05);
	\node[left] at (-1, 5) {$L_b X_2$, $2_\ell$};
	\draw[ggreen, fill=ggreen] (-1,5) circle (0.05);
	\node[left] at (-1, 4.5) {$L_b X_2$, $3_\ell$};
	\draw[ggreen, fill=ggreen] (-1,4.5) circle (0.05);
	\node[left] at (-1, 4) {$L_b X_2$, $4_\ell$};
	\draw[ggreen, fill=ggreen] (-1,4) circle (0.05);
	\node[left] at (-1, 3.5) {$L_b X_2$, $5_\ell$};
	\draw[ggreen, fill=ggreen] (-1,3.5) circle (0.05);
	\draw[fill=black] (1,5.5) circle (0.05);
	\node[right] at (1,5.5) {$1_r$, $R_d Y_1$};
	\draw[fill=black] (1,5) circle (0.05);
	\node[right] at (1,5) {$2_r$, $Y_2$};
	\draw[fill=black] (1,4.5) circle (0.05);
	\node[right] at (1,4.5) {$3_r$, $R_d Y_1$};
	\draw[ggreen, fill=ggreen] (1,0) circle (0.05);
	\node[right] at (1,4) {$4_r$, $Y_2$};
	\draw[ggreen, fill=ggreen] (1,4) circle (0.05);
	\node[right] at (1,3.5) {$5_r$, $R_d Y_1$};
	\draw[fill=black] (1,3.5) circle (0.05);
	\node[right] at (1,3) {$6_r$, $Y_2$};
	\draw[fill=black] (1,3) circle (0.05);
	\node[right] at (1,2.5) {$7_r$, $R_d Y_1$};
	\draw[fill=black] (1,2.5) circle (0.05);
	\node[right] at (1,2) {$8_r$, $Y_2$};
	\draw[fill=black] (1,2) circle (0.05);
	\node[right] at (1,1.5) {$9_r$, $R_d Y_1$};
	\draw[fill=black] (1,1.5) circle (0.05);
	\node[right] at (1,1) {$10_r$, $Y_2$};
	\draw[ggreen, fill=ggreen] (1,1) circle (0.05);
	\node[right] at (1,0.5) {$11_r$, $R_d Y_1$};
	\draw[fill=black] (1,0.5) circle (0.05);
	\node[right] at (1,0) {$12_r$, $Y_2 R_c$};
	\end{tikzpicture}
\end{align*}

Let $E = \{(2k)_r\}^{m}_{k=1}$, let $s$ denote the number of elements of $E$ contained in $V_\pi$ (so $s \geq 1$), and let $1 \leq k_1 < k_2 < \cdots < k_s = m$ be such that $(2k_q)_r \in V_\pi$.  Note $V_\pi$ divides the right nodes into $s$ disjoint regions.  For each $1 \leq q \leq s$ let $j_q = k_q - k_{q-1}$, where $k_0 = 0$, and let $\pi_q$ denote the non-crossing partition obtained by restricting $\pi$ to $\{(2k_{q-1} + 1)_r, (2k_{q-1}+2)_r, \ldots, (2k_{q}-1)_r\}$.  Thus $\pi_q$ is naturally an element of $BNC'_r(j_q)$ once a singleton block is added. 

Consequently, if we sum over all possible $n,m\geq 1$, for each $V_\pi$ one obtains $\psi_r(R_d Y_1, Y_2)$ for the $B$-operator in each of the $s$ disjoint regions on the right.  Using bi-multiplicative properties (so the $\psi_r(R_d Y_1, Y_2)$ term attaches to a $Y_2$ from above to obtain $R_{\psi_r(R_d Y_1, Y_2)} Y_2$) and summing over all possible $V_\pi$ yields the result.
\end{proof}

In order to discuss $\Psi_o(b,c,d)$, it is quite helpful to discuss a subcase.  For $n,m \geq 0$, let $\sigma'_{n,m} $ denote the element of $BNC(n, 2m+1)$ with blocks $\{\{k_\ell\}\}_{k=1}^n \cup \{1_r\} \cup \{\{(2k)_r, (2k+1)_r\}\}_{k=1}^m$.  Let $BNC_T(n,m)'_o$ denote the set of all $\pi \in BNC(n,2m+1)$ such that $\pi \vee \sigma'_{n,m} = 1_{n,2m+1}$ and $\pi$ contains no blocks containing both a $(2k)_r$ and a $(2j-1)_r$ for some $k, j$.

\begin{lem}
\label{lem:T-case-2}
Under the above notation and assumptions, if
\begin{align*}
\Psi_{o'}(b,c,d) := \sum_{\substack{n\geq 1,  m\geq 0 \\ \pi \in BNC_T(n,m)'_o}} \kappa^B_\pi(\underbrace{L_b (X_1+X_2), \ldots, L_b (X_1+X_2)}_{n \text{ entries}}, \underbrace{Y_2, R_d Y_1, \ldots, Y_2, R_d Y_1 }_{Y_2, R_d Y_1 \text{ occur }m \text{ times each}} , Y_2 R_c )
\end{align*}
then
\[
\Psi_{o'}(b,c,d) = K_{X_2, Y_2}(b, c, \psi_r(R_d Y_1, Y_2))\psi_r(R_d Y_1, Y_2)^{-1}.
\]
\end{lem}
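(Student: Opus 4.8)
The plan is to run the argument in exact parallel with the proof of Lemma \ref{lem:T-case-1}, the only new feature being the bare $Y_2$ sitting at the node $1_r$. First I would expand each $L_b(X_1+X_2)$ by multilinearity; since $(X_1,Y_1)$ and $(X_2,Y_2)$ are bi-free over $B$, a term of $\Psi_{o'}(b,c,d)$ survives only when every block of $\pi$ consists entirely of operators from the algebra generated by $X_1,Y_1$ and $\varepsilon(B\otimes B^{\op})$ or entirely of operators from the algebra generated by $X_2,Y_2$ and $\varepsilon(B\otimes B^{\op})$. Note that the odd right nodes $1_r,3_r,\dots,(2m+1)_r$ carry the $Y_2$-type entries (the last one also the suffix $R_c$) while the even right nodes carry the $R_dY_1$-entries, so that in a surviving term the block containing $1_r$ is of the $(X_2,Y_2)$-type.

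Next, exactly as in Lemma \ref{lem:T-case-1}, I would rearrange the absolutely convergent sum so as to group the partitions $\pi$ by the block $V_\pi$ of $\pi$ that contains $1_r$. Using the interval property of left nodes in bi-non-crossing partitions together with $\pi\vee\sigma'_{n,m}=1_{n,2m+1}$, the node $(2m+1)_r$ and all $n$ left nodes are forced into $V_\pi$, and $V_\pi$ cuts the remaining right nodes into $\chi$-intervals (``gaps'') lying between consecutive (with respect to $\prec_\chi$) right nodes of $V_\pi$; each gap is an alternating run beginning and ending with an $R_dY_1$-entry, and the restriction of $\pi$ to such a gap, after adjoining the singleton block, ranges over exactly the partitions in the definition of $\psi_r$. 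Summing over those restrictions contributes a factor $\psi_r(R_dY_1,Y_2)$, which by property (\ref{part:bi-multi-4}) of Definition \ref{defn:bi-multiplicative} is absorbed as a left $R$-prefix onto the $Y_2$-entry immediately $\prec_\chi$-below the gap. The sole difference from Lemma \ref{lem:T-case-1} is that the $\prec_\chi$-maximal right node $1_r$ carries a bare $Y_2$ rather than an $R_dY_1$-entry, so no gap lies $\prec_\chi$-above it and its $Y_2$ receives no prefix; every other $Y_2$-entry indexed by $V_\pi$ receives exactly one prefix, and the suffix $R_c$ is propagated as in $K_{X_2,Y_2}$. Writing $e:=\psi_r(R_dY_1,Y_2)$ and summing over all admissible $V_\pi$ (the degenerate $m=0$ case contributing the one-right-entry terms), this gives $\Psi_{o'}(b,c,d)=\sum_{n,m\ge 1}\kappa^B_{\chi_{n,m}}(L_bX_2,\dots,L_bX_2,\,Y_2,\,R_eY_2,\dots,R_eY_2,\,R_eY_2R_c)$, i.e. the defining sum of $K_{X_2,Y_2}(b,c,e)$ with the first right entry replaced by a bare $Y_2$.

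Finally, that first right entry sits at sequence position $n+1$, with no right node before it, so property (\ref{part:bi-multi-2}) of Definition \ref{defn:bi-multiplicative} in the case $q=-\infty$ shows that reinstating the prefix $R_e$ on it multiplies the whole cumulant on the right by $e$; summing, this yields $K_{X_2,Y_2}(b,c,\psi_r(R_dY_1,Y_2))=\Psi_{o'}(b,c,d)\,\psi_r(R_dY_1,Y_2)$, which is the asserted identity, the notation $\psi_r(R_dY_1,Y_2)^{-1}$ being understood, as elsewhere in the paper, via the disguised right factor of $\psi_r(R_dY_1,Y_2)$ carried by the innermost $R_e$. The substantive part of the proof is the middle step — identifying the gaps, verifying each sums to $\psi_r(R_dY_1,Y_2)$, and tracking how the bi-multiplicative reductions position the resulting $B$-operators — and this is the step I expect to be the main obstacle; however it is verbatim the bookkeeping already carried out for Lemma \ref{lem:T-case-1}, so in the write-up I would only indicate the modifications rather than repeat it.
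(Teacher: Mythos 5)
Your proposal is correct and follows essentially the same route as the paper: group the absolutely convergent sum by the block containing $1_r$ (which is the paper's block containing $1_\ell$, since that block is forced to contain $1_r$, $(2m+1)_r$ and all left nodes), sum each gap between consecutive odd right nodes of that block to a factor $\psi_r(R_dY_1,Y_2)$ attached below via bi-multiplicativity, and account for the bare $Y_2$ at $1_r$. Your closing use of Definition \ref{defn:bi-multiplicative}(\ref{part:bi-multi-2}) with $q=-\infty$ to explain the factor $\psi_r(R_dY_1,Y_2)^{-1}$ is exactly the paper's concluding remark, made slightly more explicit.
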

\begin{proof}
Again we desire to rearrange the sum in $\Psi_{o'}(b,c,d)$ (which we can as it converges absolutely) to sum over all $\pi$ with the same block containing $1_\ell$.  The result will then follow by applying bi-multiplicative properties.

Fix $n \geq 1$ and $m \geq 0$.  If $\pi \in BNC_T(n,m)'_o$, then the block $V_\pi$ containing $1_\ell$ must contain $1_r, (2m+1)_r, 1_\ell, 2_\ell, \ldots, n_\ell$ in order for $\pi \vee \sigma'_{n,m} = 1_{n,2m+1}$.  Below is an example of such a $\pi$. 
\begin{align*}
	\begin{tikzpicture}[baseline]
	\draw[thick, dashed] (-1,5.75) -- (-1,.25) -- (1,.25) -- (1,5.75);
	\draw[thick, blue, densely dotted] (1, 5) -- (0.75, 4.75) -- (1, 4.5);
	\draw[thick, blue, densely dotted] (1, 4) -- (0.75, 3.75) -- (1, 3.5);
	\draw[thick, blue, densely dotted] (1, 3) -- (0.75, 2.75) -- (1, 2.5);
	\draw[thick, blue, densely dotted] (1, 2) -- (0.75, 1.75) -- (1, 1.5);
	\draw[thick, blue, densely dotted] (1, 1) -- (0.75, 0.75) -- (1, 0.5);
	\draw[ggreen, thick] (-1,5.5) -- (-0.5,5.5) -- (-.5,0.5) -- (1, 0.5);
	\draw[thick] (1,2.5) -- (0.5,2.5) -- (.5,3.5) -- (1, 3.5);
	\draw[thick] (1,5) -- (-0,5) -- (-0,2) -- (1, 2);
	\draw[thick] (1,4) -- (-0,4);
	\draw[ggreen, thick] (-1, 5) -- (-0.5, 5);
	\draw[ggreen, thick] (-1, 4.5) -- (-0.5, 4.5);
	\draw[ggreen, thick] (-1, 4) -- (-0.5, 4);
	\draw[ggreen, thick] (-1, 3.5) -- (-0.5, 3.5);
	\draw[ggreen, thick] (1, 1.5) -- (-0.5, 1.5);
	\draw[ggreen, thick] (1, 5.5) -- (-0.5, 5.5);
	\node[left] at (-1, 5.5) {$L_b X_2$, $1_\ell$};
	\draw[ggreen, fill=ggreen] (-1,5.5) circle (0.05);
	\node[left] at (-1, 5) {$L_b X_2$, $2_\ell$};
	\draw[ggreen, fill=ggreen] (-1,5) circle (0.05);
	\node[left] at (-1, 4.5) {$L_b X_2$, $3_\ell$};
	\draw[ggreen, fill=ggreen] (-1,4.5) circle (0.05);
	\node[left] at (-1, 4) {$L_b X_2$, $4_\ell$};
	\draw[ggreen, fill=ggreen] (-1,4) circle (0.05);
	\node[left] at (-1, 3.5) {$L_b X_2$, $5_\ell$};
	\draw[ggreen, fill=ggreen] (-1,3.5) circle (0.05);
	\draw[ggreen, fill=ggreen] (1,5.5) circle (0.05);
	\node[right] at (1,5.5) {$1_r$, $Y_2$};
	\draw[fill=black] (1,5) circle (0.05);
	\node[right] at (1,5) {$2_r$, $R_d Y_1$};
	\draw[fill=black] (1,4.5) circle (0.05);
	\node[right] at (1,4.5) {$3_r$, $Y_2$};
	\node[right] at (1,4) {$4_r$, $R_d Y_1$};
	\draw[fill=black] (1,4) circle (0.05);
	\node[right] at (1,3.5) {$5_r$, $Y_2$};
	\draw[fill=black] (1,3.5) circle (0.05);
	\node[right] at (1,3) {$6_r$, $R_d Y_1$};
	\draw[fill=black] (1,3) circle (0.05);
	\node[right] at (1,2.5) {$7_r$, $Y_2$};
	\draw[fill=black] (1,2.5) circle (0.05);
	\node[right] at (1,2) {$8_r$, $R_d Y_1$};
	\draw[fill=black] (1,2) circle (0.05);
	\node[right] at (1,1.5) {$9_r$, $Y_2$};
	\draw[ggreen, fill=ggreen] (1,1.5) circle (0.05);
	\node[right] at (1,1) {$10_r$, $R_d Y_1$};
	\draw[fill=black] (1,1) circle (0.05);
	\node[right] at (1,0.5) {$11_r$, $Y_2R_c$};
	\draw[ggreen, fill=ggreen] (1,0.5) circle (0.05);
	\end{tikzpicture}
\end{align*}

Let $O = \{(2k-1)_r\}^{m+1}_{k=1}$, let $s$ denote the number of elements of $O$ contained in $V_\pi$ (so $s \geq 1$), and let $1 = k_1 < k_2 < \cdots < k_s = m+1$ be such that $(2k_q-1)_r \in V_\pi$. Note $V_\pi$ divides the right nodes into $s-1$ disjoint regions.  For each $1 \leq q \leq s-1$ let $j_q = k_{q+1} - k_q$ and let $\pi_q$ denote the non-crossing partition obtained by restricting $\pi$ to $\{(2k_{q} )_r, (2k_{q}+1)_r, \ldots, (2k_{q+1}-2)_r\}$.  Note  $\pi_q$ is naturally an element of $BNC'_r(j_q)$  once a singleton block is added.  

Consequently, if we sum over all possible $n,m\geq 1$, for each $V_\pi$ one obtains $\psi_r(R_d Y_1, Y_2)$ for the $B$-operator in each of the $s-1$ disjoint regions on the right.  Using bi-multiplicative properties (so the $\psi_r(R_d Y_1, Y_2)$ term attaches to a $Y_2$ from above to obtain $R_{\psi_r(R_d Y_1, Y_2)} Y_2$) and summing over all possible $V_\pi$ yields the result.   Note the $\psi_r(R_d Y_1, Y_2)^{-1}$ comes from the fact that the $1_r$ term is always just a $Y_2$ where it must be a $R_{\psi_r(R_d Y_1, Y_2)}Y_2$ in $K_{X_2, Y_2}(b, c, \psi_r(R_d Y_1, Y_2))$ and bi-multiplicative properties correct this.
\end{proof}

\begin{lem}
\label{lem:T-case-3}
Under the above notation and assumptions, 
\[
\Psi_o(b,c,d) =  K_{X_1, Y_1}\left(b, \Psi_{o'}(b,c,d) + c \psi_r(Y_2, R_d Y_1), d \psi_r(Y_2, R_d Y_1)\right) (d \psi_r(Y_2, R_d Y_1))^{-1} d.
\]
\end{lem}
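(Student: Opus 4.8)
The plan is to follow exactly the pattern of Lemmata \ref{lem:T-case-1} and \ref{lem:T-case-2}: rearrange the absolutely convergent series defining $\Psi_o(b,c,d)$ by grouping the partitions $\pi\in BNC_T(n,m)_o$ according to the block $V_\pi$ containing $1_\ell$, and then apply the bi-multiplicative reduction rules of Definition \ref{defn:bi-multiplicative} to $\kappa^B_\pi$ relative to $V_\pi$. First I would record the structure forced on $V_\pi$. Since $(X_1,Y_1)$ and $(X_2,Y_2)$ are bi-free over $B$, Theorem \ref{thm:bifree-classifying-theorem} forces every nonzero $\kappa^B_\pi$ occurring in $\Psi_o(b,c,d)$ to have $V_\pi$ a ``pair-$1$'' block: all of its left entries are $L_bX_1$ and all of its right entries lie among the odd positions (the $R_dY_1$ entries). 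Together with the connectivity requirement $\pi\vee\sigma_{n,m}=1_{n,2m}$ and the defining conditions of $BNC_T(n,m)_o$, this pins down which extremal right nodes must lie in $V_\pi$ and, crucially, organises the remaining nodes into the familiar regions delimited by the consecutive nodes of $V_\pi$, each region being a $\chi$-interval.

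Next I would identify the three types of region and the contribution of each. The regions delimited by two consecutive odd right nodes of $V_\pi$ carry, by exactly the argument in Lemma \ref{lem:T-case-1} (using that $\pi\vee\sigma_{n,m}=1_{n,2m}$ and bi-freeness force the restricted partition there to lie in the family summed in the definition of $\psi_r(Y_2,R_dY_1)$ once a singleton is adjoined), and after applying parts (\ref{part:bi-multi-3}) and (\ref{part:bi-multi-4}) of Definition \ref{defn:bi-multiplicative}, the effect of decorating the corresponding $R_dY_1$ entry of $V_\pi$ into $R_{d\,\psi_r(Y_2,R_dY_1)}Y_1$. The regions adjacent to the left nodes of $V_\pi$ contribute nothing that changes the left variable $b$. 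The distinguished bottom region — the one containing the node carrying $c$ (the $Y_2R_c$ entry) — is the one where the $\sigma'_{n,m}$-type pinched configurations from Lemma \ref{lem:T-case-2} appear: summing the restricted partitions of that region over all possibilities yields, after bi-multiplicative reduction, the $B$-operator $\Psi_{o'}(b,c,d)+c\,\psi_r(Y_2,R_dY_1)$ placed at the far right of the last entry of $V_\pi$ (the two summands being the configurations in which the block of the $Y_2R_c$ node does, respectively does not, reach back into $V_\pi$ through further odd right nodes). Summing over all admissible $V_\pi$ — equivalently, over the number $n'$ of left entries and $m'$ of right entries of $V_\pi$ and over the multiplicities recording how many nodes fall into each region — the ``outer'' cumulant becomes
\[
\sum_{n',m'\geq1}\kappa^B_{\chi_{n',m'}}\bigl(\underbrace{L_bX_1,\ldots,L_bX_1}_{n'\text{ entries}},\underbrace{R_{d'}Y_1,\ldots,R_{d'}Y_1}_{m'-1\text{ entries}},R_{d'}Y_1R_{c'}\bigr)=K_{X_1,Y_1}(b,c',d'),
\]
with $d'=d\,\psi_r(Y_2,R_dY_1)$ and $c'=\Psi_{o'}(b,c,d)+c\,\psi_r(Y_2,R_dY_1)$.

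Finally I would account for the leftover factor. The single leftover $(d\,\psi_r(Y_2,R_dY_1))^{-1}d=(d')^{-1}d$ on the right arises from the mismatch between the decoration $R_{d'}Y_1$ that $K_{X_1,Y_1}$ assigns to the extremal $R_dY_1$ entry of $V_\pi$ and the undecorated $R_dY_1$ actually appearing there in $\Psi_o(b,c,d)$, exactly as the $\psi_r(R_dY_1,Y_2)^{-1}$ arose in Lemma \ref{lem:T-case-2}. As in the Remark following Definition \ref{defn:T-Transform}, this expression is legitimate because every term of $K_{X_1,Y_1}(b,c',d')$ carries $d'$ as a right factor, so the $(d')^{-1}$ merely strips it, and $\psi_r(Y_2,R_dY_1)$ is invertible on a neighbourhood of $0$ since its value at $d=0$ is $E(Y_2)$, which is invertible by hypothesis. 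Assembling these pieces gives precisely the claimed identity.

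The step I expect to be the main obstacle is the bottom-region analysis: making precise which restricted partitions of that region are admissible inside a $BNC_T(n,m)_o$ partition, verifying that their sum is exactly $\Psi_{o'}(b,c,d)+c\,\psi_r(Y_2,R_dY_1)$ (keeping careful track of the $\sigma_{n,m}$ versus $\sigma'_{n,m}$ bookkeeping and of the ``disconnected'' alternative), and threading $c$ and the correction factor $(d')^{-1}d$ correctly through the bi-multiplicative reductions so that $c'$ lands at the far right of the last entry of the outer cumulant. Everything else is a rearrangement of absolutely convergent sums of the same kind already carried out in Lemmata \ref{lem:T-case-1} and \ref{lem:T-case-2} and in Section \ref{sec:R-Trans}.
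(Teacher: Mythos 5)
Your proposal follows the paper's argument for this lemma essentially step for step: group the terms of $\Psi_o$ by the block $V_\pi$ containing $1_\ell$, sum each of the $s-1$ enclosed right regions to $\psi_r(Y_2,R_dY_1)$ and attach it to an adjacent $R_dY_1$ entry of $V_\pi$ to form $R_{d\psi_r(Y_2,R_dY_1)}Y_1$, sum the bottom region to $\Psi_{o'}(b,c,d)+c\,\psi_r(Y_2,R_dY_1)$ attached as an $R$-decoration on the last entry of $V_\pi$, and correct for the single undecorated extremal $R_dY_1$ entry by the factor $(d\psi_r(Y_2,R_dY_1))^{-1}d$; your remark that $\psi_r(Y_2,R_dY_1)$ is invertible near $d=0$ because its value there is $E(Y_2)$ is a correct supplement to what the paper leaves implicit.

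The one concrete misstatement is your parenthetical identifying the origin of the two summands of the middle argument. Since $\pi$ is bi-non-crossing, no block meeting the bottom region can also meet $V_\pi$ (that would create a crossing with $V_\pi$), and the $Y_2R_c$ node is an even right node, hence never lies in $V_\pi$; so the dichotomy ``the block of the $Y_2R_c$ node does/does not reach back into $V_\pi$ through further odd right nodes'' cannot occur and is not what separates the two terms. The correct split is by whether $V_\pi$ exhausts the left nodes: the left entries of $V_\pi$ form an initial interval $\{1_\ell,\ldots,t_\ell\}$, and the remaining left nodes $(t+1)_\ell,\ldots,n_\ell$ all lie in the bottom region (in this $T$-setting the left nodes are singletons of $\sigma_{n,m}$, so there are no separate left regions --- your sentence about ``regions adjacent to the left nodes of $V_\pi$'' is likewise off, though harmless). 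When $t<n$ the bottom region contains at least one left entry and its admissible restricted partitions are, after relabelling, exactly those summed in $\Psi_{o'}$, giving the summand $\Psi_{o'}(b,c,d)$; when $t=n$ the bottom region is purely right, its admissible partitions with a singleton adjoined are exactly those of $\psi_r(Y_2,R_dY_1)$, and by property (\ref{part:bi-multi-1}) of Definition \ref{defn:bi-multiplicative} the $R_c$ on the bottom node exits on the left, giving $c\,\psi_r(Y_2,R_dY_1)$. With this correction the bottom-region verification you flag as the main obstacle goes through exactly as in Lemmata \ref{lem:T-case-1} and \ref{lem:T-case-2}, and the rest of your outline matches the paper's proof.
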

\begin{proof}
Again we desire to rearrange the sum in $\Psi_{o}(b,c,d)$ (which we can as it converges absolutely) to sum over all $\pi$ with the same block containing $1_\ell$.  The result will then follow by applying bi-multiplicative properties.

Fix $n,m \geq 1$, let $O = \{(2k-1)_r\}^{m}_{k=1}$, let $\pi \in BNC_T(n,m)_o$, let $V_\pi$ denote the block of $\pi$ containing $1_\ell$, let $t$ (respectively $s$) denote the number of elements of $\{1_\ell, \ldots, n_\ell\}$ (respectively $O$) contained in $V_\pi$ (so $t, s \geq 1$).  Since $\pi \vee \sigma_{n,m} = 1_{n,2m}$, $V_\pi$ must be of the form $\{k_\ell\}^t_{k=1} \cup \{(2k_q-1)_r\}^s_{q=1}$ for some $1 = k_1 < k_2 < \cdots < k_s \leq m$.  Below is an example of such a $\pi$.
\begin{align*}
	\begin{tikzpicture}[baseline]
	\draw[thick, dashed] (-1,5.75) -- (-1,-.25) -- (1,-.25) -- (1,5.75);
	\draw[thick, blue, densely dotted] (1, 5.5) -- (0.75, 5.25) -- (1, 5);
	\draw[thick, blue, densely dotted] (1, 4.5) -- (0.75, 4.25) -- (1, 4);
	\draw[thick, blue, densely dotted] (1, 3.5) -- (0.75, 3.25) -- (1, 3);
	\draw[thick, blue, densely dotted] (1, 2.5) -- (0.75, 2.25) -- (1, 2);
	\draw[thick, blue, densely dotted] (1, 1.5) -- (0.75, 1.25) -- (1, 1);
	\draw[thick, blue, densely dotted] (1, 0.5) -- (0.75, 0.25) -- (1, 0);
	\draw[ggreen, thick] (-1,5.5) -- (0, 5.5) -- (0, 2.5) -- (1, 2.5);
	\draw[thick] (1,3) -- (.5, 3) -- (.5, 4) -- (1, 4);
	\draw[thick, red] (1,.5) -- (0, .5) -- (0, 1.5) -- (1, 1.5);
	\draw[thick, red] (-1,4) -- (-.5, 4) -- (-.5, 0) -- (1, 0);
	\draw[ggreen, thick] (1, 5.5) -- (0, 5.5);
	\draw[ggreen, thick] (1, 4.5) -- (0, 4.5);
	\draw[ggreen, thick] (-1, 4.5) -- (0, 4.5);
	\draw[ggreen, thick] (-1, 5) -- (0, 5);
	\draw[thick, red] (1, 2) -- (-.5, 2);
	\draw[thick, red] (-1, 3.5) -- (-.5, 3.5);
	\node[left] at (-1, 5.5) {$L_b X_1$, $1_\ell$};
	\draw[ggreen, fill=ggreen] (-1,5.5) circle (0.05);
	\node[left] at (-1, 5) {$L_b X_1$, $2_\ell$};
	\draw[ggreen, fill=ggreen](-1,5) circle (0.05);
	\node[left] at (-1, 4.5) {$L_b X_1$, $3_\ell$};
	\draw[ggreen, fill=ggreen] (-1,4.5) circle (0.05);
	\node[left] at (-1, 4) {$L_b X_2$, $4_\ell$};
	\draw[red, fill=red] (-1,4) circle (0.05);
	\node[left] at (-1, 3.5) {$L_b X_2$, $5_\ell$};
	\draw[red, fill=red] (-1,3.5) circle (0.05);
	\draw[ggreen, fill=ggreen] (1,5.5) circle (0.05);
	\node[right] at (1,5.5) {$1_r$, $R_d Y_1$};
	\draw[fill=black] (1,5) circle (0.05);
	\node[right] at (1,5) {$2_r$, $Y_2$};
	\draw[ggreen, fill=ggreen] (1,4.5) circle (0.05);
	\node[right] at (1,4.5) {$3_r$, $R_d Y_1$};
	\draw[red, fill=red] (1,0) circle (0.05);
	\node[right] at (1,4) {$4_r$, $Y_2$};
	\draw[fill=black] (1,4) circle (0.05);
	\node[right] at (1,3.5) {$5_r$, $R_d Y_1$};
	\draw[black, fill=black] (1,3.5) circle (0.05);
	\node[right] at (1,3) {$6_r$, $Y_2$};
	\draw[fill=black] (1,3) circle (0.05);
	\node[right] at (1,2.5) {$7_r$, $R_d Y_1$};
	\draw[ggreen, fill=ggreen] (1,2.5) circle (0.05);
	\node[right] at (1,2) {$8_r$, $Y_2$};
	\draw[red, fill=red] (1,2) circle (0.05);
	\node[right] at (1,1.5) {$9_r$, $R_d Y_1$};
	\draw[red, fill=red] (1,1.5) circle (0.05);
	\node[right] at (1,1) {$10_r$, $Y_2$};
	\draw[red, fill=red] (1,1) circle (0.05);
	\node[right] at (1,0.5) {$11_r$, $R_d Y_1$};
	\draw[red, fill=red] (1,0.5) circle (0.05);
	\node[right] at (1,0) {$12_r$, $Y_2R_c$};
	\end{tikzpicture}
\end{align*}

Note $V_\pi$ divides the right nodes into $s$ disjoint regions where the bottom region is special as those nodes may connect to left nodes.   For each $1 \leq q \leq s$ let $j_q = k_{q+1} - k_q$ where $k_s = m+1$ and, for $q \neq s$, let $\pi_q$ denote the non-crossing partition obtained by restricting $\pi$ to $\{(2k_{q})_r, (2k_{q}+1)_r, \ldots, (2k_{q+1}-2)_r\}$.  Note $\pi_q$ is naturally an element of $BNC'_r(j_q)$  once a singleton block is added.

Let $\pi'_s$ denote the bi-non-crossing partition obtained by restricting $\pi$ to $\{k_\ell\}_{k=t+1}^n \cup \{(2k_s)_r, (2k_s + 1)_r, \ldots, (2m)_r\}$ (which is shaded differently in the above diagram).  Notice, in order for $\pi \vee \sigma_{n,m} = 1_{2n,2m}$, it must be the case that $\pi_s \in BNC_T(n-t,j_s-1)'_o$.  

Consequently, if we sum over all possible $n,m\geq 1$, for each $V_\pi$ one obtains $\psi_r(Y_2, R_d Y_1)$ for the $B$-operator in each of the $s-1$ disjoint regions on the right (excluding the bottom region).  If one sums over all possible bottom regions, one obtains $c \psi_r(Y_2, R_d Y_1)$ from the $\pi$ with $t = n$ and one obtains $\Psi_{o'}(b,c,d)$ from all other $\pi$.  Using bi-multiplicative properties (so the $\psi_r(Y_2, R_d Y_1)$ term attaches to a $R_dY_1$ from above to obtain $R_{d\psi_r(Y_2, R_d Y_1)} Y_1$) and summing over all possible $V_\pi$ yields the result.   Note the `$d$'  in the $(d \psi_r(Y_2, R_d Y_1))^{-1} d$ term comes from the $R_d Y_1$ in the $1_r$ position whereas the $(d \psi_r(Y_2, R_d Y_1))^{-1}$ comes from the fact that we want the $1_r$ term to be $R_{d \psi_r(Y_2, R_d Y_1) }Y_1$ to match the term in $K_{X_1, Y_1}(b, c, d \psi_r(Y_2, R_d Y_1))$ and bi-multiplicative properties correct this.
\end{proof}

\begin{proof}[Proof of Theorem \ref{thm:T-property}.]
Note
\[
\psi_r\left(R_{\Phi^{\inv}_{r, Y_1Y_2}(d)} Y_1, Y_2\right) = \Phi^{\inv}_{r, Y_2}(d)
\]
by equation (\ref{eq:pinched-to-inverse-of-full}), 
\[
d \psi_r(Y_2, R_d Y_1) = \psi_r(Y_2 R_d, Y_1)
\]
by equation (\ref{eq:move-around-B-in-pinched}), 
\[
\psi_r\left(Y_2 R_{\Phi^{\inv}_{r, Y_1Y_2}(d)},  Y_1\right) = \Phi^{\inv}_{r, Y_1}\left(S^r_{Y_2}(d) d S^r_{Y_2}(d)^{-1} \right)
\]
by Lemma \ref{lem:free-S-lem-for-T}, and
\[
\psi_r\left(Y_2, R_{\Phi^{\inv}_{r, Y_1Y_2}(d)} Y_1\right)= d\Phi^\inv_{r, Y_2}(d )^{-1} = S^r_{Y_2}(d)^{-1}
\]
by equation (\ref{eq:inverse-times-pinched}), 
Consequently, by replacing $d$ with $\Phi^{\inv}_{r, Y_1Y_2}(d)$ in each of the expressions from Lemmata \ref{lem:T-case-1}, \ref{lem:T-case-2}, and \ref{lem:T-case-3}, we obtain that

\begin{align*}
& K_{X_1+X_2, Y_1Y_2}\left(b,c,\Phi^{\inv}_{r, Y_1Y_2}(d)\right) \\
&= K_{X_2, Y_2}\left(b, c, \Psi^{\inv}_{r, Y_2}(d)\right) + K_{X_1, Y_1}\left(b, \Psi_{o'}\left(b,c, \Phi^{\inv}_{r, Y_1Y_2}(d)\right) + c S^r_{Y_2}(d)^{-1}, \Phi^{\inv}_{r, Y_1}\left(S^r_{Y_2}(d) d S^r_{Y_2}(d)^{-1} \right) \right) S^r_{Y_2}(d)
\end{align*}
where
\[
\Psi_{o'}\left(b,c, \Phi^{\inv}_{r, Y_1Y_2}(d)\right) = K_{X_2, Y_2}\left(b, c, \Phi^{\inv}_{r, Y_2}(d)\right) \Phi^\inv_{r, Y_2}(d)^{-1} = K_{X_2, Y_2}\left(b, c, \Phi^{\inv}_{r, Y_2}(d)\right) d^{-1} S^r_{Y_2}(d)^{-1}.
\]
Therefore
\begin{align*}
& T_{X_1, Y_1}\left(b, \, T_{X_2, Y_2}(b,c,d) S^r_{Y_2}(d)^{-1}, \, S^r_{Y_2}(d) d S^r_{Y_2}(d)^{-1}\right) S^r_{Y_2}(d) \\
&= T_{X_2, Y_2}(b,c,d) S^r_{Y_2}(d)^{-1} S^r_{Y_2}(d) \\
&\quad + K_{X_1, Y_1}\left(b, T_{X_2, Y_2}(b,c,d) S^r_{Y_2}(d)^{-1}, \Phi^{\inv}_{r, Y_1}\left(S^r_{Y_2}(d) d S^r_{Y_2}(d)^{-1} \right)\right)  \left(S^r_{Y_2}(d) d S^r_{Y_2}(d)^{-1}\right)^{-1}  S^r_{Y_2}(d) \\
&= T_{X_2, Y_2}(b,c,d) + K_{X_1, Y_1}\left(b, T_{X_2, Y_2}(b,c,d) S^r_{Y_2}(d)^{-1}, \Phi^{\inv}_{r, Y_1}\left(S^r_{Y_2}(d) d S^r_{Y_2}(d)^{-1} \right)\right) S^r_{Y_2}(d) d^{-1} \\
&= c + K_{X_2, Y_2}\left(b, c, \Psi^{\inv}_{r, Y_2}(d)\right) d^{-1}\\
& \quad + K_{X_1, Y_1}\left(b, c S^r_{Y_2}(d)^{-1} +   K_{X_2, Y_2}\left(b, c, \Psi^{\inv}_{r, Y_2}(d)\right)d^{-1}S^r_{Y_2}(d)^{-1}, \Phi^{\inv}_{r, Y_1}\left(S^r_{Y_2}(d) d S^r_{Y_2}(d)^{-1} \right)\right) S^r_{Y_2}(d) d^{-1} \\
&= c + K_{X_2, Y_2}\left(b, c, \Psi^{\inv}_{r, Y_2}(d)\right)d^{-1} \\
& \quad  + K_{X_1, Y_1}\left(b, c S^r_{Y_2}(d)^{-1} +   \Psi_{o'}\left(b,c, \Phi^{\inv}_{Y_1Y_2}(d)\right), \Phi^{\inv}_{r, Y_1}\left(S^r_{Y_2}(d) d S^r_{Y_2}(d)^{-1} \right)\right) S^r_{Y_2}(d) d^{-1} \\
&= c + K_{X_1+X_2, Y_1Y_2}\left(b,c,\Phi^{\inv}_{Y_1Y_2}(d)\right) d^{-1} \\
&= T_{X_1 + X_2, Y_1Y_2}(b,c,d)
\end{align*}
as claimed.
\end{proof}

\section{The Operator-Valued Partial $S$-Transform}
\label{sec:S}

In this section, the operator-valued bi-free partial transformation that enables multiplicative convolution both pairs is developed.  Using the results of Section \ref{sec:free-S}, the proof is near identical to the proof of \cite{S2015-2}*{Theorem 4.5} once bi-multiplicative functions are considered and bookkeeping is done.

\begin{defn}
\label{defn:S-Transform}
Let $(\A, E, \varepsilon)$ be a Banach $B$-$B$-non-commutative probability space and let $X \in \A_\ell$ and $Y \in \A_r$ be such that $E(Y)$ and $E(X)$ are invertible.  The \emph{operator-valued bi-free partial $S$-transform of $(X, Y)$} is the analytic function defined by
\begin{align*}
S_{X, Y}(b,c,d) &= c + b^{-1} \Upsilon_{X, Y}(b,c,d) + \Upsilon_{X, Y}(b,c,d) d^{-1} + b^{-1} \Upsilon_{X, Y}(b,c,d) d^{-1}
\end{align*}
where
\[
\Upsilon_{X, Y}(b,c,d) := K_{X, Y}\left(\Phi^\inv_{\ell, X}(b), c, \Phi^\inv_{r,Y}(d)\right) = K_{X, Y}\left(b S^\ell_X(b), c, S^r_Y(d)d \right)
\]
for any bounded collection of $c$ provided $b$ and $d$ sufficiently small.
\end{defn}

\begin{rem}
One may be worried about the $b^{-1}$ and $d^{-1}$ terms in Definition \ref{defn:S-Transform}.  However, notice that in the definition of $K_{X, Y}(b,c,d)$, every term in the infinite sum is some element of $B$ multiplied by $b$ on the left and $d$ on the right, due to the properties of bi-multiplicative functions.  Consequently, the $b^{-1}$ and $d^{-1}$ are really representing 
\[
b^{-1}\Phi^\inv_{\ell, X}(b) = S^\ell_X(b) \qqand \Phi^\inv_{r,Y}(d) d^{-1} = S^r_Y(d)
\]
once one factors out the extraneous $b$- and $d$-terms in $K_{X, Y}(b,c,d)$.  Thus $S_{X, Y}(b,c,d)$ is well-defined.

Note in the case $B = \bC$ that $S_{X, Y}$ agrees with the bi-free partial $S$-transform defined in \cite{S2015-2}*{Proposition 4.2} by letting $b = z$, $d = w$, and $c = 1$.  Furthermore, the following result is the operator-valued analogue of \cite{S2015-2}*{Theorem 4.5} and reduces to \cite{S2015-2}*{Theorem 4.5} when $B = \bC$, $b = z$, $d = w$, and $c = 1$.
\end{rem}

\begin{thm}
\label{thm:S-property}
Let $(\A, E, \varepsilon)$ be a Banach $B$-$B$-non-commutative probability space, let $(X_1, Y_1)$ and $(X_2, Y_2)$ be bi-free over $B$ with $E(X_k)$ and $E(Y_k)$ invertible for all $k$.  Then
\begin{align*}
S_{X_1 X_2, Y_1Y_2}& (b,c,d) \\ & = S^\ell_{X_2}(b)S_{X_1, Y_1}\left(S^\ell_{X_2}(b)^{-1}bS^\ell_{X_2}(b), \,\, S^\ell_{X_2}(b)^{-1}      S_{X_2, Y_2}(b,c,d) S^r_{Y_2}(d)^{-1}, \,\, S^r_{Y_2}(d) d S^r_{Y_2}(d)^{-1}\right) S^r_{Y_2}(d)
\end{align*}
for any bounded collection of $c$ provided $b$ and $d$ sufficiently small.
\end{thm}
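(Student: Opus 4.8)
The plan is to mimic the combinatorial argument in the proof of Theorem \ref{thm:T-property} (itself the bi-free transcription of the proof of \cite{S2015-2}*{Theorem 4.5}), now splitting products on \emph{both} sides of the pair. First I would analyze
\[
K_{X_1X_2, Y_1Y_2}(b,c,d) = \sum_{n,m\geq 1} \kappa^B_{\chi_{n,m}}\bigl(\underbrace{L_b X_1X_2, \ldots, L_b X_1 X_2}_{n \text{ entries}}, \underbrace{R_d Y_1Y_2, \ldots, R_d Y_1 Y_2}_{m-1 \text{ entries}}, R_d Y_1Y_2 R_c\bigr)
\]
and apply Theorem \ref{thm:products} to break each $L_bX_1X_2$ into the consecutive pair $L_bX_1, X_2$ and each $R_dY_1Y_2$ into $R_dY_1, Y_2$. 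Letting $\sigma$ be the partition matching these consecutive pairs (on the doubled left and right index sets), Theorem \ref{thm:products} restricts the sum to those $\pi$ with $\pi \vee \sigma$ maximal, and bi-freeness of $(X_1,Y_1)$ from $(X_2,Y_2)$ forces every surviving $\pi$ to avoid blocks joining an $X_1$-node to an $X_2$-node except through the pairing, blocks joining a $Y_1$-node to a $Y_2$-node except through the pairing, and blocks joining an ``odd'' right node to an ``even'' right node --- exactly the restrictions encoded in $BNC_T(n,m)$ in Section \ref{sec:T}.

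Next I would organize the resulting sum according to the block $V_\pi$ containing the first left node (an $X_1$-node). The cases are the direct analogues of Lemmata \ref{lem:T-case-1}, \ref{lem:T-case-2}, and \ref{lem:T-case-3}: whether $V_\pi$ first reaches a $Y_2$-node or a $Y_1$-node on the right, and, on the left, which of the remaining left nodes are $X_1$- or $X_2$-nodes. Summing the portions of $\pi$ lying strictly between consecutive elements of $V_\pi$ over all admissible bi-non-crossing partitions on those intervals produces, via bi-multiplicative properties, precisely the functions $\psi_\ell(L_bX_1,X_2)$, $\psi_\ell(X_2,L_bX_1)$, $\psi_r(R_dY_1,Y_2)$, $\psi_r(Y_2,R_dY_1)$ from Section \ref{sec:free-S}, dressing the $b$- and $d$-arguments of inner functions $K_{X_2,Y_2}$ and $K_{X_1,Y_1}$ (and, from the ``bottom'' region which alone may join left and right nodes, producing the $c$-dependent terms). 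The output is a small system of identities expressing $K_{X_1X_2,Y_1Y_2}(b,c,d)$ through $K_{X_2,Y_2}$, $K_{X_1,Y_1}$, and these $\psi$'s, exactly parallel to the three lemmas preceding the proof of Theorem \ref{thm:T-property}.

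Finally I would substitute $b \mapsto \Phi^\inv_{\ell,X_1X_2}(b)$ and $d \mapsto \Phi^\inv_{r,Y_1Y_2}(d)$ and convert each $\psi$-expression into an $S$-transform using the machinery of Section \ref{sec:free-S}: equations (\ref{eq:move-around-B-in-pinched}), (\ref{eq:pinched-to-inverse-of-full}), (\ref{eq:inverse-times-pinched}), Lemma \ref{lem:S-lem-4}, and Lemma \ref{lem:free-S-lem-for-T} give, for instance,
\[
\psi_\ell\bigl(L_{\Phi^\inv_{\ell,X_1X_2}(b)}X_1, X_2\bigr) = \Phi^\inv_{\ell,X_2}(b), \qquad \psi_\ell\bigl(X_2, L_{\Phi^\inv_{\ell,X_1X_2}(b)}X_1\bigr) = S^\ell_{X_2}(b)^{-1},
\]
\[
\psi_\ell\bigl(X_2 L_{\Phi^\inv_{\ell,X_1X_2}(b)}, X_1\bigr) = \Phi^\inv_{\ell,X_1}\bigl(S^\ell_{X_2}(b)^{-1} b\, S^\ell_{X_2}(b)\bigr),
\]
together with the mirror identities on the right. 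Substituting these into the three lemma-type identities, factoring out on the left the genuine $b$ produced by a bi-multiplicative reduction (the residual $b^{-1}$ being the $S^\ell_{X_2}$-placeholder of Definition \ref{defn:S-Transform}) and likewise on the right the genuine $d$, and reassembling the four corner contributions $c$, $b^{-1}\Upsilon_{X_1X_2,Y_1Y_2}$, $\Upsilon_{X_1X_2,Y_1Y_2}d^{-1}$, $b^{-1}\Upsilon_{X_1X_2,Y_1Y_2}d^{-1}$ yields the claimed formula after comparing power series of the two analytic functions near zero. The main obstacle I expect is the double bookkeeping: unlike Theorem \ref{thm:T-property}, where only the right face is a product, here both $X_1X_2$ and $Y_1Y_2$ are products, so the block $V_\pi$ interacts with the $X_1/X_2$ split on the left and the $Y_1/Y_2$ split on the right simultaneously, and one must track the nested $K$-expressions together with conjugations by $S^\ell_{X_2}(b)^{\pm1}$ and $S^r_{Y_2}(d)^{\pm1}$ so that every $b^{-1}$ and $d^{-1}$ placeholder cancels against a genuine $b$ or $d$. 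Checking that the boundary terms reassemble with the middle argument entering precisely as $S^\ell_{X_2}(b)^{-1} S_{X_2,Y_2}(b,c,d) S^r_{Y_2}(d)^{-1}$ is the delicate point; the rest is a mechanical transcription of the free-probability combinatorics into bi-non-crossing diagrams via Remark \ref{rem:bi-multi-same-as-free}.
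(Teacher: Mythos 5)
Your proposal follows essentially the same route as the paper's proof: expand $K_{X_1X_2,Y_1Y_2}$ via Theorem \ref{thm:products} over the doubled left and right index sets, use bi-freeness to restrict to partitions with no odd--even mixing (the paper's $BNC_S(n,m)$, the two-sided analogue of the $BNC_T(n,m)$ you cite), decompose according to the leading block joining the two sides, convert the interval sums into $\psi_\ell$ and $\psi_r$ by bi-multiplicativity, and then substitute $b\mapsto\Phi^\inv_{\ell,X_1X_2}(b)$, $d\mapsto\Phi^\inv_{r,Y_1Y_2}(d)$ and invoke (\ref{eq:move-around-B-in-pinched}), (\ref{eq:pinched-to-inverse-of-full}), (\ref{eq:inverse-times-pinched}), Lemma \ref{lem:S-lem-4}, and Lemma \ref{lem:free-S-lem-for-T} exactly as you indicate, finishing by expanding the claimed formula and comparing power series. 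The one imprecision is the case split: since odd and even nodes never share a block, the block containing $1_\ell$ can only meet $Y_1$-nodes, so the paper instead splits on the parity of the first block containing \emph{both} left and right nodes (Lemma \ref{lem:S-case-1} for the even case) and subdivides the odd case into four subcases according to whether the blocks of $1_\ell$ and $1_r$ acquire nodes of the opposite side (Lemmata \ref{lem:S-case-2}--\ref{lem:S-case-5}, assembled in Lemma \ref{lem:S-case-6}) --- precisely the ``double bookkeeping'' you flag as the expected obstacle.
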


Note that since $(X_1, Y_1)$ and $(X_2, Y_2)$ be bi-free over $B$, we obtain that $E(X_1X_2) = E(X_1)E(X_2)$ and $E(Y_1Y_2) = E(Y_2)E(Y_1)$ are invertible.  Hence $S_{X_1 X_2, Y_1Y_2}(b,c,d)$ makes sense.  Furthermore, the above formula makes sense by the same arguments used to show that the formula in Theorem \ref{thm:T-property} makes sense.

Our proof begins by analyzing
\[
K_{X_1X_2, Y_1Y_2}(b,c,d) = \sum_{n,m\geq 1} \kappa^B_{n,m}(  \underbrace{L_b X_1X_2, \ldots, L_b X_1X_2}_{n \text{ entries }}, \underbrace{R_d Y_1Y_2, \ldots, R_d Y_1Y_2}_{m-1 \text{ entries }}, R_d Y_1Y_2 R_c)  .
\]
For fix $n,m \geq 1$, let $\sigma_{n,m}$ denote the element of $BNC(2n,2m)$ with blocks $\{\{(2k-1)_\ell, (2k)_\ell\}\}_{k=1}^n \cup \{\{(2k-1)_r, (2k)_r\}\}_{k=1}^m$.  Thus Theorem \ref{thm:products} implies that
\begin{align*}
&\kappa^B_{n,m}(  \underbrace{L_b X_1X_2, \ldots, L_b X_1X_2}_{n \text{ entries }}, \underbrace{R_d Y_1Y_2, \ldots, R_d Y_1Y_2}_{m-1 \text{ entries }}, R_d Y_1Y_2 R_c) \\
&= \sum_{\substack{ \pi \in BNC(2n, 2m) \\ \pi \vee \sigma_{n,m} = 1_{2n,2m} }} \kappa^B_\pi(\underbrace{L_b X_1, X_2, L_b X_1, X_2, \ldots, L_b X_1, X_2}_{X_2 \text{ occurs }n \text{ times}}, \underbrace{R_d Y_1, Y_2, R_dY_1, Y_2, \ldots, R_dY_1, Y_2}_{Y_2 \text{ occurs }m-1 \text{ times}}, R_d Y_1, Y_2 R_c).
\end{align*}
Since $(X_1, Y_1)$ and $(X_2, Y_2)$ are bi-free, we note that 
\[
\kappa^B_\pi(\underbrace{L_b X_1, X_2, L_b X_1, X_2, \ldots, L_b X_1, X_2}_{X_2 \text{ occurs }n \text{ times}}, \underbrace{R_d Y_1, Y_2, R_dY_1, Y_2, \ldots, R_dY_1, Y_2}_{Y_2 \text{ occurs }m-1 \text{ times}}, R_d Y_1, Y_2 R_c) = 0
\]
if $\pi$ contains a block containing a $(2k)_{\theta_1}$ and a $(2j-1)_{\theta_2}$ for some $\theta_1, \theta_2 \in \{\ell, r\}$ and for some $k, j$.

For $n,m\geq 1$, let $BNC_S(n,m)$ denote all $\pi \in BNC(2n, 2m)$ such that $\pi \vee \sigma_{n,m} = 1_{2n,2m}$ and $\pi$ contains no blocks with both a $(2k)_{\theta_1}$ and a $(2j-1)_{\theta_2}$ for some $\theta_1, \theta_2 \in \{\ell, r\}$ and for some $k, j$.  Consequently, we obtain
\begin{align*}
&K_{X_1X_2, Y_1Y_2}(b,c,d) \\ 
&= \sum_{n,m\geq 1} \left(\sum_{\pi \in BNC_S(n,m)} \kappa^B_\pi(\underbrace{L_b X_1, X_2, \ldots, L_b X_1, X_2}_{X_2 \text{ occurs }n \text{ times}}, \underbrace{R_d Y_1, Y_2, \ldots, R_dY_1, Y_2}_{Y_2 \text{ occurs }m-1 \text{ times}}, R_d Y_1, Y_2 R_c) \right).
\end{align*}

We desire to divide up this sum into two sums based on types of partitions in $BNC_S(n,m)$.  Notice that if $\pi \in BNC_S(n,m)$, then $\pi$ must contain a block with both a $k_\ell$ and a $j_r$ for some $k,j$ so that $\pi \vee \sigma_{n,m} = 1_{2n, 2m}$.

Let $BNC_S(n,m)_e$ denote all $\pi \in BNC_S(n,m)$ such that the first block of $\pi$, as measured from the top in the bi-non-crossing diagram, that has both left and right nodes has nodes of even index.  Similarly let $BNC_S(n,m)_o$ denote all $\pi \in BNC_T(n,m)$ such that the first block of $\pi$, as measured from the top in the bi-non-crossing diagram, that has both left and right nodes has nodes of odd index. Note $BNC_S(n,m)_e$ and $BNC_S(n,m)_o$ are disjoint and $BNC_S(n,m)_e \cup BNC_S(n,m)_o = BNC_S(n,m)$ (as the top block can either have nodes of even index or nodes of odd index).  Therefore, if for $p \in \{o,e\}$ we define
\[
\Psi_p(b,c,d) := \sum_{n,m\geq 1} \left(\sum_{\pi \in BNC_S(n,m)_p} \kappa^B_\pi(\underbrace{L_b X_1, X_2, \ldots, L_b X_1, X_2}_{X_2 \text{ occurs }n \text{ times}}, \underbrace{R_d Y_1, Y_2, \ldots, R_dY_1, Y_2}_{Y_2 \text{ occurs }m-1 \text{ times}}, R_d Y_1, Y_2 R_c) \right),
\]
then
\[
K_{X_1X_2, Y_1Y_2}(b,c,d) = \Psi_e(b,c,d) + \Psi_o(b,c,d).
\]
Expressions for $\Psi_e(b,c,d)$ and $\Psi_o(b,c,d)$ will be derived beginning with $\Psi_e(b,c,d)$.  Again, we will not use the same rigour as we did in Section \ref{sec:R-Trans}.

\begin{lem}
\label{lem:S-case-1}
Under the above notation and assumptions,
\[
\Psi_e(b,c,d) = K_{X_2, Y_2}\left(\psi_\ell(L_b X_1, X_2), c, \psi_r(R_d Y_1, Y_2) \right). 
\]
\end{lem}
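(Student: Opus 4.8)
The plan is to follow the template set by the proofs of Lemmata \ref{lem:S-lem-1}, \ref{lem:S-lem-2}, and \ref{lem:T-case-1}. Since the series defining $\Psi_e(b,c,d)$ converges absolutely, we may reorganize it by grouping the partitions $\pi\in BNC_S(n,m)_e$ according to the single block $W_\pi$ of $\pi$ that is first, as measured from the top of the bi-non-crossing diagram, among all blocks containing both a left and a right node. By the definition of $BNC_S(n,m)_e$, every node of $W_\pi$ has even index, so $W_\pi$ consists of $X_2$-nodes (even left positions) and $Y_2$-nodes (even right positions), and contains at least one of each.

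First I would record the structural consequences of $\pi$ being bi-non-crossing together with $\pi\vee\sigma_{n,m}=1_{2n,2m}$: the nodes of $W_\pi$ cut the remaining nodes into $\chi$-intervals (the ``gaps'' lying $\prec_\chi$-strictly between consecutive nodes of $W_\pi$, together with the two outer ones), each gap is a union of blocks of $\pi$, no block of $\pi$ meets two gaps, and --- precisely because $W_\pi$ is the \emph{topmost} mixed block --- a gap between two $X_2$-nodes of $W_\pi$ is a left $\chi$-interval of alternating form $L_bX_1, X_2,\dots,X_2, L_bX_1$ while a gap between two $Y_2$-nodes of $W_\pi$ is a right $\chi$-interval of form $R_dY_1, Y_2,\dots,Y_2, R_dY_1$; the final node $Y_2R_c$ keeps its $R_c$. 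The parity constraints in the definition of $BNC_S(n,m)_e$ (no block mixes even and odd index, $2p\nsim_\pi 2q+1$) cut the admissible restrictions of $\pi$ to a left gap down to exactly the family $BNC'_\ell$ appearing in the definition of $\psi_\ell$ once a singleton block is prepended, and likewise for the right gaps and $\psi_r$.

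Next I would apply bi-multiplicativity of $\kappa^B$ exactly as in the proof of Lemma \ref{lem:S-lem-2}: summing $\kappa^B_{\pi|_{\mathrm{gap}}}$ over all admissible fillings of a fixed left gap and pushing the resulting $B$-operator onto the adjacent $X_2$-node of $W_\pi$ yields $L_{\psi_\ell(L_bX_1,X_2)}$ there, and similarly a right gap yields $R_{\psi_r(R_dY_1,Y_2)}$ on the adjacent $Y_2$-node. Once this is done in every gap, $\pi$ is reduced to $W_\pi$ alone with each $X_2$-node prefixed by $L_{\psi_\ell(L_bX_1,X_2)}$, each $Y_2$-node prefixed by $R_{\psi_r(R_dY_1,Y_2)}$, and an $R_c$ after the last node. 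As the map $\pi\mapsto(W_\pi,\ \text{the gap fillings})$ is a bijection onto the pairs consisting of a bi-non-crossing diagram on $n'\geq1$ $X_2$-nodes and $m'\geq1$ $Y_2$-nodes in which $1_\ell$ is joined to a right node, together with an independent $\psi_\ell$- or $\psi_r$-type filling of each gap, summing over all $n,m$ collapses $\Psi_e(b,c,d)$ to
\[
\sum_{n',m'\geq1}\kappa^B_{\chi_{n',m'}}\bigl(L_{\psi_\ell(L_bX_1,X_2)}X_2,\dots, L_{\psi_\ell(L_bX_1,X_2)}X_2, R_{\psi_r(R_dY_1,Y_2)}Y_2,\dots, R_{\psi_r(R_dY_1,Y_2)}Y_2, R_{\psi_r(R_dY_1,Y_2)}Y_2R_c\bigr),
\]
which is $K_{X_2,Y_2}\bigl(\psi_\ell(L_bX_1,X_2),\,c,\,\psi_r(R_dY_1,Y_2)\bigr)$ by definition of $K$.

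The step I expect to be the main obstacle is the bookkeeping for this bijection: one must verify that $W_\pi$ is well defined for every element of $BNC_S(n,m)_e$, and that once $W_\pi$ is prescribed as an abstract block inside a bi-non-crossing diagram on the even nodes, the condition $\pi\vee\sigma_{n,m}=1_{2n,2m}$ together with the parity restrictions decouples into exactly one constraint per gap (membership in $BNC'_\ell$ or $BNC'_r$), with no residual interaction among the gaps or between a gap and $W_\pi$ beyond the single $B$-operator transmitted by bi-multiplicativity. Since this is the same phenomenon already exploited in Lemmata \ref{lem:S-lem-1}--\ref{lem:S-lem-3} and \ref{lem:T-case-1}--\ref{lem:T-case-3}, I would lean on those reductions and concentrate only on checking that, when \emph{both} sides alternate, the left gaps and right gaps genuinely behave independently.
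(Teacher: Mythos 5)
Your overall strategy is exactly the paper's: decompose each $\pi\in BNC_S(n,m)_e$ according to the topmost mixed block, collapse the gaps via bi-multiplicativity into copies of $\psi_\ell(L_bX_1,X_2)$ and $\psi_r(R_dY_1,Y_2)$ attached from above to the adjacent nodes of that block, and resum to get $K_{X_2,Y_2}$. However, the bookkeeping step you defer at the end is precisely where the substance of the lemma lies, and as written your structural claims skip it. Your list of gaps (left gaps between two $X_2$-nodes of $W_\pi$, right gaps between two $Y_2$-nodes, the two outer ones) silently omits the one gap lying $\prec_\chi$-between the last $X_2$-node and the first $Y_2$-node of $W_\pi$, i.e.\ the potential mixed ``bottom'' region. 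You must show, using $\pi\vee\sigma_{n,m}=1_{2n,2m}$, that this region is empty: if the largest left index of $W_\pi$ were $2l_t<2n$ (or the largest right index $2k_s<2m$), the nodes below are paired by $\sigma_{n,m}$ only among themselves and, by bi-non-crossingness relative to the elements $(2l_t)_\ell,(2k_s)_r$ of $W_\pi$, no block of $\pi$ can join them to anything outside, so $\pi\vee\sigma_{n,m}\neq 1_{2n,2m}$. Hence $W_\pi$ contains $(2n)_\ell$ and $(2m)_r$; this is what places the $R_c$ on a node of $W_\pi$, forces $W_\pi$ to be the \emph{only} mixed block, and is exactly what distinguishes the $e$-case from the $o$-case treated in Lemma \ref{lem:S-case-6}.

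This omission also shows up in your description of the bijection's image: after collapsing the gaps, what remains of $\pi$ is the single block $W_\pi$, i.e.\ the full partition $1_{\chi_{t,s}}$ on $t$ left and $s$ right nodes, not ``a bi-non-crossing diagram in which $1_\ell$ is joined to a right node.'' Taken literally, the latter would replace each summand by a sum of $\kappa^B_\tau$ over many partitions $\tau$ and would not produce $K_{X_2,Y_2}$; your displayed formula is the correct one only because each choice of $W_\pi$ contributes the single full cumulant $\kappa^B_{\chi_{t,s}}(L_{\psi_\ell(L_bX_1,X_2)}X_2,\ldots,R_{\psi_r(R_dY_1,Y_2)}Y_2R_c)$. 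Once you add the argument of the previous paragraph (and the routine check, as in Lemmata \ref{lem:S-lem-1} and \ref{lem:S-lem-2}, that the filling of each left or right gap, with a singleton prepended, ranges bijectively over $BNC'_\ell$ resp.\ $BNC'_r$ with one constraint per gap), the proof matches the paper's.
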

\begin{proof}
Fix $n,m \geq 1$.  If $\pi \in BNC_S(n,m)_e$, let $V_\pi$ denote the first (and, as it happens, only) block of $\pi$, as measured from the top of $\pi$'s bi-non-crossing diagram, that has both left and right nodes.  Since $\pi \vee \sigma_{n,m} = 1_{2n,2m}$, there exist $t,s\geq 1$, $1\leq l_1 < l_2 < \cdots < l_t = n$, and $1 \leq k_1 < k_2 < \cdots < k_s = m$ such that
\[
V_\pi = \{(2l_p)_\ell\}^t_{p=1} \cup \{(2k_q)_r\}^s_{q=1}.
\]
Note $V_\pi$ divides the remaining left nodes into $t$ disjoint regions and the remaining right nodes into $s$ disjoint regions. Moreover, each block of $\pi$ can only contain nodes in one such region.  Below is an example of such a $\pi$.
\begin{align*}
	\begin{tikzpicture}[baseline]
	\draw[thick, dashed] (-1,5.75) -- (-1,-.25) -- (1,-.25) -- (1,5.75);
	\draw[thick, blue, densely dotted] (1, 5.5) -- (0.75, 5.25) -- (1, 5);
	\draw[thick, blue, densely dotted] (1, 4.5) -- (0.75, 4.25) -- (1, 4);
	\draw[thick, blue, densely dotted] (1, 3.5) -- (0.75, 3.25) -- (1, 3);
	\draw[thick, blue, densely dotted] (1, 2.5) -- (0.75, 2.25) -- (1, 2);
	\draw[thick, blue, densely dotted] (1, 1.5) -- (0.75, 1.25) -- (1, 1);
	\draw[thick, blue, densely dotted] (1, 0.5) -- (0.75, 0.25) -- (1, 0);
	\draw[thick, blue, densely dotted] (-1, 5.5) -- (-0.75, 5.25) -- (-1, 5);
	\draw[thick, blue, densely dotted] (-1, 4.5) -- (-0.75, 4.25) -- (-1, 4);
	\draw[thick, blue, densely dotted] (-1, 3.5) -- (-0.75, 3.25) -- (-1, 3);
	\draw[thick, blue, densely dotted] (-1, 2.5) -- (-0.75, 2.25) -- (-1, 2);
	\draw[thick, blue, densely dotted] (-1, 1.5) -- (-0.75, 1.25) -- (-1, 1);
	\draw[thick, ggreen] (1,5) -- (0,5) -- (0,0) -- (1,0);
	\draw[thick] (-1,5.5) -- (-.25, 5.5) -- (-.25, 3.5) -- (-1, 3.5);
	\draw[thick] (1, 4.5) -- (.5, 4.5) -- (.5, 2.5) -- (1, 2.5);
	\draw[thick] (1, 3.5) -- (.5, 3.5);
	\draw[thick] (1,1.5) -- (.5, 1.5) -- (.5, 0.5) -- (1, 0.5);
	\draw[thick] (-1, 1.5) -- (-.5, 1.5) -- (-.5, 2.5) -- (-1, 2.5);
	\draw[thick] (-1, 4) -- (-.5, 4) -- (-.5, 5) -- (-1, 5);
	\draw[thick, ggreen] (-1, 3) -- (0, 3);
	\draw[thick, ggreen] (-1, 1) -- (0, 1);
	\draw[thick, ggreen] (1, 2) -- (0, 2);
	\node[left] at (-1, 5.5) {$L_b X_1$, $1_\ell$};
	\draw[fill=black] (-1,5.5) circle (0.05);
	\node[left] at (-1, 5) {$X_2$, $2_\ell$};
	\draw[fill=black] (-1,5) circle (0.05);
	\node[left] at (-1, 4.5) {$L_b X_1$, $3_\ell$};
	\draw[fill=black] (-1,4.5) circle (0.05);
	\node[left] at (-1, 4) {$X_2$, $4_\ell$};
	\draw[fill=black] (-1,4) circle (0.05);
	\node[left] at (-1, 3.5) {$L_b X_1$, $5_\ell$};
	\draw[fill=black] (-1,3.5) circle (0.05);
	\node[left] at (-1, 3) {$X_2$, $6_\ell$};
	\draw[ggreen, fill=ggreen] (-1,3) circle (0.05);
	\node[left] at (-1, 2.5) {$L_b X_1$, $7_\ell$};
	\draw[fill=black] (-1,2.5) circle (0.05);
	\node[left] at (-1, 2) {$X_2$, $8_\ell$};
	\draw[fill=black] (-1,2) circle (0.05);
	\node[left] at (-1, 1.5) {$L_b X_1$, $9_\ell$};
	\draw[fill=black] (-1,1.5) circle (0.05);
	\node[left] at (-1, 1) {$X_2$, $10_\ell$};
	\draw[ggreen, fill=ggreen] (-1,1) circle (0.05);
	\draw[fill=black] (1,5.5) circle (0.05);
	\node[right] at (1,5.5) {$1_r$, $R_d Y_1$};
	\draw[ggreen, fill=ggreen] (1,5) circle (0.05);
	\node[right] at (1,5) {$2_r$, $Y_2$};
	\draw[fill=black] (1,4.5) circle (0.05);
	\node[right] at (1,4.5) {$3_r$, $R_d Y_1$};
	\draw[ggreen, fill=ggreen] (1,0) circle (0.05);
	\node[right] at (1,4) {$4_r$, $Y_2$};
	\draw[fill=black] (1,4) circle (0.05);
	\node[right] at (1,3.5) {$5_r$, $R_d Y_1$};
	\draw[fill=black] (1,3.5) circle (0.05);
	\node[right] at (1,3) {$6_r$, $Y_2$};
	\draw[fill=black] (1,3) circle (0.05);
	\node[right] at (1,2.5) {$7_r$, $R_d Y_1$};
	\draw[fill=black] (1,2.5) circle (0.05);
	\node[right] at (1,2) {$8_r$, $Y_2$};
	\draw[ggreen, fill=ggreen] (1,2) circle (0.05);
	\node[right] at (1,1.5) {$9_r$, $R_d Y_1$};
	\draw[fill=black] (1,1.5) circle (0.05);
	\node[right] at (1,1) {$10_r$, $Y_2$};
	\draw[fill=black] (1,1) circle (0.05);
	\node[right] at (1,0.5) {$11_r$, $R_d Y_1$};
	\draw[fill=black] (1,0.5) circle (0.05);
	\node[right] at (1,0) {$12_r$, $Y_2 R_c$};
	\end{tikzpicture}
\end{align*}

For each $1 \leq p \leq t$, let $i_p = l_p - l_{p-1}$, where $l_0 = 0$, and let $\pi_{\ell, p}$ denote the non-crossing partition obtained by restricting $\pi$ to $\{(2l_{p-1} + 1)_\ell, (2l_{p-1}+2)_\ell, \ldots, (2l_{p}-1)_\ell\}$.  Note $\pi_{\ell, p}$ is naturally an element of $BNC'_\ell(i_p)$ once a singleton block is added. 

Similarly, for each $1 \leq q \leq s$, let $j_q = k_q - k_{q-1}$, where $k_0 = 0$, and let $\pi_{r, q}$ denote the non-crossing partition obtained by restricting $\pi$ to $\{(2k_{q-1} + 1)_r, (2k_{q-1}+2)_r, \ldots, (2k_{q}-1)_r\}$.  Note $\pi_{r, q}$ is naturally an element of $BNC'_r(j_q)$  once a singleton block is added.

Consequently, if we sum over all possible $n,m\geq 1$, for each $V_\pi$ one obtains $\psi_\ell(L_b X_1, X_2)$ for the $B$-operator in each of the $t$ disjoint regions on the left and $\psi_r(R_d Y_1, Y_2)$ for the $B$-operator in each of the $s$ disjoint regions on the right.  Using bi-multiplicative properties (so the $\psi_\ell(L_b X_1, X_2)$ term attaches to a $X_2$ from above to obtain $L_{\psi_\ell(L_b X_1, X_2)} X_2$ and so the $\psi_r(R_d Y_1, Y_2)$ term attaches to a $Y_2$ from above to obtain $R_{\psi_r(R_d Y_1, Y_2)} Y_2$) and summing over all possible $V_\pi$ yields the result.
\end{proof}

In order to discuss $\Psi_o(b,c,d)$, it is quite helpful to discuss subcases.  For $n,m \geq 0$, let $\sigma'_{n,m} $ denote the element of $BNC(2n+1, 2m+1)$ with blocks $\{\{1_\ell, 1_r\}\} \cup \{\{(2l)_\ell, (2l+1)_\ell\}\}_{l=1}^n  \cup \{\{(2k)_r, (2k+1)_r\}\}_{k=1}^m$.  Let $BNC_S(n,m)'_o$ denote the set of all $\pi \in BNC(2n+1,2m+1)$ such that $\pi \vee \sigma'_{n,m} = 1_{2n+1,2m+1}$ and contains no blocks with both a $(2k)_{\theta_1}$ and a $(2j-1)_{\theta_2}$ any $\theta_1, \theta_2 \in \{\ell, r\}$ and any $k, j$.  We desired to divide up $BNC_S(n,m)'_o$ further.  For $\pi \in BNC_S(n,m)'_o$, let $V_{\pi, \ell}$ denote the block of $\pi$ containing $1_\ell$ and let $V_{\pi, r}$ denote the block of $\pi$ containing $1_r$.  Then
\begin{align*}
BNC_S(n,m)_{o,0} &= \{\pi \in BNC_S(n,m)'_o \, \mid \,  V_{\pi, \ell} \text{ has no right nodes and } V_{\pi, r} \text{ has no left nodes}\},\\
BNC_S(n,m)_{o,r} &= \{\pi \in BNC_S(n,m)'_o \, \mid \,  V_{\pi, \ell} \text{ has no right nodes but } V_{\pi, r} \text{ has left nodes}\}, \\
BNC_S(n,m)_{o,\ell} &= \{\pi \in BNC_S(n,m)'_o \, \mid \,  V_{\pi, \ell} \text{ has right nodes but } V_{\pi, r} \text{ has no left nodes}\}, \text{ and} \\
BNC_S(n,m)_{o, \ell r} &= \{\pi \in BNC_S(n,m)'_o \, \mid \, V_{\pi, \ell} = V_{\pi, r}\}.
\end{align*}
Due to the nature of bi-non-crossing partitions, the above sets are disjoint and have union $BNC_S(n,m)'_o$.  

For $p \in \{0, r, \ell, \ell r\}$, define
\begin{align*}
\Psi_{o, p}(b,c,d)  := \sum_{\substack{ n,m\geq 0 \\ \pi \in BNC_S(n,m)_{o,p}}} \kappa^B_\pi(X_2, \underbrace{L_{b}X_1, X_2, \ldots, L_b X_1, X_2}_{X_2, L_b X_1 \text{ each occur }n \text{ times}}, \underbrace{ Y_2, R_d Y_1, Y_2 \ldots, R_dY_1, Y_2, R_d Y_1}_{Y_2, R_d Y_1 \text{ occurs }m \text{ times}}, Y_2 R_c).
\end{align*}

\begin{lem}
\label{lem:S-case-2}
Under the above notation and assumptions, 
\[
\Psi_{o,0}(b,c,d) =  \psi_\ell(L_b X_1, X_2)^{-1} \Phi_{\ell, X_2}(\psi_\ell(L_b X_1, X_2)) c \Phi_{r, Y_2}(\psi_r(R_d Y_1, Y_2)) \psi_r(R_d Y_1, Y_2)^{-1}.
\]
\end{lem}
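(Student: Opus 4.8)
The plan is to mimic the structure of the proof of Lemma \ref{lem:T-case-1}, but now track what happens on \emph{both} the left and the right sides simultaneously, since for $\pi \in BNC_S(n,m)_{o,0}$ the blocks $V_{\pi,\ell}$ and $V_{\pi,r}$ are distinct and each lives purely on one side. First I would fix $n,m\geq 0$ and a partition $\pi \in BNC_S(n,m)_{o,0}$, and note that $V_{\pi,\ell}$ (the block of $\pi$ containing $1_\ell$) consists entirely of left nodes and $V_{\pi,r}$ (the block containing $1_r$) consists entirely of right nodes. Because $\pi \vee \sigma'_{n,m} = 1_{2n+1,2m+1}$, the block $V_{\pi,\ell}$ must be of the form $\{1_\ell\} \cup \{(2l_p)_\ell, (2l_p+1)_\ell\}$-reachable nodes — more precisely, writing the left nodes in $\prec_\chi$-order, $V_{\pi,\ell}$ must contain $1_\ell$ and must hit enough of the $\sigma'$-pairs so that the left half of the diagram becomes connected; similarly for $V_{\pi,r}$. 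This forces $V_{\pi,\ell} = \{1_\ell\} \cup \{(2l_p+1)_\ell\}_{p}$ together with the partner nodes, i.e. an interval-type block on the left, and likewise $V_{\pi,r}$ on the right, exactly as in the $BNC'$ description from Section \ref{sec:free-S}.

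Next I would carry out the rearrangement of the absolutely convergent sum $\Psi_{o,0}(b,c,d)$ by summing first over all $\pi$ with a fixed pair $(V_{\pi,\ell}, V_{\pi,r})$. The left block $V_{\pi,\ell}$ divides the remaining left nodes into disjoint regions, each of which (after adding a singleton block) ranges over all of $BNC'_\ell(\cdot)$, so summing the restriction of $\kappa^B_\pi$ to each such region produces a factor of $\psi_\ell(L_b X_1, X_2)$; symmetrically each right region produces $\psi_r(R_d Y_1, Y_2)$. Now I would invoke the bi-multiplicative properties (parts (\ref{part:bi-multi-3}) and (\ref{part:bi-multi-4}) of Definition \ref{defn:bi-multiplicative}) to attach these $B$-operators as $L_{\psi_\ell(L_b X_1, X_2)}$ and $R_{\psi_r(R_d Y_1, Y_2)}$ to the appropriate $X_2$'s and $Y_2$'s above, so that after summing over all sizes of $V_{\pi,\ell}$ one gets, reading the left side, a factor $\Phi_{\ell, X_2}(\psi_\ell(L_b X_1, X_2))$ — but with the caveat that the very first entry of $V_{\pi,\ell}$ is the bare $X_2$ rather than $L_{\psi_\ell(L_b X_1,X_2)}X_2$, which accounts for the leading $\psi_\ell(L_b X_1, X_2)^{-1}$ correction factor (exactly the phenomenon noted at the end of the proof of Lemma \ref{lem:T-case-2}). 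Symmetrically the right side produces $\Phi_{r, Y_2}(\psi_r(R_d Y_1, Y_2)) \psi_r(R_d Y_1, Y_2)^{-1}$. The central $c$ sits between them — it comes from the $Y_2 R_c$ node at the bottom of the bottom-most region, which in the $o,0$-case connects neither to left nodes nor through $V_{\pi,\ell}$ or $V_{\pi,r}$, so it simply contributes $c$ sandwiched between the left-reduction and the right-reduction.

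Assembling these three pieces in order — left reduction, then $c$, then right reduction — gives
\[
\Psi_{o,0}(b,c,d) = \psi_\ell(L_b X_1, X_2)^{-1} \Phi_{\ell, X_2}(\psi_\ell(L_b X_1, X_2))\, c\, \Phi_{r, Y_2}(\psi_r(R_d Y_1, Y_2)) \psi_r(R_d Y_1, Y_2)^{-1},
\]
which is the claimed formula. I expect the main obstacle to be the careful bookkeeping of the two ``correction'' inverse factors $\psi_\ell(L_b X_1,X_2)^{-1}$ and $\psi_r(R_d Y_1,Y_2)^{-1}$: one has to verify that the first node $X_2$ of $V_{\pi,\ell}$ (respectively the first node $1_r$, $Y_2$ of $V_{\pi,r}$) genuinely lacks the $L_{\psi_\ell(\cdots)}$ (respectively $R_{\psi_r(\cdots)}$) prefix that all the other $X_2$'s (respectively $Y_2$'s) acquire under the bi-multiplicative reduction, and that applying the reduction identities in the right order reproduces $\Phi_{\ell, X_2}$ and $\Phi_{r, Y_2}$ exactly (not $\Psi$, not something shifted). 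This is the same subtlety already handled in Lemmata \ref{lem:S-lem-2}, \ref{lem:T-case-2}, and \ref{lem:T-case-3}, so I would lean on those arguments and the bijection between non-crossing partitions with $\{1\}$ a block and $BNC'_k(\cdot)$ to keep the counting honest; beyond that the proof is routine diagram-chasing, and I would include one bi-non-crossing diagram illustrating a typical $\pi \in BNC_S(n,m)_{o,0}$ to guide the reader.
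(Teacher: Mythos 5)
Your overall strategy is the paper's: fix the blocks $V_{\pi,\ell}$ and $V_{\pi,r}$, sum each intermediate region to a factor $\psi_\ell(L_bX_1,X_2)$ or $\psi_r(R_dY_1,Y_2)$, attach these by bi-multiplicativity, and account for the two missing prefixes by the inverse correction factors. However, there is a concrete structural error at the bottom of the diagram, and it is exactly the point where this lemma differs from the general case handled in Lemma \ref{lem:S-case-6}. You assert that the $Y_2R_c$ node ``connects neither to left nodes nor through $V_{\pi,\ell}$ or $V_{\pi,r}$'' and that this is why $c$ appears in the middle. In fact, for $\pi\in BNC_S(n,m)_{o,0}$ the condition $\pi\vee\sigma'_{n,m}=1_{2n+1,2m+1}$ forces $V_{\pi,\ell}=\{(2l_p-1)_\ell\}_{p=1}^t$ with $l_1=1$ and $l_t=n+1$, and $V_{\pi,r}=\{(2k_q-1)_r\}_{q=1}^s$ with $k_1=1$ and $k_s=m+1$: if the lowest node of $V_{\pi,\ell}$ (resp.\ $V_{\pi,r}$) were not $(2n+1)_\ell$ (resp.\ $(2m+1)_r$), then the nodes below it could only be joined by $\pi$-blocks to nodes lying below the lowest node of the block on the other side (anything else crosses $V_{\pi,\ell}$ or $V_{\pi,r}$), and since $\sigma'_{n,m}$ pairs those bottom nodes only among themselves, this bottom set would be a nonempty union of blocks of $\pi\vee\sigma'_{n,m}$ not containing $1_\ell$, contradicting the join condition. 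In particular $(2m+1)_r\in V_{\pi,r}$, so $c$ does not come from a free-standing bottom node: it comes from the $R_c$ sitting on the last entry of the block $V_{\pi,r}$, which escapes to the left of that block's $B$-value by property (\ref{part:bi-multi-1}) of Definition \ref{defn:bi-multiplicative}, and then lands between the $V_{\pi,\ell}$-value and the $V_{\pi,r}$-value by property (\ref{part:bi-multi-3}).

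This matters because without $l_t=n+1$ and $k_s=m+1$ your claim that every region determined by $V_{\pi,\ell}$ (resp.\ $V_{\pi,r}$) ranges exactly over $BNC'_\ell(\cdot)$ (resp.\ $BNC'_r(\cdot)$) fails: a hypothetical region below the lowest node of $V_{\pi,\ell}$ would not be tied to $V_{\pi,\ell}$ by a $\sigma'$-pair and could contain blocks mixing left and right nodes, and summing over it would not produce $\psi_\ell$; such configurations are precisely what generate the recursive $\Psi_{o'}$ term in Lemma \ref{lem:S-case-6}, not a clean central $c$. Relatedly, your description of $V_{\pi,\ell}$ as ``$\{1_\ell\}\cup\{(2l_p+1)_\ell\}$ together with the partner nodes, i.e.\ an interval-type block'' is not correct: the parity condition in the definition of $BNC_S(n,m)$ forbids $V_{\pi,\ell}$ from containing any even-indexed node, so it is a set of odd left nodes and is not an interval. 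Once the connectivity argument above is inserted, the rest of your reduction (regions giving $\psi_\ell$ and $\psi_r$; the bare $X_2$ at $1_\ell$ and bare $Y_2$ at $1_r$ producing the factors $\psi_\ell(L_bX_1,X_2)^{-1}$ on the left and $\psi_r(R_dY_1,Y_2)^{-1}$ on the right, the division being legitimate by Lemma \ref{lem:divide}) is exactly the paper's argument and yields the stated formula.
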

\begin{proof}
Fix $n,m \geq 0$.  If $\pi \in BNC_S(n,m)_{o,0}$, then, since $\pi \vee \sigma'_{n,m} = 1_{2n+1,2m+1}$, there exist $t,s\geq 1$, $1 = l_1 < l_2 < \cdots < l_t = n+1$, and $1 = k_1 < k_2 < \cdots < k_s = m+1$ such that
\[
V_{\pi, \ell} = \{(2l_p-1)_\ell\}^t_{p=1} \qqand  V_{\pi, r} = \{(2k_q-1)_r\}^s_{q=1}.
\]
Note $V_{\pi, \ell}$ divides the remaining left nodes into $t-1$ disjoint regions and $V_{\pi, r}$ divides the remaining right nodes into $s-1$ disjoint regions. Moreover, each block of $\pi$ can only contain nodes in one such region.  The following is an example of such a $\pi$.
\begin{align*}
	\begin{tikzpicture}[baseline]
	\draw[thick, dashed] (-1,5.75) -- (-1,.25) -- (1,.25) -- (1,5.75);
	\draw[thick, blue, densely dotted] (1, 5.5) -- (0, 5.75) -- (-1, 5.5);
	\draw[thick, blue, densely dotted] (1, 5) -- (0.75, 4.75) -- (1, 4.5);
	\draw[thick, blue, densely dotted] (1, 4) -- (0.75, 3.75) -- (1, 3.5);
	\draw[thick, blue, densely dotted] (1, 3) -- (0.75, 2.75) -- (1, 2.5);
	\draw[thick, blue, densely dotted] (1, 2) -- (0.75, 1.75) -- (1, 1.5);
	\draw[thick, blue, densely dotted] (-1, 1) -- (-0.75, 0.75) -- (-1, 0.5);
	\draw[thick, blue, densely dotted] (-1, 5) -- (-0.75, 4.75) -- (-1, 4.5);
	\draw[thick, blue, densely dotted] (-1, 4) -- (-0.75, 3.75) -- (-1, 3.5);
	\draw[thick, blue, densely dotted] (-1, 3) -- (-0.75, 2.75) -- (-1, 2.5);
	\draw[thick, blue, densely dotted] (-1, 2) -- (-0.75, 1.75) -- (-1, 1.5);
	\draw[thick, blue, densely dotted] (-1, 1) -- (-0.75, 0.75) -- (-1, 0.5);
	\draw[thick] (1, 5) -- (.375, 5) -- (.375, 3) -- (1, 3);	
	\draw[thick] (1, 4) -- (.375, 4);
	\draw[thick, red] (1, 5.5) -- (.125, 5.5) -- (.125, 1.5) -- (1, 1.5);
	\draw[thick, ggreen] (-1, 5.5) -- (-.125, 5.5) -- (-.125, 0.5) -- (-1, 0.5);
	\draw[thick, ggreen] (-1, 4.5) -- (-.125, 4.5);
	\draw[thick] (-1, 2.5) -- (-.675, 2.5) -- (-.675, 1.5) -- (-1, 1.5);
	\draw[thick, red] (1, 2.5) -- (.125, 2.5);
	\draw[thick] (-1, 4) -- (-.375, 4) -- (-.375, 3) -- (-1, 3);
	\draw[thick] (-1, 3) -- (-.375, 3) -- (-.375, 1) -- (-1, 1);
	\node[left] at (-1, 5.5) {$X_2$, $1_\ell$};
	\draw[ggreen, fill=ggreen] (-1,5.5) circle (0.05);
	\node[left] at (-1, 5) {$L_b X_1$, $2_\ell$};
	\draw[fill=black] (-1,5) circle (0.05);
	\node[left] at (-1, 4.5) {$X_2$, $3_\ell$};
	\draw[ggreen, fill=ggreen] (-1,4.5) circle (0.05);
	\node[left] at (-1, 4) {$L_b X_1$, $4_\ell$};
	\draw[fill=black] (-1,4) circle (0.05);
	\node[left] at (-1, 3.5) {$X_2$, $5_\ell$};
	\draw[fill=black] (-1,3.5) circle (0.05);
	\node[left] at (-1, 3) {$L_b X_1$, $6_\ell$};
	\draw[fill=black] (-1,3) circle (0.05);
	\node[left] at (-1, 2.5) {$X_2$, $7_\ell$};
	\draw[black, fill=black] (-1,2.5) circle (0.05);
	\node[left] at (-1, 2) {$L_b X_1$, $8_\ell$};
	\draw[fill=black] (-1,2) circle (0.05);
	\node[left] at (-1, 1.5) {$X_2$, $9_\ell$};
	\draw[fill=black] (-1,1.5) circle (0.05);
	\node[left] at (-1, 1) {$L_b X_1$, $10_\ell$};
	\draw[fill=black] (-1,1) circle (0.05);
	\node[left] at (-1, .5) {$X_2$, $11_\ell$};
	\draw[ggreen, fill=ggreen] (-1,.5) circle (0.05);
	\draw[red, fill=red] (1,5.5) circle (0.05);
	\node[right] at (1,5.5) {$1_r$, $Y_2$};
	\draw[fill=black] (1,5) circle (0.05);
	\node[right] at (1,5) {$2_r$, $R_d Y_1$};
	\draw[fill=black] (1,4.5) circle (0.05);
	\node[right] at (1,4.5) {$3_r$, $Y_2$};
	\node[right] at (1,4) {$4_r$, $R_d Y_1$};
	\draw[fill=black] (1,4) circle (0.05);
	\node[right] at (1,3.5) {$5_r$, $Y_2$};
	\draw[fill=black] (1,3.5) circle (0.05);
	\node[right] at (1,3) {$6_r$, $R_d Y_1$};
	\draw[fill=black] (1,3) circle (0.05);
	\node[right] at (1,2.5) {$7_r$, $Y_2$};
	\draw[red, fill=red] (1,2.5) circle (0.05);
	\node[right] at (1,2) {$8_r$, $R_d Y_1$};
	\draw[fill=black] (1,2) circle (0.05);
	\node[right] at (1,1.5) {$9_r$, $Y_2 R_c$};
	\draw[red, fill=red] (1,1.5) circle (0.05);
	\end{tikzpicture}
\end{align*}

Consequently, if we sum over all possible $n,m\geq 1$, for each $V_\pi$ one obtains $\psi_\ell(L_b X_1, X_2)$ for the $B$-operator in each of the $t-1$ disjoint regions on the left and $\psi_r(R_d Y_1, Y_2)$ for the $B$-operator in each of the $s-1$ disjoint regions on the right.  Using bi-multiplicative properties (so the $\psi_\ell(L_b X_1, X_2)$ term attaches to a $X_2$ from above to obtain $L_{\psi_\ell(L_b X_1, X_2)} X_2$ and so the $\psi_r(R_d Y_1, Y_2)$ term attaches to a $Y_2$ from above to obtain $R_{\psi_r(R_d Y_1, Y_2)} Y_2$) and summing over all possible $V_\pi$ yields the result.  Note the $\psi_\ell(L_b X_1, X_2)^{-1}$ (respectively $\psi_r(R_d Y_1, Y_2)^{-1}$) occur since one copy of $\psi_\ell(L_b X_1, X_2)$ (respectively $\psi_r(R_d Y_1, Y_2)$) is missing on $1_\ell$ (respectively $1_r$) $X_2$-term (respectively $Y_2$-term) of the product in order to obtain $\Phi_{\ell, X_2}(\psi_\ell(L_b X_1, X_2))$ (respectively $\Phi_{r, Y_2}(\psi_r(R_d Y_1, Y_2))$).  Note this division is valid by Lemma \ref{lem:divide}.
\end{proof}

\begin{lem}
\label{lem:S-case-3}
Under the above notation and assumptions, 
\[
\Psi_{o,r}(b,c,d) = \psi_\ell(X_2, L_b X_1) K_{X_2, Y_2}( \psi_\ell(L_b X_1, X_2), c, \psi_r(R_d Y_1, Y_2)) \psi_r(R_d Y_1, Y_2)^{-1} .
\]
\end{lem}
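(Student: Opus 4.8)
## Proof Proposal for Lemma \ref{lem:S-case-3}

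The plan is to follow the exact same template used in the proof of Lemma \ref{lem:S-case-2}, since $BNC_S(n,m)_{o,r}$ is a sibling class in the same partition-of-partitions scheme. First I would fix $n,m \geq 0$ and take $\pi \in BNC_S(n,m)_{o,r}$. By definition, the block $V_{\pi,\ell}$ containing $1_\ell$ has no right nodes, so — since $\pi \vee \sigma'_{n,m} = 1_{2n+1,2m+1}$ — it must consist of alternate left nodes $\{(2l_p - 1)_\ell\}_{p=1}^t$ for some $1 = l_1 < \cdots < l_t = n+1$, just as in Lemma \ref{lem:S-case-2}. However, the block $V_{\pi,r}$ containing $1_r$ now \emph{does} have left nodes; because $1_\ell \notin V_{\pi,r}$, the left nodes in $V_{\pi,r}$ must be a terminal segment $\{(2l)_\ell\}$ (in the $\prec_\chi$ order, the bottommost left nodes), paired with alternate right nodes $\{(2k_q-1)_r\}_{q=1}^s$ where $1 = k_1 < \cdots < k_s = m+1$. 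I would draw the corresponding bi-non-crossing diagram (copying the style of the diagrams in the preceding lemmas) to make the region structure visible.

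Next I would identify how the diagram decomposes. The block $V_{\pi,\ell}$ divides the left nodes into $t-1$ disjoint regions, each supporting an arbitrary element of $BNC'_\ell(i_p)$ once a singleton is added, exactly as before. On the right side, the block $V_{\pi,r}$ (which now also reaches over to grab a terminal run of left nodes) divides the right nodes into $s-1$ interior disjoint regions plus the bottom region; the bottom region, together with the left nodes attached to $V_{\pi,r}$, is precisely a partition counted by $K_{X_2, Y_2}$-type sums with the "$c$" sitting in its usual place. Summing over all $n, m$ and all such $\pi$: each of the $t-1$ left regions contributes a factor $\psi_\ell(L_b X_1, X_2)$, each of the $s-1$ interior right regions contributes $\psi_r(R_d Y_1, Y_2)$, the terminal left run attached to $V_{\pi,r}$ contributes $\psi_\ell(X_2, L_b X_1)$ (note the swapped argument order — this is the run where $X_1$'s and $X_2$'s alternate starting differently, matching the $\psi_\ell(X_2, L_b X_1)$ notation), and the bottom right region assembles into $K_{X_2, Y_2}(\psi_\ell(L_b X_1, X_2), c, \psi_r(R_d Y_1, Y_2))$ after bi-multiplicative reductions. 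The overall $\psi_r(R_d Y_1, Y_2)^{-1}$ correction appears for the same reason as in Lemma \ref{lem:S-case-2}: the $1_r$-node's $Y_2$ in the product is a bare $Y_2$ where $K_{X_2, Y_2}$ wants $R_{\psi_r(R_d Y_1, Y_2)} Y_2$, and bi-multiplicativity from Definition \ref{defn:bi-multiplicative} absorbs the discrepancy.

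The bookkeeping step I expect to be the main obstacle is correctly tracking \emph{which} $B$-operator escapes on which side when the block $V_{\pi,r}$ spans both left and right nodes. The asymmetry here — left nodes attached to the block containing $1_r$ rather than to the block containing $1_\ell$ — means the usual "left factors escape left, right factors escape right" heuristic must be applied carefully, and the placement of $\psi_\ell(X_2, L_b X_1)$ versus $\psi_\ell(L_b X_1, X_2)$ (reversed argument order) depends on whether the first operator in the relevant run is an $X_1$ or an $X_2$. I would resolve this by appealing to identity \eqref{eq:move-around-B-in-pinched} and the reduction rules in parts \eqref{part:bi-multi-3} and \eqref{part:bi-multi-4} of Definition \ref{defn:bi-multiplicative}, checking one explicit small case against the diagram to confirm the factor ordering before writing the general argument. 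Once the region-to-factor correspondence is pinned down, summing over $V_\pi$ (legitimate since all series converge absolutely in the relevant neighbourhood of zero, by the estimates in Remark \ref{rem:convergence}) yields the claimed formula.
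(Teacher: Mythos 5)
Your structural description of a partition $\pi \in BNC_S(n,m)_{o,r}$ is wrong in two concrete ways, and since the whole argument is a region-by-region summation built on that description, this is a genuine gap rather than bookkeeping. First, you assert that $V_{\pi,\ell}$ consists of alternate left nodes $\{(2l_p-1)_\ell\}_{p=1}^t$ with $l_t = n+1$, ``just as in Lemma \ref{lem:S-case-2}.'' That cannot happen in the $o,r$ class: if $V_{\pi,\ell}$ contained both $1_\ell$ and $(2n+1)_\ell$, then any left node $j_\ell$ of $V_{\pi,r}$ (and $V_{\pi,r}$ has left nodes by the definition of the class, while being disjoint from $V_{\pi,\ell}$) would give the crossing pattern $1_\ell \prec_\chi j_\ell \prec_\chi (2n+1)_\ell \prec_\chi 1_r$. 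In fact the join condition $\pi \vee \sigma'_{n,m} = 1_{2n+1,2m+1}$ forces the unique block meeting both sides --- here $V_{\pi,r}$ --- to contain the bottom nodes $(2n+1)_\ell$ and $(2m+1)_r$, while $V_{\pi,\ell}$ is confined to the top-left region above the highest left node of $V_{\pi,r}$. Second, you describe the left nodes of $V_{\pi,r}$ as a terminal segment $\{(2l)_\ell\}$ of even-indexed nodes; since $1_r$ has odd index and the defining condition of $BNC_S(n,m)'_o$ forbids a block from containing both an even- and an odd-indexed node, the left nodes of $V_{\pi,r}$ are odd-indexed ($X_2$) nodes, and they are not consecutive: they are interlaced with gaps, and it is these gaps --- cut out by $V_{\pi,r}$, not by $V_{\pi,\ell}$ --- that supply the $t-1$ factors $\psi_\ell(L_bX_1,X_2)$ attached to the $X_2$'s of the $K_{X_2,Y_2}$ spine.

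Consequently your account of the prefactor $\psi_\ell(X_2,L_bX_1)$ is inverted: it does not come from a ``terminal left run attached to $V_{\pi,r}$'' at the bottom, but from the top-left region, the one containing $V_{\pi,\ell}$. Summing over all admissible configurations of that region produces the product $\psi_\ell(X_2,L_bX_1)\,\psi_\ell(L_bX_1,X_2)$; the second factor is exactly what converts the highest $X_2$ of $V_{\pi,r}$ into $L_{\psi_\ell(L_bX_1,X_2)}X_2$ so that the cumulant over $V_{\pi,r}$ (left entries $X_2$, right entries $Y_2$, the last being $Y_2R_c$) matches a term of $K_{X_2,Y_2}(\psi_\ell(L_bX_1,X_2),c,\psi_r(R_dY_1,Y_2))$, while the first factor cannot be absorbed and escapes on the left --- that is precisely the source of the prefactor in the statement, which your mechanism would not produce. (Your explanation of the $\psi_r(R_dY_1,Y_2)^{-1}$ correction at the $1_r$ node is fine.) To repair the proof, redo the analysis with the correct picture: $V_{\pi,r}$ is the unique two-sided block and reaches the bottom on both sides; its left nodes cut the remaining left nodes into $t$ regions and its right nodes cut the remaining right nodes into $s-1$ regions; the $t-1$ non-top left regions each give $\psi_\ell(L_bX_1,X_2)$, the right regions each give $\psi_r(R_dY_1,Y_2)$, and the top-left region, which contains $V_{\pi,\ell}$, gives the two-factor product above.
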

\begin{proof}
Fix $n,m \geq 0$.  Note $BNC_S(0,m)_{o,r} = \emptyset$   by definition.

If $\pi \in BNC_S(n,m)_{o,r}$, then, since $\pi \vee \sigma'_{n,m} = 1_{2n+1,2m+1}$, there exist $t,s\geq 1$, $1 < l_1 < l_2 < \cdots < l_t = n+1$, and $1 = k_1 < k_2 < \cdots < k_s = m+1$ such that
\[
V_{\pi, r} = \{(2l_p-1)_\ell\}^t_{p=1}  \cup  \{(2k_q-1)_r\}^s_{q=1}.
\]
Note $V_{\pi, r}$ divides the remaining right nodes into $s-1$ disjoint regions and the remaining left nodes into $t$ regions.  However, the top region is special.  If $l_0$ is the largest natural number such that $(2l_0 -1)_\ell \in V_{\pi, \ell}$, then $l_0$ further divides the top region on the left into two regions. Note each block of $\pi$ can only contain nodes in one such region.
The following is an example of such a $\pi$ for which $l_0 = 3$, with one part of the special region ($1_\ell, \ldots, 5_\ell$) shaded differently.
\begin{align*}
	\begin{tikzpicture}[baseline]
	\draw[thick, dashed] (-1,5.75) -- (-1,-.75) -- (1,-.75) -- (1,5.75);
	\draw[thick, blue, densely dotted] (1, 5.5) -- (0, 5.75) -- (-1, 5.5);
	\draw[thick, blue, densely dotted] (1, 5) -- (0.75, 4.75) -- (1, 4.5);
	\draw[thick, blue, densely dotted] (1, 4) -- (0.75, 3.75) -- (1, 3.5);
	\draw[thick, blue, densely dotted] (1, 3) -- (0.75, 2.75) -- (1, 2.5);
	\draw[thick, blue, densely dotted] (1, 2) -- (0.75, 1.75) -- (1, 1.5);
	\draw[thick, blue, densely dotted] (-1, 1) -- (-0.75, 0.75) -- (-1, 0.5);
	\draw[thick, blue, densely dotted] (-1, 5) -- (-0.75, 4.75) -- (-1, 4.5);
	\draw[thick, blue, densely dotted] (-1, 4) -- (-0.75, 3.75) -- (-1, 3.5);
	\draw[thick, blue, densely dotted] (-1, 3) -- (-0.75, 2.75) -- (-1, 2.5);
	\draw[thick, blue, densely dotted] (-1, 2) -- (-0.75, 1.75) -- (-1, 1.5);
	\draw[thick, blue, densely dotted] (-1, 1) -- (-0.75, 0.75) -- (-1, 0.5);
	\draw[thick, blue, densely dotted] (-1, 0) -- (-0.75, -0.25) -- (-1, -0.5);
	\draw[thick] (1, 5) -- (.375, 5) -- (.375, 3) -- (1, 3);	
	\draw[thick] (1, 4) -- (.375, 4);
	\draw[thick, ggreen] (1, 5.5) -- (0, 5.5) -- (0, -.5) -- (-1, -.5);
	\draw[thick, ggreen] (-1, .5) -- (0, .5);
	\draw[thick, ggreen] (0, 1.5) -- (1, 1.5);
	\draw[thick, ggreen] (1, 2.5) -- (0, 2.5);
	\draw[thick] (-1,1) -- (-.375, 1) -- (-.375, 3) -- (-1,3);
	\draw[thick] (-1,1.5) -- (-.625, 1.5) -- (-.625, 2.5) -- (-1,2.5);
	\draw[thick, red] (-1, 5.5) -- (-.375, 5.5) -- (-.375, 3.5) -- (-1, 3.5);
	\draw[thick, red] (-1, 4.5) -- (-.375, 4.5);
	\node[left] at (-1, 5.5) {$X_2$, $1_\ell$};
	\draw[red, fill=red] (-1,5.5) circle (0.05);
	\node[left] at (-1, 5) {$L_b X_1$, $2_\ell$};
	\draw[red, fill=red] (-1,5) circle (0.05);
	\node[left] at (-1, 4.5) {$X_2$, $3_\ell$};
	\draw[red, fill=red] (-1,4.5) circle (0.05);
	\node[left] at (-1, 4) {$L_b X_1$, $4_\ell$};
	\draw[red, fill=red] (-1,4) circle (0.05);
	\node[left] at (-1, 3.5) {$X_2$, $5_\ell$};
	\draw[red, fill=red] (-1,3.5) circle (0.05);
	\node[left] at (-1, 3) {$L_b X_1$, $6_\ell$};
	\draw[fill=black] (-1,3) circle (0.05);
	\node[left] at (-1, 2.5) {$X_2$, $7_\ell$};
	\draw[fill=black] (-1,2.5) circle (0.05);
	\node[left] at (-1, 2) {$L_b X_1$, $8_\ell$};
	\draw[fill=black] (-1,2) circle (0.05);
	\node[left] at (-1, 1.5) {$X_2$, $9_\ell$};
	\draw[ggreen, fill=ggreen] (-1,.5) circle (0.05);
	\node[left] at (-1, 1) {$L_b X_1$, $10_\ell$};
	\draw[fill=black] (-1,1) circle (0.05);
	\node[left] at (-1, .5) {$X_2$, $11_\ell$};
	\draw[fill=black] (-1,1.5) circle (0.05);
	\node[left] at (-1, 0) {$L_b X_1$, $12_\ell$};
	\draw[fill=black] (-1,0) circle (0.05);
	\node[left] at (-1, -.5) {$X_2$, $13_\ell$};
	\draw[ggreen, fill=ggreen] (-1,-.5) circle (0.05);
	\draw[ggreen, fill=ggreen] (1,5.5) circle (0.05);
	\node[right] at (1,5.5) {$1_r$, $Y_2$};
	\draw[fill=black] (1,5) circle (0.05);
	\node[right] at (1,5) {$2_r$, $R_d Y_1$};
	\draw[fill=black] (1,4.5) circle (0.05);
	\node[right] at (1,4.5) {$3_r$, $Y_2$};
	\node[right] at (1,4) {$4_r$, $R_d Y_1$};
	\draw[fill=black] (1,4) circle (0.05);
	\node[right] at (1,3.5) {$5_r$, $Y_2$};
	\draw[fill=black] (1,3.5) circle (0.05);
	\node[right] at (1,3) {$6_r$, $R_d Y_1$};
	\draw[fill=black] (1,3) circle (0.05);
	\node[right] at (1,2.5) {$7_r$, $Y_2$};
	\draw[ggreen, fill=ggreen] (1,2.5) circle (0.05);
	\node[right] at (1,2) {$8_r$, $R_d Y_1$};
	\draw[fill=black] (1,2) circle (0.05);
	\node[right] at (1,1.5) {$9_r$, $Y_2 R_c$};
	\draw[ggreen, fill=ggreen] (1,1.5) circle (0.05);
	\end{tikzpicture}
\end{align*}

Consequently, if we sum over all possible $n,m\geq 1$, for each $V_\pi$ one obtains $\psi_r(R_d Y_1, Y_2)$ for the $B$-operator in each of the $s-1$ disjoint regions on the right and $\psi_\ell(L_b X_1, X_2)$ for the $B$-operator in each of the $t-1$ disjoint regions on the left, excluding the top region where one obtains $\psi_\ell(X_2, L_b X_1)\psi_\ell(L_b X_1, X_2)$.  Using bi-multiplicative properties (so the $\psi_\ell(L_b X_1, X_2)$ term attaches to a $X_2$ from above to obtain $L_{\psi_\ell(L_b X_1, X_2)} X_2$ and so the $\psi_r(R_d Y_1, Y_2)$ term attaches to a $Y_2$ from above to obtain $R_{\psi_r(R_d Y_1, Y_2)} Y_2$) and summing over all possible $V_\pi$ yields the result.  Note the $\psi_\ell(X_2, L_b X_1)$ remains on the left of the product since it is not needed in the $K_{X_2, Y_2}$-term and $\psi_r(R_d Y_1, Y_2)^{-1}$ occurs since one copy of $\psi_r(R_d Y_1, Y_2)$ is missing on $1_r$  $Y_2$-term of the product.
\end{proof}

\begin{lem}
\label{lem:S-case-4}
Under the above notation and assumptions,
\[
\Psi_{o,\ell}(b,c,d) = \psi_\ell(L_b X_1, X_2)^{-1} K_{X_2, Y_2}(\psi_\ell(L_b X_1, X_2), c, \psi_r(R_d Y_1, Y_2)) \psi_r(Y_2, R_d Y_1).
\]
\end{lem}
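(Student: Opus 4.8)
The plan is to argue exactly as in the proof of Lemma \ref{lem:S-case-3}, but with the roles of left and right nodes interchanged (so that every reduction via bi-multiplicative properties involving right operators is carried out with the opposite multiplication on $B$). First I would describe the combinatorial structure of a general $\pi \in BNC_S(n,m)_{o,\ell}$: since $\pi \vee \sigma'_{n,m} = 1_{2n+1,2m+1}$ and the block $V_{\pi, r}$ of $1_r$ has no left nodes, the block $V_\pi := V_{\pi, \ell}$ of $1_\ell$ is the connecting block and must have the form
\[
V_\pi = \{(2l_p - 1)_\ell\}_{p=1}^t \cup \{(2k_q - 1)_r\}_{q=1}^s
\]
for some $t, s \geq 1$, $1 = l_1 < l_2 < \cdots < l_t = n+1$, and $1 < k_1 < k_2 < \cdots < k_s = m+1$; moreover $V_{\pi, r}$ lies entirely among the right nodes $1_r, 2_r, \ldots, (2k_1 - 2)_r$ and divides this extremal region on the right, which is the mirror image of the special top region on the left appearing in the proof of Lemma \ref{lem:S-case-3}.

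Next I would rearrange the (absolutely convergent) sum defining $\Psi_{o,\ell}(b,c,d)$ so as to sum first over the possible shapes of $V_\pi$ and of $V_{\pi, r}$, and then over the restrictions of $\pi$ to the complementary regions. The left nodes not in $V_\pi$ fall into $t-1$ $\chi$-intervals, on each of which the restriction of $\pi$ ranges over $BNC'_\ell(j)$ for the appropriate $j$; summing each over all $n$ produces a factor $\psi_\ell(L_b X_1, X_2)$. The right nodes strictly between consecutive right nodes of $V_\pi$ form $s-1$ $\chi$-intervals, each summing to $\psi_r(R_d Y_1, Y_2)$. The special region $1_r, \ldots, (2k_1-2)_r$, divided by $V_{\pi, r}$, is treated by the splitting mechanism in the proof of Lemma \ref{lem:S-lem-1} (splitting at the block of $1_r$), and summing over it produces the product $\psi_r(Y_2, R_d Y_1)\,\psi_r(R_d Y_1, Y_2)$, the first factor being the contribution of the part below the last node of $V_{\pi, r}$. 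One then applies the bi-multiplicative properties of $\kappa^B$ (Definition \ref{defn:bi-multiplicative}) to collapse each region, absorbing its $B$-valued contribution into an adjacent node from above.

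I would then assemble the reduction. Each of the $t-1$ factors $\psi_\ell(L_b X_1, X_2)$ attaches as $L_{\psi_\ell(L_b X_1, X_2)}$ to the $X_2$ node just below it, so after collapsing every left region the left arguments become $X_2, L_{\psi_\ell(L_b X_1, X_2)}X_2, \ldots, L_{\psi_\ell(L_b X_1, X_2)}X_2$; applying property (\ref{part:bi-multi-2}) of Definition \ref{defn:bi-multiplicative} to the leading bare $X_2$ pulls out a factor $\psi_\ell(L_b X_1, X_2)^{-1}$ on the far left, in the formal sense of the remark following Definition \ref{defn:op-free-S} (every term of the resulting $K_{X_2, Y_2}$ carries a $\psi_\ell(L_b X_1, X_2)$ on the left). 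Symmetrically the $s-1$ factors $\psi_r(R_d Y_1, Y_2)$ together with the $\psi_r(R_d Y_1, Y_2)$ coming out of the special region attach as $R_{\psi_r(R_d Y_1, Y_2)}$ to the $Y_2$ nodes and build exactly the arguments $R_{\psi_r(R_d Y_1, Y_2)}Y_2$ required by $K_{X_2, Y_2}\big(\psi_\ell(L_b X_1, X_2), c, \psi_r(R_d Y_1, Y_2)\big)$, while the remaining factor $\psi_r(Y_2, R_d Y_1)$ has no node to be absorbed into and stays on the right. Summing over all admissible $V_\pi$ and $V_{\pi, r}$ then yields the claimed identity.

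The step I expect to be the main obstacle is the bookkeeping for the special region on the right and the precise accounting of the correction factors. One must verify that, for a fixed $V_\pi$, the block $V_{\pi, r}$ together with the sub-partitions it induces ranges over exactly the sets needed to produce $\psi_r(Y_2, R_d Y_1)\psi_r(R_d Y_1, Y_2)$, and that the bi-multiplicative reductions — which move each region's $B$-operator up along the dotted lines, switching between $L$ and $R$ as it crosses a node and using the opposite order in $B$ for right operators — deposit $\psi_\ell(L_b X_1, X_2)^{-1}$ and $\psi_r(Y_2, R_d Y_1)$ on precisely the indicated sides. Once the mirror correspondence with the proof of Lemma \ref{lem:S-case-3} is set up, the remaining verifications are routine.
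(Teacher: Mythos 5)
Your proposal is correct and follows essentially the same route as the paper, whose proof of this lemma is literally to apply a mirror to Lemma \ref{lem:S-case-3}; your description of the connecting block $V_{\pi,\ell}$, the $t-1$ left and $s-1$ right ordinary regions, the special top-right region containing $V_{\pi,r}$, and the origins of the correction factors $\psi_\ell(L_b X_1, X_2)^{-1}$ (missing $L_\beta$ on the leading bare $X_2$) and $\psi_r(Y_2, R_d Y_1)$ (escaping on the right) matches that mirror argument. One small slip in the middle: in the special region the part \emph{below} the lowest node of $V_{\pi,r}$ has pattern starting and ending with $R_d Y_1$ and hence contributes $\psi_r(R_d Y_1, Y_2)$ (the factor that is absorbed as $R_{\psi_r(R_d Y_1, Y_2)}$ into the $Y_2$ at $(2k_1-1)_r$), while the first factor $\psi_r(Y_2, R_d Y_1)$ comes from the part woven with $V_{\pi,r}$ itself and is the one that escapes on the right --- which is exactly how your subsequent assembly paragraph (correctly) uses them.
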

\begin{proof}
The proof of this result can be obtained by applying a mirror to Lemma \ref{lem:S-case-3}.
\end{proof}

\begin{lem}
\label{lem:S-case-5}
Under the above notation and assumptions,
\[
\Psi_{o, \ell r}(b,c,d) = \psi_\ell(L_b X_1, X_2)^{-1} K_{X_2, Y_2}(\psi_\ell(L_b X_1, X_2), c, \psi_r(R_d Y_1, Y_2)) \psi_r(R_d Y_1, Y_2)^{-1}.
\]
\end{lem}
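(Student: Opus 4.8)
The plan is to follow, essentially line for line, the proof of Lemma~\ref{lem:S-case-1} (the even case $\Psi_e$); the only genuinely new feature is a pair of ``missing prefix'' corrections that produce the outer factors $\psi_\ell(L_bX_1,X_2)^{-1}$ and $\psi_r(R_dY_1,Y_2)^{-1}$.

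First I would fix $n,m\geq 0$ and $\pi\in BNC_S(n,m)_{o,\ell r}$, and set $V_\pi:=V_{\pi,\ell}=V_{\pi,r}$, the block of $\pi$ containing both $1_\ell$ and $1_r$. Because $(X_1,Y_1)$ and $(X_2,Y_2)$ are bi-free over $B$, every cumulant mixing an index-$1$ entry (an even-index node, carrying $L_bX_1$ or $R_dY_1$) with an index-$2$ entry (an odd-index node, carrying $X_2$ or $Y_2$) vanishes, so no block of $\pi$ has both even- and odd-index nodes; since $1_\ell,1_r$ are odd, $V_\pi$ consists only of odd-index nodes. The join condition $\pi\vee\sigma'_{n,m}=1_{2n+1,2m+1}$ together with bi-non-crossingness then forces, by the very same argument as in Lemma~\ref{lem:S-case-1}, that
\[
V_\pi=\{(2l_p-1)_\ell\}_{p=1}^{t}\cup\{(2k_q-1)_r\}_{q=1}^{s}
\]
for some $1=l_1<l_2<\cdots<l_t=n+1$ and $1=k_1<k_2<\cdots<k_s=m+1$. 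Then $V_\pi$ divides the remaining left nodes into the $t-1$ intervals $(2l_p)_\ell,\dots,(2l_{p+1}-2)_\ell$ and the remaining right nodes into $s-1$ analogous intervals, each other block of $\pi$ lying inside one of them; restricting $\pi$ to a left (resp.\ right) interval and prepending a singleton gives an element of $BNC'_\ell$ (resp.\ $BNC'_r$), the bijection already exploited in Lemmata~\ref{lem:S-lem-1} and \ref{lem:S-lem-2}.

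Next I would rearrange the (absolutely convergent) sum in $\Psi_{o,\ell r}(b,c,d)$ so as to sum first over all $\pi$ with a fixed $V_\pi$ and then over all admissible $V_\pi$. Summing over the partitions on a single left interval produces, via the bi-multiplicative properties, a factor $\psi_\ell(L_bX_1,X_2)$ that attaches from above to the $X_2$ sitting at the node $(2l_{p+1}-1)_\ell$ immediately below the interval, turning it into $L_{\psi_\ell(L_bX_1,X_2)}X_2$; symmetrically each right interval contributes a factor $\psi_r(R_dY_1,Y_2)$ attached to the corresponding $Y_2$. Carrying this out yields
\[
\Psi_{o,\ell r}(b,c,d)=\sum_{t,s\geq 1}\kappa^B_{\chi_{t,s}}\!\left(X_2,\,L_{\psi_\ell(L_bX_1,X_2)}X_2,\,\dots,\,L_{\psi_\ell(L_bX_1,X_2)}X_2,\;Y_2,\,R_{\psi_r(R_dY_1,Y_2)}Y_2,\,\dots,\,R_{\psi_r(R_dY_1,Y_2)}Y_2R_c\right),
\]
in which the $X_2$ at $1_\ell$ and the $Y_2$ at $1_r$ carry no prefix simply because nothing lies above them in the diagram. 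Comparing with
\[
K_{X_2,Y_2}\!\left(\psi_\ell(L_bX_1,X_2),c,\psi_r(R_dY_1,Y_2)\right)=\sum_{t,s\geq 1}\kappa^B_{\chi_{t,s}}\!\left(L_{\psi_\ell(L_bX_1,X_2)}X_2,\,\dots,\,R_{\psi_r(R_dY_1,Y_2)}Y_2R_c\right),
\]
the sole discrepancy is precisely those two absent prefixes. Since $1_\ell$ is the first left node and the first right node in natural order has no earlier right node, property~(\ref{part:bi-multi-2}) of Definition~\ref{defn:bi-multiplicative} allows one to reinsert $L_{\psi_\ell(L_bX_1,X_2)}$ at the cost of left multiplication by $\psi_\ell(L_bX_1,X_2)$ and to reinsert $R_{\psi_r(R_dY_1,Y_2)}$ at the cost of right multiplication by $\psi_r(R_dY_1,Y_2)$; hence $\psi_\ell(L_bX_1,X_2)\,\Psi_{o,\ell r}(b,c,d)\,\psi_r(R_dY_1,Y_2)=K_{X_2,Y_2}(\psi_\ell(L_bX_1,X_2),c,\psi_r(R_dY_1,Y_2))$, which rearranges to the claimed identity. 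The one-sided inverses are legitimate because, by bi-multiplicativity, every term of $K_{X_2,Y_2}(u,c,v)$ carries a factor $u$ on the left and a factor $v$ on the right (cf.\ the remark preceding Lemma~\ref{lem:divide}).

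The hard part is not any single manipulation but the combinatorial bookkeeping shared with Lemma~\ref{lem:S-case-1}: checking that the join condition genuinely forces $V_\pi$ into the displayed form, with $1_\ell$ left-most and $1_r$ first in natural order, so that the two corrections land cleanly on opposite sides. I would dispatch this by appeal to the argument already given for Lemma~\ref{lem:S-case-1} rather than repeating it, and I would briefly note the degenerate cases $t=1$ (so $n=0$, no left intervals, the left side of the collapsed cumulant being a lone $X_2$) and $s=1$ (so $m=0$), as well as the possibility that $BNC_S(n,m)_{o,\ell r}=\emptyset$ for small $n,m$ — all of which are consistent with, and harmless for, the stated formula.
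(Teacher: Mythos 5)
Your argument is correct and is essentially the paper's: the paper disposes of this case by noting a bijection with $BNC_S(n,m)_{o,0}$ (merge $V_{\pi,\ell}$ and $V_{\pi,r}$) and invoking the gap-collapsing argument of Lemma \ref{lem:S-case-2}, which is exactly the argument you rerun directly, with the missing prefixes on the $1_\ell$ and $1_r$ entries accounting for the outer factors $\psi_\ell(L_bX_1,X_2)^{-1}$ and $\psi_r(R_dY_1,Y_2)^{-1}$ just as in that proof.
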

\begin{proof}
The proof of this result follows from the proof of Lemma \ref{lem:S-case-2}.  Indeed there is a bijection from $BNC_S(n,m)_{o, 0}$ to $BNC_S(n,m)_{o,\ell r}$ where given $\pi \in BNC_S(n,m)_{o,0}$ we produce $\pi'\in BNC_S(n,m)_{o,\ell r}$ by joining $V_{\pi, \ell}$ and $V_{\pi, r}$ into a single block which, under the same arguments, yields the result. 
\begin{align*}
	\begin{tikzpicture}[baseline]
	\draw[thick, dashed] (-1,5.75) -- (-1,.25) -- (1,.25) -- (1,5.75);
	\draw[thick, blue, densely dotted] (1, 5.5) -- (0, 5.75) -- (-1, 5.5);
	\draw[thick, blue, densely dotted] (1, 5) -- (0.75, 4.75) -- (1, 4.5);
	\draw[thick, blue, densely dotted] (1, 4) -- (0.75, 3.75) -- (1, 3.5);
	\draw[thick, blue, densely dotted] (1, 3) -- (0.75, 2.75) -- (1, 2.5);
	\draw[thick, blue, densely dotted] (1, 2) -- (0.75, 1.75) -- (1, 1.5);
	\draw[thick, blue, densely dotted] (-1, 1) -- (-0.75, 0.75) -- (-1, 0.5);
	\draw[thick, blue, densely dotted] (-1, 5) -- (-0.75, 4.75) -- (-1, 4.5);
	\draw[thick, blue, densely dotted] (-1, 4) -- (-0.75, 3.75) -- (-1, 3.5);
	\draw[thick, blue, densely dotted] (-1, 3) -- (-0.75, 2.75) -- (-1, 2.5);
	\draw[thick, blue, densely dotted] (-1, 2) -- (-0.75, 1.75) -- (-1, 1.5);
	\draw[thick, blue, densely dotted] (-1, 1) -- (-0.75, 0.75) -- (-1, 0.5);
	\draw[thick] (1, 5) -- (.375, 5) -- (.375, 3) -- (1, 3);	
	\draw[thick] (1, 4) -- (.375, 4);
	\draw[thick] (1, 5.5) -- (.125, 5.5) -- (.125, 1.5) -- (1, 1.5);
	\draw[thick] (-1, 5.5) -- (-.125, 5.5) -- (-.125, 0.5) -- (-1, 0.5);
	\draw[thick] (-1, 4.5) -- (-.125, 4.5);
	\draw[thick] (-1, 2.5) -- (-.675, 2.5) -- (-.675, 1.5) -- (-1, 1.5);
	\draw[thick] (1, 2.5) -- (.125, 2.5);
	\draw[thick] (-1, 4) -- (-.375, 4) -- (-.375, 3) -- (-1, 3);
	\draw[thick] (-1, 3) -- (-.375, 3) -- (-.375, 1) -- (-1, 1);
	\node[left] at (-1, 5.5) {$X_2$, $1_\ell$};
	\draw[black, fill=black] (-1,5.5) circle (0.05);
	\node[left] at (-1, 5) {$L_b X_1$, $2_\ell$};
	\draw[fill=black] (-1,5) circle (0.05);
	\node[left] at (-1, 4.5) {$X_2$, $3_\ell$};
	\draw[black, fill=black] (-1,4.5) circle (0.05);
	\node[left] at (-1, 4) {$L_b X_1$, $4_\ell$};
	\draw[fill=black] (-1,4) circle (0.05);
	\node[left] at (-1, 3.5) {$X_2$, $5_\ell$};
	\draw[fill=black] (-1,3.5) circle (0.05);
	\node[left] at (-1, 3) {$L_b X_1$, $6_\ell$};
	\draw[fill=black] (-1,3) circle (0.05);
	\node[left] at (-1, 2.5) {$X_2$, $7_\ell$};
	\draw[black, fill=black] (-1,2.5) circle (0.05);
	\node[left] at (-1, 2) {$L_b X_1$, $8_\ell$};
	\draw[fill=black] (-1,2) circle (0.05);
	\node[left] at (-1, 1.5) {$X_2$, $9_\ell$};
	\draw[fill=black] (-1,1.5) circle (0.05);
	\node[left] at (-1, 1) {$L_b X_1$, $10_\ell$};
	\draw[fill=black] (-1,1) circle (0.05);
	\node[left] at (-1, .5) {$X_2$, $11_\ell$};
	\draw[black, fill=black] (-1,.5) circle (0.05);
	\draw[black, fill=black] (1,5.5) circle (0.05);
	\node[right] at (1,5.5) {$1_r$, $Y_2$};
	\draw[fill=black] (1,5) circle (0.05);
	\node[right] at (1,5) {$2_r$, $R_d Y_1$};
	\draw[fill=black] (1,4.5) circle (0.05);
	\node[right] at (1,4.5) {$3_r$, $Y_2$};
	\node[right] at (1,4) {$4_r$, $R_d Y_1$};
	\draw[fill=black] (1,4) circle (0.05);
	\node[right] at (1,3.5) {$5_r$, $Y_2$};
	\draw[fill=black] (1,3.5) circle (0.05);
	\node[right] at (1,3) {$6_r$, $R_d Y_1$};
	\draw[fill=black] (1,3) circle (0.05);
	\node[right] at (1,2.5) {$7_r$, $Y_2$};
	\draw[black, fill=black] (1,2.5) circle (0.05);
	\node[right] at (1,2) {$8_r$, $R_d Y_1$};
	\draw[fill=black] (1,2) circle (0.05);
	\node[right] at (1,1.5) {$9_r$, $Y_2R_c$};
	\draw[black, fill=black] (1,1.5) circle (0.05);
	\draw[thick, dashed] (-1,5.75) -- (-1,.25) -- (1,.25) -- (1,5.75);
	\draw[thick] (3, 3) -- (4,3) -- (3.90, 2.90);
	\draw[thick] (4,3) -- (3.90, 3.1);
	\end{tikzpicture}
	\quad	
	\begin{tikzpicture}[baseline]
	\draw[thick, dashed] (-1,5.75) -- (-1,.25) -- (1,.25) -- (1,5.75);
	\draw[thick, blue, densely dotted] (1, 5.5) -- (0, 5.75) -- (-1, 5.5);
	\draw[thick, blue, densely dotted] (1, 5) -- (0.75, 4.75) -- (1, 4.5);
	\draw[thick, blue, densely dotted] (1, 4) -- (0.75, 3.75) -- (1, 3.5);
	\draw[thick, blue, densely dotted] (1, 3) -- (0.75, 2.75) -- (1, 2.5);
	\draw[thick, blue, densely dotted] (1, 2) -- (0.75, 1.75) -- (1, 1.5);
	\draw[thick, blue, densely dotted] (-1, 1) -- (-0.75, 0.75) -- (-1, 0.5);
	\draw[thick, blue, densely dotted] (-1, 5) -- (-0.75, 4.75) -- (-1, 4.5);
	\draw[thick, blue, densely dotted] (-1, 4) -- (-0.75, 3.75) -- (-1, 3.5);
	\draw[thick, blue, densely dotted] (-1, 3) -- (-0.75, 2.75) -- (-1, 2.5);
	\draw[thick, blue, densely dotted] (-1, 2) -- (-0.75, 1.75) -- (-1, 1.5);
	\draw[thick, blue, densely dotted] (-1, 1) -- (-0.75, 0.75) -- (-1, 0.5);
	\draw[thick] (1, 5) -- (.375, 5) -- (.375, 3) -- (1, 3);	
	\draw[thick] (1, 4) -- (.375, 4);
	\draw[thick, ggreen] (1, 5.5) -- (0, 5.5) -- (0, 1.5) -- (1, 1.5);
	\draw[thick, ggreen] (-1, 5.5) -- (0, 5.5) -- (0, 0.5) -- (-1, 0.5);
	\draw[thick, ggreen] (-1, 4.5) -- (0, 4.5);
	\draw[thick] (-1, 2.5) -- (-.675, 2.5) -- (-.675, 1.5) -- (-1, 1.5);
	\draw[thick, ggreen] (1, 2.5) -- (0, 2.5);
	\draw[thick] (-1, 4) -- (-.375, 4) -- (-.375, 3) -- (-1, 3);
	\draw[thick] (-1, 3) -- (-.375, 3) -- (-.375, 1) -- (-1, 1);
	\node[left] at (-1, 5.5) {$X_2$, $1_\ell$};
	\draw[ggreen, fill=ggreen] (-1,5.5) circle (0.05);
	\node[left] at (-1, 5) {$L_b X_1$, $2_\ell$};
	\draw[fill=black] (-1,5) circle (0.05);
	\node[left] at (-1, 4.5) {$X_2$, $3_\ell$};
	\draw[ggreen, fill=ggreen] (-1,4.5) circle (0.05);
	\node[left] at (-1, 4) {$L_b X_1$, $4_\ell$};
	\draw[fill=black] (-1,4) circle (0.05);
	\node[left] at (-1, 3.5) {$X_2$, $5_\ell$};
	\draw[fill=black] (-1,3.5) circle (0.05);
	\node[left] at (-1, 3) {$L_b X_1$, $6_\ell$};
	\draw[fill=black] (-1,3) circle (0.05);
	\node[left] at (-1, 2.5) {$X_2$, $7_\ell$};
	\draw[black, fill=black] (-1,2.5) circle (0.05);
	\node[left] at (-1, 2) {$L_b X_1$, $8_\ell$};
	\draw[fill=black] (-1,2) circle (0.05);
	\node[left] at (-1, 1.5) {$X_2$, $9_\ell$};
	\draw[fill=black] (-1,1.5) circle (0.05);
	\node[left] at (-1, 1) {$L_b X_1$, $10_\ell$};
	\draw[fill=black] (-1,1) circle (0.05);
	\node[left] at (-1, .5) {$X_2$, $11_\ell$};
	\draw[ggreen, fill=ggreen] (-1,.5) circle (0.05);
	\draw[ggreen, fill=ggreen] (1,5.5) circle (0.05);
	\node[right] at (1,5.5) {$1_r$, $Y_2$};
	\draw[fill=black] (1,5) circle (0.05);
	\node[right] at (1,5) {$2_r$, $R_d Y_1$};
	\draw[fill=black] (1,4.5) circle (0.05);
	\node[right] at (1,4.5) {$3_r$, $Y_2$};
	\node[right] at (1,4) {$4_r$, $R_d Y_1$};
	\draw[fill=black] (1,4) circle (0.05);
	\node[right] at (1,3.5) {$5_r$, $Y_2$};
	\draw[fill=black] (1,3.5) circle (0.05);
	\node[right] at (1,3) {$6_r$, $R_d Y_1$};
	\draw[fill=black] (1,3) circle (0.05);
	\node[right] at (1,2.5) {$7_r$, $Y_2$};
	\draw[ggreen, fill=ggreen] (1,2.5) circle (0.05);
	\node[right] at (1,2) {$8_r$, $R_d Y_1$};
	\draw[fill=black] (1,2) circle (0.05);
	\node[right] at (1,1.5) {$9_r$, $Y_2R_c$};
	\draw[ggreen, fill=ggreen] (1,1.5) circle (0.05);
	\end{tikzpicture}
\end{align*}
\end{proof}

\begin{lem}
\label{lem:S-case-6}
Under the above notation and assumptions, 
\[
\Psi_{o}(b,c,d) =   b (\psi_\ell(X_2, L_b X_1)b)^{-1} K_{X_2, Y_2}(\psi_\ell(X_2, L_b X_1)b, \Psi_{o'}(b,c,d), d \psi_r(Y_2, R_d Y_1))(d \psi_r(Y_2, R_d Y_1))^{-1} d
\]
where
\[
\Psi_{o'}(b,c,d) := \Psi_{o,0}(b,c,d) + \Psi_{o,r}(b,c,d) + \Psi_{o,\ell}(b,c,d) + \Psi_{o,\ell r}(b,c,d).
\]
\end{lem}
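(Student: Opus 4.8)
The plan is to run the same argument as in Lemmas \ref{lem:S-case-1}--\ref{lem:S-case-5} and in the $T$-transform computation of Lemma \ref{lem:T-case-3}: rearrange the absolutely convergent sum defining $\Psi_o(b,c,d)$ so as to group the partitions $\pi\in BNC_S(n,m)_o$ by their topmost mixed block, reduce the contribution of that block with the bi-multiplicative properties of $\kappa^B$, and recognise the outcome as a single $K$-transform whose $c$-slot receives $\Psi_{o'}(b,c,d)$.

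First I would fix $n,m\ge 1$ and $\pi\in BNC_S(n,m)_o$ and let $V_\pi$ be the first block of $\pi$, measured from the top of the bi-non-crossing diagram, that has both left and right nodes. By definition of $BNC_S(n,m)_o$ this block has odd-index nodes, i.e.\ it is built from $L_bX_1$-entries on the left and $R_dY_1$-entries on the right, and the condition $\pi\vee\sigma_{n,m}=1_{2n,2m}$ forces $1_\ell,1_r\in V_\pi$ and
\[
V_\pi=\{(2l_p-1)_\ell\}_{p=1}^{t}\cup\{(2k_q-1)_r\}_{q=1}^{s}
\]
for some $1=l_1<\cdots<l_t\le n+1$ and $1=k_1<\cdots<k_s\le m+1$. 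Such a $V_\pi$ carves the diagram into $t-1$ left ``sandwiched'' regions of $X_2$- and $L_bX_1$-entries, each beginning and ending with an $X_2$-entry, into $s-1$ right sandwiched regions of $Y_2$- and $R_dY_1$-entries, each beginning and ending with a $Y_2$-entry, and into a single common ``bottom'' region lying below the last node of $V_\pi$ on each side --- the only region one of whose blocks may carry entries from both sides outside $V_\pi$, and which begins with an $X_2$-entry on the left and a $Y_2$-entry on the right.

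Next I would sum over all $\pi$ with $V_\pi$ fixed and invoke Theorem \ref{thm:products} (using that $X_1,X_2$ and $Y_1,Y_2$ are free over $B$): by the bijection built into the definitions of $\psi_\ell$ and $\psi_r$ through $BNC'_\ell$ and $BNC'_r$, each left sandwiched region contributes the factor $\psi_\ell(X_2,L_bX_1)$ and each right one the factor $\psi_r(Y_2,R_dY_1)$, while the bottom region ranges --- under the four-way split according to whether the blocks $V_{\pi,\ell}$, $V_{\pi,r}$ of its induced sub-partition meet the other side --- over exactly the families $BNC_S(\cdot,\cdot)_{o,0}$, $BNC_S(\cdot,\cdot)_{o,r}$, $BNC_S(\cdot,\cdot)_{o,\ell}$, $BNC_S(\cdot,\cdot)_{o,\ell r}$ of Lemmas \ref{lem:S-case-2}--\ref{lem:S-case-5}, hence contributes $\Psi_{o'}(b,c,d)=\Psi_{o,0}+\Psi_{o,r}+\Psi_{o,\ell}+\Psi_{o,\ell r}$. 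Applying parts (\ref{part:bi-multi-1})--(\ref{part:bi-multi-4}) of Definition \ref{defn:bi-multiplicative}, each $\psi_\ell(X_2,L_bX_1)$ attaches from above to the $L_bX_1$-node of $V_\pi$ immediately below it, producing $L_{\psi_\ell(X_2,L_bX_1)}L_bX_1=L_{\psi_\ell(X_2,L_bX_1)b}X_1$ (which equals $\psi_\ell(X_2L_b,X_1)$ after (\ref{eq:move-around-B-in-pinched})), and symmetrically on the right one gets $R_{d\psi_r(Y_2,R_dY_1)}Y_1$; summing over all $V_\pi$ and all $n,m$ then reassembles the cumulants over $V_\pi$ into $K_{X_1,Y_1}\bigl(\psi_\ell(X_2,L_bX_1)b,\ \Psi_{o'}(b,c,d),\ d\psi_r(Y_2,R_dY_1)\bigr)$. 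Finally, the outer factors $b(\psi_\ell(X_2,L_bX_1)b)^{-1}$ and $(d\psi_r(Y_2,R_dY_1))^{-1}d$ are pure bookkeeping: in $\Psi_o$ the node $1_\ell$ carries only $L_b$ and the node $1_r$ only $R_d$, whereas in the displayed $K_{X_1,Y_1}$ those nodes must carry $L_{\psi_\ell(X_2,L_bX_1)b}$ and $R_{d\psi_r(Y_2,R_dY_1)}$, and part (\ref{part:bi-multi-2}) of Definition \ref{defn:bi-multiplicative} corrects the discrepancy; the divisions are legitimate by Lemma \ref{lem:divide}, as $\psi_\ell(X_2,L_bX_1)$ and $\psi_r(Y_2,R_dY_1)$ are analytic near $0$ with invertible values $E(X_2)$ and $E(Y_2)$ there.

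The step I expect to be the main obstacle is the combinatorial bookkeeping for the bottom region: one must verify that, as $\pi$ ranges over $BNC_S(n,m)_o$ with $V_\pi$ held fixed, the sub-partition induced on the bottom region ranges exactly once over each element of $\bigsqcup_{n',m'}BNC_S(n',m')'_o$, and --- crucially --- that the parity conventions match up (odd-index entries are $X_1/Y_1$-entries in $\Psi_o$ but $X_2/Y_2$-entries in $\Psi_{o'}$), so that the $c$-slot of the outer $K_{X_1,Y_1}$ really receives the unshifted quantity $\Psi_{o'}(b,c,d)$ rather than some variant. Once this is settled, the remaining manipulations are routine, being direct analogues of those in Lemmas \ref{lem:S-case-1}--\ref{lem:S-case-5} and in the parallel $T$-transform argument of Lemma \ref{lem:T-case-3}.
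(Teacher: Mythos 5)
Your argument is correct and follows essentially the same route as the paper's proof: isolate the topmost mixed block $V_\pi$ (which necessarily contains $1_\ell$ and $1_r$ and consists of odd-indexed nodes, so its entries are $L_bX_1$'s and $R_dY_1$'s), collapse the $t-1$ left and $s-1$ right sandwiched regions into factors $\psi_\ell(X_2,L_bX_1)$ and $\psi_r(Y_2,R_dY_1)$ attached from above via bi-multiplicativity, identify the bottom region with the union of the $BNC_S(\cdot,\cdot)'_o$ so that it contributes $\Psi_{o'}(b,c,d)$ in the $c$-slot, and correct the missing factors at the $1_\ell$ and $1_r$ entries with the outer $b(\psi_\ell(X_2,L_bX_1)b)^{-1}$ and $(d\psi_r(Y_2,R_dY_1))^{-1}d$. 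One remark: your resummed cumulant is $K_{X_1,Y_1}$ rather than the $K_{X_2,Y_2}$ appearing in the displayed statement, and yours is in fact the correct subscript, since the entries of $V_\pi$ come from $X_1$ and $Y_1$; the statement's $K_{X_2,Y_2}$ is a typo, as confirmed by the way the lemma is invoked (as $K_{X_1,Y_1}$ at the substituted arguments) in the proof of Theorem \ref{thm:S-property}.
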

\begin{proof}

Fix $n,m \geq 1$.  If $\pi \in BNC_S(n,m)_o$, let $V_\pi$ denote the first block of $\pi$, as measured from the top of $\pi$'s bi-non-crossing diagram, that has both left and right nodes.  Since $\pi \in BNC_S(n,m)_o$, there exist $t,s\geq 1$, $1= l_1 < l_2 < \cdots < l_t \leq n$, and $1 = k_1 < k_2 < \cdots < k_s \leq m$ such that
\[
V_\pi = \{(2l_p-1)_\ell\}^t_{p=1} \cup \{(2k_q-1)_r\}^s_{q=1}.
\]
Note $V_\pi$ divides the remaining left nodes and right nodes into $t-1$ disjoint regions on the left, $s-1$ disjoint regions on the right, and one region on the bottom. Below is an example of such a $\pi$.
\begin{align*}
	\begin{tikzpicture}[baseline]
	\draw[thick, dashed] (-1,5.75) -- (-1,-.25) -- (1,-.25) -- (1,5.75);
	\draw[thick, blue, densely dotted] (1, 5.5) -- (0.75, 5.25) -- (1, 5);
	\draw[thick, blue, densely dotted] (1, 4.5) -- (0.75, 4.25) -- (1, 4);
	\draw[thick, blue, densely dotted] (1, 3.5) -- (0.75, 3.25) -- (1, 3);
	\draw[thick, blue, densely dotted] (1, 2.5) -- (0.75, 2.25) -- (1, 2);
	\draw[thick, blue, densely dotted] (1, 1.5) -- (0.75, 1.25) -- (1, 1);
	\draw[thick, blue, densely dotted] (1, 0.5) -- (0.75, 0.25) -- (1, 0);
	\draw[thick, blue, densely dotted] (-1, 5.5) -- (-0.75, 5.25) -- (-1, 5);
	\draw[thick, blue, densely dotted] (-1, 4.5) -- (-0.75, 4.25) -- (-1, 4);
	\draw[thick, blue, densely dotted] (-1, 3.5) -- (-0.75, 3.25) -- (-1, 3);
	\draw[thick, blue, densely dotted] (-1, 2.5) -- (-0.75, 2.25) -- (-1, 2);
	\draw[thick, blue, densely dotted] (-1, 1.5) -- (-0.75, 1.25) -- (-1, 1);
	\draw[thick, ggreen] (-1,5.5) -- (1,5.5);
	\draw[thick, ggreen] (0, 5.5) -- (0, 1.5) -- (1, 1.5);
	\draw[thick, ggreen] (1, 4.5) -- (0, 4.5);
	\draw[thick, ggreen] (-1, 3.5) -- (0, 3.5);
	\draw[thick] (-1, 5) -- (-0.5, 5) -- (-0.5, 4) -- (-1, 4);
	\draw[thick] (1, 4) -- (.25, 4) -- (.25, 2) -- (1, 2);
	\draw[thick] (1, 3.5) -- (.5, 3.5) -- (.5, 2.5) -- (1, 2.5);
	\draw[thick, red] (1, 1) -- (.25, 1) -- (.25, .5) -- (1, .5);
	\draw[thick, red] (-1, 3) -- (-.25, 3) -- (-.25, 0) -- (1, 0);
	\draw[thick, red] (-1, 1.5) -- (-.5, 1.5) -- (-.5, 2.5) -- (-1, 2.5);
	\draw[thick, red] (-1, 1) -- (-.25, 1);
	\node[left] at (-1, 5.5) {$L_b X_1$, $1_\ell$};
	\draw[ggreen, fill=ggreen] (-1,5.5) circle (0.05);
	\node[left] at (-1, 5) {$X_2$, $2_\ell$};
	\draw[fill=black] (-1,5) circle (0.05);
	\node[left] at (-1, 4.5) {$L_b X_1$, $3_\ell$};
	\draw[fill=black] (-1,4.5) circle (0.05);
	\node[left] at (-1, 4) {$X_2$, $4_\ell$};
	\draw[fill=black] (-1,4) circle (0.05);
	\node[left] at (-1, 3.5) {$L_b X_1$, $5_\ell$};
	\draw[ggreen, fill=ggreen] (-1,3.5) circle (0.05);
	\node[left] at (-1, 3) {$X_2$, $6_\ell$};
	\draw[red, fill=red] (-1,3) circle (0.05);
	\node[left] at (-1, 2.5) {$L_b X_1$, $7_\ell$};
	\draw[red, fill=red] (-1,2.5) circle (0.05);
	\node[left] at (-1, 2) {$X_2$, $8_\ell$};
	\draw[red, fill=red] (-1,2) circle (0.05);
	\node[left] at (-1, 1.5) {$L_b X_1$, $9_\ell$};
	\draw[red, fill=red] (-1,1.5) circle (0.05);
	\node[left] at (-1, 1) {$X_2$, $10_\ell$};
	\draw[red, fill=red] (-1,1) circle (0.05);
	\draw[ggreen, fill=ggreen] (1,5.5) circle (0.05);
	\node[right] at (1,5.5) {$1_r$, $R_d Y_1$};
	\draw[fill=black] (1,5) circle (0.05);
	\node[right] at (1,5) {$2_r$, $Y_2$};
	\draw[ggreen, fill=ggreen] (1,4.5) circle (0.05);
	\node[right] at (1,4.5) {$3_r$, $R_d Y_1$};
	\draw[red, fill=red] (1,0) circle (0.05);
	\node[right] at (1,4) {$4_r$, $Y_2$};
	\draw[fill=black] (1,4) circle (0.05);
	\node[right] at (1,3.5) {$5_r$, $R_d Y_1$};
	\draw[fill=black] (1,3.5) circle (0.05);
	\node[right] at (1,3) {$6_r$, $Y_2$};
	\draw[fill=black] (1,3) circle (0.05);
	\node[right] at (1,2.5) {$7_r$, $R_d Y_1$};
	\draw[black, fill=black] (1,2.5) circle (0.05);
	\node[right] at (1,2) {$8_r$, $Y_2$};
	\draw[fill=black] (1,2) circle (0.05);
	\node[right] at (1,1.5) {$9_r$, $R_d Y_1$};
	\draw[ggreen, fill=ggreen] (1,1.5) circle (0.05);
	\node[right] at (1,1) {$10_r$, $Y_2$};
	\draw[red, fill=red] (1,1) circle (0.05);
	\node[right] at (1,0.5) {$11_r$, $R_d Y_1$};
	\draw[red, fill=red] (1,0.5) circle (0.05);
	\node[right] at (1,0) {$12_r$, $Y_2R_c$};
	\end{tikzpicture}
\end{align*}

For each $1 \leq p \leq t$, let $i_p = l_{p+1} - l_{p}$, where $l_{t+1} = n+1$, and, for $p \neq t$, let $\pi_{\ell, p}$ denote the non-crossing partition obtained by restricting $\pi$ to $\{(2l_{p})_\ell, (2l_{p}+1)_\ell, \ldots, (2l_{p+1}-2)_\ell\}$.  Note $\pi_{\ell, p}$  is naturally an element of $BNC'_\ell(i_p)$ once a singleton block is added. 

Similarly, for each $1 \leq q \leq s$, let $j_q = k_{q+1} - k_{q}$, where $k_{s+1} = m+1$, and, for $q \neq s$, let $\pi_{r, q}$ denote the non-crossing partition obtained by restricting $\pi$ to $\{(2k_{q})_r, (2k_{q}+1)_r, \ldots, (2k_{q+1}-2)_r\}$.  Note $\pi_{r, q}$ is naturally an element of $BNC'_r(j_q)$ once a singleton block is added.

Finally, if $\pi'$ is the bi-non-crossing partition obtained by restricting $\pi$ to 
\[
\{(2l_{t})_\ell, (2l_{t}+1)_\ell, \ldots, (2n)_\ell, (2k_{s})_r, (2k_{s}+1)_r, \ldots, (2m)_r\}
\]
(which is shaded differently in the above diagram), then $\pi' \in BNC_S(i_t-1,j_s-1)'_o$.

Consequently, if we sum over all possible $n,m\geq 1$, for each $V_\pi$ one obtains $\psi_\ell(X_2, L_b X_1)$ for the $B$-operator in each of the $t-1$ disjoint regions on the left and $\psi_r(Y_2, R_d Y_1)$ for the $B$-operator in each of the $s-1$ disjoint regions on the right, and $\Psi_{o'}(b,c,d)$ for the bottom region.  Using bi-multiplicative properties (so the $\psi_\ell(X_2, L_b X_1)$ term attaches to a $L_b X_1$ from above to obtain $L_{\psi_\ell(X_2, L_b X_1)b} X_1$ and so the $\psi_r(Y_2, R_d Y_1)$ term attaches to a $R_d Y_1$ from above to obtain $R_{d \psi_r(Y_2, R_d Y_1)} Y_1$) and summing over all possible $V_\pi$ yields the result.  Note the `$b$'  in the $b (\psi_\ell(X_2, L_b X_1)b)^{-1}$ term (respectively the `$d$'  in the $(d \psi_r(Y_2, R_d Y_1))^{-1} d$) term comes from the $L_b X_1$ (respectively $R_d Y_1$) in the $1_\ell$ (respectively $1_r$) position whereas the $(\psi_\ell(X_2, L_b X_1)b)^{-1}$ (respectively $(d \psi_r(Y_2, R_d Y_1))^{-1}$) comes from the fact that we want the $1_\ell$ (respectively $1_r$) term to be $L_{\psi_\ell(X_2, L_b X_1)b} X_1$ (respectively $R_{d \psi_r(Y_2, R_d Y_1) }X_1$) to match the term in $K_{X_2, Y_2}(\psi_\ell(X_2, L_b X_1)b, \Psi_{o'}(b,c,d), d \psi_r(Y_2, R_d Y_1))$ and bi-multiplicative properties correct this.
\end{proof}

\begin{proof}[Proof of Theorem \ref{thm:S-property}]

We desired to replace $b$ with $\Phi^\inv_{\ell, X_1X_2}(b)$ and replace $d$ with $\Phi^{\inv}_{r, Y_1Y_2}(d)$ in each expression from Lemmata \ref{lem:S-case-1}, \ref{lem:S-case-2}, \ref{lem:S-case-3}, \ref{lem:S-case-4}, \ref{lem:S-case-5}, and \ref{lem:S-case-6}.  Note
\[
\psi_\ell\left( L_{\Phi^\inv_{\ell, X_1X_2}(b)} X_1, X_2\right) = \Phi^\inv_{\ell, X_2}(b) \qqand  \psi_r\left(R_{\Phi^{\inv}_{r, Y_1Y_2}(d)} Y_1, Y_2\right) = \Phi^{\inv}_{r, Y_2}(d)
\]
by equation (\ref{eq:pinched-to-inverse-of-full}), 
\[
\psi_\ell(X_2, L_b Y_1)b = \psi_\ell (X_2 L_b, X_1) \qqand d \psi_r(Y_2, R_d Y_1) = \psi_r(Y_2 R_d, Y_1)
\]
by equation (\ref{eq:move-around-B-in-pinched}), 
\begin{align*}
\psi_\ell\left(X_2 L_{\Phi^{\inv}_{\ell, X_1X_2}(b)},  X_1\right) &= \Phi^{\inv}_{r, x_1}\left(S^\ell_{X_2}(b)^{-1} b S^\ell_{X_2}(b) \right) \quad \text{ and} \\ \psi_r\left(Y_2 R_{\Phi^{\inv}_{r, Y_1Y_2}(d)},  Y_1\right) &= \Phi^{\inv}_{r, Y_1}\left(S^r_{Y_2}(d) d S^r_{Y_2}(d)^{-1} \right)
\end{align*}
by Lemma \ref{lem:free-S-lem-for-T}, and
\begin{align*}
\psi_\ell\left(X_2, L_{\Phi^{\inv}_{\ell, X_1X_2}(b)} X_1 \right)&= \Phi^\inv_{\ell, X_2}(b )^{-1}b = S^\ell_{X_2}(b)^{-1} \quad \text{and} \\ \psi_r\left(Y_2, R_{\Phi^{\inv}_{r, Y_1Y_2}(d)} Y_1 \right) &= d\Phi^\inv_{r, Y_2}(d )^{-1} = S^r_{Y_2}(d)^{-1}
\end{align*}
by equation (\ref{eq:inverse-times-pinched}).  Hence
\begin{align*}
\Psi_e\left(\Phi^\inv_{\ell, X_1X_2}(b), c, \Phi^{\inv}_{r, Y_1Y_2}(d)\right)&=  K_{X_2, Y_2}\left(\Phi^\inv_{\ell, X_2}(b), c, \Phi^{-1}_{r, Y_2}(d)\right), \\
\Psi_{o,0}\left(\Phi^\inv_{\ell, X_1X_2}(b), c, \Phi^{\inv}_{r, Y_1Y_2}(d)\right)&= S^\ell_{X_2}(b)^{-1} c S^r_{Y_2}(d)^{-1}  ,\\
\Psi_{o,r}\left(\Phi^\inv_{\ell, X_1X_2}(b), c, \Phi^{\inv}_{r, Y_1Y_2}(d) \right)&= S^\ell_{X_2}(b)^{-1}K_{X_2, Y_2}\left(\Phi^\inv_{\ell, X_2}(b), c, \Phi^{-1}_{r, Y_2}(d)\right) d^{-1} S^r_{Y_2}(d)^{-1} \\
\Psi_{o,\ell}\left(\Phi^\inv_{\ell, X_1X_2}(b), c, \Phi^{\inv}_{r, Y_1Y_2}(d) \right)&=  S^\ell_{X_2}(b)^{-1} b^{-1} K_{X_2, Y_2}\left(\Phi^\inv_{\ell, X_2}(b), c, \Phi^{-1}_{r, Y_2}(d)\right) S^r_{Y_2}(d)^{-1}       \\
\Psi_{o,\ell r}\left(\Phi^\inv_{\ell, X_1X_2}(b), c, \Phi^{\inv}_{r, Y_1Y_2}(d) \right) &= S^\ell_{X_2}(b)^{-1} b^{-1} K_{X_2, Y_2}\left(\Phi^\inv_{\ell, X_2}(b), c, \Phi^{-1}_{r, Y_2}(d)\right) d^{-1} S^r_{Y_2}(d)^{-1},
\end{align*}
and $\Psi_0\left(\Phi^\inv_{\ell, X_1X_2}(b), c, \Phi^{\inv}_{r, Y_1Y_2}(d)\right)$ equals
\[
 S^\ell_{X_2}(b) K_{X_1, Y_1}\left(\Phi^\inv_{\ell, X_1}\left(S^\ell_{X_2}(b)^{-1} b S^\ell_{X_2}(b) \right), \Psi_{o'}\left(\Phi^\inv_{\ell, X_1X_2}(b), c, \Phi^{\inv}_{r, Y_1Y_2}(d) \right), \Phi^{\inv}_{r, Y_1}\left(S^r_{Y_2}(d) d S^r_{Y_2}(d)^{-1}\right)    \right) S^r_{Y_2}(d).
\]
By using the fact that
\[
\Upsilon_{X_1X_2, Y_1Y_2}(b,c,d) := \Psi_e\left(\Phi^\inv_{\ell, X_1X_2}(b), c, \Phi^{\inv}_{r, Y_1Y_2}(d)\right) + \Psi_0\left(\Phi^\inv_{\ell, X_1X_2}(b), c, \Phi^{\inv}_{r, Y_1Y_2}(d)\right)
\]
and by using Definition \ref{defn:S-Transform}, one may expand
\[
S^\ell_{X_2}(b)S_{X_1, Y_1}\left(S^\ell_{X_2}(b)^{-1}bS^\ell_{X_2}(b),  \, \, S^\ell_{X_2}(b)^{-1}      S_{X_2, Y_2}(b,c,d) S^r_{Y_2}(d)^{-1}, \, \, S^r_{Y_2}(d) d S^r_{Y_2}(d)^{-1}\right) S^r_{Y_2}(d)
\]
(to obtain 16 terms) and use the above equations to obtain $S_{X_1X_2,Y_1Y_2}(b,c,d)$.
\end{proof}

\end{document}